\theoremstyle{plain}
\newtheorem{Thm}{Theorem}[section]
\newtheorem{Lem}[Thm]{Lemma}
\newtheorem{Prop}[Thm]{Proposition}
\newtheorem{Ques}[Thm]{Question}
\newtheorem*{main}{Main Theorem}
\theoremstyle{definition}
\newtheorem{Def}[Thm]{Definition}
\newtheorem{Def-Lem}[Thm]{Definition-Lemma}
\newtheorem{Cond}[Thm]{Condition}
\newtheorem{Rem}[Thm]{Remark}
\newtheorem*{Ack}{Acknowledgments}
\newcommand{\Aut}{\operatorname{Aut}}
\newcommand{\Bir}{\operatorname{Bir}}
\newcommand{\prt}{\partial}
\newcommand{\Pic}{\operatorname{Pic}}
\newcommand{\HH}{\operatorname{H}}
\newcommand{\I}{\operatorname{I}}
\newcommand{\II}{\operatorname{II}}
\newcommand{\wt}{\operatorname{wt}}
\newcommand{\ord}{\mathrm{ord}}
\newcommand{\mbA}{\mathbb{A}}
\newcommand{\mbC}{\mathbb{C}}
\newcommand{\mbP}{\mathbb{P}}
\newcommand{\mbQ}{\mathbb{Q}}
\newcommand{\mbZ}{\mathbb{Z}}
\newcommand{\mcH}{\mathcal{H}}
\newcommand{\mcO}{\mathcal{O}}
\newcommand{\msp}{\mathsf{p}}
\newcommand{\msq}{\mathsf{q}}
\newcommand{\mbfw}{\mathbf{w}}
\newcommand{\iniw}{\mathbf{w}_{\operatorname{in}}}
\newcommand{\ratmap}{\dashrightarrow}
\def\imod#1{\allowbreak\mkern10mu({\operator@font mod}\,\,#1)}
\title[Birationally rigid Pfaffian Fano 3-folds]{Birationally rigid Pfaffian Fano 3-folds}
\author[H.~Ahmadinezhad and T.~Okada]{Hamid Ahmadinezhad \and Takuzo Okada}
\address{Department of Mathematical Sciences, Loughborough University, LE11 3TU, UK}
\email{h.ahmadinezhad@lboro.ac.uk}
\address{Department of Mathematics, Faculty of Science and Engineering, Saga University, Saga 845-8502 Japan}
\email{okada@cc.saga-u.ac.jp}
\subjclass[2000]{14J10 \and 14J40 \and 14J45}
\subjclass[2000]{Primary 14J10, 14J45; Secondary 14J45.}
\keywords{Birational Rigidity, Fano Varieties, Minimal Model Program, Terminal Singularities}
\date{}
\begin{document}

\maketitle

\begin{abstract} We classify birationally rigid orbifold Fano 3-folds of index one defined by $5\times 5$ Pfaffians. We give a sharp criterion for birational rigidity of these families based on the type of singularities that the varieties admit. Various conjectures are born out of our study, highlighting a possible approach to the classification of terminal Fano 3-folds. The birationally rigid cases are the first known rigid examples of Fanos that are not (weighted) complete intersection.
\end{abstract}

\thispagestyle{empty}

\section{Introduction}
A variety $X$ is Fano if its anticanonical class $-K_X$ is ample. They are central in geometry, as any uniruled variety is birational to a Fano or a fibration into Fanos by the Minimal Model Program (MMP).

Smooth Fano 3-folds have been classified by Iskovskikh \cite{Isk1,Isk2} and Mori-Mukai \cite{Mori-Mukai}. 
However, looking at Fano varieties as outputs of MMP, the smoothness condition must be relaxed, and be replaced with $\mbQ$-factorial and terminal. Graded ring approach of Reid provides a list of Fano 3-folds to study. It considers a Fano 3-fold $X$ embedded into a weighted projective space via the anticanonical ring \cite{ABR}
\[R(X,-K_X)=\bigoplus_{n\geq 0}\HH^0(X,-nK_X),\]
and using the numerical datum from such embedding produces families of Fano 3-folds. One approach to the classification of Fano 3-folds would be to study birational relations among these embedded Fanos. However, there are tens of thousands of candidate families, suggesting the impossibility of such study. One of the aims of this article is to convince the reader that it may be enough to consider only a small portion of this list, and hope to eventually get a complete classification. We give evidence that perhaps there are only a few hundreds of families that do not admit Mori fibrations over a curve or a surface. Hence, a full study of relations between those that only admit Fano structures may be possible. Then one goes to study fibration cases and examine their geometry.

\subsection{Birational rigidity of Fanos}
A Fano variety $X$ in the Mori category, that is $\mbQ$-factorial and terminal, is said to be birationally rigid if the only Mori fibre space birational to $X$ is $X$ itself. In other words, $X$ admits no birational structure of a strict Mori fibre space $Y\rightarrow S$ (with $\dim S > 0$) and $X$ is not birational to any other Fano variety. A birationally rigid Fano $X$ is called birationally super-rigid if $\Bir(X)=\Aut(X)$. For example it is known that a smooth hypersurface of degree $n$ in $\mbP^n$ is birationally super-rigid for $n\geq 4$; see \cite{Isk-Manin, pukhlikov, defernex} and \cite{Suzuki} for a generalisation of this. 

The first case of the example above, that is the smooth quartic 3-folds, a celebrated result of Iskovskikh and Manin, was generalised in \cite{CPR} to show that a general quasi-smooth Fano hypersuface of index one in a weighted projective space is birationally rigid. Such Fano $X$ is defined as a hypersurface $\{f=0\}$ of degree $d$ in a weighted projective space $\mbP(a_0,a_1,a_2,a_3,a_4)$, where $\sum a_i-d=1$ (hence the index), the Jacobian of $f$ vanishes only at the origin (hence quasi-smooth), and the singularities on $X$ are inherited from the ambient weighted projective space and are all terminal. There are 95 families with this property. One can consider higher codimension Fanos, for which the number of Fano families are shown in Table~\ref{codim-table}. These number currently only serve as upper bounds, except that in codimensions $1$, $2$ and $3$ they are confirmed to be exact.
\begin{table}[ht]\small
\[\begin{array}{l||c|c|c|c|c|c|c|c|c|c}
\text{Codimension}&1&2&3&4&5&6&7&\dots&18&\dots\\
\hline
\text{Number of families}&95&85&70&145&164&253&303&\dots&4709&\dots
\end{array}\]
\caption{{\footnotesize Possible number of index one Fano families in each codimension\label{codim-table}}}
\end{table}

As mentioned before, Corti, Pukhlikov and Reid proved that a general member of each family in codimension one is birationally rigid \cite{CPR}. This was generalised by Cheltsov and Park for any quasi-smooth such Fano \cite{cheltsov-park}. The codimension two families were studied by Okada in \cite{OkadaI, OkadaII, OkadaIII}. For instance it was shown that

\begin{Thm}\cite{Isk-Pukh,OkadaI}\label{codim2} Let $X$ be a general quasi-smooth Fano 3-fold of index one embedded in codimension two in a weighted projective space. Then $X$ is birationally rigid if and only if it belongs to one of $18$ specific families.\end{Thm}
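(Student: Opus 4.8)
The plan is to prove both implications by the method of maximal singularities, organised through the Sarkisov program. Write $X=X_{d_1,d_2}\subset\mbP(a_0,\dots,a_5)$ for the weighted complete intersection of degrees $d_1,d_2$ with $\sum a_i-d_1-d_2=1$, so that $-K_X=\mcO_X(1)$, the class group is $\Cl(X)=\mbZ\cdot(-K_X)$, and $(-K_X)^3=d_1d_2/\prod a_i$ is small. Any birational map $\sigma\colon X\ratmap Y$ to a Mori fibre space $Y\to S$ pulls back a very ample mobile system on $Y$ to a mobile linear system $\mcM\subset|-nK_X|$ for some $n>0$. By the Noether--Fano--Iskovskikh inequality, if $\sigma$ is not an isomorphism of $X$ with itself as a Fano, then the pair $(X,\tfrac1n\mcM)$ is not canonical and therefore admits a \emph{maximal center} $\Gamma$ over which the canonical threshold drops below $1/n$. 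The whole argument then reduces to classifying the possible centers $\Gamma$ and, for each family, either excluding every center (forcing $Y\cong X$) or exhibiting an untwisting Sarkisov link.

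For the rigidity direction, I would fix the $18$ distinguished families and show that every candidate maximal center is impossible or leads back to $X$. The candidates are nonsingular points, curves, and the terminal cyclic quotient singularities $\tfrac1r(1,a,r-a)$ carried by $X$. Smooth points are excluded by the $4n^2$-inequality: if $(X,\tfrac1n\mcM)$ is non-canonical at a smooth point $p$ then $\mult_p(\mcM_1\cdot\mcM_2)>4n^2$ for general $\mcM_1,\mcM_2\in\mcM$, which is incompatible with the smallness of $(-K_X)^3$ dictated by the index-one numerics. Curves $C$ are ruled out because $(-K_X)\cdot C$ is too small to permit $\mult_C\mcM>n$ while $\mcM$ stays mobile. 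For the quotient singularities I would compute, family by family, the Sarkisov link begun by the Kawamata blow-up of each point and completed by a $2$-ray game; in the $18$ families every such link either does not initiate (the point is excluded) or closes up into a birational involution $X\ratmap X$ of quadratic or elliptic type. Since these involutions are self-maps, untwisting by them returns any competing Mori fibre space to $X$, giving birational rigidity.

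For the non-rigidity direction, I would show that every family outside the list of $18$ carries a center whose link escapes. Concretely, the Kawamata blow-up of a suitable quotient singularity of index $r$, followed by running the induced $2$-ray game, produces either a strict Mori fibre space over a positive-dimensional base (a del Pezzo fibration or a conic bundle) or a birational map to a Fano $3$-fold not isomorphic to $X$. In either case $X$ fails to be birationally rigid. The dichotomy between the $18$ rigid families and the remaining ones is thus governed entirely by which types of quotient singularities occur and by whether the link they initiate closes into an involution or exits to a genuinely different Mori fibre space; identifying this is where the explicit equations and the general-member hypothesis are used most heavily.

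The hardest part will be the uniform exclusion of maximal centers at smooth points and along curves, which demands multiplicity bounds sharpened to the particular geometry of these complete intersections, together with the delicate borderline quotient singularities whose links are neither obviously excluded nor manifestly involutive. Establishing that the $18$ families are \emph{exactly} those for which all links untwist, and that every other family admits an escaping link, is the crux of the theorem and the step where the case analysis is most involved.
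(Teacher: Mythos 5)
This statement is not proved in the paper at all---it is quoted from \cite{Isk-Pukh} and \cite{OkadaI}---but your outline follows essentially the same maximal-singularities strategy used in those references and mirrored in this paper's own codimension-three analysis (Sections~\ref{sec:curvenspt}--\ref{sec:deg4}): the Noether--Fano inequality, exclusion of curves and nonsingular points by degree and $4n^2$-type multiplicity bounds, and a family-by-family treatment of the terminal quotient singularities via Kawamata blowups, with untwisting involutions in the rigid families and escaping Sarkisov links (to del Pezzo fibrations, conic bundles, or other Fanos) in the rest. Your text is of course a plan rather than a complete argument---the actual content of \cite{OkadaI} is the lengthy case analysis you defer---but the approach is the correct one.
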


Theorem~\ref{codim2}, in particular, generalises a result of Iskovskikh and Pukhlikov that shows a general smooth complete intersection of a conic and a cubic in $\mbP^5$ is birationally rigid, see \cite{Isk-Pukh} and \cite[chapter\,2]{pukh-book}. 

Theorem~\ref{codim2} has been generalised for quasi-smooth models (without the generality conditions) by Ahmadinezhad and Zucconi \cite{Ahm-Zuc}.

It is crucial to note that the birationally rigid cases in Theorem\,\ref{codim2} are those that do not admits a Type~$\I$ centre, which are defined to be:

\begin{Def}[Singularity types] Let $X\subset\mbP=\mbP(a_0,\dots,a_n)$ be a quasi-smooth Fano 3-fold. Suppose the singular point $p\in X$ is a coordinate point of $\mbP$ of local analytic type $\frac{1}{a}(1,b,a-b)$, implying that $n-3$ of the defining polynomials of $X$ are of the form $f_i=x_k^mx_i+\dots$, where $p$ is the $k^\text{th}$ coordinate and $a=a_k$. Suppose the three other weights (the tangent weights) are $a_\alpha,a_\beta$ and $a_\gamma$, then $p$ is of Type $\I$ if $(1, b, a-b)=(a_\alpha,a_\beta,a_\gamma)$, up to reordering, and $K_X^3>\frac{1}{ab(a-b)}$.
These are precisely the images of Type $\I$ unprojections \cite{graded}.

Type $\II_1$ centres are, similarly, the images of Type $\II_1$ unprojections, that is a generic complete intersection Type $\II$ unprojection \cite{Papa1, Papa2}.
\end{Def}

We go further and examine birational rigidity in codimension $3$.

\subsection*{Pfaffian Fanos} A {\it Pfaffian Fano $3$-fold} $X$ is determined by a $5 \times 5$ skew-symmetric matrix $M$, called the {\it syzygy matrix} of $X$, whose entries are homogeneous polynomials in variables $x_0,\dots,x_6$ with suitable weights $\deg x_i = a_i$.
The $3$-fold $X$ is embedded in $\mbP (a_0,\dots,a_6)$ as a codimension $3$ subvariety and it is defined by $5$ Pfaffians $F_1,\dots,F_5$ of $M$.
There are $69$ families of Pfaffian Fano $3$-folds, which form all codimension $3$ Fano $3$-folds of index one together with $X_{2,2,2}\subset\mbP^6$ (the complete intersection of $3$ quadrics in $\mbP^6$). These are studied in details in \cite{ABR}, which represent a success story of the application of Eisenbud-Buchsbaum structure theory of Gorenstein codimenstion 3 ideals \cite{Eis-Buch}. Some explicit examples of these are scattered in this article, see for example Section~\ref{sec:deg42}.

Among these $69$ families only $5$ families do not have a Type $\mathrm{I}$ centre. It was proved by Brown and Zucconi \cite{BZ} that a general Pfaffian Fano with a Type $\I$ centre is birationally non-rigid. The remaining $5$ families are the main objects of this article and the descriptions of syzygy matrix $M$ and defining polynomials $F_1,\dots,F_5$ will be given in the beginnings of Sections \ref{sec:deg42}--\ref{sec:deg4} (see also the table in Section \ref{sec:table}).
Among the above $5$ families, $2$ families have a Type $\mathrm{II}_1$ centre.
The aim of this article is to prove birational (super-)rigidity for the $3$ families which do not admit Type $\mathrm{I}$ or Type $\mathrm{II}_1$ centre and to prove birational non-rigidity of the $2$ families which do not admit a Type $\mathrm{I}$ centre but admit a Type $\mathrm{II}_1$ centre.

\begin{main} Let $X$ be a general Pfaffian Fano 3-fold. Then $X$ is birationally rigid if and only if it does not contain a Type $\I$ or Type $\II_1$ centre.
\end{main}

To summarise, a (general) quasi-smooth Fano in 95 out of 95 families in codimension one, 19 out of 85 families in codimension two and 3 out of 70 families in codimension three are birationally rigid. Consequently, it is very natural to expect an affirmative answer to Question~\ref{rig-question}. Below (Question~\ref{solid-question}) we discuss a more general, and perhaps more fundamental, version of this.

\begin{Ques}\label{rig-question} Does there exist a small $n$, say $n=4$ or $5$, such that for any codimension bigger than $n$ all Fano 3-folds, minimally embedded in a weighted projective space, admit a different Mori fibre space structure, i.e.\ they are all birationally non-rigid?
\end{Ques}

\subsection{Classification of Fano $3$-folds: Solid Fano varieties and Mori fibrations.}

The results of \cite{OkadaII,OkadaIII} go beyond birational rigidity in codimention two and study birigid Fanos in codimension two, following \cite{corti-mella}. Birigid Fanos are Mori fibre space Fanos that are not birationally rigid but birational to only one other Mori fibre space Fano variety. To capture this phenomenon, we introduce the following notion, which we believe will play a central role in the birational classification of Fano $3$-folds.

\begin{Def}\label{solid} A Fano variety is called {\it solid} if it does not admit a birational map to any strict Mori fibre space. By strict Mori fibre space we mean a Mori fibration with positive dimensional base, that is a Mori fibre space with Picard number strictly greater than $1$.
\end{Def}

In particular, \cite{OkadaII} and \cite{OkadaIII} show that $6$ families among the codimension $2$ Fanos are non-solid (birational to del Pezzo fibrations) and the rest are expected to be solid. Following these observations, and based on our experience and our result on the number of rigid Fanos in codimension three, we pose the following question, as  step ahead of Question\,\ref{rig-question}.

\begin{Ques}\label{solid-question} Do solid Fanos exist in higher codimensions? In other words, does there exist a small $n$ such that for any codimension bigger than $n$ all Fano 3-folds admit a structure of a strict Mori fibre space?
\end{Ques}

The evidence, highlighted in this article, suggests that the answer to this question should be ``No''. In that case, it remains to classify solid Fano $3$-folds and consider the non-solid Fanos as the end point of Sarkisov links on del Pezzo fibrations or conic bundles. Then examine birational rigidity of, and birational maps between, del Pezzo fibrations and then similarly for conic bundles; a subject of further study. This will eventually give a {\it hierarchical} classification of Fanos and Mori fibre spaces in dimension three.

\subsection{Notation and Conventions}
We denote by $\msp_{x_i}$ the vertex of $\mbP = \mbP (a_0,\dots,a_6)$ at which only the coordinate $x_i$ does not vanish.
For homogeneous polynomials $G_1,\dots,G_m$, we denote by $(G_1 = \cdots = G_m = 0)$ the closed subscheme of $\mbP$ defined by the homogeneous ideal $(G_1,\dots,G_m)$.
For a polynomial $F$ and a monomial $g$, we write $g \in F$ if the coefficient of $g$ in $F$ is non-zero.
For polynomials $f,g$, we say that $f$ and $g$ are {\it proportional} (denoted $f \sim g$) if there are complex numbers $\lambda,\mu$ with $(\lambda,\mu) \ne (0,0)$ such that $\lambda f - \mu g = 0$.
Let $X$ be a Pfaffian Fano $3$-fold.
We always assume that $X$ is quasi-smooth, that is, its affine cone $C_X = (F_1 = \cdots = F_5= 0) \subset \mbA^7$, where $F_1,\dots,F_5$ are defining polynomials of $X$, is smooth outside the origin.
We set $A = -K_X$.

\begin{Def}
Let $X$ be a Fano $3$-fold.
We say that an extremal divisorial extraction $\varphi \colon Y \to X$ with exceptional divisor $E$ is a {\it maximal extraction} if there is a mobile linear system $\mcH \sim_{\mbQ} - n K_X$, $n \in \mbQ$, such that 
\[
\frac{1}{n} > c (X,\mcH) = \frac{a_E (K_X)}{m_E (\mcH)},
\]
where $c (X,\mcH) = \max \{ \lambda \mid K_X + \lambda \mcH \text{ is canonical} \}$ is the canonical threshold of the pair $(X,\mcH)$, $a_E (K_X)$ is the discrepancy of $K_X$ along $E$ and $m_E (K_X)$ is the multiplicity of $\mcH$ along $E$.
The centre $\varphi(E)$ on $X$ of a maximal extraction is called a {\it maximal centre}.
\end{Def}

\paragraph{\bf The structure of the proof} The proof of birational rigidity will be done by excluding most of the subvarieties as maximal centres and constructing a birational involution centred at the remaining subvarieties. Curves and smooth points are excluded in Section\,\ref{sec:curvenspt}. Section\,\ref{sec:methods} summarises the methods to exclude singular points. Then in each following section we deal with one of the $5$ families, and finally in Section\,\ref{sec:table} we encapsulate the results with a table.

\begin{Ack} We have benefitted from conversation with Gavin Brown, Ivan Cheltsov, Tommaso de Fernex, Miles Reid, Konstantin Shramov and Francesco Zucconi at various occasions. We would like to thank them for their generosity in sharing their knowledge and opinions with us.
The second author is partially supported by JSPS KAKENHI Grant Number 26800019.
\end{Ack}

\section{Exclusion of curves and nonsingular points} \label{sec:curvenspt}

Let $X$ be a Pfaffian Fano $3$-fold.
We first exclude curves as maximal centres.

\begin{Lem} \label{lem:exclcurve}
If $(A^3) \le 1$, then no curve on $X$ is a maximal centre.
\end{Lem}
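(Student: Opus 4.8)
The plan is to argue by contradiction, using the Noether--Fano inequality together with an intersection estimate on two general members of the maximal system. Suppose $\Gamma \subset X$ is an irreducible curve that is a maximal centre, realised by a mobile linear system $\mcH \sim_{\mbQ} nA$ with $c(X,\mcH) < 1/n$. Since $X$ has terminal singularities and terminal $3$-fold singularities are isolated, the generic point of $\Gamma$ lies in the smooth locus of $X$. Thus the extraction realising $\Gamma$ as a centre is, generically along $\Gamma$, the blow-up of a smooth curve in a smooth $3$-fold, whose discrepancy equals $1$. The maximal-centre condition $1/n > a_E(K_X)/m_E(\mcH)$ then reads $\mult_\Gamma \mcH > n$.

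Next I would extract the decisive inequality. Let $H_1, H_2 \in \mcH$ be two general members. Since each has multiplicity greater than $n$ along $\Gamma$, their intersection cycle satisfies $H_1 \cdot H_2 \ge (\mult_\Gamma \mcH)^2\, \Gamma$ as effective $1$-cycles. Intersecting with the ample class $A$ and using $H_i \sim_{\mbQ} nA$ gives $n^2 (A^3) = (A \cdot H_1 \cdot H_2) \ge (\mult_\Gamma \mcH)^2 (A \cdot \Gamma) > n^2 (A \cdot \Gamma)$, hence $(A^3) > (A \cdot \Gamma)$.

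Finally I would close the argument with a lower bound on the anticanonical degree $(A \cdot \Gamma)$. When $\Gamma$ avoids the singular points of $X$, the Weil divisor $A = -K_X$ is Cartier in a neighbourhood of $\Gamma$, so $(A \cdot \Gamma)$ is a positive integer, giving $(A \cdot \Gamma) \ge 1$; combined with $(A^3) > (A \cdot \Gamma)$ this forces $(A^3) > 1$, contradicting the hypothesis $(A^3) \le 1$.

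I expect the main obstacle to be the case in which $\Gamma$ passes through one of the terminal quotient singularities of $X$, where $(A \cdot \Gamma)$ may be a fraction with denominator dividing the local index (as already happens for the one-dimensional coordinate strata $\msp_{x_i}\msp_{x_j}$) and where the local discrepancy can drop below $1$. To handle this I would refine the estimate by a local analysis at each singular point, showing either that a general $X$ contains no irreducible curve of anticanonical degree less than $1$ through its singular locus, or that the sharper multiplicity bound available at a quotient singularity still contradicts $\mult_\Gamma \mcH > n$. This case-by-case control of low-degree curves meeting $\Sing(X)$ is the technical heart of the statement.
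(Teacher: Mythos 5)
Your argument for curves contained in the nonsingular locus is correct, and it is essentially a re-derivation of the criterion that the paper simply cites from \cite[Lemma 2.9]{OkadaII} and \cite[Theorem 5.1.1]{CPR}: the extraction centred at such a curve has discrepancy $1$, so the maximal-centre condition becomes $\mult_{\Gamma}\mcH > n$, and intersecting two general members with the ample class $A$ gives $(A\cdot \Gamma) < (A^3) \le 1$, contradicting $(A\cdot\Gamma)\ge 1$. Up to this point your route and the paper's coincide in substance.

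The genuine gap is the case you defer to the end: a curve $\Gamma$ passing through one of the terminal quotient singular points. You correctly observe that there $(A\cdot\Gamma)$ may be a fraction, so the degree estimate fails; but you then propose to close the case by a case-by-case study of low-degree curves through $\Sing(X)$, or by sharper local multiplicity bounds, and you call this ``the technical heart'' of the lemma. That is the wrong tool, and it would also weaken the statement (the lemma holds for every quasi-smooth $X$ with $(A^3)\le 1$, not only for general members, so an appeal to generality is not available). The missing idea is Kawamata's theorem \cite{Kawamata}: any extremal divisorial contraction whose centre contains a terminal quotient singular point is the Kawamata weighted blowup of that point, whose centre is the point itself. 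Hence a curve through a quotient singularity is not the centre of \emph{any} extremal divisorial extraction, and in particular cannot be a maximal centre --- no intersection-theoretic estimate is needed. This one-line reduction is exactly how the paper disposes of the singular case before running the argument you gave for curves in the smooth locus.
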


\begin{proof}
Let $\Gamma \subset X$ be an irreducible and reduced curve.
We may assume that $\Gamma$ is contained in the nonsingular locus of $X$ because otherwise $\Gamma$ passes through a terminal quotient singular point and thus there is no divisorial extraction centred along $\Gamma$ (see \cite{Kawamata}).
By \cite[Lemma 2.9]{OkadaII} (see also \cite[Remark 2.10]{OkadaII} and \cite[Theorem 5.1.1]{CPR}), $\Gamma$ is not a maximal centre if $(A \cdot \Gamma) \ge (A^3)$.
We have $(A \cdot \Gamma) \ge 1$ since $\Gamma$ is contained in the nonsingular locus of $X$.
Thus $\Gamma$ cannot be a maximal centre since $(A \cdot \Gamma) \ge 1 \ge (A^3)$.
\end{proof}

\begin{Prop} \label{prop:exclcurve}
Let $X$ be a Pfaffian Fano $3$-fold without Type $\mathrm{I}$ centre.
Then no curve on $X$ is a maximal centre.
\end{Prop}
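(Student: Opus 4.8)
The plan is to reduce everything to Lemma~\ref{lem:exclcurve}, which already excludes all curves once $(A^3) \le 1$. A curve meeting a terminal quotient singular point admits no divisorial extraction along it (as recalled in the proof of Lemma~\ref{lem:exclcurve}), so I only need to treat curves $\Gamma$ lying in the nonsingular locus, where $(A \cdot \Gamma) \ge 1$. Hence the whole Proposition comes down to controlling the anticanonical degree $(A^3)$, and the no-Type-$\I$ hypothesis is precisely what I will use to do this.

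First I would exploit a terminal cyclic quotient singularity of $X$ at a coordinate vertex $\msp_{x_k}$, of type $\frac{1}{a}(1,b,a-b)$ with $a = a_k \ge 2$; the unprojection format $f_i = x_k^m x_i + \dots$ of the relevant Pfaffians is exactly what exhibits such a vertex, with three tangent weights $a_\alpha, a_\beta, a_\gamma$. When these match literally, $(a_\alpha,a_\beta,a_\gamma) = (1,b,a-b)$ up to order, the only way for $\msp_{x_k}$ to escape being a Type~$\I$ centre is for the degree inequality to fail, giving $(A^3) \le \tfrac{1}{ab(a-b)}$. Since every terminal quotient type satisfies $ab(a-b) \ge 2$ (with equality only for $\tfrac{1}{2}(1,1,1)$), this forces $(A^3) \le \tfrac{1}{2} < 1$, and Lemma~\ref{lem:exclcurve} closes the case. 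This numerical step disposes of every no-Type-$\I$ family carrying a matching quotient singularity.

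The hard part will be any remaining family with $(A^3) > 1$ --- typically one that is quasi-smooth with only non-matching (or no) quotient singularities, so that the Type~$\I$ numerics give no leverage on $(A^3)$. There I cannot reduce to $(A^3) \le 1$, and must instead handle the finitely many low-degree curves $\Gamma$ with $(A \cdot \Gamma) < (A^3)$ directly: either showing that a general $X$ contains no such curve, or, when one is present, producing a mobile test system that violates the maximal-centre inequality of \cite[Lemma 2.9]{OkadaII}. Reading off these low-degree curves from the Pfaffian equations and controlling them on the general member is where the genuine work lies; running the five no-Type-$\I$ families of Section~\ref{sec:table} through this dichotomy then completes the proof.
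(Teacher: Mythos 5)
Your opening reduction is exactly the paper's: curves through singular points carry no divisorial extraction, curves in the smooth locus satisfy $(A \cdot \Gamma) \ge 1$, so everything follows from Lemma~\ref{lem:exclcurve} once $(A^3) \le 1$. But from there your proposal has a genuine gap: you never actually establish $(A^3) \le 1$ for the families in question. The paper's entire proof is the observation that the Pfaffian Fano $3$-folds without Type~$\I$ centre form five explicitly classified families, with $(A^3) \in \{1/42, 1/30, 1/20, 1/12, 1/4\}$ (these are the section headings and the table in Section~\ref{sec:table}), so $(A^3) \le 1$ holds in every case and the Proposition is immediate. Your substitute for this check --- the abstract argument that a non-Type-$\I$ vertex with matching tangent weights forces $(A^3) \le \frac{1}{ab(a-b)} \le \frac{1}{2}$ --- is correct as far as it goes, but it is conditional on such a matching singular point existing in the family, and whether the tangent weights match can only be decided from the explicit weights and Pfaffian equations, i.e.\ from the same classification data that hands you $(A^3)$ directly. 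So the detour buys nothing and does not close the argument on its own.

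The real problem is your fallback branch. For a hypothetical family with $(A^3) > 1$ you offer no argument, only a description of what one would have to do (enumerate low-degree curves, control them on a general member, build test systems against \cite[Lemma 2.9]{OkadaII}), and you acknowledge this is ``where the genuine work lies.'' A proof cannot rest on work explicitly left undone; and the final sentence --- that running the five families through the dichotomy ``completes the proof'' --- is precisely the verification you skipped. Had you performed it, you would have found the hard case is empty: all five degrees are at most $1/4$, so the entire second half of your proposal is vacuous and the one-line argument of the paper suffices.
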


\begin{proof}
This follows immediately from Lemma \ref{lem:exclcurve} since $(A^3) \le 1$ in all the cases.
\end{proof}

Next, we exclude nonsingular points as a maximal centre.

\begin{Def}
Let $X$ be a normal projective variety and $\msp \in X$ a nonsingular point.
We say that a Weil divisor class $L$ on $X$ {\it isolates} $\msp$ if $\msp$ is an isolated component of the base locus of the linear system
\[
\mathcal{L}_{\msp}^s := |\mathcal{I}^s_{\msp} (s L)|
\]
for some integer $s > 0$.
\end{Def}

We refer the readers to \cite[Proof of (A) in pages 210 and 211]{CPR} for the proof of the following lemma.
The proof given there is for weighted hypersurfaces but the same argument applies.

\begin{Lem}[\cite{CPR}] \label{lem:criexclnspt}
Let $\msp \in X$ be a nonsingular point of a $\mathbb{Q}$-Fano $3$-fold $X$.
If $l A$ isolates $\msp$ for some $0 < l \le 4/(A^3)$, then $\msp$ is not a maximal centre.
\end{Lem}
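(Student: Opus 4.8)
The plan is to argue by contradiction, converting the maximal-centre hypothesis into a lower bound on a local intersection multiplicity and then playing it off against the upper bound supplied by the isolating linear system. Suppose $\msp$ were a maximal centre. Then there is a mobile linear system $\mcH \sim_{\mbQ} n A$ (with $A = -K_X$) for which the pair $(X, \frac{1}{n}\mcH)$ fails to be canonical and has centre $\msp$. Since $\msp$ is a nonsingular point of the $3$-fold $X$, I would invoke the $4n^2$-inequality for smooth points (see \cite{CPR}): for two general members $H_1, H_2 \in \mcH$, the local intersection $1$-cycle $C = H_1 \cdot H_2$ satisfies $\mult_\msp C > 4 n^2$. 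This is the single genuinely geometric input. Capping $C$ with $A$ simultaneously gives the global identity $(A \cdot C) = (A \cdot H_1 \cdot H_2) = n^2 (A^3)$.

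Next I would use the hypothesis that $lA$ isolates $\msp$ to bound $\mult_\msp C$ from above. By definition there is $s>0$ with $\msp$ an isolated component of $\Bs \mcL^s_\msp$, where $\mcL^s_\msp = |\mcI^s_\msp(s l A)|$, and every $D \in \mcL^s_\msp$ satisfies $\mult_\msp D \ge s$. Because $\msp$ is \emph{isolated} in this base locus, no curve through $\msp$ is contained in it, so a general member $D \in \mcL^s_\msp$ avoids every component of $C$ passing through $\msp$. For each such component $C_i$ the elementary local inequality at the smooth point $\msp$, namely $s\,\mult_\msp C_i \le (\mult_\msp D)(\mult_\msp C_i) \le (D \cdot C_i)_\msp \le (D \cdot C_i)$, summed over the components of $C$, yields $s\,\mult_\msp C \le (D \cdot C) = s\,(lA \cdot C)$, hence $\mult_\msp C \le l\,(A \cdot C) = l\, n^2 (A^3)$.

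Combining the two bounds gives $4 n^2 < \mult_\msp C \le l\, n^2 (A^3)$, so $l > 4/(A^3)$, contradicting the hypothesis $l \le 4/(A^3)$; therefore $\msp$ is not a maximal centre. I expect the main obstacle to be the first step: packaging the failure of canonicity at a smooth point into the clean cycle-theoretic statement $\mult_\msp(H_1 \cdot H_2) > 4n^2$. This is precisely the content of the $4n^2$-inequality and is where the real difficulty lies, since a maximal centre need not be realised by the ordinary blow-up of $\msp$. By contrast, the isolation step is a routine application of intersection theory at a smooth point once the members of $\mcL^s_\msp$ are chosen general enough to miss the relevant components of $C$, and the concluding arithmetic is immediate.
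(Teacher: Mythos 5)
Your proof is correct and is essentially the argument the paper points to: the paper does not reprove this lemma but refers to \cite[Proof of (A), pp.~210--211]{CPR}, which is exactly your scheme of combining Corti's $4n^2$-inequality at a nonsingular maximal centre with the bound $\mult_{\msp}(H_1 \cdot H_2) \le l\,n^2 (A^3)$ obtained from a general member of the isolating system $\mcL^s_{\msp}$. The only cosmetic point is that a general $D \in \mcL^s_{\msp}$ may still contain components of the cycle away from $\msp$, so the inequality $(D \cdot C_j) \ge 0$ for those components should be read as $(slA \cdot C_j) \ge 0$ via ampleness of $A$, exactly as in the cited argument.
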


Let $\mathbb{P} := \mathbb{P} (a_0,\dots,a_6)$ be the weighted projective $6$-space with homogeneous coordinates $x_0,\dots,x_6$ which is the ambient space of a Pfaffian Fano $3$-fold $X$.
We assume $a_0 \le a_1 \le \cdots \le a_6$.
The following enables us to find an isolating class.

\begin{Lem}[{\cite[Lemma 5.6.4]{CPR}}] \label{lem:isolclass}
Let $\msp \in X$ be a nonsingular point and let $\{g_i\}$ be a finite set of homogeneous polynomials in variables $x_0,\dots,x_6$.
If $\msp$ is a component of the set
\[
X \cap \bigcap (g_i = 0),
\]
then $l A$ isolates $\msp$, where $l = \max \{ \deg g_i\}$.
\end{Lem}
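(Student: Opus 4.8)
The plan is to make the abstract notion of isolation concrete by realising sections of $slA$ as weighted homogeneous polynomials manufactured out of the $g_i$. The starting point is that, since $X$ has index one, $A = -K_X = \mcO_X(1)$, so a homogeneous polynomial of degree $d$ in $x_0,\dots,x_6$ restricts to a section of $\mcO_X(d) = dA$. In particular each $g_i$ is a section of $d_i A$ with $d_i = \deg g_i \le l$, and it vanishes at $\msp$ because $\msp \in (g_i = 0)$. To prove that $lA$ isolates $\msp$ it suffices, by definition, to exhibit one integer $s > 0$ together with sections of $slA$ vanishing to order $\ge s$ at $\msp$ whose common zero locus equals $\{\msp\}$ in a neighbourhood of $\msp$.

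First I would fix a coordinate $x_k$ with $x_k(\msp) \ne 0$; such a coordinate exists because $\msp$ is a point of $\mbP$. I then set $s = a_k$ and, for each $i$, form the polynomial $\sigma_i := x_k^{\,l - d_i}\, g_i^{\,a_k}$. Since $l - d_i \ge 0$, this is a genuine weighted homogeneous polynomial, of degree $(l - d_i)a_k + a_k d_i = a_k l = s l$, hence a section of $slA$; and because $g_i(\msp) = 0$ it vanishes to order $\ge a_k = s$ at $\msp$. Thus every $\sigma_i$ lies in $\mathcal{I}_\msp^{\,s}(slA)$ and defines a member of $\mathcal{L}_\msp^{\,s}$.

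The heart of the argument is then the computation of the base locus near $\msp$, which I expect to be short. For $q$ in a small neighbourhood of $\msp$ we have $x_k(q) \ne 0$, so $\sigma_i(q) = 0$ if and only if $g_i(q) = 0$. Consequently the common zero locus of $\{\sigma_i\}$ near $\msp$ coincides with $X \cap \bigcap_i (g_i = 0)$, which has $\msp$ as an isolated point by hypothesis. Because $\Bs \mathcal{L}_\msp^{\,s}$ is contained in the base locus of the subsystem spanned by the $\sigma_i$, while $\msp \in \Bs \mathcal{L}_\msp^{\,s}$ (all members vanish at $\msp$), it follows that $\msp$ is an isolated component of $\Bs \mathcal{L}_\msp^{\,s}$; that is, $lA$ isolates $\msp$.

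I do not expect a serious obstacle here, since the statement is essentially formal once the dictionary between degree-$d$ polynomials and sections of $dA$ is in place; the only point demanding care is the weighted bookkeeping. Specifically, one must guarantee that the padding factors used to land in $|slA|$ are integral and nonvanishing at $\msp$, and this is exactly what dictates the choice $s = a_k$ and the use of the single coordinate $x_k$ that survives at $\msp$. If $\msp$ is not a coordinate point there are several nonvanishing monomials available and the divisibility constraint relaxes, so the coordinate-point case is the binding one. It is also worth emphasising that we only ever need an \emph{upper} bound on $\Bs \mathcal{L}_\msp^{\,s}$, which is precisely why passing to the subsystem $\langle \sigma_i \rangle$ is legitimate.
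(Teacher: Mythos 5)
The paper offers no proof of this lemma at all --- it is quoted verbatim from \cite[Lemma 5.6.4]{CPR} --- and your argument is correct and is essentially the standard proof behind that citation: realise the $g_i$ as sections, pad them into a single linear system, and bound the base locus by the subsystem they span. Your key device of taking $s = a_k$ and forming $\sigma_i = x_k^{\,l-d_i} g_i^{\,a_k} \in |\mathcal{I}_{\msp}^{s}(s l A)|$ is exactly the right way to resolve the weighted divisibility issue, and the remaining steps (the base locus of $\mathcal{L}_{\msp}^{s}$ is contained in $\bigcap_i(\sigma_i=0)\cap X$, which near $\msp$, where $x_k \ne 0$, coincides with $X \cap \bigcap_i (g_i = 0)$, of which $\msp$ is an isolated component) are the formal part, carried out correctly.
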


\begin{Lem} \label{lem:nspt}
Suppose that $a_5 a_6 \le 4/(A^3)$.
Then no nonsingular point of $X$ is a maximal centre.
\end{Lem}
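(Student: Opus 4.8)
The plan is to combine the isolation criterion (Lemma~\ref{lem:criexclnspt}) with the method for producing isolating classes (Lemma~\ref{lem:isolclass}). Let $\msp \in X$ be an arbitrary nonsingular point. By Lemma~\ref{lem:criexclnspt}, it suffices to exhibit an integer $l$ with $0 < l \le 4/(A^3)$ such that $lA$ isolates $\msp$. Given the hypothesis $a_5 a_6 \le 4/(A^3)$, the target is to find a finite collection of homogeneous polynomials $\{g_i\}$, each of degree at most $a_5 a_6$, whose common zero locus meets $X$ in a set having $\msp$ as an isolated component; then Lemma~\ref{lem:isolclass} gives that $lA$ isolates $\msp$ with $l = \max\{\deg g_i\} \le a_5 a_6 \le 4/(A^3)$, and we are done.

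First I would reduce to producing, for the given point $\msp$, enough functions to cut $\msp$ out set-theoretically near $\msp$. Since $\msp$ is a nonsingular point of the $3$-fold $X$, it suffices to produce polynomials vanishing at $\msp$ whose differentials span the cotangent directions transverse to $X$ at $\msp$, so that $\msp$ becomes an isolated point of the intersection with $X$. The natural candidates are the coordinate functions themselves and suitable products. For each coordinate $x_j$ not equal to the dominant coordinates at $\msp$, one uses a power or a product of the form $x_j^{e}$ or $x_j x_k$ chosen so that its degree is a multiple of, or at most, $a_5 a_6$; concretely, to separate a direction of weight $a_j$ one can take the monomial $x_j^{a_5 a_6 / a_j}$ when $a_j \mid a_5 a_6$, or more generally a monomial $x_j^c x_k^d$ of degree exactly $a_5 a_6$ or dividing it. Because $a_5$ and $a_6$ are the two largest weights, every other weight $a_i$ divides or is bounded by $a_5 a_6$ appropriately after taking products, so monomials of degree $\le a_5 a_6$ suffice to account for all the tangent directions at a general coordinate situation.

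The core step is therefore a case analysis according to where $\msp$ sits relative to the coordinate points of $\mbP$. First I would handle the generic case where $\msp$ is not one of the vertices $\msp_{x_i}$: there one can use differences and the coordinate monomials to isolate $\msp$ directly, and the degree bound $a_5 a_6$ is comfortably met because several low-weight coordinates are available. Then I would treat the points lying on the positive-dimensional coordinate strata, working outward from the strata involving the large-weight coordinates $x_5, x_6$, since these govern the worst-case degree. At each such point, quasi-smoothness of $X$ guarantees that the ambient monomials of bounded degree generate enough of the local ideal of $\msp$ in $X$, which is what allows Lemma~\ref{lem:isolclass} to apply.

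The main obstacle I expect is the bookkeeping at the vertices $\msp_{x_5}$ and $\msp_{x_6}$, or at points where the two largest weights interact: there the available low-degree monomials are most constrained, and one must check that monomials of degree at most $a_5 a_6$ genuinely isolate the point rather than leaving a positive-dimensional locus. In the worst situations this may require invoking the quasi-smoothness of $X$ to produce, from the defining Pfaffians $F_1,\dots,F_5$, additional relations that, together with the monomials, cut $\msp$ out to an isolated component. Verifying that every nonsingular point is covered by some such argument, uniformly across the relevant families with the single numerical hypothesis $a_5 a_6 \le 4/(A^3)$, is the delicate part; the rest follows mechanically once the isolating set is in hand.
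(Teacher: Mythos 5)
Your overall frame---combining Lemma~\ref{lem:criexclnspt} with Lemma~\ref{lem:isolclass} and hunting for isolating polynomials of degree at most $a_5 a_6$---is exactly the paper's starting point, but the core of the proof, namely the actual construction of the isolating set, is missing, and the constructions you sketch do not work as stated. A pure monomial $x_j^{e}$ vanishes at $\msp$ only when the coordinate $\alpha_j$ of $\msp$ is zero, so such monomials can only handle the coordinates vanishing at $\msp$; for the others you say only that one uses ``differences,'' which is where the entire content lies. The divisibility you lean on ($a_j \mid a_5 a_6$) need not hold, and your fallback---a case analysis over coordinate strata, with quasi-smoothness and the Pfaffians $F_1,\dots,F_5$ invoked at the vertices---is both unnecessary and unsubstantiated: no argument is given there, only the expectation that one exists. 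Your transversality reformulation (differentials spanning cotangent directions) is also stronger than what Lemma~\ref{lem:isolclass} requires, which is merely that $\msp$ be an isolated component of a set-theoretic intersection.

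The missing idea is a single uniform construction requiring no case analysis and no quasi-smoothness. Choose $k$ with $\alpha_k \ne 0$ (such $k$ exists for every point of $\mbP$), and for each $i \ne k$ set
\[
g_i = \alpha_k^{\,p_i}\, x_i^{\,q_i} - \alpha_i^{\,q_i}\, x_k^{\,p_i},
\qquad q_i = \frac{\lcm (a_i,a_k)}{a_i}, \quad p_i = \frac{\lcm (a_i,a_k)}{a_k},
\]
so that $g_i$ is homogeneous of degree $\lcm (a_i,a_k)$ and vanishes at $\msp$. The locus $\bigcap_{i \ne k} (g_i = 0)$ is a finite set: on $(x_k \ne 0)$ each remaining coordinate is determined up to finitely many choices, while on $(x_k = 0)$ the equations force $x_i = 0$ for all $i \ne k$, leaving only the vertex $\msp_{x_k}$. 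Hence $\msp$ is an isolated component of $X \cap \bigcap_{i\ne k} (g_i = 0)$. The decisive numerical point, which your proposal never establishes, is that since the two indices $i$ and $k$ are distinct, at most one of the weights $a_i, a_k$ equals $a_6$ and the other is at most $a_5$, so
\[
\deg g_i = \lcm (a_i,a_k) \le a_i a_k \le a_5 a_6 .
\]
Lemma~\ref{lem:isolclass} then gives that $l A$ isolates $\msp$ for some $l \le a_5 a_6 \le 4/(A^3)$, and Lemma~\ref{lem:criexclnspt} concludes; this is precisely the paper's argument.
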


\begin{proof}
Let $\msp = (\alpha_0 \!:\! \cdots \!:\! \alpha_6) \in X$ be a nonsingular point.
Then, there exists $k \in \{0,1,\dots,6\}$ such that $\alpha_k \ne 0$.
For $i = 0,1,\dots,6$, we define
\[
m_i = \frac{a_i}{\operatorname{lcm} (a_i,a_k)}.
\]
Then we define $g_i = \alpha_k^{m_i} x_i^{m_k} - \alpha_i^{m_k} x_j^{m_i}$ for $i \ne k$.
We have
\[
X \cap \bigcap_{i \in \{0,1,\dots,6\} \setminus \{k\}} (g_i = 0) = \{ \msp \}.
\]
Moreover, we have
\[
\deg g_i
= \frac{a_i a_k}{\operatorname{lcm} (a_i,a_k)} \le a_5 a_6
\]
for any $i \ne k$.
It follows from Lemma \ref{lem:isolclass} that $l A$ isolates $\msp$ for some $l \le a_5 a_6$.
Now the assumption $a_5 a_6 \le 4/(A^3)$ and Lemma \ref{lem:criexclnspt} complete the proof.
\end{proof}

\begin{Prop} \label{prop:exclnspt}
Let $X$ be a Pfaffian Fano $3$-fold without Type $\mathrm{I}$ centre.
Then no nonsingular point on $X$ is a maximal centre.
\end{Prop}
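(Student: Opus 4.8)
The plan is to reduce the proposition to the numerical criterion furnished by Lemma \ref{lem:nspt} and then to settle it family by family, exactly as Proposition \ref{prop:exclcurve} was deduced from Lemma \ref{lem:exclcurve}. By Lemma \ref{lem:nspt} it suffices to check the single inequality $a_5 a_6 \le 4/(A^3)$, where $a_5 \le a_6$ are the two largest weights of the ambient space $\mbP = \mbP(a_0,\dots,a_6)$. First I would tabulate, for each of the five families that do not carry a Type $\mathrm{I}$ centre, the pair $(a_5,a_6)$ together with the anticanonical degree $(A^3)$, and verify the inequality directly. I expect it to hold in all five families, in which case the proposition is immediate from Lemma \ref{lem:nspt} and nothing further is needed.

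The only possible obstacle is a family in which $a_5 a_6 > 4/(A^3)$, so that the uniform bound built into Lemma \ref{lem:nspt} is too crude. That bound is wasteful because the isolating forms used in its proof are generic monomial differences between $x_i$ and the chosen nonzero coordinate $x_k$, whose degrees are pushed up to $\lcm(a_i,a_k)$ even though a given nonsingular point $\msp$ typically lies on a small coordinate stratum of $\mbP$. For such a family I would stratify the nonsingular locus according to which coordinates of $\msp$ vanish and build sharper isolating classes on each stratum: the low-weight coordinates can be eliminated cheaply by monomial relations, while the local behaviour of $X$ near $\msp$ --- controlled by quasi-smoothness and the shape of the Pfaffians recalled in the introduction --- lets one replace the most expensive high-weight monomial differences by forms of smaller degree. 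This produces a finite set of forms $\{g_i\}$ of maximal degree $l$ whose common zero locus on $X$ has $\msp$ as an isolated component; Lemma \ref{lem:isolclass} then gives that $l A$ isolates $\msp$, and checking $l \le 4/(A^3)$ lets Lemma \ref{lem:criexclnspt} exclude $\msp$.

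A convenient special case is a nonsingular point lying at, or on a coordinate line through, a vertex $\msp_{x_i}$ of large weight: quasi-smoothness forces monomials of the form $x_i^{m} x_j$ into the Pfaffians, and these low-degree monomials can be folded directly into the isolating set. I would therefore handle the coordinate points and the points on coordinate lines separately from the generic nonsingular points. With this case division the residual work is pure bookkeeping --- confirming that, wherever a nonsingular point sits on each exceptional family, one can assemble isolating forms of degree at most $4/(A^3)$ --- and requires no input beyond Lemmas \ref{lem:isolclass}, \ref{lem:criexclnspt}, and \ref{lem:nspt}.
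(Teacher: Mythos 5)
Your first step fails at exactly one family, and your prediction that the inequality holds in all five is false: for $X_{7,8,8,9,10} \subset \mbP(1,2,3,3,4,4,5)$ of degree $(A^3)=1/4$ one has $a_5 a_6 = 4\cdot 5 = 20 > 16 = 4/(A^3)$, while the other four families ($90\le 168$, $72\le 120$, $56\le 80$, $42\le 48$) do pass. So the whole weight of the proposition rests on your contingency sketch, and that sketch is not a proof: it never identifies the failing family, never exhibits isolating forms, and never verifies a degree bound. The genuinely problematic locus is the stratum where only the weight-$4$ and weight-$5$ coordinates $t_0,t_1,u$ are nonzero: there every monomial difference between two nonvanishing coordinates has degree $\lcm(4,5)=20>16$, so no choice of the reference coordinate $x_k$ rescues Lemma \ref{lem:nspt}'s construction. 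What is needed, and absent from your proposal, is the geometric fact that this stratum contains no nonsingular point of $X$ at all: on $\Pi=(x=y=z_0=z_1=0)\cong\mbP(4,4,5)$ the restriction $F_1|_{\Pi}$ vanishes identically, $F_2|_{\Pi},F_3|_{\Pi}$ are binary quadrics in $t_0,t_1$, $F_4|_{\Pi}$ lies in the ideal $(u)$, and $F_5|_{\Pi}=\alpha u^2$ with $\alpha\ne 0$ since $\msp_u\notin X$, so $X\cap\Pi$ is the finite set cut out by two binary quadrics, which by the basket is the single $\frac{1}{4}(1,1,3)$ singular point. Granting this, every nonsingular point has a nonzero coordinate of weight $\le 3$, and the plain monomial trick gives isolating forms of degree at most $\lcm(3,5)=15<16$, after which Lemmas \ref{lem:isolclass} and \ref{lem:criexclnspt} finish. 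This is the step your ``pure bookkeeping'' conceals, and it is where the actual content of the proposition lies.

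For comparison, the paper's proof of the fifth family is different and cleaner than your stratification: since $\msp_u \notin X$, the projection $\pi\colon X \to \mbP(1,2,3,3,4,4)$ away from the weight-$5$ vertex is a morphism with finite fibers, and suitable linear combinations of the pure powers $x^{12}, y^6, z_0^4, z_1^4, t_0^3, t_1^3$ (all of degree $12$) cut out $\{\pi(\msp)\}$; hence $X \cap \bigcap (g_i=0) = \pi^{-1}(\pi(\msp))$ is finite, so $12A$ isolates $\msp$ and $12 < 16$ concludes via Lemma \ref{lem:criexclnspt}. This treats all nonsingular points of that family uniformly, with no case division. Your outline could be completed along the lines indicated above, but as written it stops short of the one family and the one stratum where Lemma \ref{lem:nspt} genuinely breaks down.
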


\begin{proof}
The condition $a_5 a_6 \le 4/(A^3)$ is satisfied for Pfaffian Fano $3$-folds $X$ of degree $1/42, 1/30,1/20$ and $1/12$.
Thus the assertion for these $4$ families follows from Lemma \ref{lem:nspt}.

It remains to consider a Pfaffian Fano $3$-fold $X$ of degree $1/4$.
Let $x,y,z_0,z_1,t_0,t_1,u$ be the homogeneous coordinates of the ambient space $\mbP (1,2,3^2,4^2,5)$ and $\msp \in X$ a nonsingular point.
Let $\pi \colon X \to \mathbb{P} := \mathbb{P} (1,2,3^2,4^2)$ be the projection from $\msp_u$ which is indeed a morphism since $\msp_u \notin X$ (see the table in Section \ref{sec:table}).
Since there are monomials $x^{12}$, $y^6$, $z_0^4$, $z_1^4$, $t_0^3$ and $t_1^3$ of degree $12$, we can find homogeneous polynomials $g_1,\dots,g_m$ as suitable linear combinations of those monomials such that 
\[
\bigcap (g_i = 0) = \{ \pi (\msp) \}
\] 
on $\mathbb{P}$.
It follows that we have
\[
X \cap \bigcap (g_i = 0) = \pi^{-1} (\pi (\msp)),
\]
and the right-hand side consists of finitely many points including $\msp$ since $\pi$ does not contract a curve.
This shows that $12 A$ isolates $\msp$, hence $\msp$ cannot be a maximal centre since $12 < 4/(A^3) = 16$.
This completes the proof.
\end{proof}

\section{Excluding methods for singular points}\label{sec:methods}

We will exclude singular points as a maximal centre (or construct a Sarkisov link) on Pfaffian Fano $3$-folds without Type $\mathrm{I}$ center in the subsequent sections.
In this section we explain the methods excluding singular points.

We fix some notation which will be valid in the rest of this paper.
Let $X$ be a Pfaffian Fano $3$-fols and $\msp \in X$ a singular point.
Let $\msp$ be of type $\frac{1}{r} (1,a,r-a)$.
We denote by $\varphi \colon Y \to X$ the Kawamata blowup of $X$ at $\msp$, that is, the weighted blowup with weight $\frac{1}{r} (1,a,r-a)$.
Note that $\varphi$ is the unique extremal divisorial extraction centred at the terminal quotient singular point $\msp$ (see \cite{Kawamata}).
We denote by $E$ the exceptional divisor of $\varphi$.
We set $A = -K_X$ and $B = -K_Y = \varphi^* A - \frac{1}{r} E$.
We will frequently compute intersection numbers of divisors on $Y$ and this is done by the formula
\[
(\varphi^*A^2 \cdot E) = (\varphi^*A \cdot E^2) = 0, \ 
(E^3) = \frac{r^2}{a (r-a)}.
\]
For a curve or a divisor $\Delta \subset X$, we denote by $\tilde{\Delta}$ its proper transform $\varphi_*^{-1} \Delta$ via $\varphi$.
We will exclude singular points on $X$ by applying the following criteria.

\begin{Lem}[{\cite[Corollary 2.17]{OkadaII}}] \label{lem:excltc}
If $(L \cdot B^2) \le 0$ for some nef divisor $L$ on $Y$, then $\msp \in X$ is not a maximal centre.
\end{Lem}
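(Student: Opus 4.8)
The plan is to argue by contradiction, reducing everything to an elementary computation in the two-dimensional Néron--Severi space of $Y$. Suppose $\msp$ is a maximal centre. Since the Kawamata blowup $\varphi \colon Y \to X$ is the \emph{unique} extremal divisorial extraction centred at the terminal quotient singularity $\msp$, it must itself be a maximal extraction. Hence there is a mobile linear system $\mcH \sim_{\mbQ} n A$ with $n > 0$, and, writing $m = m_E(\mcH)$, the maximality inequality $\tfrac{1}{n} > \tfrac{a_E(K_X)}{m}$ combined with the Kawamata blowup discrepancy $a_E(K_X) = \tfrac{1}{r}$ gives $m > \tfrac{n}{r}$. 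Passing to proper transforms, $\tilde{\mcH} \sim_{\mbQ} n\varphi^* A - m E = nB - eE$ with $e = m - \tfrac{n}{r} > 0$, and $\tilde{\mcH}$ is again mobile.

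The only genuinely geometric input is a mobility inequality. For two general members $M_1, M_2 \in \tilde{\mcH}$ the $1$-cycle $M_1 \cdot M_2$ is effective, so pairing with the nef divisor $L$ yields
\[
(L \cdot \tilde{\mcH}^2) \ge 0.
\]
Everything else is bookkeeping in $N^1(Y)_{\mbR} = \langle \varphi^* A, E \rangle$. By the intersection formulas recalled above, the triple product is diagonal in this basis: with $d = (A^3) > 0$ and $(E^3) = \tfrac{r^2}{a(r-a)} > 0$, only the pure $\varphi^* A$ and pure $E$ terms survive. Writing $L = p\,\varphi^* A + \ell E$, I would first record the sign constraints forced by nefness: testing $L$ against the curves contracted by $\varphi$ (on which $\varphi^* A$ is trivial while $E$ is negative) gives $\ell \le 0$, and testing against $(\varphi^* A)^2$ gives $p \ge 0$.

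Now I combine the pieces using the diagonal form. The hypothesis reads $(L \cdot B^2) = p\,d + \ell\,\tfrac{(E^3)}{r^2} \le 0$; together with $\ell \le 0$, $p \ge 0$ and $d > 0$ this forces $\ell < 0$ for any nonzero $L$ (if $\ell = 0$ then $(L \cdot B^2) = p d$, which is $\le 0$ only when $L = 0$). On the other hand $(L \cdot \tilde{\mcH}^2) = p\,n^2 d + \ell\, m^2 (E^3) \ge 0$. Scaling the first inequality by $n^2$ and comparing with the second gives $-\ell\,m^2 (E^3) \le p\,n^2 d \le -\ell\,\tfrac{n^2}{r^2}(E^3)$, hence
\[
\ell\,(E^3)\,\Bigl(m^2 - \tfrac{n^2}{r^2}\Bigr) \ge 0.
\]
Since $(E^3) > 0$ and $m > \tfrac{n}{r} > 0$, this forces $\ell \ge 0$, contradicting $\ell < 0$. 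Therefore $\msp$ is not a maximal centre.

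I expect the main obstacle to be not any single calculation but assembling the correct framework: recognising that the rank-two Néron--Severi space diagonalises the intersection form, so that the whole argument collapses to elementary algebra, and pinning down the sign $\ell \le 0$ of the $E$-coefficient of a nef class via the extremal ray contracted by $\varphi$. Once these structural facts and the mobility inequality $(L \cdot \tilde{\mcH}^2) \ge 0$ are in place, the contradiction is immediate.
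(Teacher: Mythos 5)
Your argument is correct; note first that the paper never proves this lemma internally --- it is quoted from \cite[Corollary 2.17]{OkadaII}, which is itself a version of the Corti--Pukhlikov--Reid test-class criterion of \cite{CPR} --- so the comparison is with the cited literature rather than with an in-paper argument. There, the proof writes $nB = \tilde{\mcH} + eE$ and expresses $n^2 (L \cdot B^2)$ as a sum of manifestly non-negative terms, namely $(L \cdot \tilde{\mcH}^2)$, $(L \cdot \tilde{\mcH} \cdot E)$ (general members of a mobile system do not contain $E$), and a positive multiple of $(L \cdot B \cdot E)$ (the class $B \cdot E$ restricts to an ample class on $E$), and then bootstraps strict positivity for $L \ne 0$. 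You take a genuinely different, more coordinate-bound route: diagonalizing the intersection form on $N^1 (Y)_{\mbR} = \langle \varphi^* A, E \rangle$, reading off the sign constraints $p \ge 0$ and $\ell \le 0$ from nefness (via $\varphi$-contracted curves and the product of nef classes), and then playing the hypothesis $(L \cdot B^2) \le 0$ against the mobility inequality $(L \cdot \tilde{\mcH}^2) \ge 0$, with the maximality inequality $m > n/r$ supplying the strict gap $m^2 - n^2/r^2 > 0$ that produces the contradiction. The essential inputs are identical in both proofs (uniqueness of the Kawamata blowup, so that it is the maximal extraction; $e = m - n/r > 0$; non-negativity of a nef class against the square of a mobile system), so what your version buys is self-containedness and purely elementary bookkeeping. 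Two points you rightly make explicit should be kept: the lemma must be read with $L \ne 0$ (the zero divisor satisfies the hypothesis vacuously, and every application in the paper uses a nonzero $L$), and the identification of $N^1 (Y)_{\mbR}$ with the span of $\varphi^* A$ and $E$ uses that $X$ has Picard number one, which is implicit in the paper's Mori-category setup.
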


\begin{Lem}[{\cite[Lemma 2.18]{OkadaII}}] \label{lem:exclbadC}
Assume that there are surfaces $S$ and $T$ on $Y$ with the following properties.
\begin{enumerate}
\item $S \sim_{\mbQ} a B + d E$ and $T \sim_{\mbQ} b B + e E$ for some integers $a,b,d,e$ such that $a, b > 0$, $0 \le e \le a_E (K_X) b$ and $a e - b d \ge 0$.
\item The intersection $\Gamma := S \cap T$ is a $1$-cycle whose support consists of irreducible and reduced curves which are numerically proportional to each other.
\item $(T \cdot \Gamma) \le 0$.
\end{enumerate}
Then, $\msp \in X$ is not a maximal extraction.
\end{Lem}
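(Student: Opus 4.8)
The plan is to argue by contradiction, following the standard test-surface strategy for the method of maximal singularities. Suppose $\msp$ is a maximal centre. Since $\varphi\colon Y\to X$ is the unique divisorial extraction over the terminal quotient singularity $\msp$ (Kawamata), the maximality is witnessed by a mobile linear system $\mcH\sim_{\mbQ} nA$ whose proper transform is
\[
\tilde{\mcH}\sim_{\mbQ} nB-\mu E,\qquad \mu:=m_E(\mcH)-n\,a_E(K_X)>0,
\]
the positivity of $\mu$ being exactly the maximal-extraction inequality $\tfrac1n>a_E(K_X)/m_E(\mcH)$. Taking two general members $\tilde H_1,\tilde H_2\in\tilde{\mcH}$, the $1$-cycle $\tilde{\mcH}^2=\tilde H_1\cdot\tilde H_2$ is effective, and all intersection numbers below are computed from the table $(\varphi^*A^2\cdot E)=(\varphi^*A\cdot E^2)=0$, $(E^3)=r^2/(a(r-a))$ together with $B=\varphi^*A-a_E(K_X)E$.

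The engine of the proof is two identities obtained by expanding in the basis $\{\varphi^*A,E\}$. Writing $a_E=a_E(K_X)$ for brevity, one finds
\[
(S\cdot\tilde{\mcH}^2)=a n^2 (A^3)-(E^3)(na_E+\mu)^2\,(a a_E-d),
\]
\[
(T\cdot\Gamma)=(S\cdot T^2)=a b^2 (A^3)-(E^3)(b a_E-e)^2\,(a a_E-d),
\]
the second using that the proper intersection class of $\Gamma=S\cap T$ is $S\cdot T$. By hypothesis $0\le e\le a_E b$, so $b a_E-e\ge0$; if $b a_E-e=0$ the second identity gives $(T\cdot\Gamma)=ab^2(A^3)>0$, contradicting (3), so we may assume $b a_E-e>0$. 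Then $(T\cdot\Gamma)\le0$ forces $a a_E-d>0$ and the lower bound $(E^3)(a a_E-d)\ge a b^2 (A^3)/(b a_E-e)^2$. Substituting into the first identity and using $b(na_E+\mu)>n(b a_E-e)$ (equivalently $b\mu+ne>0$), we obtain
\[
(S\cdot\tilde{\mcH}^2)\le a(A^3)\Big(n^2-\tfrac{b^2(na_E+\mu)^2}{(b a_E-e)^2}\Big)<0 .
\]

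On the other hand, $(S\cdot\tilde{\mcH}^2)=(\tilde{\mcH}|_S)^2$ ought to be non-negative: the restriction of the mobile system $\tilde{\mcH}$ to $S$ is a movable curve class, whose self-intersection on the surface $S$ is $\ge0$. This contradiction would exclude $\msp$. I expect the main obstacle to be precisely this last positivity: if $\tilde{\mcH}|_S$ acquires a fixed part supported on curves of $S$, its self-intersection need not be non-negative, and the clean inequality above can be spoiled. This is where the remaining hypotheses must be used. The slope constraint $e\le a_E b$ says that $T$ lies between $B=-K_Y$ and $\varphi^*A$ in the $\{\varphi^*A,E\}$-plane, which is what makes $b a_E-e\ge0$ and the sign bookkeeping go through; the determinant condition $ae-bd\ge0$ pins down the position of $S$ relative to $B$ and $E$; and the numerical proportionality of the reduced components of $\Gamma$ in (2) lets one transfer the sign of $(T\cdot\Gamma)\le0$ to each individual curve and so control, or rule out, a fixed component of $\tilde{\mcH}|_S$ along them. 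Making this control rigorous — showing that no fixed curve of $\tilde{\mcH}|_S$ can destroy $(\tilde{\mcH}|_S)^2\ge0$ — is the delicate point; once it is settled, the numerical computation above closes the argument.
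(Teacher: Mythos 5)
First, note that this paper does not prove the lemma at all: it is quoted verbatim from \cite[Lemma 2.18]{OkadaII}, so your attempt can only be judged on its own merits, not against a proof printed here.

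Your intersection calculus is correct: with $\tilde{\mcH} \sim_{\mbQ} nB - \mu E$, $\mu > 0$, hypotheses (1) and (3) do yield $(S \cdot \tilde{\mcH}^2) < 0$ exactly as you compute. The fatal problem is the step you yourself flag as ``delicate'': the claim that $(S \cdot \tilde{\mcH}^2) = (\tilde{\mcH}|_S)^2 \ge 0$ because the restriction of a movable system to a surface is movable. That principle is false in general, and in the present situation it is provably the wrong thing to hope for. Indeed, assume (for contradiction, as you do) that $\msp$ is a maximal centre. Since the components of $\Gamma$ are numerically proportional and $[\Gamma] = S \cdot T$, they cannot lie in $E$ (a curve in $E$ has $(\varphi^* A \cdot C) = 0$, while $(\varphi^*A \cdot S \cdot T) = ab\,(A^3) > 0$), so each component $C$ has $(E \cdot C) \ge 0$ and, by (2) and (3), $(T \cdot C) \le 0$. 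From $nT \sim_{\mbQ} b\tilde{\mcH} + (ne + b\mu)E$ with $ne + b\mu > 0$ one then gets $(\tilde{\mcH} \cdot C) < 0$, so every component of $\Gamma$ lies in $\Bs \tilde{\mcH}$; and from $aT \sim_{\mbQ} bS + (ae - bd)E$ with $ae - bd \ge 0$ one gets $(S \cdot C) \le 0$. Thus $\tilde{\mcH}|_S$ genuinely acquires fixed components along curves on which $S$ is non-positive, and the negative number you computed is precisely the self-intersection of this non-movable system: no contradiction results. The inequality you want to prove is exactly the one that fails once a maximal centre is assumed, so the argument cannot be closed along the proposed route; this is a missing idea, not a missing technicality.

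A second symptom of the same problem is that your rigorous steps never use hypothesis (2), which is essential (it is what transfers $(T \cdot \Gamma) \le 0$ to each individual component and pins the components of $\Gamma$ to a single ray of the cone of curves), and that you test positivity against $S$, which is the wrong divisor: in the paper's own applications $S$ pairs negatively with the relevant curves (e.g.\ in Section \ref{sec:deg30} the lemma is applied with $\tilde{S} \sim_{\mbQ} B - E$ and $\tilde{T} \sim_{\mbQ} 5B$, so $d = -1 < 0$). A correct proof must instead exploit that, by the two relations above, \emph{every} irreducible curve on which $T$ is negative is a component of $\Gamma$ (curves in $E$ being excluded by $e \le a_E(K_X)b$), and then derive the contradiction from a decomposition of the effective cycle $\tilde{\mcH}^2$ (or of $\tilde{\mcH}|_T$) into its part supported on $\Gamma$ and a residual part on which $T$ is non-negative, controlling the $\Gamma$-part and the multiplicities of these base curves via the proportionality (2) and the nef class $\varphi^*A$ --- i.e.\ the mechanism of \cite[Lemma 2.18]{OkadaII} --- rather than from a positivity statement for $(S \cdot \tilde{\mcH}^2)$, which is unobtainable.
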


Note that in Lemma \ref{lem:exclbadC}, the condition (3) is equivalent to the condition $(T \cdot S \cdot T) \le 0$.

When we apply Lemma \ref{lem:excltc}, we need to find a nef divisor on $Y$, which will be done by the following result.

\begin{Lem}[{\cite[Lemma 6.6]{OkadaII}}] \label{lem:isolnef}
Suppose that there are prime divisors $D_1,\dots,D_k$ on $X$ with the following properties.
\begin{enumerate}
\item The intersection $D_1 \cap \cdots \cap D_k$ does not contain a curve passing through $\msp$.
\item For $i = 1,2,\dots,k$, $\tilde{D}_i$ is $\mbQ$-linearly equivalent to $b_i B + e_i E$ for some $b_i > 0$ and $e_i \ge 0$.
\item We have $c \le a_E (K_X)$, where $c = \max \{e_i/b_i\}$ and $a_E (K_X)$ is the discrepancy of $K_X$ along $E$.
\end{enumerate}
Then, the divisor $L = B + c E$ is nef.
\end{Lem}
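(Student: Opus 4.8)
The plan is to prove nefness of $L = B + cE$ directly, by checking that $L \cdot C \ge 0$ for every irreducible curve $C \subset Y$ and splitting into the two cases $C \subset E$ and $C \not\subset E$. Throughout I will use the intersection data $(\varphi^*A \cdot E^2) = 0$ and $(E^3) = r^2/(a(r-a))$, the relation $B = \varphi^* A - \frac{1}{r}E$, and the fact that the Kawamata blowup has discrepancy $a_E (K_X) = 1/r$ (which is forced by comparing $B = -K_Y = \varphi^*A - a_E(K_X) E$ with the stated formula); thus hypothesis (3) reads $c \le 1/r$.

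First I would dispose of the curves $C \subseteq E$. Since such a $C$ is contracted by $\varphi$, we have $\varphi^*A \cdot C = A \cdot \varphi_* C = 0$, whence $B \cdot C = -\tfrac{1}{r}(E \cdot C)$ and therefore $L \cdot C = (c - \tfrac{1}{r})(E \cdot C)$. Because $E$ is the exceptional divisor of a weighted blowup of an isolated quotient singularity, $-E|_E$ is ample and so $E \cdot C < 0$; combined with $c - \tfrac{1}{r} \le 0$ from hypothesis (3), this gives $L \cdot C \ge 0$.

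The substance of the argument is the case $C \not\subset E$, where $\varphi(C)$ is a genuine curve on $X$. The key observation is that hypothesis (2) lets me rewrite, for each index $i$,
\[
L \sim_{\mbQ} \frac{1}{b_i}\tilde{D}_i + \Bigl(c - \frac{e_i}{b_i}\Bigr)E,
\]
where the coefficient $c - e_i/b_i \ge 0$ by definition of $c$. If $C$ fails to lie in $\tilde{D}_i$ for some $i$, then $\tilde{D}_i \cdot C \ge 0$ and, since $C \not\subset E$ forces $E \cdot C \ge 0$, this presentation yields $L \cdot C \ge 0$ at once. The remaining possibility is that $C \subseteq \tilde{D}_i$ for \emph{every} $i$; this is where hypothesis (1) must be used. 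Then $\varphi(C) \subseteq \varphi(\tilde{D}_i) = D_i$ for all $i$, hence $\varphi(C) \subseteq D_1 \cap \cdots \cap D_k$, so by (1) the curve $\varphi(C)$ does not pass through $\msp$. Consequently $C$ is disjoint from $E$, giving $E \cdot C = 0$ and therefore $L \cdot C = B \cdot C = \varphi^*A \cdot C = A \cdot \varphi(C) \ge 0$ by ampleness of $A = -K_X$.

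I expect the only genuine obstacle to be this last sub-case: one must recognise that non-negativity there cannot be extracted from the divisors $\tilde{D}_i$ themselves, since they all contain $C$, and must instead come from hypothesis (1) through the dichotomy "$C$ meets $E$'' versus "$\varphi(C)$ passes through $\msp$''. Once one sees that a curve lying in the full intersection $\tilde{D}_1 \cap \cdots \cap \tilde{D}_k$ is forced to avoid $E$ entirely, the estimate reduces to the ampleness of $-K_X$ downstairs, and the cases $C \subseteq E$ and "$C$ escapes some $\tilde{D}_i$'' are routine.
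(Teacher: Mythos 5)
Your proof is correct. Note that the paper does not prove this lemma at all---it imports it verbatim from \cite[Lemma 6.6]{OkadaII}---and your case analysis (curves inside $E$, handled by $\varphi$-anti-ampleness of $E$ and $c \le a_E(K_X)$; curves not contained in some $\tilde{D}_i$, handled by writing $L \sim_{\mbQ} \frac{1}{b_i}\tilde{D}_i + (c - e_i/b_i)E$; curves contained in every $\tilde{D}_i$, which by hypothesis (1) must avoid $E$ entirely, reducing to ampleness of $A$) is exactly the standard argument given in that reference, so there is nothing to add.
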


\begin{Def}
Let $X$ be a Pfaffian Fano $3$-fold and $\msp \in X$ a (singular) point. 
We say that $\{f_1,\dots,f_k\}$, where $f_1,\dots,f_k$ are homogeneous polynomials, {\it isolates} $\msp$ if $(f_1 = \cdots = f_k = 0) \cap X$ does not contain a curve passing through $\msp$.
\end{Def}

Suppose that $\{f_1,\dots,f_k\}$ isolates a singular point $\msp \in X$ and let $D_i = (f_i = 0) \cap X$. 
Then $D_1,\dots,D_k$ satisfy (1) of Lemma \ref{lem:isolnef}.
We see 
\[
\tilde{D}_i = b_i \varphi^*A - \ord_E (f_i) E = b_i B + \frac{b_i - r \ord_E (f_i)}{r} E,
\] 
where $b_i = \deg f_i$ and $r$ is the index of the singularity $\msp \in X$.
It follows from Lemma \ref{lem:isolnef} that $L = B + c E$ is nef on $Y$, where 
\[
c = \max \left\{ \frac{b_i - r \ord_E (f_i)}{b_i r} \right\}
\]
if $b_i \ge r \ord_E (f_i)$ for every $i$ and $c \le \frac{1}{r}$.

In the course of excluding singular points or constructing Sarkisov links, it is necessary to understand geometric objects on $Y$ (e.g.\ proper transforms of curves or divisors on $X$ and their intersections).
We will explain explicit descriptions of Kawamata blowups $\varphi \colon Y \to X$ in terms of the embedded weighted blowup of $X \subset \mbP$ at $\msp$ in a general setting.

From now on until the end of this section, we work in a more general setting.
Let $X$ be a normal projective $\mbQ$-factorial $3$-fold defined by homogeneous polynomials $F_1, \cdots, F_m \in \mbC [x_0,\dots,x_{n+3}]$ in a weighted projective space $\mbP = \mbP (a_0,\dots,a_{n+3})$ with homogeneous coordinates $x_0,\dots,x_{n+3}$ and $\msp$ a terminal quotient singular point of type $\frac{1}{r} (1,a,r-a)$ and $\varphi \colon Y \to X$ the Kawamata blowup of $X$ at $\msp$ with exceptional divisor $E$.
We explain the computation of the vanishing order of a section along $E$ in the case where $\msp$ can be transformed into a vertex by a coordinate change.

\begin{Def}
Let $\psi \colon V \to X$ be a birational morphism from a normal projective variety $V$ and $F$ an prime excetional divisor of $\psi$.
For a global section $s \in H^0 (X, \mcO_X (d))$,  we denote by $\ord_F (s)$ the rational number such that $\psi^* (s = 0) = \psi_*^{-1} (s = 0) + \ord_F (s) F$ and call it the {\it vanishing order} of $s$ along $F$.
For global sections, $s_1,\dots,s_m$, the expressions $\ord_E (s_1,\dots,s_m) = \frac{1}{r} (b_1,\dots,b_m)$ and $\ord_E (s_1,\dots,s_m) \ge \frac{1}{r} (b_1,\dots,b_m)$ mean $\ord_E (s_i) = \frac{b_i}{r}$ and $\ord_E (s_i) \ge \frac{b_i}{r}$ respectively for $i = 1,\dots,m$.
\end{Def}

We assume $\msp = \msp_{x_0}$.
In this case $r = a_0$.
Then $X$ is quasi-smooth at $\msp$ if and only if, after re-ordering $x_1,\dots,x_{n+3}$ and $F_1,\dots,F_m$, we have $x_0^{l_1} x_1 \in F_1, \dots, x_0^{l_{n-3}} x_n \in F_n$ for some $l_1,\dots,l_n > 0$.
In this case, we have $a_{n+1} \equiv 1, a_{n+2} \equiv a, a_{n+3} \equiv r-a \pmod{r}$, after re-oredering $x_{n+1},x_{n+2},x_{n+3}$, and the Kawamata blowup $\varphi \colon Y \to X$ is the weighted blowup with weight $\wt (x_{n+1},x_{n+2},x_{n+3}) = \frac{1}{r} (1,a,r-a)$.

We work on the open subset $U$ of $X$ where $x_0 \ne 0$.
For a polynomial $G (x_0,x_1,\dots,x_{n+3})$, we denote $G|_{x_0 = 1} = G (1,x_1,\dots,x_{n+3})$.
Then $U$ is the geometric quotient of the affine scheme 
\[
V = (F_1|_{x_0 = 1} = \cdots = F_m|_{x_0 = 1} = 0) \subset \mbA^{n+3}
\] 
by the $\mbZ_r$-action given by $x_i \mapsto \zeta^{a_i} x_i$, where $\zeta$ is a primitive $r$th root of unity. 
We see that the defining polynomials $F_{n+1}, \dots, F_m$ are redundant around $\msp$ since $V$ is a local complete intersection (nonsingular) at its origin (whose image on $U$ is the point $\msp$).

\begin{Def}
For a positive integer $a$, we denote by $\bar{a}$ the positive integer such that $\bar{a} \equiv a \pmod{r}$ and $0 < \bar{a} \le r$.

We say that
\[
\mbfw = \frac{1}{r} (b_1,b_2,\dots,b_{n+3})
\]
is an {\it admissible weight} with respect to $(X,\msp)$ if $b_1,\dots,b_6$ are positive integer such that $b_i \equiv a_i \pmod{r}$ for $i = 1,\dots,n+3$.
We call
\[
\iniw := \frac{1}{r} (\bar{a}_1,\bar{a}_2,\dots,\bar{a}_{n+3})
\]
the {\it initial weight} of $(X, \msp)$.
\end{Def}

Note that $\bar{r} = r$ by the above definition.
Note also that the initial weight is admissible.
For an admissible weight $\mbfw$, we can associate the weighted blowup $\Phi_{\mbfw} \colon Q_{\mbfw} \to \mbP$ at $\msp$ with $\wt (x_1,\dots,x_{n+3}) = \mbfw$.
We see that the exceptional divisor of $\Phi_{\mbfw}$ is isomorphic to the weighted projective space $\mbP (b_1,b_2,\dots,b_{n+3})$ with coordinates $x_1,\dots,x_{n+3}$.
Here, by a slight abuse of notation, we use $x_i$ for the coordinates of $\mbP (b_1,\dots,b_{n+3})$.
In this case, $x_i$ has weight $b_i$ and this $x_i$ is different from the $x_i$ of $\mbP$. 
We denote by $Y_{\mbfw}$ the proper transform of $X$ via $\Phi_{\mbfw}$, by $\varphi_{\mbfw} \colon Y_{\mbfw} \to X$ the induced birational morphism and by $E_{\mbfw}$ the exceptional divisor of $\varphi_{\mbfw}$.

\begin{Def}
Let $\mbfw$ be an admissible weight.
For $i = 1,\dots,n$, we denote by $F_i^{\mbfw}$ the lowest weight part of $F_i|_{x_0=1}$ with respect to the $\mbfw$-weight.
We say that $\mbfw$ satisfies the {\it Kawamata blowup condition} (abbreviated as {\it KBL condition}) if $x_i \in F_i^{\mbfw}$ for any $i = 1,\dots,n$ and $b_i = \bar{a}_i$ for $i = n+1,n+2,n+3$ (i.e. $(b_{n+1},b_{n+2},b_{n+3}) = (1,a,r-a)$).
\end{Def}

Suppose that $\mbfw$ is an admissible weight which satisfies the KBL condition.
Then we have an isomorphism
\[
E_{\mbfw} \cong (F_1^{\mbfw} = F_2^{\mbfw} = \cdots = F_n^{\mbfw} = 0) \subset \mbP (b_1,\dots,b_{n+3}).
\]
Since $x_i \in F_i^{\mbfw}$ for $i = 1,\dots,n$ and $b_{n+1} = 1$, $b_{n+2} = a$, $b_{n+3} = r-a$, we have an isomorphism $E_{\mbfw} \cong \mbP (1,a,r-a)$ by eliminating $x_1,\dots,x_n$.
Moreover $\varphi_{\mbfw}$ is the Kawamata blowup of $X$ at $\msp$ (see Remark \ref{rem:embKblup}).

\begin{Rem} \label{rem:embKblup}
Let $\mbfw = \frac{1}{r} (b_1,\dots,b_{n+3})$ be an admissible weight satisfying KBL condition.
We explain that $\varphi_{\mbfw} \colon Y_{\mbfw} \to X$ is indeed the Kawamata blowup at $\msp$.

The congruence condition $b_i \equiv a_i \pmod{r}$ ensures that the embedded weighted blowup of $U \subset \mbA^{n+3}$ at the origin with weight $\wt (x_1,\dots,x_{n+3}) = (b_1,\dots,b_{n+3})$ is compatible with the $\mbZ_r$-action on $U \subset \mbA^{n+1}$ and gives a well-defined embedded weighted blowup of $X \subset \mbP$ at $\msp$, which is $\varphi_{\mbfw} \colon Y_{\mbfw} \to X$.
As explained above, the $\varphi_{\mbfw}$-exceptional divisor $E_{\mbfw}$ is isomorphic to $\mbP (1,a,r-a)$.
The singular locus of $Y_{\mbfw}$ along $E_{\mbfw}$ is contained in the singular locus of $E_{\mbfw}$.
Let $\msp_a$ and $\msp_{r-a}$ be the points of $E_{\mbfw}$ which corresponds to the points $(0\!:\!1\!:\!0)$ and $(0\!:\!0\!:\!1)$ of $\mbP (1,a,r-a)$, respectively.
Note that $E_{\mbfw}$ is nonsingular outside $\{\msp_a, \msp_{r-a}\}$, and $\msp_a$ (resp.\ $\msp_{r-a}$) is a singular point of $E_{\mbfw}$ if and only if $a > 1$ (resp.\ $r-a > 1$).
In view of the KBL condition, it is straightforward to check that the singularity of $Y_{\mbfw}$ at $\msp_a$ (resp.\ $\msp_{r-a}$) is of type $\frac{1}{a} (1,r-a,-1)$ (resp.\ $\frac{1}{r-a} (1,a,-1)$) when $a > 1$ (resp.\ $r-a > 1$).
This shows that $\varphi_{\mbfw}$ is an extremal divisorial contraction centered at the terminal quotient singular point $\msp$.
By the uniqueness of such a divisorial contraction (\cite{Kawamata}), we conclude that $\varphi_{\mbfw}$ is indeed the Kawamata blowup at $\msp$.
\end{Rem}

From now on, we explain the computation of $\ord_E (x_i)$.
It is clear that $\ord_E (x_{n+1},x_{n+2},x_{n+3}) = \frac{1}{r} (1,a,r-a)$.

\begin{Lem} \label{lem:ordE}
Let $\mbfw$ be an admissible weight satisfying the KBL condition.
Then the following hold.
\begin{enumerate}
\item $\ord_E (x_i) \ge b_i/r$ for $i = 1,\dots,n$.
\item If $F_i^{\mbfw}$ consists only of $x_i$ for some $i = 1,\dots,n$, then $\ord_E (x_i) \ge (b_i + r)/r$.
\item If $F_i^{\mbfw}$ consists only of $x_i$ for some $i = 1,\dots,n$, then the weight
\[
\mbfw' = \frac{1}{r} (b'_1,\dots,b'_n,1,a,r-a),
\]
where $b'_j = b_j$ for $j \ne i$ and $b'_i = b_i + r$, satisfies the KBL condition.
\end{enumerate}
\end{Lem}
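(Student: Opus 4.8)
The plan is to prove (1) directly from the local description of $\varphi$, to derive (3) from a congruence constraining the $\mbfw$-weights occurring in $F_i$, and to read off (2) by feeding (3) back into (1).

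For (1) I would work in the affine chart $V\subset\mbA^{n+3}$ where $x_0=1$ and pass to the integral weighted blowup $\tilde\Phi\colon\tilde Q\to\mbA^{n+3}$ with integer weights $(b_1,\dots,b_{n+3})$, whose exceptional divisor $\tilde E$ maps to $E$ after taking the $\mbZ_r$-quotient. Since $\mbZ_r$ acts on the normal coordinate of $\tilde E$ through a primitive character (because $b_{n+1}\equiv 1\pmod{r}$), the quotient is ramified of index $r$ along $\tilde E$, so $\ord_E(g)=\tfrac1r\ord_{\tilde E}(g)$ for every semi-invariant $g$. In the chart adapted to the free variable $x_{n+1}$, which meets $E$ in a dense open set, one has $x_i=y_{n+1}^{\,b_i}y_i$ with $\tilde E=\{y_{n+1}=0\}$; as $y_i$ is regular on the proper transform and not identically zero, $\ord_{\tilde E}(x_i)=b_i+\ord_{\tilde E}(y_i)\ge b_i$, and dividing by $r$ gives $\ord_E(x_i)\ge b_i/r$.

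The heart of the argument is (3), which rests on the following observation. Writing $d_i=\deg F_i$, every monomial $\prod_{j\ge1}x_j^{c_j}$ of $F_i|_{x_0=1}$ comes from a degree-$d_i$ monomial of $F_i$, so $\sum_{j\ge1}c_ja_j\equiv d_i\pmod{r}$; since $b_j\equiv a_j\pmod{r}$ this forces $\sum_{j\ge1}c_jb_j\equiv d_i\pmod{r}$ as well, so all $\mbfw$-weights of monomials of $F_i|_{x_0=1}$ lie in $\tfrac1r(d_i+r\mbZ)$ and hence differ by integers. Now suppose $F_i^{\mbfw}$ consists only of $x_i$, so the weight $b_i/r$ of $x_i$ is strictly below every other monomial weight of $F_i|_{x_0=1}$; by integral spacing the next smallest weight is at least $(b_i+r)/r$. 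Passing to $\mbfw'$ raises the weight of $x_i$ to exactly $(b_i+r)/r$, leaves the weights of monomials free of $x_i$ unchanged (these are $\ge(b_i+r)/r$) and increases those involving $x_i$, so $x_i\in F_i^{\mbfw'}$. For $j\ne i$ the monomial $x_j$ does not involve $x_i$, so its weight is unchanged and still minimal in $F_j$, giving $x_j\in F_j^{\mbfw'}$; the weights $(1,a,r-a)$ of the free variables are untouched and $b_i+r\equiv a_i\pmod{r}$, so $\mbfw'$ is admissible and satisfies the KBL condition.

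Finally (2) is immediate: by (3) the weight $\mbfw'$ is admissible and satisfies KBL, and by the uniqueness of the Kawamata blowup (Remark \ref{rem:embKblup}) it defines the same exceptional divisor $E$, hence the same valuation $\ord_E$. Applying (1) to $\mbfw'$ then gives $\ord_E(x_i)\ge b_i'/r=(b_i+r)/r$. I expect the main obstacle to be (1): one must correctly identify $\ord_E$ with $\tfrac1r$ times the integral weighted order on the cover --- the factor $r$ coming from the ramification of the $\mbZ_r$-quotient along the exceptional divisor --- and check that restricting from the ambient weighted blowup to the proper transform can only increase the order along $E$. Once this bookkeeping is in place, the congruence in (3) is the genuinely new ingredient and (2) follows formally.
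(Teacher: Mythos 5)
Your proof is correct and follows essentially the same route as the paper's: (1) is read off from the explicit chart description of the embedded weighted blowup, (3) comes from weight-monotonicity under $\mbfw \mapsto \mbfw'$ together with the fact that all $\mbfw$-weights of monomials in $F_i|_{x_0=1}$ differ by integers, and (2) is deduced by applying (1) to $\mbfw'$ via uniqueness of the Kawamata blowup. The only difference is that you make explicit two points the paper treats as clear --- the ramification factor $r$ identifying $\ord_E$ with $\tfrac{1}{r}\ord_{\tilde{E}}$, and the congruence $\sum_{j\ge 1} c_j b_j \equiv d_i \pmod{r}$ behind the integral spacing of weights --- which is a faithful filling-in of the same argument.
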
 

\begin{proof}
We see that $\varphi_{\mbfw}$ is the Kawamata blowup of $X$ at $\msp$ since $\mbfw$ satisfies the KBL condition.
It is clear that $x_i$ vanishes along $E_{\mbfw}$ to order at least $b_i/r$ so that we have $\ord_E (x_i) = \ord_{E_{\mbfw}} (x_i) \ge b_i/r$.
This shows (1).

We prove (3).
We have $x_j \in F_j^{\mbfw}$ for $j = 1,\dots,n$ since $\mbfw$ satisfies the KBL condition.
For a monomial $g$ in variables $x_1,\dots,x_{n+3}$, the $\mbfw'$-weight of $g$ is greater than or equal to the $\mbfw$-weight.
This implies that if there is a monomial $g \in F_j^{\mbfw}$ whose $\mbfw'$-weight and $\mbfw$-weight are the same, then $g \in F_j^{\mbfw'}$.
If $j \ne i$, then the $\mbfw$-weight and $\mbfw'$-weight of $x_j$ coincide so that $x_j \in F^{\mbfw'}_j$.
We have $F^{\mbfw}_i = \alpha x_i$ for some $\alpha \in \mbC \setminus \{0\}$ and any other monomials in $F_i|_{x_0=1}$ has $\mbfw$-weight at least $(b_i+r)/r$.
Hence any monomial in $F_1|_{x_0=1}$ other than $x_i$ has $\mbfw'$-weight at least $(b_i+r)/r$.
Since the $\mbfw'$-weight of $x_i$ is $(b_i+r)/r$, we see $x_i \in F^{\mbfw'}_i$.
This proves (3).
Finally, (2) follows from (1) and (3).
\end{proof}

As an immediate consequence, we have the following somewhat obvious fact: $\ord_E (x_i) \ge \bar{a}_i/r$ for any $1 \le i \le n+3$.

In most of the case, if $x_i$ is chosen as a general member of $H^0 (X, \mcO_X (a_i))$, then we have $\ord_E (x_i) = \frac{\bar{a}_i}{r}$.
Sometimes we seek for a coordinate $x_i$ with high vanishing order and we explain how to obtain such a coordinate.
In general the lowest weight part $F_i^{\iniw}$ with respect to the initial weight $\iniw$ contains a monomial  other than $x_i$.
Now we suppose that, after replacing $x_1$ suitably, the terms in $F_1^{\iniw}$ other than $x_1$ can be eliminated, that is, $F_1^{\iniw} = x_1$.
Then, by Lemma \ref{lem:ordE}, we have $\ord_E (x_1) \ge \frac{\bar{a}_1 + r}{r}$.
We can possibly repeat this process for some coordinates $x_i$ with $i = 1,2,3$ by replacing $\iniw$ with $\mbfw = \frac{1}{r} (\bar{a}_1 + r, \bar{a}_2,\dots,\bar{a}_{n+3})$, which satisfies KBL condition by Lemma \ref{lem:ordE}, and we can obtain coordinates $x_i$ which vanish along $E$ to an order high than $\bar{a}_i/r$.

We will frequently apply the following simple coordinate change technique.

\begin{Lem} \label{lem:elim}
Let $F$ be a polynomial of the form 
\[
F = x_0^3 f_1 + x_0^2 (\alpha x_1 + f_2) + x_0 (x_1 f_3 + f_4) + x_1^2 f_5 + x_1 f_6 + f_7,
\]
where $\alpha \in \mbC \setminus \{0\}$ and $f_i \in \mbC [x_2,\dots,x_n]$.
Then, after replacing $x_1$ with $\gamma x_1 + h$ for suitable $\gamma \in \mbC \setminus \{0\}$ and $h \in \mbC [x_0,x_2,\dots,x_n]$, the terms divisible by $x_0^2$ in $F$ except for $\alpha x_0^2 x_1$ are eliminated.
\end{Lem}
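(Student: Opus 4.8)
The plan is to regard $F$ as a polynomial in $x_0$ whose coefficients lie in $\mbC[x_1,\dots,x_n]$, and to remove the two unwanted $x_0^2$-divisible blocks, namely $x_0^3 f_1$ and $x_0^2 f_2$, by absorbing them into the distinguished term $\alpha x_0^2 x_1$ (here $\alpha \ne 0$ is essential). The guiding identity is
\[
x_0^3 f_1 + \alpha x_0^2 x_1 + x_0^2 f_2 = \alpha x_0^2 \left( x_1 + \tfrac{1}{\alpha}(x_0 f_1 + f_2) \right),
\]
which suggests the shift $x_1 \mapsto x_1 - \tfrac{1}{\alpha}(x_0 f_1 + f_2)$, that is, taking $\gamma = 1$ and $h = -\tfrac{1}{\alpha}(x_0 f_1 + f_2) \in \mbC[x_0,x_2,\dots,x_n]$. (Any $\gamma \ne 1$ would rescale the surviving term to $\alpha\gamma\, x_0^2 x_1$, so $\gamma = 1$ is what keeps the coefficient equal to $\alpha$.)

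The \emph{main obstacle} is that this shift involves $x_0$ through the summand $-\tfrac{1}{\alpha}x_0 f_1$, and substituting it into the lower-order piece $x_0(x_1 f_3 + f_4) + x_1^2 f_5 + x_1 f_6$ re-creates new $x_0^2$-divisible terms of the shape $x_0^2 g$ with $g \in \mbC[x_2,\dots,x_n]$ (arising from $x_0 \cdot x_1 f_3$ and from the square $x_1^2 f_5$). Hence a single substitution does not land on the desired normal form. I would circumvent this by performing the reduction in two stages, using the observation that this dangerous feedback into the $x_0^2$-layer occurs \emph{only} when the shift itself contains $x_0$.

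In the first stage I would substitute $x_1 \mapsto x_1 - \tfrac{1}{\alpha}x_0 f_1$. A short expansion shows the $x_0^3$-coefficient becomes $f_1 + \alpha\cdot(-\tfrac{1}{\alpha}f_1) = 0$, so the cubic layer is cleared, while the $x_0^2$-coefficient (beyond $\alpha x_1$) turns into $f_2' := f_2 - \tfrac{1}{\alpha} f_1 f_3 + \tfrac{1}{\alpha^2} f_1^2 f_5 \in \mbC[x_2,\dots,x_n]$; the polynomial keeps the hypothesised form, now with $f_1$ replaced by $0$. In the second stage, the $x_0^3$-term being gone, I would substitute $x_1 \mapsto x_1 - \tfrac{1}{\alpha} f_2'$. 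Because this shift is free of $x_0$, it produces no new $x_0^2$-divisible terms from the lower-order or quadratic pieces, and it cancels the residual $x_0^2 f_2'$, leaving exactly $\alpha x_0^2 x_1$ as the sole $x_0^2$-divisible term. Composing the two stages yields the single substitution $x_1 \mapsto x_1 - \tfrac{1}{\alpha}(x_0 f_1 + f_2')$, of the required form $\gamma x_1 + h$ with $\gamma = 1$ and $h = -\tfrac{1}{\alpha}(x_0 f_1 + f_2') \in \mbC[x_0,x_2,\dots,x_n]$. The only things left to verify are the two coefficient expansions, which are routine.
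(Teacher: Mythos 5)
Your proof is correct and takes essentially the same approach as the paper: composing your two stages gives the single substitution $x_1 \mapsto x_1 - \tfrac{1}{\alpha}x_0 f_1 - \tfrac{1}{\alpha}f_2 + \tfrac{1}{\alpha^2}f_1 f_3 - \tfrac{1}{\alpha^3}f_1^2 f_5$, which after the normalization $\alpha = 1$ is exactly the replacement the paper writes down in one shot. Your staged argument merely makes explicit where the correction terms $f_1 f_3 - f_1^2 f_5$ come from, namely as cancellations of the feedback created by the $x_0$-dependent part of the shift.
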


\begin{proof}
We may assume $\alpha = 1$.
Then the replacement $x_1 \mapsto x_1 - y f_1 - f_2 + f_1 f_3 - f_1^2 f_5$ eliminates the terms divisible by $x_0^2$ except for $x_0^2 x_1$.
\end{proof}

\section{Pfaffian Fano $3$-fold of degree $1/42$} \label{sec:deg42}

Let $X = X_{16,17,18,19,20} \subset \mbP (1_x,5_y,6_z,7_t,8_u,9_v,10_w)$ be a Pfaffian Fano $3$-fold of degree $1/42$.
Here a degree of a Fano threefold means the anticanonical degree so that $(A^3) = 1/42$, where $A = -K_X$.
We exclude all the singular points on $X$ and prove that $X$ is birationally super-rigid under a suitable generality condition.
The syzygy matrix of $X$ and the defining polynomials are given as follows:
\[
M =
\begin{pmatrix}
0 & a_6 & a_7 & a_8 & a_9 \\
& 0 & b_8 & b_9 & b_{10} \\
& & 0 & c_{10} & c_{11} \\
& & & 0 & d_{12} \\
& & & & 0
\end{pmatrix}
\hspace{1.5cm}
\begin{aligned}
F_1 &= a_6 c_{10} - a_7 b_9 + a_8 b_8 \\ 
F_2 & = a_6 c_{11} - a_7 b_{10} + a_9 b_8 \\ 
F_3 &= a_6 d_{12} - a_8 b_{10} + a_9 b_9 \\ 
F_4 &= a_7 d_{12} - a_8 c_{11} + a_9 c_{10} \\
F_5 &= b_8 d_{12} - b_9 c_{11} + b_{10} c_{10}
\end{aligned}
\]
Here the entries $a_i,b_i,c_i,d_i$ of $M$ are homogeneous polynomials of (weighted) degree $i$.
The basket of singularities of $X$, which indicates the number and type of singularities, is as follows
\[
\left\{ \frac{1}{2} (1,1,1), \frac{1}{3} (1,1,2), \frac{1}{5} (1,1,4), \frac{1}{5} (1,2,3), \frac{1}{7} (1,1,6) \right\}.
\]

The aim of this section is to prove the following theorem, which will follow from Propositions \ref{prop:exclcurve}, \ref{prop:exclnspt} and the results of the present section (see also \cite[Theorem 2.32]{OkadaII}).
The condition in the statement will be introduced later.

\begin{Thm}
Let $X$ be a Pfaffian Fano $3$-fold of degree $1/42$.
If $X$ satisfies \emph{Condition \ref{cd:deg42-5}}, then it is birationally super-rigid.
\end{Thm}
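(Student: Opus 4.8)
The plan is to obtain super-rigidity by proving that $X$ admits no maximal centre whatsoever. By the Noether--Fano--Iskovskikh criterion underlying the Sarkisov program, the absence of maximal centres forces every birational map from $X$ to a Mori fibre space to be biregular, so that $X$ is the unique Mori fibre space in its birational class and $\Bir(X) = \Aut(X)$. Since $(A^3) = 1/42 \le 1$, Proposition \ref{prop:exclcurve} excludes all curves, and since the two largest weights satisfy $a_5 a_6 = 9 \cdot 10 = 90 \le 168 = 4/(A^3)$, Proposition \ref{prop:exclnspt} excludes all nonsingular points. The entire remaining content is therefore to exclude, as maximal centres, the five singular points of the basket, of types $\tfrac{1}{2}(1,1,1)$, $\tfrac{1}{3}(1,1,2)$, $\tfrac{1}{5}(1,1,4)$, $\tfrac{1}{5}(1,2,3)$ and $\tfrac{1}{7}(1,1,6)$.

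For each such $\msp$ of type $\tfrac{1}{r}(1,a,r-a)$ I would take the Kawamata blowup $\varphi \colon Y \to X$ with exceptional divisor $E$, set $B = \varphi^* A - \tfrac{1}{r} E$, and test a divisor of the form $L = B + cE$. Using the intersection numbers recorded in Section \ref{sec:methods} one finds
\[
(L \cdot B^2) = (A^3) + \frac{cr - 1}{r \, a(r-a)},
\]
so that $(L \cdot B^2) \le 0$ exactly when $c \le \tfrac{1}{r} - (A^3)\,a(r-a)$. The first task for each point is thus to produce a finite set of homogeneous polynomials isolating $\msp$ (in the sense of the definition preceding Lemma \ref{lem:isolnef}) and to compute their vanishing orders along $E$ by the admissible/initial weight machinery of Lemma \ref{lem:ordE}; Lemma \ref{lem:isolnef} then supplies the nef divisor $L = B + cE$, and whenever $c$ meets the displayed bound, Lemma \ref{lem:excltc} excludes $\msp$. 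A direct check of the five thresholds $\tfrac{1}{r} - (A^3) a(r-a)$ gives the values $\tfrac{20}{42}$, $\tfrac{2}{7}$, $\tfrac{11}{105}$, $\tfrac{2}{35}$ and $0$ respectively, all nonnegative; hence an isolating set achieving $c = 0$ suffices in every case. So the real work is the geometric problem of exhibiting isolating polynomials whose $E$-orders are large enough to force $c = 0$ while their common zero locus on $X$ contains no curve through $\msp$.

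The main obstacle will be the point of type $\tfrac{1}{7}(1,1,6)$, lying at $\msp_t$, where the threshold is exactly $0$: here the isolating set must realise $c = 0$ on the nose, with no slack. This is precisely the step where genericity of $X$ is indispensable and where I expect \emph{Condition \ref{cd:deg42-5}} to enter. Concretely, I would invoke the order-boosting of Lemma \ref{lem:ordE}(2)--(3), replacing a coordinate so that the leading part $F_i^{\mbfw}$ reduces to the single variable $x_i$ and thereby raising $\ord_E(x_i)$ by $1$, so as to build an isolating set of the optimal degrees; Condition \ref{cd:deg42-5} should guarantee that the chosen polynomials really do cut $X$ down to finitely many points near $\msp_t$. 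Should the nef-divisor test $(L \cdot B^2) \le 0$ turn out to be unattainable at this borderline point, the fallback is the surface method of Lemma \ref{lem:exclbadC}: one chooses $S \sim_{\mbQ} aB + dE$ and $T \sim_{\mbQ} bB + eE$ built from the syzygy data so that $\Gamma = S \cap T$ is a one-cycle of numerically proportional curves with $(T \cdot \Gamma) \le 0$, again relying on Condition \ref{cd:deg42-5} to control the components of $\Gamma$. The remaining four points carry strictly positive thresholds and should yield to the plain isolating-class computation with no extra hypotheses.
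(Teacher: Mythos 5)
Your skeleton---curves by Proposition \ref{prop:exclcurve}, nonsingular points by Proposition \ref{prop:exclnspt}, then the five singular points via Kawamata blowups and the test $(L \cdot B^2) \le 0$---is exactly the paper's, and your formula $(L \cdot B^2) = (A^3) + \frac{cr-1}{r\,a(r-a)}$ together with the five thresholds $\tfrac{20}{42}, \tfrac{2}{7}, \tfrac{11}{105}, \tfrac{2}{35}, 0$ is correct. But you have misallocated the difficulty, and this creates genuine gaps. The $\tfrac{1}{7}(1,1,6)$ point, which you single out as the main obstacle requiring Condition \ref{cd:deg42-5} and order-boosting, is in fact the easiest case: at $\msp_t$ the coordinates $x,y,z$ have degrees $1,5,6$ congruent to the blowup weights $\tfrac{1}{7}(1,5,6)$, so $c = 0$ comes for free, and quasi-smoothness alone (the monomials $u^2, v^2, w^2$ forced by $\msp_u, \msp_v, \msp_w \notin X$) shows $\{x,y,z\}$ isolates the point; no generality hypothesis and no boosting are used there. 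Condition \ref{cd:deg42-5} (that $\gamma \ne 0$ and $\delta + \gamma\varepsilon \ne 0$) actually enters at the $\tfrac{1}{5}(1,1,4)$ point: there one must manufacture a degree-$12$ section $g$ with $\ord_E(g) \ge 7/5$, and the condition is precisely what makes $\{x,w,g\}$ isolate the point, yielding $c = 1/10 \le 11/105$. So your claim that the four points other than $\tfrac{1}{7}(1,1,6)$ need ``no extra hypotheses'' fails already here.

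More seriously, your plan breaks down outright at the $\tfrac{1}{5}(1,2,3)$ point. Its threshold is $2/35$, and the only available classes with $e/b \le 2/35$ are $x$ and (after boosting $\ord_E(z) \ge 6/5$) $z$: every other coordinate gives $e/b \ge 1/10 > 2/35$, and no further boosting is possible because the relevant leading parts are $F_4^{\mbfw} = v + t^2 + \cdots$ and $F_5^{\mbfw} = w + ut + \cdots$, which are not single variables. But $\{x,z\}$ does not isolate the point: $(x = z = 0) \cap X$ is an irreducible reduced curve $\Gamma$ through $\msp$. Hence the plain isolating-class computation cannot succeed, and the paper instead applies the surface method of Lemma \ref{lem:exclbadC} with $S = (x=0) \cap X$, $T = (z=0) \cap X$, verifying that $\tilde{S} \cap \tilde{T} = \tilde{\Gamma}$ on $Y$ and computing $(\tilde{T} \cdot \tilde{S} \cdot \tilde{T}) = \tfrac{6}{7} - \tfrac{6}{5} < 0$. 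You mention Lemma \ref{lem:exclbadC} only as a fallback for the wrong point; as written, your proposal contains no valid argument for either of the two $\tfrac{1}{5}$-points, which is where all the actual work of the paper lies.
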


\subsection{Exclusion of the $\frac{1}{2} (1,1,1)$ point}

\begin{Lem} \label{lem:deg42excl2}
The point of type $\frac{1}{2} (1,1,1)$ is not a maximal centre.
\end{Lem}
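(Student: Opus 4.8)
The plan is to exclude the point $\msp$ of type $\frac{1}{2}(1,1,1)$ by producing a nef divisor $L$ on the Kawamata blowup $\varphi \colon Y \to X$ with $(L \cdot B^2) \le 0$ and then invoking Lemma \ref{lem:excltc}. Here $r = 2$ and $a = 1$, so the intersection formulas of Section \ref{sec:methods} give $(E^3) = 4$, $(E \cdot B^2) = \tfrac14 (E^3) = 1$, and
\[
(B^3) = (A^3) - \tfrac18 (E^3) = \tfrac{1}{42} - \tfrac12 = -\tfrac{10}{21}.
\]
Since $(B^3) < 0$ the divisor $B$ itself is not nef, so I would instead use $L = B + cE$ with $c > 0$ as small as possible. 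As $(L \cdot B^2) = (B^3) + c(E \cdot B^2) = -\tfrac{10}{21} + c$, this is $\le 0$ exactly when $c \le \tfrac{10}{21}$, so the whole argument reduces to isolating $\msp$ by polynomials of controlled vanishing order along $E$.

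To build such an $L$ I would isolate $\msp$ and apply Lemma \ref{lem:isolnef}. The point lies on the $\mu_2$-fixed stratum $\Pi = (x = y = t = v = 0) \cong \mbP(6,8,10)$, so the four odd-weight coordinates $x,y,t,v$ all vanish at $\msp$. Restricting the five Pfaffians to $\Pi$ kills $F_2$ and $F_4$ identically and leaves three equations in $z,u,w$ (schematically $zw + u^2$, $z^3 - uw$ and $uz^2 + w^2$); a direct check shows their common zero locus in the surface $\mbP(6,8,10)$ is zero-dimensional, so $X \cap \Pi$ contains no curve through $\msp$ and hence $\{x,y,t,v\}$ isolates $\msp$. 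Because $x,y,t,v$ have odd weight they lie in the nontrivial $\mu_2$-character space, so in local $\frac12(1,1,1)$ coordinates at $\msp$ they are combinations of the three weight-one local coordinates; thus $\ord_E(x),\ord_E(y),\ord_E(t),\ord_E(v) \ge \tfrac12$. Writing $\tilde D_i = b_i B + e_i E$ for $D_i = (f_i = 0) \cap X$ with $f_i \in \{x,y,t,v\}$ of degree $b_i \in \{1,5,7,9\}$, one gets
\[
\frac{e_i}{b_i} = \frac{b_i - 2\ord_E(f_i)}{2 b_i} \le \frac{b_i - 1}{2b_i} \le \frac{8}{18} = \frac49,
\]
the maximum occurring at $b_i = 9$. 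Hence $c = \tfrac49 \le \tfrac12 = a_E(K_X)$, so Lemma \ref{lem:isolnef} yields that $L = B + \tfrac49 E$ is nef, and
\[
(L \cdot B^2) = -\tfrac{10}{21} + \tfrac49 = -\tfrac{2}{63} < 0,
\]
so Lemma \ref{lem:excltc} shows $\msp$ is not a maximal centre.

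The two delicate points, rather than the arithmetic, are where I expect the real work to lie. First, one must genuinely verify that $X \cap \Pi$ is zero-dimensional, so that the four odd coordinates isolate $\msp$ without a residual curve; this is a short but necessary computation with the restricted Pfaffians. Second, and more importantly, $\msp$ is \emph{not} a torus-fixed vertex of $\mbP$ and cannot be moved to one by a weight-preserving coordinate change, since there is no weight-$2$ coordinate. Consequently the vertex-based computation of $\ord_E$ from Section \ref{sec:methods} does not apply verbatim, and the bound $\ord_E(x_i) \ge \tfrac12$ for the odd coordinates must be justified by passing to explicit local analytic coordinates of the $\frac12(1,1,1)$ quotient at $\msp$. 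Once these two facts are in place, the estimate $\tfrac49 \le \tfrac{10}{21}$ closes the argument with a small but strictly positive margin.
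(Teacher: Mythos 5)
Your proof is correct and is essentially the paper's own argument: the paper likewise isolates $\msp$ by the odd-degree coordinates $\{x,y,t,v\}$, uses $\ord_E(x,y,t,v) \ge \tfrac12(1,1,1,1)$ together with Lemma \ref{lem:isolnef} to get a nef divisor (the paper's $L = 9\varphi^*A - \tfrac12 E = 9\bigl(B + \tfrac49 E\bigr)$ is just your $L$ rescaled), and concludes via Lemma \ref{lem:excltc}. The only difference is that you spell out the finiteness of $X \cap \Pi$ and the local-coordinate justification of the vanishing orders, which the paper leaves as ``clear''.
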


\begin{proof}
Let $\msp$ be the point of type $\frac{1}{2} (1,1,1)$.
It is clear that the set $\{x,y,t,v\}$ isolates the point $\msp$ and $\ord_E (x,y,t,v) \ge \frac{1}{2} (1,1,1,1)$.
Thus, we see that $L = 9 \varphi^*A - \frac{1}{2} E$ is nef by Lemma \ref{lem:isolnef} and we compute
\[
(L \cdot B^2) = 9 (A^3) - \frac{1}{2^3} (E^3) = \frac{9}{42} - \frac{1}{2} < 0.
\]
Therefore, $\msp$ is not a maximal centre by Lemma \ref{lem:excltc}.
\end{proof}

\subsection{Exclusion of the $\frac{1}{3} (1,1,2)$ point}

\begin{Lem}
The point of type $\frac{1}{3} (1,1,2)$ is not a maximal centre.
\end{Lem}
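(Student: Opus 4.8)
The plan is to imitate the exclusion of the $\frac{1}{2}(1,1,1)$ point in Lemma \ref{lem:deg42excl2}, but the first and most important task is simply to \emph{locate} the point, since it is not a coordinate vertex. Indeed, for general $X$ both $z^3$ and $v^2$ (which are the degree-$18$ monomials $a_6 d_{12}$ and $a_9 b_9$) occur in $F_3$, so neither $\msp_z$ nor $\msp_v$ lies on $X$. The index-$3$ locus of $\mbP$ is instead the line $\Lambda = \{x=y=t=u=w=0\} \cong \mbP(6,9)$, along which the stabiliser is $\mbZ_3$. Restricting the Pfaffians to $\Lambda$ annihilates all of them except $F_3|_{\Lambda} \sim z^3 + v^2$, so $X \cap \Lambda$ is the single point $\msp$ cut out by $z^3 \sim v^2$, and this is the $\frac{1}{3}(1,1,2)$ point. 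I would record this at the outset.

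Next, following Lemma \ref{lem:isolnef} and Lemma \ref{lem:excltc}, I would isolate $\msp$ and exhibit a nef divisor $L = B + cE$ with $(L \cdot B^2) \le 0$. The delicate point is the choice of isolating set. Since the $\mbZ_3$-weights of $x,y,t,u,w$ are $1,2,1,2,1 \pmod 3$, the general lower bounds $\ord_E (x,y,t,u,w) \ge \frac{1}{3}(1,2,1,2,1)$ hold, and with the generic equalities the ratios $e_i/b_i$ are $0, 1/5, 2/7, 1/4, 3/10$ respectively. The value $3/10$ coming from $w$ is fatal: it forces $c = 3/10$ and gives $(L \cdot B^2) = -\frac{1}{7} + \frac{3}{10}\cdot\frac{1}{2} = \frac{1}{140} > 0$. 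Hence I would discard $w$ and use only $\{x,y,t,u\}$. To check that this still isolates $\msp$, restrict the Pfaffians to $\{x=y=t=u=0\}$: one finds $F_5 \sim w^2$, which forces $w = 0$, and then $F_3 \sim z^3 + v^2$ cuts out exactly $\msp$. Thus $\{x,y,t,u\}$ isolates $\msp$, and now $c = \max\{0, 1/5, 2/7, 1/4\} = 2/7 \le 1/3 = a_E(K_X)$, so $L = B + \frac{2}{7}E$ is nef.

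Finally I would compute, using $(E^3) = 9/2$,
\[
(L \cdot B^2) = B^3 + \tfrac{2}{7}(E \cdot B^2) = \Big(\tfrac{1}{42} - \tfrac{1}{27}\cdot\tfrac{9}{2}\Big) + \tfrac{2}{7}\cdot\tfrac{1}{9}\cdot\tfrac{9}{2} = -\tfrac{1}{7} + \tfrac{1}{7} = 0,
\]
so that $\msp$ is excluded by Lemma \ref{lem:excltc}. The main obstacle is precisely this razor-thin margin: the inequality is saturated, so there is no slack, and the entire argument hinges on realising that $w$ must be dropped from the isolating set and that $F_5$ supplies exactly the relation $w^2$ needed to justify this. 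A secondary technical issue is that, because $\msp$ is not a vertex and $z^3 \sim v^2$ cannot be straightened into a single monomial, the vanishing orders must be read off from the $\mbZ_3$-action on the coordinates rather than from the KBL machinery of Section \ref{sec:methods}; verifying $\ord_E(x,y,t,u) \ge \frac{1}{3}(1,2,1,2)$ carefully at this non-vertex point is what makes the estimate rigorous.
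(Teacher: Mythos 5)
Your proof is correct and follows essentially the same route as the paper: the same isolating set $\{x,y,t,u\}$ (justified via $F_5|_{\Pi}=\alpha w^2$ and then $F_3$ cutting out the point on $\mbP(6_z,9_v)$), the same appeal to Lemmas \ref{lem:isolnef} and \ref{lem:excltc}, and the same saturated computation $(L\cdot B^2)=0$. Your divisor $L=B+\tfrac{2}{7}E$ is just a positive rescaling of the paper's $L = 7\varphi^*A-\tfrac{1}{3}E = 7\bigl(B+\tfrac{2}{7}E\bigr)$, so the two arguments coincide; your extra remarks on locating the non-vertex point and on why $w$ must be dropped make explicit what the paper leaves implicit.
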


\begin{proof}
Let $\msp$ be the point of type $\frac{1}{3} (1,1,2)$.
We set $\Pi = (x = y = t = u = 0)$.
Then $F_5|_{\Pi} = \alpha w^2$ with $\alpha \ne 0$ since $X$ does not contain $\msp_w$.
It follows that 
\[
\Pi \cap X = (x = y = t = u = w = 0) \cap X = \{\msp\}
\]
and $\{x,y,t,u\}$ isolates $\msp$.
We see $\ord_E (x,y,t,u) \ge \frac{1}{3} (1,2,1,2)$.
It follows that $L = 7 \varphi^*A - \frac{1}{3} E$ is nef by Lemma \ref{lem:isolnef} and we compute
\[
(L \cdot B^2) = 7 (A^3) - \frac{1}{3^3} (E^3) = \frac{7}{42} - \frac{1}{6} = 0.
\]
Therefore, $\msp$ is not a maximal centre by Lemma \ref{lem:excltc}.
\end{proof}

\subsection{Exclusion of the $\frac{1}{7} (1,1,6)$ point}

\begin{Lem} \label{lem:deg42excl3}
The point of type $\frac{1}{7} (1,1,6)$ is not a maximal centre.
\end{Lem}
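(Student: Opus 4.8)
The plan is to exclude $\msp$ via the numerical criterion of Lemma~\ref{lem:excltc}, taking for the nef divisor $L$ the anticanonical $B$ itself. Since $7$ divides the weight of $t$ and of no other variable, the point $\msp$ is the vertex $\msp_t$, and $r=7$. At $\msp_t$ the three tangent coordinates are $x,u,z$, whose weights are $1,1,6 \pmod 7$, while $y,v,w$ (weights $5,2,3 \pmod 7$) are eliminated by three of the Pfaffians. Writing $B = -K_Y = \varphi^* A - \frac{1}{7}E$ for the Kawamata blowup $\varphi\colon Y \to X$, the governing computation is
\[
(B^3) = (A^3) - \frac{1}{7^3}(E^3) = \frac{1}{42} - \frac{1}{7\cdot 1\cdot 6} = 0 .
\]
Hence it suffices to produce an isolating set for $\msp$ all of whose vanishing orders along $E$ attain the minimal value $\bar{a}_i/7$, so that the divisor furnished by Lemma~\ref{lem:isolnef} is exactly $L = B$, giving $(L \cdot B^2) = (B^3) = 0 \le 0$.

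First I would show that $\{x,y,z\}$ isolates $\msp$. On the locus $(x=y=z=0) = \mbP(7,8,9,10)$ with coordinates $t,u,v,w$, every matrix entry whose degree is not a nonnegative combination of $7,8,9,10$ vanishes; concretely $a_6 = c_{11} = d_{12} = 0$, whereas for general $X$ one has $a_7 \sim t$, $a_8,b_8 \sim u$, $a_9,b_9 \sim v$ and $b_{10},c_{10} \sim w$. Substituting into the five Pfaffians yields, up to nonzero scalars, $F_5 = w^2$, then $F_3|_{w=0} = v^2$, then $F_1|_{v=0} = u^2$. These force $w=0$, then $v=0$, then $u=0$ in turn, so that $(x=y=z=0)\cap X = \{\msp_t\}$; in particular no curve of this intersection passes through $\msp$.

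Next I would record the vanishing orders. As $x,z$ are tangent coordinates of residues $1,6$, one has $\ord_E(x) = \frac{1}{7}$ and $\ord_E(z) = \frac{6}{7}$. The equation eliminating $y$ is $F_4$, whose lowest $\iniw$-weight part contains $y$ (from the monomial $t^2 y$) together with further monomials of the same initial weight, e.g.\ $vw \in a_9 c_{10}$; by Lemma~\ref{lem:ordE} this gives $\ord_E(y) = \frac{5}{7}$ exactly (alternatively one uses a general section of $|5A|$, which agrees with $y$ on $(x=0)$ and has the minimal order). Thus $\tilde{D}_x \sim B$, $\tilde{D}_y \sim 5B$ and $\tilde{D}_z \sim 6B$, all with vanishing $E$-coefficient, so Lemma~\ref{lem:isolnef} makes $L = B$ nef and Lemma~\ref{lem:excltc} completes the exclusion.

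The main obstacle is precisely the choice of isolating set. The ostensibly natural choice of the three tangent coordinates $\{x,z,u\}$ also isolates $\msp$, but $u$ has degree $8$ while $\ord_E(u) = \frac{1}{7}$, contributing $e_u/b_u = \frac{1}{8}>0$; the resulting nef divisor is $L = B + \frac{1}{8}E$, for which $(L \cdot B^2) = \frac{1}{8}(E\cdot B^2) = \frac{1}{48} > 0$ and the criterion fails. Replacing the high-degree, low-order coordinate $u$ by the lower-degree $y$, which vanishes to the full order $\frac{5}{7}$, is exactly what drives $c$ down to $0$. The two points demanding care are therefore the Pfaffian restriction computation showing $\{x,y,z\}$ still isolates $\msp$, and the verification that $\ord_E(y)$ equals $\frac{5}{7}$ and is not larger (a larger value would make the $E$-coefficient of $\tilde{D}_y$ negative and violate the hypotheses of Lemma~\ref{lem:isolnef}).
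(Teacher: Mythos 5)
Your proposal is correct and follows the paper's proof essentially step for step: the same isolating set $\{x,y,z\}$, the same restriction of the Pfaffians to $(x=y=z=0)$ producing $w^2$, $v^2$, $u^2$ in turn so that the intersection is $\{\msp_t\}$, the same nef divisor $L=B$ from Lemma~\ref{lem:isolnef}, the same computation $(L\cdot B^2)=\frac{1}{42}-\frac{1}{42}=0$, and the conclusion via Lemma~\ref{lem:excltc}. One caveat: Lemma~\ref{lem:ordE} gives only \emph{lower} bounds on vanishing orders, so it cannot by itself yield $\ord_E(y)=\frac{5}{7}$ exactly (the presence of other monomials in $F_4^{\iniw}$ does not preclude their restriction to $E$ cancelling); however, your parenthetical fallback --- replacing $y$ by a general member of $|5A|$, which has the minimal order and still isolates $\msp$ because it agrees with $y$ on $(x=0)$ --- is exactly the right repair, and is also how the paper implicitly justifies this point (see the remark following Lemma~\ref{lem:ordE} that general sections attain the initial order).
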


\begin{proof}
We claim that $\{x,y,z\}$ isolates the point $\msp = \msp_t$ of type $\frac{1}{7} (1,1,6)$.
Set $\Pi = (x = y = z = 0)$.
Then we have 
\[
F_1|_{\Pi} = \alpha v t + \beta u^2, \ 
F_3|_{\Pi} = \gamma w u + \delta v^2, \ 
F_5|_{\Pi} = \varepsilon w^2,
\]
for some $\alpha,\beta,\dots,\varepsilon \in \mbC$.
We see that none of $\beta, \delta,\varepsilon$ is zero since $\msp_w, \msp_v, \msp_u \notin X$.
It follows that
\[
\Pi \cap X \subset (x = y = z = u = v = w = 0) = \{ \msp\},
\]
that is, $\{x,y,z\}$ isolates $\msp$.
We see $\ord_E (x,y,z) \ge \frac{1}{7} (1,5,6)$ so that $L = \varphi^*A - \frac{1}{7} E$ is nef by Lemma \ref{lem:isolnef}.
We compute
\[
(L \cdot B^2) = (B^3) = (A^3) - \frac{1}{7^3} (E^3) = \frac{1}{42} - \frac{1}{42} = 0.
\]
Therefore, $\msp$ is not a maximal centre by Lemma \ref{lem:excltc}.
\end{proof}

\subsection{Exclusion of the $\frac{1}{5} (1,1,4)$ point}

Let $\msp$ be the point of type $\frac{1}{5} (1,1,4)$.
After replacing coordinates, we assume $\msp = \msp_z$.
We see $u^2 \in F_1$, $z^3, v^2 \in F_3$ and $w^2 \in F_5$ since $\msp_z,\msp_u,\msp_v,\msp_w \notin X$, and this implies $z \in a_6$, $z^2 \in d_{12}$, $u \in a_8, b_8$, $v \in a_9,b_9$ and $w \in b_{10}, c_{10}$.
We claim $t \in a_7$.
Indeed, if $t \notin a_7$, then $t w \notin F_2$ and this implies that $X$ is not quasi-smooth at the $\frac{1}{7} (1,1,6)$ point $\msp_t$.
This shows $t \in a_7$.
Moreover, since $\msp$ is of type $\frac{1}{5} (1,1,4)$, we have $y^2 z \notin F_1$, which implies $y^2 \notin c_{10}$.
By quasi-smoothness of $X$ at $\msp$, we have $y^2 u \in F_3$, which implies $y^2 \in b_{10}$.
By setting $\Pi = (x = w = 0)$ and by re-scaling coordinates, the restrictions of the syzygy matrix and defining polynomials to $\Pi$ can be written as follows:
\[
M|_{\Pi} =
\begin{pmatrix}
0 & z & t & \alpha u & \beta v \\
& 0 & u & v & y^2 \\
& & 0 & 0 & \gamma z y \\
& & & 0 & \delta z^2 + \varepsilon t y \\
& & & & 0
\end{pmatrix}
\hspace{1.5cm}
\begin{aligned}
F_1|_{\Pi} &= - t v + \alpha u^2 \\ 
F_2|_{\Pi} &= \gamma z^2 y - t y^2 + \beta v u \\
F_3|_{\Pi} &= \delta z^3 + \varepsilon t z y - \alpha u y^2 + \beta v^2 \\
F_4|_{\Pi} &= \delta t z^2 + \varepsilon t^2 y - \alpha \gamma u z y \\
F_5|_{\Pi} &= \delta u z^2 + \varepsilon u t y - \gamma v z y
\end{aligned}
\]
where $\alpha,\beta,\delta \in \mbC \setminus \{0\}$ and $\gamma, \varepsilon \in \mbC$.
By quasi-smoothness of $X$ at the $\frac{1}{7} (1,1,6)$ point $\msp_t$, we have $t^2 y \in F_4$, which implies $\varepsilon \ne 0$.
We set $S = (x = 0) \cap X$ and $T = (w = 0) \cap X$.
Then $\Gamma := S \cap T$ is defined by the equations $F_1|_{\Pi} = \cdots = F_5|_{\Pi} = 0$.
We see $\ord_E (x,z,t,u,v,w) \ge \iniw = \frac{1}{5} (1, 1, 2, 3, 4, 5)$.
Note that $y^3 x^2, y^2 t, y^2 z x$ and $y^2 z^2$ are the monomials of degree $17$ whose initial weight is $2/5$.
The coefficients of $t y^2$ and $z^2 y$ in $F_2$ are $-1$ and $\gamma$, respectively, and let $\lambda$, $\mu$ be the coefficients of $y^3 x^2$, $y^2 z x$ in $F_2$, respectively.
We define $g = - t y + \gamma z^2 + \lambda y^2 x^2 + \mu y z x$.
Then we can write $F_2 = y g + G$, where each monomial in $G$ vanishes along $E$ to order at least $7/5$, hence $\ord_E (g) \ge 7/5$. 
We set $s = g|_{\Pi} = - t y + \gamma z^2$, so that we have  $F_2|_{\Pi} = y s + \beta u v$.

\begin{Cond} \label{cd:deg42-5}
Under the above choice of coordinates, $\gamma \ne 0$ and $\delta + \gamma \varepsilon \ne 0$.
\end{Cond}

\begin{Lem}
If $X$ satisfies \emph{Condition \ref{cd:deg42-5}}, then $\msp$ is not a maximal centre.
\end{Lem}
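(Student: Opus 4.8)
The plan is to exclude $\msp$ by producing a nef divisor $L$ on $Y$ with $(L\cdot B^2)\le 0$ and invoking Lemma~\ref{lem:excltc}, where $L$ is obtained from Lemma~\ref{lem:isolnef} applied to an isolating set of divisors. The naive ``bad curve'' approach via Lemma~\ref{lem:exclbadC} with $S=(x=0)\cap X$ and $T=(w=0)\cap X$ does \emph{not} work, since $\tilde S=B$, $\tilde T=10B+E$ give $(\tilde S\cdot\tilde T^2)=95/84>0$; the whole purpose of the high-order function $g$ constructed above is to serve as a third isolating divisor of very small $E$-slope. First I would assemble the numerical data on $Y$: as $\msp$ is of type $\frac{1}{5}(1,1,4)$ we have $(E^3)=\frac{25}{4}$, hence $(B^2\cdot E)=\frac{1}{25}(E^3)=\frac14$ and $(B^3)=(A^3)-\frac{1}{125}(E^3)=\frac{1}{42}-\frac{1}{20}=-\frac{11}{420}$, while $a_E(K_X)=\frac15$. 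I would also record $\ord_E(x)=\frac15$, $\ord_E(w)=1$, and $\ord_E(g)\ge\frac75$; the last is immediate from the construction, since $y$ is a unit at $\msp$ and $F_2=yg+G$ with every monomial of $G$ vanishing along $E$ to order at least $\frac75$, so $\ord_E(g)=\ord_E(yg)=\ord_E(G)\ge\frac75$.

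The heart of the argument is to show that $\{x,w,g\}$ isolates $\msp$, i.e.\ that $(x=w=g=0)\cap X$ contains no curve through $\msp$. Since $(x=w=0)\cap X=\Gamma$, this reduces to checking that no branch of $\Gamma$ at $\msp$ lies in $(g=0)$. On $\Gamma$ one has $x=0$, so $g$ restricts to $s$, and the relation $F_2|_{\Pi}=ys+\beta uv$ gives $s=-\beta uv/y$ on $\Gamma$ (where $y\ne0$); hence $g|_\Gamma$ vanishes exactly where $uv=0$. I would then solve the restricted equations near $\msp$: from $F_2|_\Pi=0$ and $F_3|_\Pi=0$ one gets $t=\gamma z^2+\cdots$ and $u=\alpha^{-1}(\delta+\gamma\varepsilon)z^3+\cdots$, and substituting into $F_1|_\Pi=-tv+\alpha u^2=0$ yields $v=\frac{(\delta+\gamma\varepsilon)^2}{\alpha\gamma}z^4+\cdots$, so that the branch of $\Gamma$ through $\msp$ is parametrised by $z$. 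Here Condition~\ref{cd:deg42-5} ($\gamma\ne0$ and $\delta+\gamma\varepsilon\ne0$) is precisely what keeps the leading coefficients of $u$ and $v$ nonzero, so that $uv\sim z^7\not\equiv0$ along the branch; thus $g$ is not identically zero on it and $\msp$ is an isolated point of $(x=w=g=0)\cap X$.

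Granting the isolation, I would apply Lemma~\ref{lem:isolnef} to $D_x=(x=0)\cap X$, $D_w=(w=0)\cap X$ and $D_g=(g=0)\cap X$, whose proper transforms are $\tilde D_x=B$, $\tilde D_w=10B+E$ and $\tilde D_g=12B+E$. The associated slopes are $0$, $\frac1{10}$ and $\frac1{12}$, so $c=\max\{0,\tfrac1{10},\tfrac1{12}\}=\frac1{10}\le a_E(K_X)=\frac15$ and $L=B+\frac1{10}E$ is nef. Finally
\[
(L\cdot B^2)=(B^3)+\tfrac1{10}(B^2\cdot E)=-\frac{11}{420}+\frac1{40}=-\frac{1}{840}<0,
\]
and therefore $\msp$ is not a maximal centre by Lemma~\ref{lem:excltc}.

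The main obstacle is the isolation step. The pair $(x=w=0)$ already cuts out the entire curve $\Gamma$ through $\msp$, so everything hinges on the third divisor $(g=0)$ meeting $\Gamma$ only at $\msp$, which in turn depends on the precise leading behaviour of $\Gamma$ at $\msp$. It is exactly the nonvanishing of $\delta+\gamma\varepsilon$ (together with $\gamma\ne 0$) in Condition~\ref{cd:deg42-5} that prevents the leading term of $u$, hence of $uv$, from dropping and $g$ from vanishing along a curve through $\msp$. It is worth noting that the choice of $w$ rather than, say, $v$ is forced: $\ord_E(v)$ would give slope $\frac19>\frac1{10}$, pushing $c$ up to the point where $(L\cdot B^2)$ becomes positive, so the slope of $w$ sits right at the boundary of what makes the estimate succeed.
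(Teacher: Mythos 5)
Your overall strategy is exactly the paper's: isolate $\msp$ by $\{x,w,g\}$, obtain the nef class $L=B+\tfrac{1}{10}E$ from Lemma~\ref{lem:isolnef}, and exclude $\msp$ via $(L\cdot B^2)<0$ and Lemma~\ref{lem:excltc}; all your numerical data are correct and agree with the paper's (your $L$ is $\tfrac1{10}$ of the paper's $10\varphi^*A-E$, and your $-\tfrac1{840}$ is $\tfrac1{10}$ of its $\tfrac{10}{42}-\tfrac14$). The genuine gap is in the isolation step, where you assert that $\Gamma=(x=w=0)\cap X$ has a \emph{single} branch at $\msp$, parametrised by $z$. That claim is not justified by your elimination, and it is in fact false. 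Working in the chart $y=1$, eliminating $t=\gamma z^2+\beta uv$ from $F_2|_{\Pi}=0$ and $\alpha u=(\delta+\gamma\varepsilon)z^3+\beta v^2+\varepsilon\beta uvz$ from $F_3|_{\Pi}=0$ (note $u$ depends on $v$ as well as $z$, through the term $\beta v^2/\alpha$), the remaining equation $F_1|_{\Pi}=0$ becomes
\[
-\gamma z^2 v+u\bigl((\delta+\gamma\varepsilon)z^3+\varepsilon\beta uvz\bigr)
\;=\;z\bigl(-\gamma zv+(\delta+\gamma\varepsilon)uz^2+\varepsilon\beta u^2v\bigr)\;=\;0,
\]
whose zero locus has \emph{three} branches at the origin: the component $z=0$ (on which $\alpha u=\beta v^2$, $t=\beta uv$), your branch $v\sim\frac{(\delta+\gamma\varepsilon)^2}{\alpha\gamma}z^4$, and a branch $z\sim\frac{\varepsilon\beta^3}{\gamma\alpha^2}v^4$ coming from the Newton-polygon edge of the second factor joining the monomials $zv$ and $v^5$. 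Since you never invoke $F_4|_{\Pi}$ or $F_5|_{\Pi}$, your argument cannot decide which of these lie on $\Gamma$. In fact, because $a_6|_{\Pi}=z$, the Pfaffian syzygies give $z\,F_4|_{\Pi}$ and $z\,F_5|_{\Pi}$ as combinations of $F_1|_{\Pi},F_2|_{\Pi},F_3|_{\Pi}$ (e.g.\ $zF_4|_{\Pi}=tF_3|_{\Pi}-\alpha uF_2|_{\Pi}+\beta vF_1|_{\Pi}$ when $y=1$), so every branch not contained in $(z=0)$ automatically lies on $\Gamma$: the branch $z\sim\frac{\varepsilon\beta^3}{\gamma\alpha^2}v^4$ is a genuine second branch of $\Gamma$ at $\msp$ that your parametrisation misses, while the spurious component $z=0$ is removed from $\Gamma$ only by $F_4|_{\Pi}=\varepsilon t^2+\cdots$, an equation you never use.

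As it happens, on the missed branch one has $u\sim\frac{\beta}{\alpha}v^2$, so $uv\not\equiv0$ there as well and the isolation statement is true; but your proof does not establish it, because it rests on a false uniqueness claim about the branches of $\Gamma$ at $\msp$. The robust repair is the paper's own argument, which avoids any branch parametrisation: on $\Sigma=(x=w=s=0)\cap X$ the relation $F_2|_{\Pi}=ys+\beta uv$ forces $uv=0$; then $v=0$ implies $u=0$ by $F_1|_{\Pi}=-tv+\alpha u^2$, so $\Sigma\subset(u=0)$, where $F_4|_{\Pi}=t(\delta z^2+\varepsilon ty)$; finally the non-proportionality of $s=\gamma z^2-ty$ and $\delta z^2+\varepsilon ty$ (this is precisely where $\delta+\gamma\varepsilon\ne0$ enters, with $\gamma\ne0$) gives $(s=\delta z^2+\varepsilon ty=0)=(z=ty=0)$ and hence $\Sigma=\{\msp_y,\msp_t\}$ is finite. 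That computation handles all components and branches of $\Gamma$ simultaneously, which is exactly what your branch-by-branch check needed but did not deliver.
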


\begin{proof}
We will show that $\{x,w,g\}$ isolates $\msp$, or equivalently $\{x,w,s\}$ isolates $\msp$.
We set $\Sigma = (x = w = s = 0) \cap X = X \cap \Pi \cap (s = 0)$.
We see $v u = 0$ on $\Sigma$ since $F_2|_{\Pi} = y s + \beta v u$ and $\beta \ne 0$.
By the equation $F_1|_{\Pi} = 0$ and $\alpha \ne 0$, $v = 0$ implies $u = 0$, hence   
\[
\Sigma = (x = w = s = u = t v = \delta z^3 + \varepsilon t z y + \beta v^2 = \delta t z^2 + \varepsilon t^2 y = \gamma v z y = 0),
\]
set-theoretically.
By the assumption $\delta + \gamma \varepsilon \ne 0$, $s = -t y + \gamma z^2$ is not proportional to $\delta z^2 + \varepsilon t y$, so that $(s = \delta z^2 + \varepsilon t y = 0) = (z = t y =0)$.
Hence, it is straightforward to see $\Sigma = \{\msp_y,\msp_t\}$, which shows that $\{x,w,g\}$ isolates $\msp$.

We have $\ord_E (x,w,s) \ge \frac{1}{5} (1,5,7)$ so that $L = 10 \varphi^*A - \frac{5}{5} E$ is nef by Lemma \ref{lem:isolnef} and we compute
\[
(L \cdot B^2) = 10 (A^3)  - \frac{5}{5^3} (E^3) = \frac{10}{42} - \frac{1}{4} < 0.
\]
Therefore, $\msp$ is not a maximal centre by Lemma \ref{lem:excltc}. 
\end{proof}

\subsection{Exclusion of the $\frac{1}{5} (1,2,3)$ point}

Let $\msp$ be the point of type $\frac{1}{5} (1,2,3)$.
We may assume $\msp = \msp_y$.
By the same argument as in the previous subsection, we have $t \in a_7$, $u \in a_8,b_8$, $v \in a_9,b_9$ and $w \in b_{10}, c_{10}$.
Since $\msp$ is of type $\frac{1}{5} (1,2,3)$, we have $v y^2 \in F_4, w y^2 \in F_5$ and $t y^2 \notin F_2, u y^2 \notin F_3$.
We see that $v y^2 \in F_4$ implies $y^2 \in c_{10}$ and $t y^2 \notin F_2$ implies $y^2 \notin b_{10}$.
Since $\msp_t \in X$ is of type $\frac{1}{7} (1,1,6)$, we have $t^2 y \in F_4$, which implies $t y \in d_{12}$.
Moreover, we have $z^3 \in F_3$ since $\msp_z \notin X$, which implies $z \in a_6$.
Hence $y^2 z \in F_1$.
By Lemma \ref{lem:elim}, we can assume that $y^2 z$ is the unique monomial in $F_1$ is divisible by $y^2$ after replacing $z$.

We set $S = (x = 0) \cap X$, $T = (z = 0) \cap X$, $\Gamma = S \cap T$ and $\Pi = (x =z = 0)$.
Then, the restrictions of the syzygy matrix and the defining polynomials to $\Pi$ can be written as follows
\[
M|_{\Pi} =
\begin{pmatrix}
0 & 0 & t & \alpha u & \beta v \\
& 0 & u & v & w \\
& & 0 & \gamma w + \delta y^2 & 0 \\
& & & 0 & t y \\
& & & & 0
\end{pmatrix}
\hspace{1.5cm}
\begin{aligned}
F_1|_{\Pi} &= - t v + \alpha u^2 \\
F_2|_{\Pi} &= - t w + \beta u v \\
F_3|_{\Pi} &= - \alpha u w + \beta v^2 \\
F_4|_{\Pi} &= t^2 y + \beta \gamma w v + \beta \delta v y^2 \\
F_5|_{\Pi} &= u t y + \gamma w^2 + \delta w y^2.
\end{aligned}
\]

Note that $\Gamma$ is defined in $\Pi$ by the above $5$ polynomials.
Note also that none of $\alpha,\beta,\gamma$ and $\delta$ is zero.

\begin{Lem}
$\Gamma$ is an irreducible and reduced curve.
\end{Lem}

\begin{proof}
By setting $t = 1$, we work on the open subset $U \subset X$ on which $t \ne 0$.
By the equations $F_1|_{\Pi} = F_2|_{\Pi} = 0$, we can eliminate $v = \alpha u^2$ and $w = \beta uv = \alpha \beta u^3$.
Hence $\Gamma \cap U$ is isomorphic to the quotient of
\[
(y + \alpha^2 \beta^2 \gamma u^5 + \alpha \beta \delta u^2 y^2 = 0) \subset \mbA^2_{y,u}
\]
under the natural $\mbZ_7$-action.
Thus $\Gamma \cap U$ is an irreducible and reduced affine curve.
We have $\Gamma \cap (t = 0) = \{\msp\}$.
This shows that $\Gamma$ is irreducible and reduced.
\end{proof}

By our choice of coordinates, $y^2 z$ is the unique monomial in $F_1$ divisible by $y^2$ and we see that monomials of degree $16$ which is not divisible by $y^2$ has initial weight at least $6/5$.
It follows that $\ord_E (z) \ge 6/5$ and $\varphi$ is realized as the embedded weighted blowup at $\msp$ with weight $\wt (x,z,t,u,v,w) = \frac{1}{5} (1,6,2,3,4,5) =: \mbfw$.
By looking at the monomials in $F_1|_{\Pi}$, $F_4|_{\Pi}$, $F_5|_{\Pi}$, the lowest weight parts of $F_1|_{y=1}$, $F_4|_{y=1}$ and $F_5|_{y=1}$ are of the form
\[
F^{\mbfw}_1 = z + v t + u^2 + f, \ 
F^{\mbfw}_4 = v + t^2 + g, \ 
F^{\mbfw}_5 = w + u t + h,
\]
where $f,g,h \in \mbC [x,z,t,u,v,w]$ vanish along $(x = z = 0)$. 
Thus we have an isomorphism
\[
E \cong (z + v t + f = v + t^2 + g = w + u t + h = 0) \subset \mbP (1_x,6_z,2_t,3_u,4_v,5_w).
\]

\begin{Lem}
The singular point of type $\frac{1}{5} (1,2,3)$ is not a maximal centre.
\end{Lem}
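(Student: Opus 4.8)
The plan is to apply Lemma \ref{lem:exclbadC} to the proper transforms on $Y$ of the two divisors $S = (x = 0) \cap X$ and $T = (z = 0) \cap X$ introduced above, whose intersection is the curve $\Gamma$ just shown to be irreducible and reduced. First I would record the classes of $\tilde{S}$ and $\tilde{T}$. Since $x$ has $\mbfw$-weight $1/5$ and is a coordinate, $\ord_E (x) = 1/5$, so $\tilde{S} = \varphi^* A - \tfrac{1}{5} E = B$, giving $\tilde{S} \sim_{\mbQ} B$ with $a = 1$, $d = 0$. Because the coordinate change was arranged so that $y^2 z$ is the only monomial of $F_1$ divisible by $y^2$, we have $\ord_E (z) = 6/5$ and $\varphi$ is the embedded weighted blowup with weight $\mbfw = \tfrac{1}{5}(1,6,2,3,4,5)$; hence $\tilde{T} = 6 \varphi^* A - \tfrac{6}{5} E = 6B$, giving $\tilde{T} \sim_{\mbQ} 6B$ with $b = 6$, $e = 0$.

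With these classes, condition (1) of Lemma \ref{lem:exclbadC} is immediate: $a = 1 > 0$, $b = 6 > 0$, and since $d = e = 0$ and $a_E (K_X) = 1/5$ we have $0 \le e \le a_E (K_X) b = 6/5$ and $a e - b d = 0 \ge 0$. Condition (3) I would then settle by the intersection computation, using $(E^3) = r^2/(a(r-a)) = 25/6$:
\[
(\tilde{T} \cdot \Gamma) = (\tilde{S} \cdot \tilde{T}^2) = 36 (B^3) = 36 \left( \frac{1}{42} - \frac{1}{125} \cdot \frac{25}{6} \right) = 36 \left( \frac{1}{42} - \frac{1}{30} \right) = - \frac{12}{35} < 0,
\]
which is the decisive negative number and gives (3) via the remark following Lemma \ref{lem:exclbadC}.

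The main obstacle is condition (2): I must confirm that the $1$-cycle $\tilde{S} \cap \tilde{T}$ on $Y$ has support consisting of numerically proportional curves, and the clean way is to show it is the single irreducible curve $\tilde{\Gamma}$. Since $\Gamma$ is irreducible, reduced, and passes through $\msp = \msp_y$ (as $\msp_y \in (x=z=0)\cap X$), its proper transform $\tilde{\Gamma}$ is irreducible and reduced, and any extra component of $\tilde{S} \cap \tilde{T}$ must be a curve contained in $E$, i.e.\ a common component of $C_S := \tilde{S} \cap E$ and $C_T := \tilde{T} \cap E$ inside $E \cong \mbP (1_x, 2_t, 3_u)$. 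Here $C_S = (x = 0) \cap E$, while eliminating $z, v, w$ through $F_1^{\mbfw}, F_4^{\mbfw}, F_5^{\mbfw}$ expresses the restriction of $z$ to $E$ as a degree-$6$ form whose part independent of $x$ is essentially $t^3 - u^2$; in particular $z|_E$ is not divisible by $x$, so $C_T$ cannot contain $C_S = (x = 0)$. Hence $C_S \cap C_T$ is finite, $\tilde{S} \cap \tilde{T}$ has no exceptional curve component, and its support is exactly $\tilde{\Gamma}$, so (2) holds trivially. Lemma \ref{lem:exclbadC} then shows that $\msp$ is not a maximal centre. I expect the only delicate point to be verifying, from the explicit equations of $E$, that $z|_E$ genuinely retains the $t^3 - u^2$ part and is therefore not a multiple of $x$; everything else reduces to the routine intersection arithmetic displayed above.
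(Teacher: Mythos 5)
Your proposal is correct and takes essentially the same approach as the paper: the same divisors $S=(x=0)\cap X$, $T=(z=0)\cap X$, the same classes $\tilde{S}\sim_{\mbQ}B$, $\tilde{T}\sim_{\mbQ}6B$, the same application of Lemma \ref{lem:exclbadC} with the same negative number $-12/35$, and the same key step of checking that $\tilde{S}\cap\tilde{T}\cap E$ is finite so that $\tilde{S}\cap\tilde{T}=\tilde{\Gamma}$. The only cosmetic difference is that the paper verifies the finiteness directly from the equations $(x=z=vt+u^2=v+t^2=w+ut=0)$ in $\mbP(1_x,6_z,2_t,3_u,4_v,5_w)$, whereas you eliminate $z,v,w$ to work on $E\cong\mbP(1_x,2_t,3_u)$ and observe that $z|_E\equiv t^3-u^2$ modulo $x$; these are the same computation.
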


\begin{proof}
We claim $\tilde{S} \cap \tilde{T} = \tilde{\Gamma}$.
To see this, it is enough to see that $\tilde{S} \cap \tilde{T}$ does not contain a curve on $E$.
The lift of the sections $x$ and $z$ on $Y$ restricts to the coordinates $x$ and $z$ of the ambient weighted projective space of $E$ and their zero loci coincides with $\tilde{S} \cap E$ and $\tilde{T} \cap E$, respectively.
Since $f,g,h$ are in the ideal $(x,z)$, the set
\[
\tilde{S} \cap \tilde{T} \cap E = (x = z = v t + u^2 = v + t^2 = w + u t = 0)
\] 
consists of a single point.
Thus $\tilde{S} \cap \tilde{T} = \tilde{\Gamma}$.
Since $\tilde{S} \sim_{\mbQ} \varphi^*A - \frac{1}{5} E$ and $\tilde{T} \sim_{\mbQ} 6\varphi^*A - \frac{6}{5} E$, we have
\[
(\tilde{T} \cdot \tilde{S} \cdot \tilde{T}) = 6^2 (A^3) - \frac{6^2}{5^3} (E^3) = \frac{6}{7} - \frac{6}{5} < 0.
\]
Therefore, $\msp$ is not a maximal centre by Lemma \ref{lem:exclbadC}.
\end{proof}

\section{Pfaffian Fano $3$-fold of degree $1/30$} \label{sec:deg30}

Let $X = X_{14,15,16,17,18} \subset \mbP (1_x,5_{y_0},5_{y_1},6_z,7_t,8_u,9_v)$ be a Pfaffian Fano $3$-fold of degree $1/30$.
We exclude all the singular points on $X$ and prove that $X$ is birationally super-rigid under a suitable generality condition.
The syzygy matrix of $X$ and the defining polynomials are given as follows:
\[
M =
\begin{pmatrix}
0 & a_5 & a_6 & a_7 & a_8 \\
& 0 & b_7 & b_8 & b_9 \\
& & 0 & c_9 & c_{10} \\
& & & 0 & d_{11} \\
& & & & 0
\end{pmatrix}
\hspace{1.5cm}
\begin{aligned}
F_1 &= a_5 c_9 - a_6 b_8 + a_7 b_7 \\  
F_2 & = a_5 c_{10} - a_6 b_9 + a_8 b_7 \\ 
F_3 &= a_5 d_{11} - a_7 b_9 + a_8 b_8 \\
F_4 &= a_6 d_{11} - a_7 c_{10} + a_8 c_9 \\
F_5 &= b_7 d_{11} - b_8 c_{10} + b_9 c_9
\end{aligned}
\]
The basket of singularities of $X$ is as follows
\[
\left\{ \frac{1}{5} (1,1,4), 2 \times \frac{1}{5} (1,2,3), \frac{1}{6} (1,1,5)  \right\}
\]

The aim of this section is to prove the following theorem, which will follow from Propositions \ref{prop:exclcurve}, \ref{prop:exclnspt} and the results of the present section.
The condition in the statement will be introduced later.

\begin{Thm}
Let $X$ be a Pfaffian Fano $3$-fold of degree $1/30$.
If $X$ satisfies \emph{Condition \ref{cd:deg30-5}}, then it is birationally super-rigid.
\end{Thm}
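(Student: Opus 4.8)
The plan is to run exactly the strategy of Section~\ref{sec:deg42}. By Propositions~\ref{prop:exclcurve} and~\ref{prop:exclnspt} no curve and no nonsingular point of $X$ is a maximal centre, so it remains to show that none of the four singular points, of types $\frac{1}{5}(1,1,4)$, $\frac{1}{5}(1,2,3)$ (two of these) and $\frac{1}{6}(1,1,5)$, is a maximal centre. Since $X$ carries no Type~$\I$ or Type~$\II_1$ centre, excluding every singular point shows that $X$ possesses no maximal centre whatsoever; the Noether--Fano--Iskovskikh method, in the form used in \cite{CPR} and \cite{OkadaII}, then implies that every birational map from $X$ to a Mori fibre space is an isomorphism, which is precisely birational super-rigidity. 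Below $\varphi\colon Y\to X$ is the Kawamata blowup of the point being treated, $E$ its exceptional divisor, $r$ the index of the point, and $B=-K_Y=\varphi^*A-\frac{1}{r}E$; the three singular points of index $5$ all lie on the line $\msp_{y_0}\msp_{y_1}=(x=z=t=u=v=0)$, cut out there by the single cubic $F_2$ in $y_0,y_1$, and I will move the point under consideration to $\msp_{y_0}$ by a linear change in $\langle y_0,y_1\rangle$.

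I would first dispose of the $\frac{1}{6}(1,1,5)$ point $\msp=\msp_z$, in direct analogy with the $\frac{1}{7}(1,1,6)$ point of Section~\ref{sec:deg42}. Setting $\Pi=(x=y_0=y_1=0)$ and restricting the five Pfaffians, the facts $\msp_{t},\msp_{u},\msp_{v}\notin X$ (see the table of Section~\ref{sec:table}) should force $\Pi\cap X=\{\msp\}$ set-theoretically, so that $\{x,y_0,y_1\}$ isolates $\msp$. As $\ord_E(x,y_0,y_1)\ge\frac{1}{6}(1,5,5)$, Lemma~\ref{lem:isolnef} makes $L=B=\varphi^*A-\frac{1}{6}E$ nef, and
\[
(L\cdot B^2)=(B^3)=(A^3)-\frac{1}{6^3}(E^3)=\frac{1}{30}-\frac{1}{30}=0,
\]
so Lemma~\ref{lem:excltc} excludes $\msp$.

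For the $\frac{1}{5}(1,1,4)$ point I would reproduce the corresponding analysis of Section~\ref{sec:deg42}: quasi-smoothness together with the vertices lying off $X$ pins down which monomials occur in the entries of the syzygy matrix, and then Lemma~\ref{lem:elim} and the weight-raising step of Lemma~\ref{lem:ordE} produce a polynomial $g$ whose vanishing order $\ord_E(g)$ strictly exceeds its naive initial-weight value. A short computation gives
\[
(B^3)=\frac{1}{30}-\frac{1}{5^3}\cdot\frac{25}{4}=-\frac{1}{60},\qquad (E\cdot B^2)=\frac14,
\]
so the isolating class must produce $L=B+cE$ with $c\le\frac{1}{15}$; this is exactly why a naive isolating set fails and a coordinate of high vanishing order is needed. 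The role of Condition~\ref{cd:deg30-5} here is to guarantee the nonvanishing of the coefficients ensuring that a set such as $\{x,\ast,g\}$ isolates $\msp$, i.e.\ that the residual scheme is finite; granting this, $(L\cdot B^2)\le 0$ and Lemma~\ref{lem:excltc} applies.

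The main obstacle will be the two $\frac{1}{5}(1,2,3)$ points. For each (moved to $\msp_{y_0}$) the plan is to apply the bad-link criterion of Lemma~\ref{lem:exclbadC} with $S=(x=0)\cap X$ and $T=(z=0)\cap X$, just as for the $\frac{1}{5}(1,2,3)$ point of Section~\ref{sec:deg42}. The steps are: (i) use Lemma~\ref{lem:elim} to arrange that after replacing $z$ a single monomial divisible by $y_0^2$ remains in the relevant degree-$16$ Pfaffian, forcing $\ord_E(z)\ge\frac{6}{5}$ and realising $\varphi$ as the embedded weighted blowup with weight $\mbfw=\frac{1}{5}(1,6,2,3,4,5)$; (ii) prove $\Gamma=S\cap T$ is irreducible and reduced by eliminating variables on the chart $t\ne0$ to reach a plane affine curve; (iii) show $\tilde S\cap\tilde T=\tilde\Gamma$ by checking that $\tilde S\cap\tilde T\cap E$ is a single point, the tails of the $F_i^{\mbfw}$ lying in the ideal $(x,z)$; and (iv) compute
\[
(\tilde T\cdot\tilde S\cdot\tilde T)=6^2(A^3)-\frac{6^2}{5^3}(E^3)=\frac{6}{5}-\frac{6}{5}=0\le0.
\]
The delicacy is that this triple intersection vanishes exactly, so steps (ii) and (iii) must hold on the nose: any splitting of $\Gamma$ or any spurious curve in $\tilde S\cap\tilde T\cap E$ would destroy the argument. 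This is where Condition~\ref{cd:deg30-5} enters, and it must be formulated so as to hold simultaneously at both index-$5$ points of type $(1,2,3)$, equivalently at both relevant roots of the cubic $F_2$ on the line $\msp_{y_0}\msp_{y_1}$. Since there are two such points rather than one, I would either check that the same coefficient conditions exclude both, or carry out steps (ii)--(iv) invariantly in $\langle y_0,y_1\rangle$ so that a single argument covers the pair; verifying irreducibility of $\Gamma$ at each point is the part I expect to require the most care.
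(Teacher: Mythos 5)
Your skeleton (reduce to the four singular points via Propositions~\ref{prop:exclcurve} and~\ref{prop:exclnspt}, then exclude each of them) is the paper's, and your treatment of the $\frac{1}{6}(1,1,5)$ point coincides with the paper's argument verbatim. But both of the remaining exclusions have genuine gaps. For the two $\frac{1}{5}(1,2,3)$ points, your step (i) presupposes that the monomial $y_1^2 z$ occurs in the degree-$16$ Pfaffian $F_3$ (in the paper's labeling, with the point at $\msp_{y_1}$): only then can Lemma~\ref{lem:elim} make $\ord_E(z)\ge 6/5$ and realise $\varphi$ with weight $\frac{1}{5}(1,5,6,2,3,4)$. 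Quasi-smoothness at the point forces $y_1 v\in F_1$ and $y_1^2y_0\in F_2$, but for $F_3$ it only forces \emph{one of} $y_1^2 z$ or $y_1^3 x$ to appear, and Condition~\ref{cd:deg30-5} says nothing about which. The paper therefore splits into two cases: when $y_1^2z\in F_3$ it does \emph{not} run the bad-curve lemma at all but uses Lemma~\ref{lem:excltc} with the isolating set $\{x,y_0,z\}$ (so no irreducibility of $(x=z=0)\cap X$ is ever needed, which matters since your triple product $(\tilde T\cdot\tilde S\cdot\tilde T)=0$ leaves no slack); when $y_1^2z\notin F_3$ the blowup weight is $\frac{1}{5}(6,5,1,2,3,4)$ — it is $x$, not $z$, that acquires order $6/5$ — and the paper applies Lemma~\ref{lem:exclbadC} to $S=(x=0)\cap X$, $T=(y_0=0)\cap X$, getting the strictly negative value $5/6-5$. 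Your single-case argument leaves the second case untreated, and in the first case replaces a clean isolating-set exclusion by an unverified irreducibility claim.

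For the $\frac{1}{5}(1,1,4)$ point you misidentify both the method and the role of Condition~\ref{cd:deg30-5}. The condition is $\delta\ne 0$, where $\delta$ is the coefficient of $zy_1^2$ in the entry $d_{11}$, and the paper uses it to prove that $\Gamma=(x=y_0=0)\cap X$ is an irreducible and reduced curve (on the chart $z\ne 0$ the curve becomes $\delta y_1-\alpha y_1^2t+\alpha^2\beta^2\gamma t^5=0$, which visibly splits off $t$ if $\delta=0$); the exclusion is then Lemma~\ref{lem:exclbadC} with $S=(x=0)\cap X$, $T=(y_0=0)\cap X$ and $(\tilde T\cdot\tilde S\cdot\tilde T)=5/6-5/4<0$. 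Your proposed route via Lemma~\ref{lem:excltc} is never carried out: you correctly derive the constraint $c\le 1/15$, but you exhibit neither the high-order coordinate $g$ nor an isolating set meeting it. This is a serious obstruction, not a formality: at this point the singularity type forces $vy_1\notin F_1$ and $y_1^2z\notin F_3$, so the order-raising mechanism of Lemma~\ref{lem:ordE} is blocked for $z$ and $v$, and the natural candidates $\{x,y_0\}$ with $c=0$ fail to isolate $\msp$ precisely because they cut out the curve $\Gamma$ through it. This is presumably why the paper abandons the isolating-class method here in favour of the bad-curve method.
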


\subsection{Exclusion of the $\frac{1}{5} (1,2,3)$ points}

Let $\msp$ be a point of type $\frac{1}{5} (1,2,3)$.
After replacing $y_0,y_1$, we assume $\msp = \msp_{y_1}$.
Note that this implies $y_1^3 \notin F_2$.
Note also that $t^2 \in F_1$, $u^2 \in F_3$ and $v^2 \in F_5$ since $\msp_t, \msp_u,\msp_v \notin X$, which implies $t \in a_7, b_7$, $u \in a_8, b_8$ and $v \in b_9,c_9$.
By quasi-smoothness of $X$ at $\msp$, we have $y_1 v \in F_1$ and $y_1^2 y_0 \in F_2$.
We divide the proof into two cases according to $y_1^2 z \in F_3$ or not.

First, we treat the case where $y_1^2 z \in F_3$.

\begin{Lem}
If $y_1^2 z \in F_3$, then $\msp$ is not a maximal centre.
\end{Lem}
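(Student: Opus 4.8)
The plan is to exclude $\msp = \msp_{y_1}$ by producing a nef divisor $L$ on the Kawamata blowup $Y$ with $(L \cdot B^2) \le 0$ and applying Lemma~\ref{lem:excltc}. With $r = 5$, $a = 2$ we have $(E^3) = r^2/(a(r-a)) = 25/6$, and since $(A^3) = 1/30$ one computes the key numerical coincidence
\[
(B^3) = (A^3) - \frac{1}{r^3}(E^3) = \frac{1}{30} - \frac{1}{125}\cdot\frac{25}{6} = 0 .
\]
Hence it is enough to show that $L = B$ itself is nef. By Lemma~\ref{lem:isolnef} this follows once I exhibit a set of sections $\{f_i\}$ isolating $\msp$ with $\ord_E(f_i) = \deg(f_i)/r$ for every $i$: then each $\tilde D_i \sim_{\mbQ} \deg(f_i)\, B$, the constant $c$ of Lemma~\ref{lem:isolnef} is $0 \le 1/r$, and $L = B$ is nef. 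The candidate isolating set is $\{x, y_0, z\}$.

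First I would pin down the vanishing orders. The residues modulo $5$ show that $x, t, u$ (weights $1,2,3$) are the tangent coordinates, while $v, y_0, z$ are eliminated by $F_1 = y_1 v + \cdots$, $F_2 = y_1^2 y_0 + \cdots$, $F_3 = y_1^2 z + \cdots$ respectively. Clearly $\ord_E(x) = 1/5$, and because $tu \in F_2$ (this term comes from $a_8 b_7$ and is nonzero since $u \in a_8$, $t \in b_7$) we get $\ord_E(y_0) = 1 = \deg(y_0)/r$. The crucial quantity is $\ord_E(z)$: every degree-$16$ monomial of initial weight below $6/5$ is either $y_1^2 z$ itself or the term $y_1^3 x$, and the latter can be absorbed into $y_1^2 z$ by a replacement $z \mapsto z + c\,y_1 x$ (cf.\ Lemma~\ref{lem:elim}). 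After this cleanup $F_3^{\iniw} = z$ consists only of $z$, so Lemma~\ref{lem:ordE} bumps the weight and yields $\ord_E(z) = 6/5 = \deg(z)/r$. This is exactly where the hypothesis $y_1^2 z \in F_3$ is used, and it is the heart of the argument: in the complementary case $z$ would vanish only to order $1/5$, forcing $c > 0$ and $(L \cdot B^2) > 0$.

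Next I would verify that $\{x, y_0, z\}$ isolates $\msp$ by computing $\Pi \cap X$ for $\Pi = (x = y_0 = z = 0)$, whose remaining coordinates are $y_1, t, u, v$ of weights $5,7,8,9$. The matrix entries collapse to $a_5 = \lambda_1 y_1$, $a_7 = \lambda_2 t$, $a_8 = \lambda_3 u$, $b_7 = \mu_1 t$, $b_8 = \mu_2 u$, $b_9 = \mu_3 v$, $c_9 = \nu_1 v$, with $a_6 = d_{11} = 0$, and $c_{10}|_{\Pi} = 0$ because $y_1^3 \notin F_2$ forces $y_1^2 \notin c_{10}$. The relations $t^2 \in F_1$, $u^2 \in F_3$, $v^2 \in F_5$ (from $\msp_t, \msp_u, \msp_v \notin X$) make the relevant coefficients nonzero, so $F_5|_{\Pi} \sim v^2$ gives $v = 0$, then $F_1|_{\Pi} \sim t^2$ gives $t = 0$ and $F_3|_{\Pi} \sim u^2$ gives $u = 0$; hence $\Pi \cap X = \{\msp\}$. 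Combining the isolation with the vanishing orders, Lemma~\ref{lem:isolnef} shows $L = B$ is nef, $(L \cdot B^2) = (B^3) = 0 \le 0$, and Lemma~\ref{lem:excltc} concludes that $\msp$ is not a maximal centre. The main obstacle is precisely the vanishing-order computation for $z$: obtaining $\ord_E(z) = 6/5$ rather than $1/5$ is what the case hypothesis buys, and it is what collapses the otherwise-positive $(L \cdot B^2)$ down to zero.
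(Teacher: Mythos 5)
Your proposal is correct and takes essentially the same route as the paper's own proof: the same isolating set $\{x,y_0,z\}$, the same coordinate cleanup (via Lemma~\ref{lem:elim}) to force $\ord_E(z) = 6/5$ and realize $\varphi$ as the embedded weighted blowup with weight $\frac{1}{5}(1,5,6,2,3,4)$, and the same conclusion from Lemmas~\ref{lem:isolnef} and \ref{lem:excltc} using $(B^3)=0$. The only cosmetic difference is in the finiteness check: you first deduce $y_1^2 \notin c_{10}$ so that $F_5|_{\Pi}$ is a pure multiple of $v^2$ (note this deduction also needs $y_1 \in a_5$, which follows from $y_1 v \in F_1$, so state that link explicitly), whereas the paper leaves that coefficient arbitrary and verifies directly that the locus $(F_1|_{\Pi}=F_3|_{\Pi}=F_5|_{\Pi}=0)$ is finite for any values of the coefficients.
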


\begin{proof}
Recall that $y_1 v \in F_1$, $y_1^2 y_0 \in F_2$ and $y_1^2 z \in F_3$.
By Lemma \ref{lem:elim}, we may assume that $y_1^2 z$ is the unique monomial in $F_3$ divisible by $y_1^2$.
Consider the weight $\wt (x,y_0,z,t,u,v) = \frac{1}{5} (1,5,6,2,3,4) =: \mbfw$.
Then $v \in F^{\mbfw}_1, y_0 \in F^{\mbfw}_2$ and $z \in F^{\mbfw}_3$ so that $\varphi$ is realized as the embedded weighted blowup at $\msp$ with the weight $\mbfw$. 

We claim that $\{x,y_0,z\}$ isolates $\msp$.
Set $\Pi = (x = y_0 = z = 0)$.
We have
\[
F_1|_{\Pi} = t^2 + \alpha v y_1, F_3|_{\Pi} = u^2 + \beta v t, F_5|_{\Pi} = v^2 + \gamma u y_1^2,
\]
for some $\alpha,\beta,\gamma \in \mbC$.
Hence
\[
(x = y_0 = z = 0) \cap X \subset (x = y_0 = z = F_1|_{\Pi} = F_3|_{\Pi} = F_5|_{\Pi} = 0)
\]
and it is straightforward to see that the set on the right-hand side is finite (for any $\alpha,\beta,\gamma$).
This shows that $\{x,y_0,z\}$ isolates $\msp$.
We see $\ord_E (x,y_0,z) \ge \frac{1}{5} (1, 5, 6)$ so that $L: = B$ is nef by Lemma \ref{lem:isolnef} and we compute
\[
(L \cdot B^2) = (B^3) = (A^3) - \frac{1}{5^3} (E^3) = \frac{1}{30} - \frac{1}{30} = 0.
\]
Therefore, $\msp$ is not a maximal centre by Lemma \ref{lem:excltc}.
\end{proof}

Next, we treat the case where $y_1^2 z \notin F_3$.
In this case, we have $y_1^3 x \in F_3$.
We set $S = (x_0 = 0) \cap X$, $T = (y_0 = 0) \cap X$ and $\Gamma = S \cap T$.

\begin{Lem}
The support of $\Gamma$ is an irreducible curve. 
\end{Lem}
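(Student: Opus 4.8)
I need to prove that the support of $\Gamma = S \cap T$ is an irreducible curve, where $S = (x = 0) \cap X$ and $T = (y_0 = 0) \cap X$, in the case $y_1^2 z \notin F_3$ (hence $y_1^3 x \in F_3$), for the point $\msp = \msp_{y_1}$ of type $\frac{1}{5}(1,2,3)$. Let me think about what the structure looks like.

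Let me write $\Pi = (x = y_0 = 0)$, so that $\Gamma$ is cut out in $\Pi$ by the restrictions $F_1|_\Pi, \ldots, F_5|_\Pi$, which live in the variables $y_1, z, t, u, v$. The ambient weighted space restricted to $\Pi$ is $\mbP(5_{y_1}, 6_z, 7_t, 8_u, 9_v)$.

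Let me understand the restricted equations. We have the monomial data: $t \in a_7, b_7$; $u \in a_8, b_8$; $v \in b_9, c_9$; $z^3 \in F_3$ (since $\msp_z \notin X$) giving $z \in a_6$; $y_1 v \in F_1$; $y_1^2 y_0 \in F_2$; $y_1^3 x \in F_3$ (the new case assumption). On $\Pi$ where $x = y_0 = 0$, the monomials $y_1^2 y_0$ and $y_1^3 x$ vanish. So I expect $F_1|_\Pi$ to contain $t^2$ and $y_1 v$ terms, $F_2|_\Pi$ to have no pure $y_1$ term surviving, $F_3|_\Pi$ to contain $u^2$ and $z^3$, and $F_5|_\Pi$ to contain $v^2$.

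=== PROOF PROPOSAL ===

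The plan is to work on the affine chart of $X$ where one of the free coordinates is nonzero, eliminate variables using two of the restricted Pfaffians, and reduce $\Gamma$ to an explicit plane curve whose irreducibility can be read off directly, exactly as in the irreducibility lemma of the previous section.

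First I would record the restricted syzygy matrix and defining polynomials on $\Pi = (x = y_0 = 0)$. The monomial analysis already carried out gives that, after re-scaling, the entries of $M|_\Pi$ are built from $y_1, z, t, u, v$ in $\mbP(5_{y_1}, 6_z, 7_t, 8_u, 9_v)$, and the five restricted Pfaffians $F_i|_\Pi$ take a form parallel to the degree-$1/42$ case: $F_1|_\Pi$ contains both $t^2$ (up to scalar) and a term $y_1 v$, the equation $F_3|_\Pi$ contains $u^2$ and $z^3$ (the latter because $z^3 \in F_3$), and $F_5|_\Pi$ contains $v^2$, while the coefficients forced nonzero by $\msp_t, \msp_u, \msp_v \notin X$ are the ones I will use to solve for variables. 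The hypothesis $y_1^2 z \notin F_3$ is what guarantees that no surviving monomial on $\Pi$ obstructs the elimination I am about to perform.

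Next I would pass to the open chart and eliminate. Choosing the chart where the coordinate with the top Pfaffian-structure role is set to $1$ (in the analogous $1/42$ argument this was $t = 1$), I would use two of the equations $F_i|_\Pi = 0$ whose leading coefficients are nonzero to solve for two of the remaining variables (say $u$ and $v$) as explicit monomials in the others. Substituting these into a third Pfaffian collapses the system to a single polynomial equation in two variables, say in $y_1$ and $z$ (or $y_1$ and $u$), cut out inside $\mbA^2$; this curve is then checked to be irreducible and reduced by inspecting that its defining polynomial is irreducible as written. Finally I would account for the quotient by the relevant $\mbZ_5$-action and verify that the locus at infinity, $\Gamma \cap (t = 0)$, is the single point $\msp$, so that adding it back does not break connectedness or introduce a new component; this shows $\Gamma$ is irreducible and reduced.

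The main obstacle I anticipate is the bookkeeping needed to confirm that the elimination is actually valid, i.e.\ that the coefficients I divide by are genuinely nonzero and that no hidden component lurks in the locus where the eliminated variables blow up. Concretely, I must be sure that setting $t = 1$ (or whichever coordinate) loses only the single point $\msp$ and not a positive-dimensional piece of $\Gamma$; this is precisely where the case hypothesis $y_1^2 z \notin F_3$ and the quasi-smoothness-forced nonvanishing of the $u^2, v^2, z^3$ coefficients must be combined carefully. Once those nonvanishing facts are in hand, the reduction to a single irreducible plane curve is routine and mirrors the earlier lemma, so the real content is organizing the monomial constraints correctly.
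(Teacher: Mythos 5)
Your strategy is transplanted from the degree-$1/42$ lemma, but it is the wrong approach for this case and it would fail. The whole point of the case hypothesis $y_1^2 z \notin F_3$ (combined with $y_1^3 \notin F_2$ and the forced membership $y_1 \in a_5$) is that it kills two entries of the restricted syzygy matrix: the only monomials of degrees $10$ and $11$ in $y_1,z,t,u,v$ (weights $5,6,7,8,9$) are $y_1^2$ and $y_1 z$, and these are excluded from $c_{10}$ and $d_{11}$, so $c_{10}|_{\Pi} = d_{11}|_{\Pi} = 0$. The restricted Pfaffians then collapse to
\[
F_1|_{\Pi} = y_1 v - \alpha z u + \beta t^2,\quad
F_2|_{\Pi} = -\alpha\delta z v + \gamma u t,\quad
F_3|_{\Pi} = -\beta\delta v t + \gamma u^2,\quad
F_4|_{\Pi} = \gamma\delta u v,\quad
F_5|_{\Pi} = \delta v^2,
\]
with $\beta,\gamma,\delta \ne 0$, whence $\Gamma = (t = u = v = 0) \cap \Pi$ set-theoretically, i.e.\ the support is the coordinate line $\mbP(5_{y_1},6_z) \cong \mbP^1$. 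In particular the entire support of $\Gamma$ lies inside $(t = 0)$: your plan to set $t = 1$, eliminate $u,v$, and exhibit an irreducible affine plane curve finds the empty set on that chart (on $t \neq 0$ the equations force $v = u = 0$ and then $F_1|_{\Pi} = \beta t^2 \neq 0$), and your intended verification that ``$\Gamma \cap (t=0)$ is the single point $\msp$'' is exactly backwards. This is also why the lemma asserts only that the \emph{support} is irreducible: $\Gamma$ is non-reduced here (note $F_5|_{\Pi} = \delta v^2$), so the conclusion ``irreducible and reduced'' that your argument aims for is false in this case.

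A secondary error shows the two families have been conflated: you claim $z^3 \in F_3$ because $\msp_z \notin X$, but in the degree-$1/30$ family $\deg z^3 = 18 \neq 16 = \deg F_3$, and moreover $\msp_z$ \emph{is} a point of $X$ (it is the $\frac{1}{6}(1,1,5)$ singular point). The correct proof needs no chart, no elimination, and no irreducibility check of an explicit polynomial: one only observes the vanishing of the two matrix entries forced by the case hypothesis, after which the support of $\Gamma$ is visibly a weighted projective line.
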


\begin{proof}
We set $\Pi = (x_0 = y_0 = 0)$.
We have $y_1 \in a_5$ because otherwise $F_3 = a_5 d_{11} - a_7 b_9 + a_8 b_8$ cannot contain $y_1^3 x$.
Then, we see $y_1^2 \notin c_{10}$ since $y_1^3 \notin F_2$.
Note also that $z y_1 \notin d_{11}$ since $y_1^2 z \notin F_3$.
We can write the restrictions of the syzygy matrix and defining polynomials to $\Pi$ as
\[
M|_{\Pi} =
\begin{pmatrix}
0 &y_1 & \alpha z & \beta t & \gamma u \\
& 0 & t & u & \delta v \\
& & 0 & v & 0 \\
& & & 0 & 0 \\
& & & & 0
\end{pmatrix}
\hspace{1.5cm}
\begin{aligned}
F_1|_{\Pi} &= y_1 v - \alpha z u + \beta t^2 \\
F_2|_{\Pi} &= - \alpha \delta z v + \gamma u t \\
F_3|_{\Pi} &= - \beta \delta v t + \gamma u^2 \\
F_4|_{\Pi} &= \gamma \delta u v \\
F_5|_{\Pi} &= \delta v^2
\end{aligned}
\]
Note that $\Gamma = X \cap \Pi$ is defined in $\Pi$ by the above $5$ polynomials.
Since $\beta, \gamma, \delta \ne 0$, we have $\Gamma = (t = u = v = 0) \cap \Pi$ set-theoretically and the proof is completed.
\end{proof}

\begin{Lem}
If $y_1^2 z \notin F_3$, then $\msp$ is not a maximal centre.
\end{Lem}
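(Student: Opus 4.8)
The plan is to exclude $\msp = \msp_{y_1}$ by means of the bad-curve criterion of Lemma~\ref{lem:exclbadC}, applied to the surfaces $S = (x = 0) \cap X$ and $T = (y_0 = 0) \cap X$ fixed above, whose intersection $\Gamma = S \cap T$ was just shown to have irreducible support, namely the line $\mbP(5_{y_1}, 6_z)$ through $\msp$. Writing $\varphi \colon Y \to X$ for the Kawamata blowup at $\msp$, with exceptional divisor $E$ and $B = -K_Y$, I must: (i) compute the classes of the proper transforms $\tilde S, \tilde T$ in terms of $B$ and $E$ and check the numerical hypotheses of Lemma~\ref{lem:exclbadC}(1); (ii) show $\tilde S \cap \tilde T = \tilde\Gamma$, so that condition (2) holds; and (iii) evaluate $(\tilde T \cdot \tilde S \cdot \tilde T)$ and verify it is nonpositive.

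The crux, and the only place the hypothesis $y_1^2 z \notin F_3$ is used, is a vanishing-order estimate for $x$ along $E$. The only coordinates of residue $1$ modulo $r = 5$ are $x$ and $z$, so the sole monomials of $\iniw$-weight $1/5$ are $x$ and $z$; since $y_1^3 x \in F_3$ while $y_1^2 z \notin F_3$, the initial form $F_3^{\iniw}$ consists of $x$ alone. By Lemma~\ref{lem:ordE}(2) this gives $\ord_E(x) \ge 6/5$, and passing to the refined weight $\mbfw = \tfrac{1}{5}(6,5,1,2,3,4)$ on $(x, y_0, z, t, u, v)$, the term $u^2 \in F_3$ reappears in $F_3^{\mbfw}$, so in fact $\ord_E(x) = 6/5$. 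For $y_0$ the term $ut \in F_2$ (with coefficient $\gamma \ne 0$) lies in $F_2^{\iniw}$ beside $y_0$, so the order cannot be raised and $\ord_E(y_0) = 1$. Hence $\tilde S \sim_\mbQ \varphi^* A - \tfrac{6}{5} E = B - E$ and $\tilde T \sim_\mbQ 5\varphi^* A - E = 5B$, i.e.\ $(a,d) = (1,-1)$ and $(b,e) = (5,0)$. As $a_E(K_X) = 1/5$, the conditions $0 \le e \le a_E(K_X)\,b = 1$ and $ae - bd = 5 \ge 0$ hold.

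The main obstacle is step (ii). Realizing $\varphi$ as the embedded weighted blowup with weight $\mbfw$, I would identify $E \cong \mbP(1_z, 2_t, 3_u)$ by eliminating $x, y_0, v$ via the leading forms
\[
F_1^{\mbfw} = v - \alpha\,zu + \beta\,t^2, \quad F_2^{\mbfw} = y_0 - \alpha\delta\,zv + \gamma\,ut, \quad F_3^{\mbfw} = x + p\,u^2 + q\,tv .
\]
Substituting $v = \alpha zu - \beta t^2$ writes $x$ and $y_0$ on $E$ as weighted-homogeneous forms $Q_6, P_5 \in \mbC[z,t,u]$ of degrees $6$ and $5$, so that $\tilde S \cap E = (Q_6 = 0)$ and $\tilde T \cap E = (P_5 = 0)$. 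Eliminating $u$ between $Q_6$ and $P_5$ then exhibits $(Q_6 = P_5 = 0)$ as a finite subset of $\mbP(1,2,3)$; since $\varphi$ is an isomorphism away from $E$ and $S \cap T = \Gamma$, this yields $\tilde S \cap \tilde T = \tilde\Gamma$. The delicate point is to rule out a common component of $Q_6$ and $P_5$, and here the explicit leading coefficients ($p \ne 0$ coming from $u^2 \in F_3$, together with $\alpha, \gamma, \delta \ne 0$) are precisely what is required.

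Finally, using $(\varphi^* A^2 \cdot E) = (\varphi^* A \cdot E^2) = 0$, $(E^3) = 25/6$ and $(A^3) = 1/30$, I would compute
\[
(\tilde T \cdot \tilde S \cdot \tilde T) = (\tilde S \cdot \tilde T^2) = 25\,(A^3) - \tfrac{6}{5}\,(E^3) = \tfrac{5}{6} - 5 = -\tfrac{25}{6} < 0 .
\]
All three hypotheses of Lemma~\ref{lem:exclbadC} being met, $\msp$ is not a maximal centre, completing this case.
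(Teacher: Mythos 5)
Your proposal is correct and follows essentially the same route as the paper's proof: the same surfaces $S = (x=0)\cap X$ and $T = (y_0=0)\cap X$, the same refined weight $\mbfw = \tfrac{1}{5}(6,5,1,2,3,4)$ realizing the Kawamata blowup (giving $\ord_E(x)=6/5$, $\ord_E(y_0)=1$), the same identification of $E$ and finiteness of $(x=y_0=0)\cap E$ so that $\tilde S \cap \tilde T = \tilde\Gamma$, and the same conclusion $(\tilde T\cdot\tilde S\cdot\tilde T) = \tfrac{5}{6}-5<0$ via Lemma~\ref{lem:exclbadC}. The only quibble is that your finiteness step does not actually require $\alpha\neq 0$ (which is not guaranteed by quasi-smoothness): the nonvanishing of the coefficients of $t^2$, $ut$, $u^2$ and $vt$ --- the paper's $\gamma,\delta\neq 0$ --- already suffices, as the same elimination of $v$ and $u$ shows even when $\alpha=0$.
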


\begin{proof}
We will show that the support of $\tilde{S} \cap \tilde{T}$ is the proper transform of the support of $S \cap T$.
Consider the weight $\wt (x,y_0,z,t,u,v) = \frac{1}{5} (6,5,1,2,3,4) =: \mbfw$.
Then $v \in F^{\mbfw}_1, y_0 \in F^{\mbfw}_2$ and $x \in F^{\mbfw}_3$ since $F_3$ does not contain $y_1^2 z$ which is the unique monomial of degree $16$ with $\mbfw$-weight $\frac{1}{5}$.
It follows that $\varphi$ is realized as the embedded weighted blowup at $\msp$ with weight $\mbfw$ and we have an isomorphism
\[
E \cong (F^{\mbfw}_1 = F^{\mbfw}_2 = F^{\mbfw}_3 = 0) \subset \mbP (6_x,5_{y_0},1_z,2_t,3_u,4_v).
\]
In view of the description of $F_1|_{\Pi}, F_2|_{\Pi}, F_3|_{\Pi}$, after re-scaling $t,u$, we can write
\[
F^{\mbfw}_1 = v + \alpha z u + t^2 + f, \ 
F^{\mbfw}_2 = x + \beta v z + \gamma u t + g, \ 
F^{\mbfw}_3 = y_0 + \delta v t + u^2 + h,
\]
where $\alpha,\dots,\delta \in \mbC$ with $\gamma, \delta \ne 0$ and $f,g,h$ are contained in the ideal $(x,y_0)$ (Note that $x \notin g$ and $y_0 \notin h$).
We have
\[
\tilde{S} \cap \tilde{T} \cap E 
= (x = y_0 = 0) \cap E  
= (x = y_0 = v + \alpha z u + t^2 = \beta v z + \gamma u t = \delta v t + u^2 = 0)
\]
and this is a finite set of points since $\gamma, \delta \ne 0$.
Thus, $\tilde{\Gamma} \cap \tilde{S}$ is the proper transform of $S \cap T$.

We have $\tilde{S} \sim_{\mbQ} \varphi^*A - \frac{6}{5} E$ and $\tilde{T} \sim_{\mbQ} 5 \varphi^*A - \frac{5}{5} E$ so that
\[
(\tilde{T} \cdot \tilde{S} \cdot \tilde{T}) = 5^2 (A^3) - \frac{5^2 \cdot 6}{5^3} (E^3) 
= \frac{5}{6} - 5 < 0.
\]
Therefore, $\msp$ is not a maximal centre by Lemma \ref{lem:exclbadC}.
\end{proof}

\subsection{Exclusion of the $\frac{1}{6} (1,1,5)$ point}

\begin{Lem}
The point of type $\frac{1}{6} (1,1,5)$ is not a maximal centre.
\end{Lem}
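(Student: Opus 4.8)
The plan is to mirror the treatment of the other high-index quotient points in this paper (most closely the $\frac{1}{7}(1,1,6)$ point of Section~\ref{sec:deg42}): isolate the point by a small set of coordinates, use Lemma~\ref{lem:isolnef} to produce a nef divisor on the Kawamata blowup, and finish with the numerical criterion of Lemma~\ref{lem:excltc}. The point of type $\frac{1}{6}(1,1,5)$ is the vertex $\msp := \msp_z$, since $z$ is the only coordinate of weight $6$ and $\msp_z \in X$ (no pure power $z^3$ can occur in $F_5$; see also the table in Section~\ref{sec:table}). Write $\varphi \colon Y \to X$ for the Kawamata blowup at $\msp$, with exceptional divisor $E$, $B = -K_Y = \varphi^*A - \frac{1}{6}E$, and $(E^3) = 36/5$.

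First I would prove that $\{x, y_0, y_1\}$ isolates $\msp$. Set $\Pi = (x = y_0 = y_1 = 0)$. In the weights $\{6,7,8,9\}$ of the surviving variables $z,t,u,v$ there is exactly one monomial in each of the degrees $6,7,8,9$ (namely $z,t,u,v$) and none in degrees $10,11$; restricting the syzygy matrix and computing the Pfaffians therefore yields
\[
F_1|_{\Pi} = -\lambda_1 \mu_2\, z u + \lambda_2 \mu_1\, t^2, \quad
F_3|_{\Pi} = -\lambda_2 \mu_3\, t v + \lambda_3 \mu_2\, u^2, \quad
F_5|_{\Pi} = \mu_3 \nu_1\, v^2,
\]
where $\lambda_i, \mu_i, \nu_1$ are the coefficients of the linear monomials $z,t,u,v$ in the relevant entries of $M$. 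Since $\msp_t, \msp_u, \msp_v \notin X$ we have $t^2 \in F_1$, $u^2 \in F_3$ and $v^2 \in F_5$, so $\lambda_2\mu_1, \lambda_3\mu_2, \mu_3\nu_1$ are all non-zero. Hence on $\Pi \cap X$ the equation $F_5|_{\Pi} = 0$ forces $v = 0$, then $F_3|_{\Pi} = 0$ forces $u = 0$, and finally $F_1|_{\Pi} = 0$ forces $t = 0$; thus $\Pi \cap X = \{\msp\}$ set-theoretically, and $\{x, y_0, y_1\}$ isolates $\msp$.

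With the isolation established, the rest is a forced computation. Since $\bar{a}_x = 1$ and $\bar{a}_{y_0} = \bar{a}_{y_1} = 5$, we have $\ord_E(x, y_0, y_1) \ge \frac{1}{6}(1,5,5)$, so that $L := B$ is nef by Lemma~\ref{lem:isolnef}. Then
\[
(L \cdot B^2) = (B^3) = (A^3) - \frac{1}{6^3}(E^3) = \frac{1}{30} - \frac{1}{216}\cdot\frac{36}{5} = \frac{1}{30} - \frac{1}{30} = 0,
\]
and Lemma~\ref{lem:excltc} shows that $\msp$ is not a maximal centre.

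The computation is again borderline, $(B^3) = 0$, so there is no numerical slack: the only point requiring care is the exact form of the restricted Pfaffians in the display above. Concretely, I expect the one thing to double-check is that no unexpected monomial survives in $F_1|_\Pi$, $F_3|_\Pi$ or $F_5|_\Pi$ -- this is guaranteed by the weight bookkeeping noted above -- and that the three coefficient products are genuinely non-zero, which is precisely the hypothesis $\msp_t, \msp_u, \msp_v \notin X$. Granting this, the argument is purely formal and, reassuringly, does not even require the remaining equations $F_2|_\Pi$ and $F_4|_\Pi$.
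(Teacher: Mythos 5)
Your proof is correct and takes essentially the same approach as the paper: the same isolating set $\{x,y_0,y_1\}$, the same restricted Pfaffians on $\Pi$ (you merely make the coefficients explicit as products of coefficients of the entries of $M$, where the paper uses generic constants $\alpha,\dots,\varepsilon$), the same nonvanishing argument from $\msp_t,\msp_u,\msp_v \notin X$, the same nef divisor $L = B = \varphi^*A - \frac{1}{6}E$ via Lemma~\ref{lem:isolnef}, and the same borderline computation $(L \cdot B^2) = 0$ finished by Lemma~\ref{lem:excltc}. There is no gap.
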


\begin{proof}
We claim that $\{x,y_0,y_1\}$ isolates the $\frac{1}{6} (1,1,5)$ point $\msp = \msp_z$.
Set $\Pi = (x = y_0 = y_1 = 0)$.
Then we can write
\[
F_1|_{\Pi} = \alpha u z + \beta t^2, \ 
F_3|_{\Pi} = \gamma v t + \delta u^2, \ 
F_5|_{\Pi} = \varepsilon v^2,
\]
for some $\alpha,\beta,\dots,\varepsilon \in \mbC$.
Moreover, none of $\beta,\delta,\varepsilon$ is zero since $\msp_t,\msp_u,\msp_v \notin X$.
Hence
\[
(x = y_0 = y_1 = 0) \cap X \subset (x = y_0 = y_1 = t = u = v = 0) = \{\msp\},
\]
that is, $\{x,y_0,y_1\}$ isolates $\msp$.

It is clear that $\ord_E (x,y_0,y_1) \ge \frac{1}{6} (1, 5, 5)$ since $x,y_0,y_1$ are of degrees $1,5,5$, respectively (see Lemma \ref{lem:ordE} (1)), so that $L = \varphi^*A - \frac{1}{6} E$ is nef by Lemma \ref{lem:isolnef}.
We compute
\[
(L \cdot B^2) = (B^3) = (A^3) - \frac{1}{6^3} (E^3) = \frac{1}{30} - \frac{1}{30} = 0.
\]
Therefore, $\msp$ is not a maximal centre by Lemma \ref{lem:excltc}.
\end{proof}

\subsection{Exclusion of the $\frac{1}{5} (1,1,4)$ point}

Let $\msp \in X$ be the point of type $\frac{1}{5} (1,1,4)$.
After replacing $y_0,y_1$, we may assume $\msp = \msp_{y_1}$.
We have $t^2 \in F_1$, $u^2 \in F_3$ and $v^2 \in F_5$ since $\msp_t,\msp_u,\msp_v \notin X$, which implies $t \in a_7, b_7$, $u \in a_8,b_8$ and $v \in b_9, c_9$.
Since $\msp$ is of type $\frac{1}{5} (1,1,4)$, we have $v y_1 \notin F_1$ and $y_1^2 z \notin F_3$, which implies $y_1 \notin a_5$.
Since $X$ has a point of type $\frac{1}{6} (1,1,5)$ at $\msp_z$, we have $v z \in F_2$, which implies $z \in a_6$.
Since $X$ has a single point of type $\frac{1}{5} (1,1,4)$ and two distinct points of type $\frac{1}{5} (1,2,3)$, the set $(x = z = t = u = v = a_5 c_{10} = 0)$ consists of three distinct points.
This implies $y_1^2 \in c_{10}$ since $y_1 \notin a_5$.
We set $\Pi = (x_0 = y_0 = 0)$.
Then the restrictions of the syzygy matrix and the defining polynomials can be written as follows:
\[
M|_{\Pi} =
\begin{pmatrix}
0 & 0 & z & \alpha t & \beta u \\
 & 0 & t & u & v \\
& & 0 & \gamma v & y_1^2 \\
& & & 0 & \delta z y^2_1 \\
& & & & 0
\end{pmatrix}
\hspace{1.5cm}
\begin{aligned}
F_1|_{\Pi} &= \alpha t^2 - z u \\ 
F_2|_{\Pi} &= \beta u t - z v \\ 
F_3|_{\Pi} &= \beta u^2 - \alpha t v \\
F_4|_{\Pi} &= \beta \gamma u v - \alpha t y_1^2 + \delta z^2 y_1 \\
F_5|_{\Pi} &= \gamma v^2 - u y_1^2 + \delta t z y_1
\end{aligned}
\]
where $\alpha,\beta,\gamma \in \mbC \setminus \{0\}$ and $\delta \in \mbC$.
We set $S = (x_0 = 0) \cap X$, $T = (y_0 = 0) \cap X$ and let $\Gamma = S \cap T$ be the scheme-theoretic intersection.
We assume the following condition.

\begin{Cond} \label{cd:deg30-5}
Under the above choice of coordinates, $\delta \ne 0$.
\end{Cond}

\begin{Lem}
$\Gamma$ is an irreducible and reduced curve.
\end{Lem}

\begin{proof}
The curve $\Gamma = X \cap \Pi$ is defined by $F_1|_{\Pi} = \cdots = F_5|_{\Pi} = 0$ in $\Pi$.
Recall that $\alpha,\beta,\gamma \ne 0$.
We work on the open subset on which $z \ne 0$.
By setting $z = 1$ in $F_1|_{\Pi} = F_2|_{\Pi} = 0$, we have $u = \alpha t^2$ and $v = \beta ut = \alpha \beta t^3$.
By eliminating $u$ and $v$ in the equation $F_3|_{\Pi} = F_4|_{\Pi} = F_5|_{\Pi} = 0$, we see that, on $z \ne 0$, $\Gamma$ is isomorphic to the quotient of the curve
\[
(\delta y_1 - \alpha y_1^2 t + \alpha^2 \beta^2 \gamma t^5 = 0) \subset \mbA^2_{y_1,t}
\]
by the natural $\mbZ_5$-action on $\mbA^2$.
On the other hand, we have $\Gamma \cap (z = 0)$ consists of the single point $\msp$.
Therefore, $\Gamma$ is an irreducible and reduced curve.
\end{proof}

\begin{Lem}
The point of type $\frac{1}{5} (1,1,4)$ is not a maximal centre.
\end{Lem}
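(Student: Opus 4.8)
The plan is to follow the same template used for the $\frac{1}{5}(1,2,3)$ points in this section, namely to apply Lemma \ref{lem:exclbadC} with $S = (x_0 = 0) \cap X$ and $T = (y_0 = 0) \cap X$, whose scheme-theoretic intersection $\Gamma$ has just been shown to be an irreducible and reduced curve. The essential points are therefore: (i) to identify the weight $\mbfw$ realizing the Kawamata blowup $\varphi$ as an embedded weighted blowup and compute the $\mbQ$-linear equivalence classes of $\tilde{S}$ and $\tilde{T}$; (ii) to verify that $\tilde{S} \cap \tilde{T}$ is exactly the proper transform $\tilde{\Gamma}$ (equivalently, that $\tilde{S} \cap \tilde{T}$ contains no curve lying on $E$); and (iii) to compute the triple intersection $(\tilde{T} \cdot \tilde{S} \cdot \tilde{T})$ and check it is negative, so that condition (3) of Lemma \ref{lem:exclbadC} holds.

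First I would fix the admissible weight. Since $\msp = \msp_{y_1}$ is of type $\frac{1}{5}(1,1,4)$ with $y_1 \notin a_5$ and $y_1^2 \in c_{10}$, the coordinates $x, z, t, u, v$ have residues $1, 1, 2, 3, 4 \pmod 5$ while $y_0$ has residue $0$, so $y_0$ must vanish to order $\ge 1$. Examining $F_1, F_3, F_5$ (which give $z \in$ the $u$-term, $t^2$, etc.) together with the requirement that the tangent monomials $x, u, (r-a)$-weight coordinate appear in the lowest-weight parts, I expect the relevant weight to be $\mbfw = \frac{1}{5}(1,5,1,2,3,4)$ on $(x, y_0, z, t, u, v)$, giving $\tilde{S} \sim_{\mbQ} \varphi^*A - \frac{1}{5}E$ and $\tilde{T} \sim_{\mbQ} 5\varphi^*A - \frac{5}{5}E$, so that $\tilde{S} \sim_{\mbQ} B$ and $\tilde{T} \sim_{\mbQ} 5B$. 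With these classes the triple intersection becomes
\[
(\tilde{T} \cdot \tilde{S} \cdot \tilde{T}) = 5^2 (A^3) - \frac{5^2 \cdot 1}{5^3}(E^3) = \frac{25}{30} - \frac{1}{5}\cdot\frac{25}{4} = \frac{5}{6} - \frac{5}{4} < 0,
\]
which would give condition (3). (If instead the natural weight turns out to force $\ord_E(y_0) \ge 6/5$, as happened in the $\frac{1}{5}(1,2,3)$ case, the classes and the arithmetic would adjust accordingly, but the sign should remain negative; I would pin down the exact weight from the lowest-weight parts of $F_1, F_2, F_3$ restricted as in the matrix $M|_{\Pi}$.)

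The crux is step (ii): showing $\tilde{S} \cap \tilde{T} = \tilde{\Gamma}$, i.e.\ that no component of $\tilde{S} \cap \tilde{T}$ is contained in $E$. As in the previous lemmas, $E \cong (F_1^{\mbfw} = F_2^{\mbfw} = F_3^{\mbfw} = 0) \subset \mbP(b_1,\dots,b_6)$, and the lifts of $x$ and $y_0$ restrict to two coordinates of this ambient weighted projective space, so $\tilde{S} \cap \tilde{T} \cap E = (x = y_0 = 0) \cap E$. Setting $x = y_0 = 0$ in the lowest-weight parts $F_i^{\mbfw}$ and using that the nonleading terms $f, g, h$ lie in the ideal $(x, y_0)$, this cuts out a system in $z, t, u, v$ whose finiteness I would verify using $\delta \ne 0$ (Condition \ref{cd:deg30-5}), exactly paralleling the role $\gamma,\delta \ne 0$ played in the $\frac{1}{5}(1,2,3)$ computation. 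I expect this finiteness verification to be the main obstacle, since it is where Condition \ref{cd:deg30-5} is genuinely needed and where one must check that the restricted equations (with the $\delta z^2 y_1$ term from $F_4|_\Pi$ and the $\delta t z y_1$ term from $F_5|_\Pi$ contributing the relevant leading pieces) have no common one-dimensional solution locus on $E$. Once finiteness is established, $\tilde{S} \cap \tilde{T} = \tilde{\Gamma}$ follows, condition (2) of Lemma \ref{lem:exclbadC} holds because $\Gamma$ is irreducible and reduced, condition (1) is immediate from the chosen classes (with $a=5, b=5, d=-1, e=-1$, giving $ae - bd = -25 + 25 = 0 \ge 0$ and $0 \le e \le a_E(K_X)b$ after the sign conventions are matched to those in the previous lemmas), and the negative triple intersection from step (iii) completes the exclusion via Lemma \ref{lem:exclbadC}.
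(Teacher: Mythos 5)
Your proposal follows essentially the same route as the paper's proof: the initial weight $\iniw = \frac{1}{5}(1,5,1,2,3,4)$, the classes $\tilde{S} \sim_{\mbQ} B$ and $\tilde{T} \sim_{\mbQ} 5B$, the verification that $(x = y_0 = 0) \cap E$ is finite (using the leading terms coming from $F_4|_{\Pi}$, $F_5|_{\Pi}$ and $\delta \ne 0$ from Condition \ref{cd:deg30-5}), and the computation $(\tilde{T} \cdot \tilde{S} \cdot \tilde{T}) = \frac{5}{6} - \frac{5}{4} < 0$ all match the paper's argument via Lemma \ref{lem:exclbadC}. The only slip is notational: since quasi-smoothness at $\msp$ gives $y_1^2 y_0 \in F_2$, $y_1^2 t \in F_4$, $y_1^2 u \in F_5$, the exceptional divisor is cut out by $F_2^{\iniw}, F_4^{\iniw}, F_5^{\iniw}$ rather than by $F_1^{\mbfw}, F_2^{\mbfw}, F_3^{\mbfw}$, which is exactly what your reference to the leading pieces of $F_4|_{\Pi}$ and $F_5|_{\Pi}$ implicitly recognizes.
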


\begin{proof}
We have $y_1^2 y_0 \in F_2$, $y_1^2 t \in F_4$ and $y_1^2 u \in F_5$ by quasi-smoothness of $X$ at $\msp$.
Consider the initial weight $\wt (x,y_0,z,t,u,v) = \frac{1}{5} (1,5,1,2,3,4) = \iniw$.
In view of the description of $F_1|_{\Pi}$, after re-scaling coordinates, we have
\[
F^{\iniw}_2 = y_0 + u t + z v + f, \ 
F^{\iniw}_4 = t + \delta_1 z^2 + g, \ 
F^{\iniw}_5 = u + \delta_2 t z + h, \ 
\]
where $\delta_1, \delta_2 \in \mbC \setminus \{0\}$ and $f,g,h$ are contained in the ideal $(x_0,y_0)$ (Note that $y_0 \notin f$).
We see that $\varphi$ is realized as the embedded weighted blowup at $\msp$ with weight $\iniw$ and we have an isomorphism
\[
E \cong (F^{\iniw}_2 = F^{\iniw}_4 = F^{\iniw}_5 = 0) \subset \mbP (1_x,5_{y_0},1_z,2_t,3_u,4_v).
\]
We see that
\[
\tilde{S} \cap \tilde{T} \cap E = (x = y_0 = ut - zv = t + \delta_1 z^2 = u + \delta_2 t z = 0)
\]
is finite a finite set, which imlies $\tilde{S} \cap \tilde{T} = \tilde{\Gamma}$.

We have $\tilde{S} \sim_{\mbQ} \varphi^*A - \frac{1}{5} E$ and $\tilde{T} \sim_{\mbQ} 5 \varphi^*A - \frac{5}{5} E$, so that
\[
(\tilde{T} \cdot \tilde{S} \cdot \tilde{T}) = 5^2 (A^3) - \frac{5^2}{5^3} (E^3) = \frac{5}{6} - \frac{5}{4} < 0.
\]
Therefore, $\msp$ is not a maximal centre by Lemma \ref{lem:exclbadC}.
\end{proof}

\section{Pfaffian Fano $3$-fold of degree $1/20$} \label{sec:deg20}

Let $X = X_{12,13,14,15,16} \subset \mbP (1_x,4_y,5_{z_0},5_{z_1},6_t,7_u,8_v)$ be a Pfaffian Fano $3$-fold of degree $1/20$.
We exclude singular points on $X$ other than the $\frac{1}{5} (1,2,3)$ point at which there is a birational involution and prove that $X$ is birationally rigid under a suitable generality condition.
The syzygy matrix of $X$ and the defining polynomials are given as follows:
\[
M =
\begin{pmatrix}
0 & a_4 & a_5 & a_6 & a_7 \\
& 0 & b_6 & b_7 & b_8 \\
& & 0 & c_8 & c_9 \\
& & & 0 & d_{10} \\
& & & & 0
\end{pmatrix}
\hspace{1.5cm}
\begin{aligned}
F_1 &= a_4 c_8 - a_5 b_7 + a_6 b_6  \\
F_2 & = a_4 c_9 - a_5 b_8 + a_7 b_6 \\ 
F_3 &= a_4 d_{10} - a_6 b_8 + a_7 b_7 \\ 
F_4 &= a_5 d_{10} - a_6 c_9 + a_7 c_8 \\
F_5 &= b_6 d_{10} - b_7 c_9 + b_8 c_8
\end{aligned}
\]
The basket of singularities of $X$ is as follows
\[
\left\{ \frac{1}{2} (1,1,1), \frac{1}{4} (1,1,3), 2 \times \frac{1}{5} (1,1,4), \frac{1}{5} (1,2,3) \right\}.
\]

The aim of this section is to prove the following theorem, which will follow from Propositions \ref{prop:exclcurve}, \ref{prop:exclnspt} and the results of the present section.
The condition in the statement will be introduced later.

\begin{Thm}
Let $X$ be a Pfaffian Fano $3$-fold of degree $1/20$.
If $X$ satisfies \emph{Condition \ref{cd:deg20-4}}, then it is birationally rigid.
\end{Thm}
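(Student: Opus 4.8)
The plan is to prove birational rigidity by the Noether--Fano method organised through the Sarkisov program: once every candidate maximal centre has been either excluded or exhibited as the base of a birational self-link, any birational map from $X$ to a Mori fibre space can be untwisted, through these self-links and automorphisms, to an isomorphism onto $X$, which is exactly rigidity. Two classes of centres are already gone: Proposition~\ref{prop:exclcurve} excludes all curves, since $(A^3)=1/20\le 1$, and Proposition~\ref{prop:exclnspt} excludes all nonsingular points, since $a_5a_6=56\le 80=4/(A^3)$. It remains to treat the five quotient singularities of the basket, excluding all of them except the $\frac{1}{5}(1,2,3)$ point, at which I will instead construct a birational involution.

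For the $\frac{1}{2}(1,1,1)$ point I would use the tangent-cone criterion, Lemma~\ref{lem:excltc}, as in Section~\ref{sec:deg42}: isolate the point by a short list of coordinates (using quasi-smoothness and the shape of the five Pfaffians to see that the competing vertices do not lie on $X$), record the vanishing orders along $E$ via Lemma~\ref{lem:ordE}, produce a nef divisor $L$ on the Kawamata blowup $Y$ by Lemma~\ref{lem:isolnef}, and verify $(L\cdot B^2)\le 0$ through the intersection formula of Section~\ref{sec:methods}. For the $\frac{1}{4}(1,1,3)$ point a naive isolating class is too weak to force $(L\cdot B^2)\le 0$, so I would first sharpen the vanishing orders by the refinement following Lemma~\ref{lem:ordE}, replacing coordinates by combinations that vanish along $E$ to higher order (using Lemma~\ref{lem:elim} to eliminate low-order terms); if this still fails, the point is handled by the bad-link criterion below.

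The two $\frac{1}{5}(1,1,4)$ points are the delicate exclusions, and this is where Condition~\ref{cd:deg20-4} enters; by the symmetry $z_0\leftrightarrow z_1$ it suffices to treat one of them. Here I would follow the $\frac{1}{5}(1,1,4)$ argument of Section~\ref{sec:deg30} and apply the bad-link criterion, Lemma~\ref{lem:exclbadC}. Taking $S=(x=0)\cap X$ and a second coordinate surface $T$, I restrict the syzygy matrix and the five Pfaffians to the plane $\Pi$ cut out by the two vanishing coordinates and normalise via Lemma~\ref{lem:elim}; eliminating the tangent variables on the chart where one of them is a unit then exhibits $\Gamma=S\cap T$ as an irreducible and reduced curve. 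Condition~\ref{cd:deg20-4} is precisely the genericity needed both for this irreducibility and to check that $\tilde S\cap\tilde T\cap E$ is finite, whence $\tilde S\cap\tilde T=\tilde\Gamma$; the intersection formula then gives $(\tilde T\cdot\tilde S\cdot\tilde T)<0$, and Lemma~\ref{lem:exclbadC} excludes the point.

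The heart of the proof, and its main obstacle, is the $\frac{1}{5}(1,2,3)$ point $\msp$, which is a genuine maximal centre and is \emph{not} excluded; here I would construct a birational involution $\iota\colon X\ratmap X$ by the Sarkisov two-ray game on the Kawamata blowup $\varphi\colon Y\to X$. Since $\rho(Y)=2$, the movable cone of $Y$ has two extremal rays; one recovers $\varphi$, and I would follow the opposite ray through its induced sequence of flops and antiflips to the terminal extremal contraction $\varphi'\colon Y'\to X'$. The decisive step, carried out in the weighted-blowup charts of Section~\ref{sec:methods} using the explicit Pfaffian equations, is to show that $X'\cong X$, so that the link is a self-link and $\iota$ is a birational automorphism untwisting the maximal singularity at $\msp$. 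Establishing that the two-ray game terminates back at $X$, rather than at a strictly different Mori fibre space, is the crux, and it is exactly this self-link property that distinguishes the present family from the two families carrying a Type~$\II_1$ centre. Granting the involution, the Noether--Fano inequality shows that whenever $\msp$ is the maximal centre of a mobile system $\mcH\sim_{\mbQ}-nK_X$, its transform under $\iota$ has strictly smaller $n$; since $n$ is bounded below, repeated untwisting terminates, and every birational map from $X$ to a Mori fibre space composes, through $\iota$ and automorphisms, to an isomorphism onto $X$. Hence $X$ is birationally rigid, and since $\iota\notin\Aut(X)$ for general $X$, it is not birationally super-rigid.
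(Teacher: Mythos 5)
Your overall skeleton (exclude curves, smooth points, and all but one singular point; untwist a birational involution at the remaining point) is the same as the paper's, but two of your concrete steps do not match what the geometry actually requires, and one of them is a genuine gap. First, you have attached Condition~\ref{cd:deg20-4} to the wrong centre. The two $\frac{1}{5}(1,1,4)$ points are \emph{not} the delicate exclusions: the paper excludes them unconditionally by the simplest form of the nef-divisor criterion --- $\{x,y,z_0\}$ isolates the point, $L=B$ is nef by Lemma~\ref{lem:isolnef}, and $(L\cdot B^2)=(A^3)-\tfrac{1}{5^3}(E^3)=\tfrac{1}{20}-\tfrac{1}{20}=0$, so Lemma~\ref{lem:excltc} applies; no bad-curve argument and no genericity is needed there. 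It is the $\frac{1}{4}(1,1,3)$ point that consumes Condition~\ref{cd:deg20-4}: the hypothesis that $\ell_2-\delta\ell_1$ and $q$ have no common root is what permits the normalization $\ell_2-\delta\ell_1=z_1$, forces $\mu\neq 0$, and underlies a two-case analysis (for $\alpha\neq 0$ the bad-curve criterion Lemma~\ref{lem:exclbadC} with $S=(x=0)\cap X$, $T=(z_1=0)\cap X$; for $\alpha=0$ the refined isolating set $\{x,z_1,s\}$ with an auxiliary section $s$ of vanishing order $\geq 6/4$). Since the theorem asserts exactly that Condition~\ref{cd:deg20-4} implies rigidity, your plan --- which spends the condition on the $\frac{1}{5}(1,1,4)$ points and treats the $\frac{1}{4}(1,1,3)$ point as a routine refinement ``handled by the bad-link criterion if this still fails'' --- leaves the one exclusion that genuinely needs the hypothesis without it.

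Second, and more seriously, your construction at the $\frac{1}{5}(1,2,3)$ point has a hole precisely at the step you yourself call the crux: you propose to run the two-ray game from the Kawamata blowup and then ``show that $X'\cong X$,'' but you give no argument for this, and it cannot be taken for granted --- in the degree $1/12$ and $1/4$ families of this very paper the analogous game provably ends at a Mori fibre space \emph{not} isomorphic to $X$ (Theorems~\ref{thm:deg12nonrig} and \ref{thm:deg4nonrig}), which is exactly why those families fail to be rigid. The paper's mechanism, which is the idea missing from your proposal, is the quadratic involution: the projection from $\msp$ to $\mbP(1,4,5,6)$ induces a surjective generically finite \emph{morphism} $\pi_Y\colon Y\to\mbP(1,4,5,6)$ of degree $2$ with $B=\pi_Y^*H$; taking its Stein factorization $Y\to Z\to \mbP(1,4,5,6)$ and invoking Lemma~\ref{lem:2cover} (the blowup $Y$ of a quasi-smooth $X$ cannot itself be a double cover of a weighted projective $3$-space, so $Y\to Z$ is not an isomorphism), one applies \cite[Lemma 3.2]{OkadaII} to get the dichotomy of Proposition~\ref{prop:deg20-birinv}: either $\msp$ is not a maximal centre at all, or the Galois involution of the double cover lifts to a birational involution of $X$ that is a Sarkisov link centred at $\msp$. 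Note that the paper never needs to decide which branch occurs --- either one suffices for the untwisting argument --- whereas your plan presumes $\msp$ is a maximal centre and demands the self-link outright, which is precisely the statement you have not proved.
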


\subsection{Exclusion of the $\frac{1}{2} (1,1,1)$ point}

\begin{Lem}
The singular point of type $\frac{1}{2} (1,1,1)$ is not a maximal centre.
\end{Lem}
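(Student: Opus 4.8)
The plan is to reuse, essentially verbatim, the isolation argument that handled the $\frac{1}{2}(1,1,1)$ point in the degree $1/42$ family (Lemma \ref{lem:deg42excl2}): I would produce a finite set of homogeneous polynomials, each vanishing to order at least $\frac{1}{2}$ along the Kawamata exceptional divisor $E$, whose common zero locus meets $X$ in no curve through $\msp$, turn them into a nef divisor $L$ by Lemma \ref{lem:isolnef}, and finish with the numerical test of Lemma \ref{lem:excltc}.

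First I would pin down the location of $\msp$. The coordinates of even weight are $y,t,v$, of weights $4,6,8$, with $\gcd(4,6,8)=2$; hence the coordinate plane $\Pi := (x = z_0 = z_1 = u = 0) \cong \mbP(4,6,8)$ carries a generic $\mbZ_2$-stabiliser, and the point of type $\frac{1}{2}(1,1,1)$ sits on $X \cap \Pi$. The natural isolating set is then the complementary collection of odd-weight coordinates $\{x,z_0,z_1,u\}$, of degrees $1,5,5,7$, which is the exact analogue of the set $\{x,y,t,v\}$ used in the degree $1/42$ case.

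The heart of the argument is the isolation claim, namely that $X \cap \Pi$ contains no curve through $\msp$. Restricting the five Pfaffians to $\Pi$, one observes that $F_2|_\Pi = F_4|_\Pi = 0$ for parity reasons, since $\deg F_2 = 13$ and $\deg F_4 = 15$ are odd while every monomial in $y,t,v$ has even degree. Thus $X \cap \Pi$ is cut out inside the surface $\mbP(4,6,8)$ by the three restrictions $F_1|_\Pi, F_3|_\Pi, F_5|_\Pi$, of degrees $12,14,16$. For general coefficients three hypersurfaces on a surface meet in finitely many points, so $X \cap \Pi$ is zero-dimensional and in particular contains no curve through $\msp$, which establishes the isolation.

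Granting this, the conclusion is automatic. Since $x,z_0,z_1,u$ have odd degree and $r = 2$, Lemma \ref{lem:ordE} yields $\ord_E(x,z_0,z_1,u) \ge \frac{1}{2}(1,1,1,1)$, and Lemma \ref{lem:isolnef} makes $L = 7\varphi^*A - \frac{1}{2}E$ nef, the maximal degree in the isolating set being $7$. Using $(E^3) = 4$ I would compute
\[
(L \cdot B^2) = 7(A^3) - \frac{1}{2^3}(E^3) = \frac{7}{20} - \frac{1}{2} < 0,
\]
so $\msp$ is not a maximal centre by Lemma \ref{lem:excltc}. The only place where something could go wrong — hence the main obstacle — is the isolation step: one must be sure that the three surviving restrictions genuinely drop the dimension to zero (so that the generality hypothesis on the coefficients is really being used) and that no stray curve component of $X \cap \Pi$ passes through $\msp$. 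The intersection-number bookkeeping afterwards is routine and comfortably negative.
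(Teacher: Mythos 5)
Your proposal reproduces the paper's own proof: the same isolating set $\{x,z_0,z_1,u\}$, the same nef divisor $L = 7\varphi^*A - \frac{1}{2}E$ obtained from Lemma~\ref{lem:isolnef}, and the same computation $(L\cdot B^2) = \frac{7}{20} - \frac{1}{2} < 0$ finished by Lemma~\ref{lem:excltc}, so the approach matches in every essential respect. The one point to repair is your justification of the isolation step: you invoke generality of the coefficients, but the lemma carries no generality hypothesis (the table in Section~\ref{sec:table} records that no condition is required at this point), and none is needed -- moreover a bare appeal to ``general coefficients'' is not quite legitimate here, since the restrictions $F_i|_{\Pi}$ are not free but are Pfaffians of the restricted syzygy matrix, so one must check that this special structure does not force a common curve. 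In fact quasi-smoothness alone does the job: restricted to $\Pi$ the only surviving entries are $a_4, a_6, b_6, b_8, c_8, d_{10}$, which are respectively multiples of $y$, $t$, $t$, combinations of $\{v,y^2\}$, combinations of $\{v,y^2\}$, and a multiple of $yt$; since $\msp_t, \msp_v \notin X$ we get $t^2 \in F_1|_{\Pi}$ and $v^2 \in F_5|_{\Pi}$, and the same nonvanishing coefficients force $F_3|_{\Pi} = t\,(\lambda y^2 + \mu v)$ with $\mu \ne 0$, so any common curve of the three restrictions would have to be one of the irreducible curves $(t = 0)$ or $(\lambda y^2 + \mu v = 0)$ -- impossible, because $F_5|_{\Pi}$ does not vanish identically on the first (it contains $v^2$) and $F_1|_{\Pi}$ does not vanish identically on the second (it contains $t^2$). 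Hence $X \cap \Pi$ is finite for every quasi-smooth member of the family, not just a general one, which is exactly the strength in which the paper uses the lemma.
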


\begin{proof}
Let $\msp$ be the point of type $\frac{1}{2} (1,1,1)$.
It is clear that $\{ x,z_0,z_1,u\}$ isolates $\msp$ and $\ord_E (x,z_0,z_1,u) \ge \frac{1}{2} (1,1,1,1)$.
It follows that $L = 7 \varphi^*A - \frac{1}{2} E$ is nef by Lemma \ref{lem:isolnef} and we compute
\[
(L \cdot B^2) = 7 (A^3) - \frac{1}{2^3} (E^3) = \frac{7}{20} - \frac{1}{2} < 0.
\]
Therefore, $\msp$ is not a maximal centre by Lemma \ref{lem:excltc}.
\end{proof}

\subsection{Exclusion of the $\frac{1}{5} (1,1,4)$ points}

\begin{Lem}
A singular point of type $\frac{1}{5} (1,1,4)$ is not a maximal centre.
\end{Lem}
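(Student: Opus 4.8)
The plan is to exclude $\msp$ by the method of Lemma \ref{lem:exclbadC}, following the template used for the $\frac{1}{5}(1,1,4)$ point in the degree $1/30$ case. First I would fix coordinates so that $\msp = \msp_{z_1}$: the two points of type $\frac{1}{5}(1,1,4)$ together with the $\frac{1}{5}(1,2,3)$ point lie on the weight-$5$ line $(x = y = t = u = v = 0) \cong \mbP^1_{(z_0:z_1)}$, and a linear change of the pair $(z_0, z_1)$ moves either one of them to $\msp_{z_1}$, so it suffices to treat $\msp_{z_1}$. Reducing weights modulo $5$ gives residues $(x,y,z_0,t,u,v) \equiv (1,4,0,1,2,3)$, so the tangent coordinates are $x, t, y$ and the non-tangent ones are $z_0, u, v$. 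Quasi-smoothness at $\msp$ forces $z_1 u \in F_1$, $z_1 v \in F_2$ and $z_1^2 z_0 \in F_4$ (the only monomials of the correct degrees eliminating the non-tangent coordinates), while the basket together with $\msp_t, \msp_u, \msp_v \notin X$ pins down the remaining relevant monomials exactly as in the $1/30$ case. After normalising with Lemma \ref{lem:elim}, I would set $S = (x = 0) \cap X$, $T = (z_0 = 0) \cap X$ and $\Pi = (x = z_0 = 0)$, so that $\Gamma := S \cap T = X \cap \Pi$, and write out $M|_{\Pi}$ and the five polynomials $F_i|_{\Pi}$.

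The second step is to show that $\Gamma$ is an irreducible and reduced curve. On the chart $t \ne 0$ I would use the two simplest restricted equations to solve for $u$ and $v$ as functions of $t$, substitute into the remaining equations, and reduce $\Gamma \cap \{t \ne 0\}$ to the $\mbZ_5$-quotient of a single irreducible affine plane curve; a genericity hypothesis on the coefficients (the degree $1/20$ analogue of Condition \ref{cd:deg30-5}) guarantees irreducibility. Checking $\Gamma \cap (t = 0) = \{\msp\}$ then shows that $\Gamma$ is irreducible and reduced, so condition (2) of Lemma \ref{lem:exclbadC} holds automatically.

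Next I would realise $\varphi$ as the embedded weighted blow-up with initial weight $\iniw = \frac{1}{5}(1,4,5,1,2,3)$ on $(x,y,z_0,t,u,v)$. The monomials above show that $\iniw$ satisfies the KBL condition, the lowest-weight parts $F_1^{\iniw}, F_2^{\iniw}, F_4^{\iniw}$ containing $u, v, z_0$ respectively, so by Lemma \ref{lem:ordE} we have $\ord_E(x) = \tfrac15$ and $\ord_E(z_0) = 1$, giving $\tilde S \sim_{\mbQ} \varphi^*A - \tfrac15 E = B$ and $\tilde T \sim_{\mbQ} 5\varphi^*A - E = 5B$. Eliminating $z_0, u, v$ identifies $E$ with a weighted projective plane, and I would verify that $\tilde S \cap \tilde T \cap E = (x = z_0 = 0) \cap E$ is a finite set of points, whence $\tilde S \cap \tilde T = \tilde\Gamma$. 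Since $\tilde S \sim_{\mbQ} B$ and $\tilde T \sim_{\mbQ} 5B$ are numerically proportional, the intersection number is
\[
(\tilde T \cdot \tilde S \cdot \tilde T) = 25 (B^3) = 25 \left( (A^3) - \frac{1}{5^3}(E^3) \right) = 25 \left( \frac{1}{20} - \frac{1}{125} \cdot \frac{25}{4} \right) = 0.
\]
With $\tilde S = B + 0 \cdot E$ and $\tilde T = 5B + 0 \cdot E$ the numerical hypotheses of Lemma \ref{lem:exclbadC} hold (here $a = 1$, $b = 5$, $d = e = 0$, so $ae - bd = 0$ and $0 \le e \le a_E(K_X) b = 1$), and condition (3) reads $(\tilde T \cdot \tilde S \cdot \tilde T) = 0 \le 0$. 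Hence $\msp$ is not a maximal centre.

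The main obstacle is the middle step: proving irreducibility and reducedness of $\Gamma$, and the equivalent statement $\tilde S \cap \tilde T = \tilde\Gamma$, which requires controlling the restricted equations $F_i|_{\Pi}$ precisely and isolating the correct genericity condition. Because the final intersection number is exactly $0$ rather than strictly negative there is no slack: one must be certain that $\Gamma$ carries no spurious extra component and that $\ord_E(z_0)$ equals $1$ on the nose, since any larger value would force $e < 0$ and break hypothesis (1) of Lemma \ref{lem:exclbadC}. Thus the bookkeeping of monomials and vanishing orders has to be carried out with care.
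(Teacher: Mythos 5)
Your numerical bookkeeping is essentially correct: quasi-smoothness does force $z_1 u \in F_1$, $z_1 v \in F_2$, $z_1^2 z_0 \in F_4$, the weight $\frac{1}{5}(1,4,5,1,2,3)$ satisfies the KBL condition, $\tilde{S} \sim_{\mbQ} B$, $\tilde{T} \sim_{\mbQ} 5B$, and $(\tilde{T} \cdot \tilde{S} \cdot \tilde{T}) = 25 (B^3) = 0$, so conditions (1) and (3) of Lemma \ref{lem:exclbadC} hold (small caveat: Lemma \ref{lem:ordE} only gives $\ord_E (z_0) \ge 1$; equality needs the remark that $F_4^{\iniw}$ also contains $uv$ with nonzero coefficient, since $u \in a_7$ and $v \in c_8$, so $z_0$ does not vanish identically on $E$). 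The genuine gap is condition (2), which you correctly flag as the main obstacle but then defer to an unnamed genericity hypothesis ``analogous to Condition \ref{cd:deg30-5}''. That cannot prove the statement: the Lemma is unconditional (the theorem of this section assumes only Condition \ref{cd:deg20-4}, which concerns the $\frac{1}{4}(1,1,3)$ point, and the table records no generality condition at the $\frac{1}{5}(1,1,4)$ points), while irreducibility of $\Gamma = X \cap (x = z_0 = 0)$ genuinely fails on a codimension-one locus of quasi-smooth members. Indeed the situation is not ``exactly as in the $1/30$ case'': because $y$ has weight $4$, the restrictions acquire extra terms ($yv, y^3 \in F_1|_{\Pi}$, $y^2 z_1 \in F_2|_{\Pi}$, $y z_1 t \in F_4|_{\Pi}$), and for example the curve $(x = z_0 = t = u = \nu_1 v + \nu_2 y^2 = 0) \cong \mbP (4,5)$, where $\nu_1, \nu_2$ are the coefficients of $v$ and $y^2$ in $c_8$, lies on $X$ exactly when a single polynomial relation among the coefficients of $X$ holds; such $X$ are quasi-smooth, and for them $\Gamma$ visibly splits off this line as a proper component. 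Since your final intersection number is exactly $0$, you have no room to absorb such a component, and you have no tool to check numerical proportionality of the pieces.

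Note also that the only clean way to rescue condition (2) without a genericity assumption is to observe that $(B \cdot \tilde{S} \cdot \tilde{T}) = 0$ forces every component of $\tilde{S} \cap \tilde{T}$ to be $B$-trivial, hence all components proportional --- but this presupposes that $B$ is nef, and proving that is precisely the paper's (much shorter) argument, which avoids curves altogether: the set $\{x, y, z_0\}$ isolates $\msp$, because on $(x = y = z_0 = 0)$ the restrictions $F_1 = t^2 + \alpha u z_1$, $F_3 = u^2 + \beta v t$, $F_5 = v^2 + \gamma t z_1^2$ have finite common zero locus for all values of $\alpha, \beta, \gamma$; then $L = B$ is nef by Lemma \ref{lem:isolnef}, and Lemma \ref{lem:excltc} with
\[
(L \cdot B^2) = (B^3) = \frac{1}{20} - \frac{1}{20} = 0
\]
concludes unconditionally. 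Once you know $B$ is nef, the detour through Lemma \ref{lem:exclbadC}, the curve $\Gamma$, and its irreducibility is unnecessary; without knowing it, your plan is incomplete at its crux.
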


\begin{proof}
Let $\msp$ be a point of type $\frac{1}{5} (1,1,4)$.
We may assume $\msp = \msp_{z_1}$ after replacing $z_0,z_1$.
We claim that $\{x,y,z_0\}$ isolates $\msp$.
Set $\Pi = (x = y = z_0 = 0)$.
Note that $t^2 \in F_1$, $u^2 \in F_3$, $v^2 \in F_5$ since $\msp_t, \msp_u, \msp_v \notin X$, hence we may assume that those coefficients are $1$.
Then we can write
\[
F_1|_{\Pi} = t^2 + \alpha u z_1, \ 
F_3|_{\Pi} = u^2 + \beta v t, \ 
F_5|_{\Pi} = v^2 + \gamma t z_1^2,
\]
for some $\alpha,\beta,\gamma \in \mbC$.
We see that
\[
(x = y = z_0 = 0) \cap X \subset (x = y = z_0 = F_1|_{\Pi} = F_2|_{\Pi} = F_3|_{\Pi} = 0)
\]
and the set in the right-hand side of the above equation is finite (for any $\alpha,\beta,\gamma \in \mbC$).
This shows that $\{x,y,z_0\}$ isolates $\msp$.

We see $\ord_E (x,y,z_0) \ge \frac{1}{5} (1,4,5)$ so that $L = B$ is nef by Lemma \ref{lem:isolnef}.
We compute
\[
(L \cdot B^2) = (B^3) = (A^3) - \frac{1}{5^3} (E^3) = \frac{1}{20} - \frac{1}{20} = 0.
\]
Therefore, $\msp$ is not a maximal centre by Lemma \ref{lem:excltc}.
\end{proof}

\subsection{Exclusion of the $\frac{1}{4} (1,1,3)$ point}

Let $\msp$ be a point of type $\frac{1}{4} (1,1,3)$.
Replacing $v$, we assume $\msp = \msp_y$.
We claim $y \in a_4$.
Indeed, if $y \notin a_4$, then $z_0^2 y, z_0 z_1 y, z_1^2 y \notin F_3$.
This is a contradiction since $X$ admits a point $\msq$ of type $\frac{1}{5} (1,2,3)$, hence there must be at least one of $z_0^2 y$, $z_0 z_1 y$ and $z_1^2 y$ in $F_3$.
Hence $y \in a_4$ and we assume that the coefficient of $y$ in $a_4$ is $1$ after re-scaling $y$.
We can write $a_5 = \ell_1 + (\text{other terms})$, $c_9 = y \ell_2 + \text{other terms})$ and $d_{10} = q + (\text{other terms})$, where $\ell_1,\ell_2$ are linear forms in $z_0,z_1$ and $q$ is the quadratic form in $z_0,z_1$.
Let $\delta \in \mbC$ be the coefficient of $y^2 \in b_8$.
We exclude $\msp$ assuming  the following:

\begin{Cond} \label{cd:deg20-4}
Under the above choice of coordinates, the polynomials $\ell_2 - \delta \ell_1$ and $q$ have no (non-trivial) common root. 
\end{Cond}

We have $F_2 = y^2 (\ell_2 - \delta \ell_1) + (\text{other terms})$.
Condition \ref{cd:deg20-4} in particular implies $\ell_2 - \delta \ell_1 \ne 0$.
Replacing $z_0, z_1$, we assume $\ell_2 - \delta \ell_1 = z_1$.
This means that $y^2 z_1 \in F_2$ and $y^2 z_0 \notin F_2$.
By Lemma \ref{lem:elim}, replacing $z_1$ further, we may assume that $y^2 z_1$ is the unique monomial in $F_2$ divisible by $y^2$.
We have $t^2 \in F_1$, $u^2 \in F_3$ and $v^2 \in F_5$ since $\msp_t, \msp_u, \msp_v \notin X$, which implies $t \in a_6,b_6$, $u \in a_7,b_7$ and $v \in b_8,c_8$.
By setting $\Pi = (x = z_1 = 0)$, we can write
\[
M|_{\Pi} =
\begin{pmatrix}
0 & y & \alpha z_0 & t & \beta u \\
& 0 & \gamma t & u & v + \delta y^2 \\
& & 0 & \varepsilon v + \eta y^2 & \zeta z_0 y \\
& & & 0 & \lambda t y + \mu z_0^2 \\
& & & & 0
\end{pmatrix},
\]
where $\alpha,\beta,\cdots,\mu \in \mbC$.
Note that $\beta, \gamma,\varepsilon \ne 0$.
Note also that $\alpha$ is the coefficient of $z_0$ in $\ell_1$ and $\mu$ is the coefficient of $z_0^2$ in $q$.
We have $\mu \ne 0$ because otherwise $\ell_2 - \delta \ell_1 = q = 0$ has a solution $z_1 = 0$ and this is impossible by Condition \ref{cd:deg20-4} (Here, recall that $\ell_2 - \delta \ell_1 = z_1$). 
Since $\msp = \msp_y \in X$ and the coefficient of $y^3$ in $F_1|_{\Pi}$ is $\eta$, we have $\eta = 0$.
The coefficient of $y^2 z_0$ in $F_2|_{\Pi}$ is $\alpha (\zeta - \delta)$ which must be $0$ by our choice of coordinates.
Thus, we have
\[
\begin{split}
F_1|_{\Pi} &= \varepsilon y v - \alpha u z_0 + \gamma t^2, \\ 
F_2|_{\Pi} &= \beta \gamma u t - \alpha v z_0, \\
F_3|_{\Pi} &= (\lambda - \delta) t y^2 + \mu z_0^2 y - v t + \beta u^2, \\
F_4|_{\Pi} &= (\alpha \lambda - \zeta) t z_0 y + \alpha \mu z_0^3 + \beta \varepsilon v u, \\
F_5|_{\Pi} &= \gamma \lambda t^2 y + \gamma \mu t z_0^2 - \zeta u z_0 y + \varepsilon v^2 + \delta \varepsilon v y^2.
\end{split}
\]
By quasi-smoothness of $X$ at $\msp$, we have $\lambda - \delta \ne 0$.
We compute $\ord_E (z_1)$.
We see that $y^3 x$, $y^2 z_0$ and $y^2 z_1$ are the monomials of degree $13$ which have initial weight $1/4$ and $y^3 x, y^2 z_0 \notin F_2$ by our choice of coordinates, hence $\ord_E (z_1) \ge 5/4$.
It follows that $\varphi$ is realized as the embedded weighted blowup at $\msp$ with $\wt (x,z_0,z_1,t,u,v) = \frac{1}{4} (1,1,5,2,3,4) =: \mbfw$.

We first consider the general case $\alpha \ne 0$.
Set $S = (x=0) \cap X$, $T = (z_1 = 0) \cap X$ and $\Gamma = S \cap T = \Pi \cap X$.

\begin{Lem}
If $\alpha \ne 0$, then $\Gamma$ is an irreducible and reduced curve.
\end{Lem}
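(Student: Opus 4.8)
The plan is to imitate the irreducibility arguments used for the earlier singular points (for instance the $\frac{1}{5}(1,1,4)$ computation of Section~\ref{sec:deg30}): pass to an affine chart in which two of the five Pfaffian equations eliminate two coordinates, identify $\Gamma$ away from a small boundary locus with an explicit plane affine curve, prove that curve is integral, and finally check that the boundary contributes only finitely many points so it cannot carry a one-dimensional component of $\Gamma$.

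The chart to use is $v \ne 0$, where I set $v = 1$. Here the elimination is triangular: $F_2|_{\Pi} = \beta\gamma u t - \alpha v z_0$ gives $z_0 = \frac{\beta\gamma}{\alpha} u t$, and feeding this into $F_1|_{\Pi} = \varepsilon y v - \alpha u z_0 + \gamma t^2$ gives $y = \frac{\gamma t}{\varepsilon}(\beta u^2 - t)$, using $\alpha,\varepsilon \ne 0$. Thus $y$ and $z_0$ become honest polynomials in $u,t$, so the graph identifies $\Gamma \cap (v \ne 0)$ with the quotient by a finite cyclic group of a plane curve $C \subset \mbA^2_{u,t}$, cut out by the three substituted polynomials. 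Substituting into $F_4|_{\Pi}$ one finds $\bar F_4 = \beta u\, B(u,t)$ with $B = C_1 u^2 t^3 + C_2 t^4 + \varepsilon$ for explicit constants $C_1,C_2$; as in the earlier computations the three remaining equations cut out a single plane curve (the rows of $M|_{\Pi}$ giving the usual linear relations among the Pfaffians), so the one-dimensional part of $C$ is exactly $(B = 0)$, the extra locus $u = 0$ meeting $C$ in only finitely many points.

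For the boundary I would set $v = 0$, so that $F_2|_{\Pi} = \beta\gamma u t$ forces $u = 0$ or $t = 0$. If $u = 0$ then $F_1|_{\Pi} = \gamma t^2$ gives $t = 0$, and then $F_4|_{\Pi} = \alpha\mu z_0^3$ gives $z_0 = 0$ (here $\gamma,\alpha,\mu \ne 0$), leaving the single point $\msp = \msp_y$; if $t = 0$ with $u \ne 0$ then $F_1|_{\Pi} = -\alpha u z_0$ forces $z_0 = 0$ and $F_3|_{\Pi} = \beta u^2$ forces $u = 0$, a contradiction. Hence $\Gamma \cap (v = 0) = \{\msp\}$ is finite, so $\Gamma$ is the closure of $\Gamma \cap (v \ne 0)$ and it remains only to show that $C$, equivalently $(B = 0)$, is integral.

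The crux, and the step I expect to be the main obstacle, is precisely the irreducibility and reducedness of $B = C_1 u^2 t^3 + C_2 t^4 + \varepsilon$. Regarding $B$ as a quadratic in $u$ over $\mbC[t]$ with vanishing linear term, it is irreducible and squarefree as soon as $-(C_2 t^4 + \varepsilon)/(C_1 t^3)$ fails to be a square in $\mbC(t)$; this is automatic, since in lowest terms the denominator $C_1 t^3$ has odd $t$-degree and a square must have even-degree reduced denominator. The only danger is the degenerate case $C_1 = 0$, in which $B$ would split into factors $t - \text{const}$ and $\Gamma$ would break up. I therefore expect the whole lemma to hinge on the single nondegeneracy $C_1 \ne 0$, i.e. the nonvanishing of the explicit combination of structure constants displayed above; this should follow from quasi-smoothness at $\msp$ (which already supplies $\lambda \ne \delta$) together with Condition~\ref{cd:deg20-4}, and is where the generality hypothesis on $X$ is genuinely used.
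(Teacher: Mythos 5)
Your overall strategy (pass to an affine chart, eliminate two coordinates with two of the Pfaffians, reduce to a plane curve, check the boundary is finite) is exactly the paper's strategy, just in the chart $v \ne 0$ instead of the paper's chart $z_0 \ne 0$, and most of it checks out: the eliminations $z_0 = \tfrac{\beta\gamma}{\alpha}ut$, $y = \tfrac{\gamma t}{\varepsilon}(\beta u^2 - t)$ are right, the boundary computation $\Gamma \cap (v=0) = \{\msp\}$ is right, and your syzygy claim is correct and can be made precise (the row relations give, after substitution, $\bar{F}_4 = \beta u \bar{F}_5$ and $\bar{F}_3 = \varepsilon^{-1}(\beta u^2 - t)\bar{F}_5$, so the ideal of $C$ is generated by $B = \bar{F}_5$). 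Your odd-pole-order argument for irreducibility of $B$ when $C_1 \ne 0$ is also correct.

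The genuine gap is the degenerate case, and it is a double error. First, $C_1 = 0$ cannot be excluded: computing the constants, $C_1 = 0$ is (after the paper's rescaling $\alpha = 1$) the single relation $\lambda - \delta = -\beta\gamma\varepsilon\mu$, which is precisely the vanishing of the paper's constant $\theta_1 = -\bigl(\tfrac{\beta\gamma\varepsilon\mu}{\lambda-\delta}+1\bigr)$. All of $\beta,\gamma,\varepsilon,\mu,\lambda-\delta$ are nonzero, so this relation can hold; quasi-smoothness gives only $\lambda \ne \delta$, and Condition~\ref{cd:deg20-4} constrains only $\ell_1,\ell_2,q$ and says nothing about $\lambda,\beta,\gamma,\varepsilon$. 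The paper even warns ``$\theta_1$ can be $0$ while $\theta_2 \ne 0$'' and treats that case, so no extra hypothesis is available (adding one would weaken the stated theorem). Second, your prediction that in this case ``$\Gamma$ would break up'' is false: when $C_1 = 0$ the plane curve $(B=0)=(C_2t^4+\varepsilon=0)$ is indeed four disjoint lines $t = \tau_i$, but $\Gamma \cap (v \ne 0)$ is not $C$; it is the quotient of $C$ by the $\mu_8$-action $(t,u) \mapsto (\zeta^6 t, \zeta^7 u)$ coming from the weighted chart $v \ne 0$ (a structure you set up at the start and then dropped at the decisive moment). Since $\zeta^6$ runs over all fourth roots of unity, the four roots of $C_2t^4 = -\varepsilon$ form a single orbit, the four lines are permuted transitively, and the quotient --- hence $\Gamma$ --- is still irreducible and reduced. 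So the repair is a case analysis, not a nondegeneracy assumption: either make this orbit argument when $C_1 = 0$, or switch to the paper's chart, where the degeneration only produces the factorization $f_1' = t(\gamma t + \theta_2 u^3)$ whose extraneous factor $t = 0$ misses the curve (because $\mu \ne 0$), so irreducibility is immediate in both cases.
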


\begin{proof}
In this case we have $\zeta = \delta$ since $\alpha (\zeta - \delta) = 0$.
We work on the open subset $U = (z_0 \ne 0) \subset \Pi$ by setting $z_0 = 1$.
Re-scaling $z_0$, we may assume $\alpha = 1$.
By $F_2|_{\Pi} = 0$, we have $v = \beta \gamma u t$.
For a polynomial $F = F (x,y,z_0,z_1,t,u,v)$, we set $\bar{F} = F (0,y,1,0,u,\beta \gamma u t)$.
Then, by eliminating $v$, we see that $\Gamma \cap U$ is the quotient of affine scheme defined by the polynomials
\[
\begin{split}
f_1 &:= \bar{F}_1 = \beta \gamma \varepsilon u t y - u + \gamma t^2, \\
f_3 &:= \bar{F}_3 = (\lambda - \delta) t y^2 + \mu y - \beta \gamma u t^2 + \beta u^2, \\
f_4 &:= \bar{F}_4 = (\lambda - \delta) t y + \mu + \beta^2 \gamma \varepsilon u^2 t, \\
f_5 &:= \bar{F}_5 = \gamma \lambda t^2 y + \gamma \mu t - \delta u y + \beta^2 \gamma^2 \varepsilon u^2 t^2 + \beta \gamma \delta \varepsilon u t y^2.
\end{split}
\]
in $\mbA^3_{y,t,u}$.
We define
\[
\Delta = (f_1 = f_3 = f_4 = f_5 = 0) \subset \mbA^3_{y,u,t}.
\]
We have $f_3 = y f_4 - \beta u f_1$ and $f_5 = \gamma t f_4 + \delta y f_1$, which implies that $\Delta$ is defined by $f_1 = f_4 = 0$.
Set
\[
\theta = \frac{\beta \gamma \varepsilon}{\lambda - \delta} \ne 0.
\] 
and we eliminate the term $u t y$ from $f_1$, that is, we consider $f'_1 = f_1 - \theta f_4$.
Then $\Delta$ is defined by $f'_1 = f_4 = 0$.
Here we have $f_1' = \theta_1 u + \gamma t^2 + \theta_2 u^3 t$, where $\theta_1 = - (\theta \mu + 1)$ and $\theta_2 = - \beta^2 \gamma \varepsilon \theta$.
Note that $\theta_1$ can be $0$ while $\theta_2 \ne 0$.
We have $(t = 0) \cap \Delta = \emptyset$ since $\mu \ne 0$.
It follows that $\Delta$ is contained in the open subset $(t \ne 0) \subset \mbA^3$.
The projection $\mbA^3_{y,t,u} \ratmap \mbA^2_{t,u}$ induces an isomorphism $\Delta \to \Xi \cap (t \ne 0)$, where $\Xi$ is the curve in $\mbA^2_{y,u}$ defined by $f'_1 = 0$.
If $\theta_1 \ne 0$, it is clear that $\Xi$ is irreducible and reduced, and so is $\Delta$.
If $\theta_1 = 0$, then $f'_1 = t (\gamma t + \theta_2 u^3)$ and $\Xi \cap (t \ne 0)$ is defined by $\gamma t - \theta_2 u^3 = 0$.
Since $\gamma \ne 0$, $\Xi \cap (t \ne 0)$ is irreducible and reduced, and so is $\Delta$.
Therefore, $\Delta$ is irreducible and reduced, and so is $\Gamma \cap U$.

We consider $\Gamma \cap (z_0 = 0)$.
Since $F_2|_{\Pi} = \beta \gamma u t - v z_0$, we have $\Gamma \cap (z_0 = 0) = \Sigma_1 \cup \Sigma_2$, where $\Sigma_1 = \Gamma \cap (z_0 = t = 0)$ and $\Sigma_2 = \Gamma \cap (z_0 = u = 0) \cap (t \ne 0)$.
It is easy to see $\Sigma_t = \{\msp_y\}$.
We have
\[
\begin{split}
\Sigma_2 &= (z_0 = u = \varepsilon y v + \gamma t^2 = (\lambda - \delta) y^2 - v = \gamma \lambda t^2 y + \varepsilon v^2 + \delta \varepsilon v y^2 = 0) \cap (t \ne 0) \\
& = (z_0 = u = \varepsilon y v + \gamma t^2 = (\lambda - \delta) y^2 - v = 0) \cap (t \ne 0)
\end{split}
\] 
and it is straightforward to see that $\Sigma_2$ consists of $2$ points.
Therefore, $\Gamma$ is an irreducible and reduced curve.
\end{proof}

\begin{Lem}
If $\alpha \ne 0$, then $\msp$ is not a maximal centre.
\end{Lem}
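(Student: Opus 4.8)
The plan is to exclude $\msp$ via Lemma \ref{lem:exclbadC}, taking as the two surfaces the proper transforms $\tilde{S}$ and $\tilde{T}$ of $S = (x = 0) \cap X$ and $T = (z_1 = 0) \cap X$, whose scheme-theoretic intersection $\Gamma = S \cap T = \Pi \cap X$ has just been shown to be an irreducible and reduced curve. Since $\varphi$ is the embedded weighted blowup with $\wt (x,z_0,z_1,t,u,v) = \mbfw = \frac{1}{4}(1,1,5,2,3,4)$, we have $\ord_E (x) = 1/4$ and $\ord_E (z_1) = 5/4$, so that
\[
\tilde{S} \sim_{\mbQ} \varphi^* A - \tfrac{1}{4} E = B, \qquad \tilde{T} \sim_{\mbQ} 5 \varphi^* A - \tfrac{5}{4} E = 5B.
\]
Writing $\tilde{S} \sim_{\mbQ} a B + d E$ and $\tilde{T} \sim_{\mbQ} b B + e E$ with $(a,d) = (1,0)$ and $(b,e) = (5,0)$, the numerical hypotheses $a,b>0$, $0 \le e \le a_E(K_X) b$ and $ae-bd \ge 0$ of condition (1) are immediate. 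Condition (2) (that $\tilde{S} \cap \tilde{T}$ is a cycle of numerically proportional irreducible reduced curves) will follow as soon as I know $\tilde{S} \cap \tilde{T} = \tilde{\Gamma}$, since $\tilde{\Gamma}$ is then a single irreducible reduced curve.

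The crux is therefore to prove $\tilde{S} \cap \tilde{T} = \tilde{\Gamma}$, equivalently that $\tilde{S} \cap \tilde{T}$ contains no curve on $E$. For this I would use the explicit description of $E$. Setting $y = 1$ and passing to lowest $\mbfw$-weight parts, the equations $F_1$, $F_3$, $F_2$ have leading forms containing $v$, $t$, $z_1$ respectively (with nonzero coefficients $\varepsilon$, $\lambda - \delta$, and the coefficient of $y^2 z_1$), so eliminating these three coordinates yields $E \cong \mbP (1_x, 1_{z_0}, 3_u)$. Then $\tilde{S} \cap E = (x = 0) \cap E \cong \mbP (1_{z_0}, 3_u)$, while on $E$ the coordinate $z_1$ is, via $F_2^{\mbfw} = 0$, the function $\alpha v z_0 - \beta \gamma u t + (\text{terms in } x)$. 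Restricting to $x = 0$ and substituting the expressions for $t$ and $v$ forced by $F_3^{\mbfw} = 0$ and $F_1^{\mbfw} = 0$, namely $t = -\tfrac{\mu}{\lambda - \delta} z_0^2$ and $v = \tfrac{1}{\varepsilon}(\alpha u z_0 - \gamma t^2)$, I expect $z_1|_{\{x=0\}}$ to reduce to a weighted form $c_1 u z_0^2 + c_2 z_0^5$ on $\mbP (1_{z_0}, 3_u)$, where $c_2 = -\alpha \gamma \mu^2/(\varepsilon (\lambda - \delta)^2)$.

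Because $\alpha \ne 0$ (our hypothesis) together with $\gamma, \varepsilon, \mu \ne 0$ and $\lambda - \delta \ne 0$, the coefficient $c_2$ is nonzero, so $z_1|_{\{x=0\}} = z_0^2 (c_1 u + c_2 z_0^3)$ is not identically zero and its zero locus on $\mbP (1_{z_0}, 3_u) \cong \mbP^1$ is finite. Hence $\tilde{S} \cap \tilde{T} \cap E$ is a finite set and $\tilde{S} \cap \tilde{T} = \tilde{\Gamma}$. With this established, condition (3) follows from the single computation
\[
(\tilde{T} \cdot \tilde{S} \cdot \tilde{T}) = 5^2 (A^3) - \frac{5^2}{4^3} (E^3) = \frac{5}{4} - \frac{25}{12} = -\frac{5}{6} < 0,
\]
using $(A^3) = 1/20$ and $(E^3) = r^2/(a(r-a)) = 16/3$, and Lemma \ref{lem:exclbadC} then shows that $\msp$ is not a maximal centre. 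I expect the genuinely delicate point to be the finiteness of $(x = z_1 = 0) \cap E$: one must track the $x = 0$ restrictions of the eliminated coordinates through $F_1^{\mbfw}$, $F_2^{\mbfw}$, $F_3^{\mbfw}$ carefully enough to see that the $z_0^5$-coefficient survives, and it is precisely here that the assumption $\alpha \ne 0$ (and the earlier nonvanishing of $\mu$ and $\lambda - \delta$) is used.
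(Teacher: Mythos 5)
Your proposal is correct and follows essentially the same route as the paper: both exclude $\msp$ via Lemma~\ref{lem:exclbadC} with $\tilde{S} \sim_{\mbQ} B$ and $\tilde{T} \sim_{\mbQ} 5B$, reduce everything to showing $\tilde{S} \cap \tilde{T} = \tilde{\Gamma}$ by checking that $(x = z_1 = 0) \cap E$ is finite, and conclude with the same computation $(\tilde{T} \cdot \tilde{S} \cdot \tilde{T}) = \frac{5}{4} - \frac{25}{12} < 0$; your explicit elimination on $E \cong \mbP (1_x, 1_{z_0}, 3_u)$ is simply a fleshed-out version of what the paper dismisses as ``straightforward to see.'' One small remark: the finiteness step does not in fact need $\alpha \ne 0$ (when $\alpha = 0$ the coefficient $c_1$ of $u z_0^2$ is still nonzero, so the binary form remains nonzero), and the hypothesis really enters through the preceding lemma guaranteeing that $\Gamma$ is irreducible and reduced --- i.e.\ condition (2) of Lemma~\ref{lem:exclbadC} --- which you correctly take as input.
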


\begin{proof}
We will show $\tilde{S} \cap \tilde{T} = \tilde{\Gamma}$.
We have an isomorphism
\[
E \cong (F^{\mbfw}_1 = F^{\mbfw}_2 = F^{\mbfw}_3 = 0) \subset \mbP (1_x, 1_{z_0}, 5_{z_1}, 2_t,3_u,4_v).
\]
Note that $F^{\mbfw}_i|_{x=z_1=0}$ coincides with the lowest weight part of $(F_i|_{\Pi})|_{y=1}$.
Hence we have
\[
F^{\mbfw}_1 = \varepsilon v - u z_0 + \gamma t^2 + f, \ 
F^{\mbfw}_2 = \beta \gamma ut - v z_0 + g, \ 
F^{\mbfw}_3 = (\lambda - \delta) t + \mu z_0^2 + h,
\]
where $f,g,h \in (x,z_1)$.
It is straightforward to see 
\[
\tilde{S} \cap \tilde{T} \cap E = (x = z_1 = F^{\mbfw}_1 = F^{\mbfw}_2 = F^{\mbfw}_3 = 0)
\]
is a finite set of points, which implies $\tilde{S} \cap \tilde{T} = \tilde{\Gamma}$.

Finally, since $\tilde{S} \sim_{\mbQ} \varphi^*A - \frac{1}{4} E = B$ and $\tilde{T} \sim_{\mbQ} 5 \varphi^*A - \frac{5}{4} E = 5 B$, we have
\[
(\tilde{T} \cdot \tilde{S} \cdot \tilde{T}) = 5^2 (A^3) - \frac{5^2}{4^3} (E^3) = \frac{5}{4} - \frac{5^2}{12} < 0.
\]
Therefore, $\msp$ is not a maximal centre by Lemma \ref{lem:exclbadC}.
\end{proof}

Next, we consider the case $\alpha = 0$.

\begin{Lem}
If $\alpha = 0$, then $\msp$ is not a maximal centre.
\end{Lem}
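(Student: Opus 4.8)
The plan is to exclude $\msp$ using the \emph{same} pair of surfaces as in the case $\alpha \ne 0$, namely $S = (x = 0) \cap X$ and $T = (z_1 = 0) \cap X$, and to apply Lemma~\ref{lem:exclbadC}; all the work goes into showing that the intersection curve is still irreducible once $\alpha = 0$ makes $F_2|_\Pi$ degenerate. (The isolating-class method of Lemma~\ref{lem:excltc} is not available here, since $z_0$ is a genuine tangent direction of $E$ and any isolating set built from coordinates produces a nef $L$ with $(L \cdot B^2) > 0$.)

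First I would record the effect of $\alpha = 0$ on the restricted matrix. Since the coordinates were normalised so that $\ell_2 - \delta\ell_1 = z_1$, the coefficient $\zeta$ of $z_0$ in $\ell_2$ satisfies $\zeta = \delta\alpha$; hence $\alpha = 0$ forces $\zeta = 0$, so the entry $M_{35}|_\Pi = \zeta z_0 y$ vanishes. Recomputing the Pfaffians of $M|_\Pi$ then gives $F_2|_\Pi = \beta\gamma ut$ and $F_4|_\Pi = \beta\varepsilon uv$, while $F_1|_\Pi, F_3|_\Pi, F_5|_\Pi$ are as before but with every $\zeta$-term gone. Because $\beta\gamma \ne 0$, the identity $F_2|_\Pi = \beta\gamma ut$ shows that $\Gamma := \tilde S \cap \tilde T = \Pi \cap X$ lies in $(ut = 0)$.

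Next I would analyse $\Gamma$ on the two branches $t = 0$ and $u = 0$. On $t = 0$ the equation $F_1|_\Pi = \varepsilon yv = 0$ forces $v = 0$ (the locus $y = 0$ yielding only the point $\msp_{z_0}$); the vanishing $\zeta = 0$ is exactly what makes $F_4|_\Pi$ and $F_5|_\Pi$ vanish identically once $t = v = 0$, so this branch is precisely the curve
\[
C := (t = v = 0,\ \mu z_0^2 y + \beta u^2 = 0) \subset \mbP (4_y, 5_{z_0}, 7_u),
\]
which passes through $\msp = \msp_y$ and is irreducible and reduced, since $\beta u^2 + \mu z_0^2 y$ is irreducible in $\mbC [y, z_0, u]$ (as $\mu, \beta \ne 0$ and $y$ is not a square). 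On the branch $u = 0$ one is left with the three equations $F_1|_\Pi, F_3|_\Pi, F_5|_\Pi$ in $\mbP (4_y, 5_{z_0}, 6_t, 8_v)$, which I expect to cut out only finitely many points; checking this (splitting off $z_0 = 0$ and $z_0 \ne 0$) is routine. Hence $\Gamma$ has irreducible and reduced support $C$, so condition (2) of Lemma~\ref{lem:exclbadC} holds.

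Finally, the weight $\mbfw = \frac{1}{4} (1,1,5,2,3,4)$ and the isomorphism $E \cong (F^{\mbfw}_1 = F^{\mbfw}_2 = F^{\mbfw}_3 = 0) \subset \mbP (1_x, 1_{z_0}, 5_{z_1}, 2_t, 3_u, 4_v)$ are unchanged, so I would verify that $\tilde S \cap \tilde T \cap E = (x = z_1 = 0) \cap E$ is a finite set, giving $\tilde S \cap \tilde T = \tilde{\Gamma}$. Since $\tilde S \sim_{\mbQ} \varphi^* A - \frac{1}{4} E = B$ and $\tilde T \sim_{\mbQ} 5 \varphi^* A - \frac{5}{4} E = 5B$, one computes
\[
(\tilde T \cdot \tilde S \cdot \tilde T) = 25 (B^3) = 25 \left( \frac{1}{20} - \frac{1}{4^3} \cdot \frac{16}{3} \right) = 25 \left( \frac{1}{20} - \frac{1}{12} \right) = - \frac{5}{6} < 0,
\]
and Lemma~\ref{lem:exclbadC} excludes $\msp$. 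The main obstacle is the middle step: the clean parametrisation that worked for $\alpha \ne 0$ (solving $v$ from $F_2|_\Pi$) is no longer available, and one must instead use the vanishing $\zeta = 0$ to see that the $t = 0$ branch is a genuine irreducible curve through $\msp$ while the $u = 0$ branch degenerates to points.
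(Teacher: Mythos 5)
Your reduction via $F_2|_{\Pi} = \beta\gamma ut$ is correct, as is the observation that $\alpha = 0$ forces $\zeta = 0$ (the coefficient of $y^2 z_0$ in $F_2|_{\Pi}$ is $\zeta - \alpha\delta$, which vanishes by the normalisation of coordinates), and the branch $t = 0$ does give the irreducible curve $C$. The genuine gap is your claim that the branch $u = 0$ is finite: it is not. On $(u = 0)$ the three equations $F_1|_{\Pi}, F_3|_{\Pi}, F_5|_{\Pi}$ are dependent; precisely, with $\alpha = \zeta = 0$ one has the identity
\[
y\, F_5|_{\Pi} \;=\; \gamma t \, F_3|_{\Pi} + (v + \delta y^2)\, F_1|_{\Pi} \qquad \text{on } (u = 0),
\]
so where $y \ne 0$ the locus is cut out by $F_1|_{\Pi} = F_3|_{\Pi} = 0$ alone. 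In the chart $y = 1$ this is the irreducible curve $C'$ given by $v = -\gamma t^2/\varepsilon$ and $\mu z_0^2 + (\lambda - \delta) t + (\gamma/\varepsilon) t^3 = 0$, and $C'$ passes through $\msp$. Hence the support of $\Gamma$ is $C \cup C'$, two distinct irreducible curves, and your verification of condition (2) of Lemma \ref{lem:exclbadC} collapses. The approach is salvageable, but only after real extra work: one must prove that $\tilde{C}$ and $\tilde{C}'$ are numerically proportional. They in fact are: $\tilde{S} \cdot \tilde{T} = \tilde{C} + \tilde{C}'$ with $(\varphi^*A \cdot \tilde{C}, E \cdot \tilde{C}) = (1/10, 2/3)$ and $(\varphi^*A \cdot \tilde{C}', E \cdot \tilde{C}') = (3/20, 1)$, so $\tilde{C} \equiv \frac{2}{3} \tilde{C}'$ and Lemma \ref{lem:exclbadC} then applies; but computing $(E \cdot \tilde{C}) = 2/3$ requires analysing the branches at $\msp$ on the Kawamata blowup ($C$ is locally irreducible at $\msp$, and its orbifold lift is a pair of lines whose proper transforms pass through the $\frac{1}{3}$-singular point of $E$), which is exactly the kind of verification your write-up treats as routine and skips.

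It is also worth noting that the paper's own proof is the method you dismissed in your opening parenthesis. You are right that no isolating set built from \emph{coordinates} gives a useful nef divisor, but the paper instead uses the degree-$10$ polynomial $s = \theta y^2 x^2 + (\lambda - \delta) t y + \mu z_0^2$, chosen so that $y s$ is the lowest $\mbfw$-weight part of $F_3$; then $\ord_E (s) \ge 6/4$, the set $\{x, z_1, s\}$ isolates $\msp$ (a check that uses precisely your identities $F_2|_{\Pi} = \beta\gamma ut$ and $F_4|_{\Pi} = \beta\varepsilon uv$), and Lemma \ref{lem:isolnef} gives the nef divisor $L = 10 \varphi^* A - \frac{6}{4} E$ with $(L \cdot B^2) = 10 (A^3) - \frac{6}{4^3} (E^3) = \frac{1}{2} - \frac{1}{2} = 0$, so Lemma \ref{lem:excltc} excludes $\msp$. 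That route avoids the delicate two-component curve geometry entirely.
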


\begin{proof}
We see that $y^3 x^2$, $y^2 t$ and $y z_0^2$ are the only monomials of degree $14$ having $\mbfw$-weight $\frac{2}{4}$.
Note that the coefficients of $t y^2$ and $z_0^2 y$ in $F_3$ are $\lambda - \delta$ and $\mu$, respectively, and let $\theta$ be the coefficient of $y^3 x^2$ in $F_3$.
We set $s = \theta y^2 x^2 + (\lambda - \delta) t y + \mu z_0^2$.
Since the monomials in $F_3$ other than $y^3 x^2$, $t y^2$ and $z_0^2 y$ have $\mbfw$-weight greater than $2/4$, we have $\ord_E (s) \ge 6/4$.

We will show that $\{x,z_1,s\}$ isolates $\msp$.
It is enough to show that 
\[
\Sigma := (s = F_1|_{\Pi} = \cdots = F_5|_{\Pi} = 0) \cap \Pi^{\circ} \subset \Pi^{\circ}
\] 
is a finite set of points, where $\Pi^{\circ} = \Pi \cap (y \ne 0)$.
For a subset $\Xi$ of $\Pi$ and monomials $g_1,\dots,g_k$, we define $\Xi_{g_1,\dots,g_k} = \Xi \cap (g_1 = \cdots = g_k = 0)$.
We claim $\Sigma^{\circ} := \Sigma \cap (u \ne 0) = \emptyset$.
We have $\Sigma^{\circ} = \Sigma^{\circ}_t$ since $F_2|_{\Pi} = \beta \gamma u t$.
Then we see $\Sigma^{\circ} = \emptyset$ since $F_3|_{\Pi} = s|_{\Pi} - v t + \beta u^2$ and $u \ne 0$ on $\Sigma^{\circ}$.
This implies $\Sigma = \Sigma_u$.
We have $F_3|_{\Pi'} = s|_{\Pi'} - v t$, hence $F_3|_{\Sigma} = - v t$.
Thus $\Sigma = \Sigma_u = \Sigma_{u,v} \cup \Sigma_{u,t}$.
Since $F_1|_{\Pi_u} = \varepsilon y v + \gamma t^2$, we have $\Sigma_{u,v} \subset \Sigma_{u,t}$.
This shows $\Sigma = \Sigma_{u,t}$ and it is defined in $\Pi_{u,t}$ by the equations
\[
\mu z_0^2 
= \varepsilon y v 
= \varepsilon v^2 + \delta \varepsilon v y^2 = 0.
\]
It is now straightforward to see $\Sigma = \{\msp\}$.

Now, since $\ord_E (x,z_1,s) \ge \frac{1}{4} (1,5,6)$, we see that $L = 10\varphi^*A - \frac{6}{4} E$ is nef by Lemma \ref{lem:isolnef} and we have
\[
(L \cdot B^2) = 10 (A-3) - \frac{6}{4^3} (E^3) = \frac{1}{2} - \frac{1}{2} = 0.
\]
Therefore, $\msp$ is not a maximal centre by Lemma \ref{lem:excltc}.
\end{proof}

\subsection{The $\frac{1}{5} (1,2,3)$ point and birational involution}

Let $\msp \in X$ be the point of type $\frac{1}{5} (1,2,3)$.
We assume $\msp = \msp_{z_1}$ after replacing $z_0$ and $z_1$.
We have $u \in a_6, b_6$ and $v \in a_7, b_7$ since $\msp_u, \msp_v \notin X$.
Since $\msp$ is of type $\frac{1}{5} (1,2,3)$, we have $z_1^2 y \in F_3$ and $z_1^2 z_0 \in F_4$.
By $z_1^2 y \in F_3 = a_4 d_{10} - a_6 b_8 + a_7 b_7$, we have $y \in a_4$ and $z_1^2 \in d_{10}$.
It follows that $z_1^2 t \in F_5 = b_6 d_{10} - b_7 c_9 + b_8 c_8$.
Thus $\varphi$ is the weighted blowup with weight $\wt (x,u,v) = \frac{1}{5} (1,2,3)$.
By Lemma \ref{lem:elim}, we can assume that $z_1^2 t$ is the unique monomial in $F_5$ divisible by $z^2$.
We see that $z_1^3 x$ and $z_1^2 t$ are all the monomials of degree $16$ having initial weight $\frac{1}{5}$.
By our choice of coordinates, $z_1^3 x \notin F_5$, hence $\wt (x,y,z_0,t,u,v) = \frac{1}{5} (1,4,5,6,2,3) =: \mbfw$ satisfies the KBL condition.

Let $\pi \colon X \ratmap \mbP := \mbP (1,4,5,6)$ be the projection to the coordinates $x,y,z_0,t$.
We have
\[
F_3 (0,0,0,z_1,0,u,v) = \lambda u^2, \ 
F_5 (0,0,0,z_1,0,u,v) = \mu v^2,
\]
for some $\lambda, \mu \in \mbC \setminus \{0\}$ since $u \in a_6,b_6$ and $v \in a_7 b_7$.
Hence we have $(x = y = z_0 = t = 0) \cap X = \{\msp\}$, which implies that $\pi$ is defined outside $\msp$.
Let $\pi_Y \colon Y \ratmap \mbP$ be the induced rational map.
We take $H \in |\mcO_{\mbP} (1)|$.

\begin{Lem}
The map $\pi_Y$ is a surjective generically finite morphism of degree $2$ such that $B = \pi_Y^*H$.
\end{Lem}

\begin{proof}
First, we show that $\pi_Y$ is everywhere defined.
It is enough to show that $\pi_Y$ is defined at every point of $E$.
We see that $\varphi$ is realized as the embedded weighted blowup at $\msp$ with weight $\mbfw$ and we have an isomorphism
\[
E \cong (F^{\mbfw}_3 = F^{\mbfw}_4 = F^{\mbfw}_5 = 0) \subset \mbP (1_x,4_y,5_{z_0},6_t,2_u,3_v).
\]
The indeterminacy locus of $\pi_Y$ is the set $(x = y = z_0 = t = 0) \cap E$. 
We see that $F^{\mbfw}_3 = y + \alpha u^2 + g_3$, $F^{\mbfw}_4 = z_0 + \beta v u + g_4$ and $F^{\mbfw}_5 = t + \gamma v^2 + g_5$, where $g_3, g_4, g_5 \in (x,y,z_0,t)$, $y \notin g_3$, $z_0 \notin g_4$, $t \notin g_5$ and $\alpha, \beta, \gamma \ne 0$.
Hence, the set $(x = y = z_0 = t = 0) \cap E$ is empty, which shows that $\pi_Y$ is a morphism.

By the construction, $\pi_Y^*H$ is the proper transform of $(x = 0) \cap X$ via $\varphi$, which is $B$ since $\ord_E (x) = 1/5$.
We have $(H^3) = 1/120$ and 
\[
(B^3) = (A^3) - \frac{1}{5^3} (E^3) = \frac{1}{20} - \frac{1}{30} = \frac{1}{60}.
\]
This implies that $\pi_Y$ is a surjective generically finite morphism of degree $2$.
\end{proof}

\begin{Prop} \label{prop:deg20-birinv}
One of the following holds.
\begin{enumerate}
\item $\msp$ is not a maximal centre.
\item There is a birational involution $\sigma \colon X \ratmap X$ which is a Sarkisov link centred at $\msp$.
\end{enumerate}
\end{Prop}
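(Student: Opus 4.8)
The plan is to produce the required involution as the Galois (deck) transformation of the degree-two map $\pi_Y \colon Y \to \mbP := \mbP(1,4,5,6)$ constructed in the previous lemma, and then to run the two-ray game on $Y$ in order to decide whether this transformation delivers a genuine Sarkisov self-link or instead lets us exclude $\msp$.

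First I would build the candidate involution. Since $\pi_Y$ is a surjective generically finite morphism of degree two, the extension $\mbC(Y)/\pi_Y^*\mbC(\mbP)$ is separable of degree two, hence Galois; let $\tau \colon Y \dashrightarrow Y$ be its nontrivial element and set $\sigma := \varphi \circ \tau \circ \varphi^{-1} \colon X \dashrightarrow X$, a birational involution. Because $B = \pi_Y^* H$ is pulled back from the base, $\tau^* B = B$, so $\tau$ preserves $-K_Y = B$, and any component of its non-biregular locus must be swept out by curves $C$ with $(B\cdot C) = (H\cdot(\pi_Y)_* C) = 0$.

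Next I would use $\rho(Y)=2$ to analyse the two-ray game. One boundary ray of the nef cone is spanned by $\varphi^*A$ and realises $\varphi \colon Y \to X$; the opposite ray is governed by $B$, and the second step of the game is the $B$-contraction $\pi_Y$. The decisive point is its nature. Using the explicit model $E \cong (F^{\mbfw}_3 = F^{\mbfw}_4 = F^{\mbfw}_5 = 0) \subset \mbP(1_x,4_y,5_{z_0},6_t,2_u,3_v)$ together with the restriction $\pi_Y|_E \colon E \to (y t = z_0^2) \subset \mbP$, I would pin down the $B$-trivial locus and verify that for general $X$ the contraction $\pi_Y$ is \emph{small}, contracting only finitely many curves to points. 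Granting this, $\tau$ is an isomorphism in codimension one and flops exactly these curves, so the link
\[
X \xleftarrow{\ \varphi\ } Y \overset{\tau}{\ratmap} Y \xrightarrow{\ \varphi\ } X
\]
closes up; by the uniqueness of the Kawamata blowup (\cite{Kawamata}) the right-hand contraction is again $\varphi$, whence $\sigma$ is a Sarkisov link of type $\II$ centred at $\msp$ and a nontrivial birational involution. This is conclusion (2), and it is also the source of rigidity (rather than super-rigidity) for this family.

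Finally, the complementary situation is that the $B$-contraction fails to be small for the given $X$, i.e.\ the fibre of $\pi_Y$ jumps to positive dimension along a curve in the image, so that an entire surface becomes $B$-trivial. In that case I would invoke Lemma \ref{lem:exclbadC}: take $S = \tilde{(x=0)} \sim_{\mbQ} B$ and let $T$ be the proper transform of $\pi^{-1}(yt = z_0^2)$, so that every component $C$ of $\Gamma = S\cap T$ lies in the $B$-trivial locus and hence $(T \cdot \Gamma) = 0$; the lemma then yields that $\msp$ is not a maximal centre, which is conclusion (1). I expect the main obstacle to be precisely this middle step — reading off from the syzygy matrix $M$ the exact $B$-trivial locus of $\pi_Y$ and proving the clean dichotomy between ``finitely many flopping curves'' and ``a contracted surface'' — since it is here that the geometry of the double cover, and thus whether $\sigma$ genuinely modifies $X$, is decided.
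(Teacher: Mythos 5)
Your skeleton — factor the degree-two map $\pi_Y$ and split according to whether the induced birational contraction is small (giving the quadratic involution) or divisorial (giving exclusion) — is the same as the paper's, which takes the Stein factorization $\pi_Y = \pi_Z \circ \psi$ and quotes \cite[Lemma 3.2]{OkadaII}. However, your case division is incomplete in a way that matters. Besides ``finitely many contracted curves'' and ``a contracted surface'' there is a third possibility: $\psi$ contracts \emph{nothing}, i.e.\ $B$ is ample and $Y$ is itself a double cover of $\mbP(1,4,5,6)$. In that case your deck transformation $\tau$ is biregular, there are no flopping curves, and neither of your branches yields conclusion (1) or (2): for instance, if $\tau(E)=E$ then $\sigma = \varphi\circ\tau\circ\varphi^{-1}$ extends to an automorphism of $X$, which is not a Sarkisov link, and nothing excludes $\msp$ either. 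The paper devotes a separate statement, Lemma \ref{lem:2cover}, precisely to ruling this out: if $Y$ were such a double cover it would be isomorphic to a hypersurface in some $\mbP(b_0,\dots,b_3,d)$ with only terminal quotient singularities, hence quasi-smooth, hence of Picard number one by \cite[Theorem 3.2.4]{Dolgachev}, contradicting $\rho(Y)=2$. This non-formal step is absent from your proposal, and your fallback of verifying smallness only ``for general $X$'' does not repair it, since the proposition carries no generality hypothesis.

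Your handling of the divisorial case also fails, and one can see this numerically. With $\msp=\msp_{z_1}$ one has $\ord_E(y)\ge 4/5$, $\ord_E(t)\ge 6/5$, $\ord_E(z_0)\ge 1$, so $\ord_E(yt-z_0^2)\ge 2$; writing $T \sim_{\mbQ} 10B + eE$ for your second surface, this forces $e = 2-\ord_E(yt-z_0^2) \le 0$. If $e<0$, hypothesis (1) of Lemma \ref{lem:exclbadC} fails outright; if $e=0$, then $T\sim_{\mbQ} 10B$ and
\[
(T\cdot S\cdot T) = 100\,(B^3) = \frac{100}{60} > 0,
\]
so hypothesis (3) can never hold. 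Relatedly, your claim that every component of $\Gamma = S\cap T$ is $B$-trivial is impossible, since $(B\cdot S\cdot T) = 10\,(B^3) > 0$. The correct mechanism in the divisorial case is different: a $B$-trivial divisorial contraction sweeps out infinitely many $B$-trivial curves, and each such curve meets $E$ positively (from $B=\varphi^*A-\tfrac{1}{5}E$, a $B$-trivial curve cannot lie in $E$, so $(E\cdot C) = 5(\varphi^*A\cdot C)>0$); this contradicts maximality of the extraction by \cite[Lemma 2.20]{OkadaII} — the same argument the paper spells out in the proof of Lemma \ref{lem:divcontmob}. So both branches of your dichotomy need to be replaced by the cited results, and the missing third case needs Lemma \ref{lem:2cover}.
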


\begin{proof}
We take the Stein factorization of $\pi_Y$ and let $\psi \colon Y \to Z$ be the birational morphism, $\pi_Z \colon Z \to \mbP$ be the double cover such that $\pi_Y = \pi_Z \circ \psi$.
By Lemma \ref{lem:2cover} below, $\psi$ is not an isomorphism.
Thus, by \cite[Lemma 3.2]{OkadaII}, either (1) or (2) happen depending on whether $\psi$ is divisorial or small.
\end{proof}

We use the following result in the above proof.

\begin{Lem} \label{lem:2cover}
Let $X$ be a $\mbQ$-Fano $3$-fold embedded in a weighted projective space $\mbP (a_0,\dots,a_n)$.
Suppose that $X$ is quasi-smooth and let $\varphi \colon Y \to X$ be the Kawamata blowup of $X$ at a terminal quotient singular point $\msp \in X$.
Then $Y$ cannot be a double cover of any weighted projective $3$-space.
\end{Lem}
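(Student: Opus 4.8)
The plan is to argue by contradiction. Suppose that there is a finite degree-two morphism $\pi \colon Y \to W$ onto a weighted projective $3$-space $W = \mbP (b_0,b_1,b_2,b_3)$. Since a double cover is automatically Galois, I would first record the associated involution $\tau \in \Aut (Y)$ with $W = Y / \langle \tau \rangle$. The whole strategy is to turn $\tau$ into a \emph{second} extremal divisorial extraction over the point $\msp$, which then collides head-on with the uniqueness of the Kawamata blowup at a terminal quotient singularity.

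First I would pin down the relevant ranks. As $X$ is a quasi-smooth $\mbQ$-Fano $3$-fold in a weighted projective space, $\rank \Cl (X) = 1$, and since $\varphi$ is a divisorial contraction this forces $\rho (Y) = 2$, while $\rho (W) = 1$. For the Galois cover the pullback identifies $N^1 (W)_{\mbQ}$ with the $\tau$-invariant subspace $(N^1 (Y)_{\mbQ})^{\tau}$, the operator $\tfrac12 (1 + \tau^\ast)$ being the projection to invariants. Hence the invariant part of the two-dimensional space $N^1 (Y)_{\mbQ}$ is one-dimensional, so $\tau^\ast$ cannot act trivially. An involution acting nontrivially on $N^1 (Y)_{\mbQ}$ and preserving $\bNE (Y)$ must interchange the two extremal rays, for if it fixed both rays it would act as the identity. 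I would then write $R_1$ for the ray contracted by $\varphi$ and set $R_2 = \tau (R_1)$.

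Next I would analyse the morphism $\varphi \circ \tau \colon Y \to X$. A curve $C$ is contracted by it exactly when $\tau (C) \in R_1$, that is when $C \in \tau^{-1}(R_1) = R_2$; thus $\varphi \circ \tau$ is the extremal contraction of $R_2$, its exceptional divisor is $E' := \tau (E)$, and it sends $E'$ to $(\varphi \circ \tau)(E') = \varphi (E) = \msp$. Because $K_Y$ is $\tau$-invariant, the discrepancy of $E'$ over $X$ equals that of $E$, namely $1/r$, so $\varphi \circ \tau$ is an extremal divisorial extraction in the Mori category centred at the terminal quotient singular point $\msp$. By the uniqueness of such an extraction \cite{Kawamata}, the divisorial valuations $\ord_E$ and $\ord_{E'}$ on $\mbC (Y) = \mbC (X)$ would have to coincide. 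On the other hand $E \neq E'$: if $\tau (E) = E$ then $E$ would be negative on both $R_1$ and $R_2$, which is impossible for an effective divisor since it would then be negative on an ample class. As distinct prime divisors give distinct valuations, $\ord_E \neq \ord_{E'}$, the desired contradiction. I expect the crux to be the middle step: extracting the involution from the cover and using the rank computation to guarantee that $\tau$ swaps the two rays rather than fixing them. Once $\varphi \circ \tau$ is recognised as a rival Kawamata extraction over $\msp$, Kawamata's uniqueness closes the argument immediately.
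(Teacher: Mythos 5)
The first two thirds of your argument are fine: $\rho(Y)=2$, the Galois involution $\tau$ of the double cover acts nontrivially on $N^1(Y)_{\mbQ}$, it must therefore swap the two boundary rays of $\bNE(Y)$, and $\varphi\circ\tau$ is then a second extremal divisorial contraction, with exceptional divisor $E'=\tau(E)\neq E$, mapping $E'$ to $\msp$ with discrepancy $1/r$. The fatal step is the last one. Kawamata's uniqueness theorem says that any two extremal divisorial extractions centred at $\msp$ are isomorphic \emph{over} $X$, equivalently that they extract the same valuation of the function field $\mbC(X)$ --- where each exceptional divisor must be read through its \emph{own} structure morphism. The valuation extracted by $(Y,E',\varphi\circ\tau)$ is $f\mapsto \ord_{E'}\bigl((\varphi\circ\tau)^{*}f\bigr)=\ord_{E'}\bigl(\tau^{*}\varphi^{*}f\bigr)=\ord_{\tau(E')}\bigl(\varphi^{*}f\bigr)=\ord_{E}\bigl(\varphi^{*}f\bigr)$, i.e.\ exactly the valuation extracted by $(Y,E,\varphi)$. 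Indeed the two extractions are isomorphic over $X$ via the isomorphism $\tau$ itself, since $\varphi\circ\tau=(\varphi\circ\tau)$. So Kawamata's theorem is satisfied, not violated: your claim that uniqueness forces ``$\ord_E=\ord_{E'}$ on $\mbC(Y)=\mbC(X)$'' silently uses the identification $\varphi^{*}$ of $\mbC(X)$ inside $\mbC(Y)$ for \emph{both} extractions, whereas the second one comes with the identification $(\varphi\circ\tau)^{*}=\tau^{*}\circ\varphi^{*}$. The configuration you have produced --- a $\rho=2$ Fano $Y$ with an involution interchanging two divisorial rays, both contracting to $X$ with centre $\msp$ --- is precisely a Sarkisov self-link of type II without flops, and nothing in \cite{Kawamata} forbids it. So there is no contradiction, and the strategy cannot be repaired by sharpening this step.

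The paper gets the contradiction from a different source, still starting from $\rho(Y)=2$ but never using the Galois involution: a double cover $\pi\colon Y\to\mbP(b_0,\dots,b_3)$ branched over $(f=0)$ realizes $Y$ as the weighted hypersurface $(y^2-f=0)\subset\mbP(b_0,\dots,b_3,d)$, $2d=\deg f$; since $Y$ has only terminal quotient singularities this hypersurface is quasi-smooth, and then Dolgachev's theorem \cite{Dolgachev} forces its Picard number to be $1$, contradicting $\rho(Y)=2$. In other words, the correct obstruction is a Lefschetz-type statement about quasi-smooth hypersurfaces in weighted projective space, not the uniqueness of the divisorial extraction.
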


\begin{proof}
Assume that there is a double cover $\pi \colon Y \to \mbP:= \mbP (b_0,\dots,b_3)$.
Let $D \subset \mbP$ be the branched divisor and $f$ the defining polynomial of $D$.
Then $Y$ is isomorphic to the weighted hypersurface $Z := (y^2 - f = 0) \subset \mbP (b_0,\dots,b_3,d)$, where $2 d = \deg f$ and $d = \deg y$. 
Since $X$ is quasi-smooth and $\varphi$ is a Kawamata blowup, we see that $Y$ has only (terminal) quotient singularities, and so is $Z \cong Y$.
This implies that $Z$ is quasi-smooth and this implies that the Picard number of $Z$ is one (see \cite[Theorem 3.2.4]{Dolgachev}).
This is a contradiction since the Picard number of $Y$ is $2$.
\end{proof}

\section{Pfaffian Fano $3$-fold of degree $1/12$} \label{sec:deg12}

Let $X = X_{10,11,12,13,14} \subset \mbP (1_x,3_y,4_z,5_{t_0},5_{t_1},6_u,7_v)$ be a Pfaffian Fano $3$-fold of degree $1/12$.
The main aim of this section is to prove that there is a Sarkisov link centred at the $\frac{1}{5} (1,2,3)$ point to a Mori fiber space other than $X$.
This implies that $X$ is not birationally rigid.
Unfortunately we are unable to construct an explicit link.
Instead, we will show that the Kawamata blowup at the $\frac{1}{5} (1,2,3)$ admits a flop (and thus there is a link to a Mori fiber space) and then derive a contradiction assuming the target of the link is isomorphic to $X$.
To do this, we need to exclude or untwist the other centres, so we will exclude singular points of type $\frac{1}{3} (1,1,2)$ and construct a Sarkisov link centred at the $\frac{1}{5} (1,1,4)$ point which is a birational involution.
The syzygy matrix of $X$ and the defining polynomials are given as follows:
\[
M =
\begin{pmatrix}
0 & a_3 & a_4 & a_5 & a_6 \\
& 0 & b_5 & b_6 & b_7 \\
& & 0 & c_7 & c_8 \\
& & & 0 & d_9 \\
& & & & 0
\end{pmatrix}
\hspace{1.5cm}
\begin{aligned}
F_1 &= a_3 c_7 - a_4 b_6 + a_5 b_5 \\ 
F_2 & = a_3 c_8 - a_4 b_7 + a_6 b_5 \\ 
F_3 &= a_3 d_9 - a_5 b_7 + a_6 b_6 \\ 
F_4 &= a_4 d_9 - a_5 c_8 + a_6 c_7 \\
F_5 &= b_5 d_9 - b_6 c_8 + b_7 c_7
\end{aligned}
\]
The basket of singularities of $X$ is as follows
\[
\left\{ 2 \times \frac{1}{3} (1,1,2), \frac{1}{4} (1,1,3), \frac{1}{5} (1,1,4), \frac{1}{5} (1,2,3) \right\}.
\]

We have $u \in a_6, b_6$ and $v \in b_7, c_7$ since $\msp_u, \msp_v \notin X$.

\subsection{Exclusion of the $\frac{1}{4} (1,1,3)$ point}

Let $\msp = \msp_z$ be the point of type $\frac{1}{4} (1,1,3)$.
For the entries $a_5, b_5,d_9$ of the syzygy matrix $M$, we write $a_5 = \ell_1 + (\text{other terms})$, $b_5 = \ell_2 + (\text{other terms})$ and $d_9 = z \ell_3 + (\text{other terms})$, where $\ell_i = \ell_i (t_0,t_1)$ is a linear form.
We see that the solutions of $x = y = z = u = v = \ell_1 \ell_2 = 0$ corresponds to the $\frac{1}{5} (1,1,4)$ and $\frac{1}{5} (1,2,3)$, so that $\ell_1 \ne 0$ and $\ell_2$ are not proportional.
We assume $z \in a_4$.
Then we can assume that the coefficient of $z$ in $a_4$ is $1$ by re-scaling $z$ and let $\varepsilon \in \mbC$ be the coefficient of $z^2$ in $c_8$.

\begin{Lem} \label{lem:deg12-4ell}
We have $\ell_3 \ne 0$ and $\ell_1, \ell_3$ are not proportional.
\end{Lem}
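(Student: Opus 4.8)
The plan is to deduce the lemma from quasi-smoothness of $X$ at the $\frac{1}{5}(1,2,3)$ point, which I first locate precisely. Restricting the five Pfaffians to the $t$-line $(x=y=z=u=v=0)\cong\mbP^1_{t_0,t_1}$, every entry of $M$ whose degree is not divisible by $5$ vanishes, so that only the two degree-$5$ entries survive: there $a_5\mapsto\ell_1$ and $b_5\mapsto\ell_2$, giving $F_1\mapsto\ell_1\ell_2$ and $F_2\mapsto\cdots\mapsto F_5\mapsto 0$. Hence $X$ meets this $\mbP^1$ exactly in the two (distinct) roots of $\ell_1\ell_2$, which by the paragraph above are the points of type $\frac{1}{5}(1,1,4)$ and $\frac{1}{5}(1,2,3)$. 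Write $P_1=(\ell_1=0)$ and $P_2=(\ell_2=0)$; after a coordinate change on $t_0,t_1$ I may take $P_1=\msp_{t_1}$ with $\ell_1=c\,t_0$. I will show that quasi-smoothness of $X$ at $P_1$ forces $\ell_3$ not to vanish at $P_1$.

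The heart of the argument is a linear-part computation in the orbifold chart at $P_1$ (set $t_1=1$). In the remaining local coordinates $x,y,z,t_0,u,v$, a product of two entries contributes a linear term only when one factor is a nonzero constant at $P_1$ and the other is linear. Every entry other than $a_5,b_5$ vanishes at $P_1$ (no pure-$t_1$ monomial of the relevant degree), and $a_5(P_1)=0$ since $\ell_1$ vanishes there, while $b_5(P_1)=\ell_2(P_1)\ne 0$. A direct check then gives: from $a_5b_5\subset F_1$ a linear part proportional to $t_0$; from $a_6b_5\subset F_2$ one proportional to $u$ (using $u\in a_6$); and from $b_5d_9\subset F_5$ one proportional to $z$, whose coefficient is $\ell_2(P_1)$ times the coefficient of $z\,t_1$ in $d_9$, i.e. times the value of $\ell_3$ at $P_1$. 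Crucially, both $F_3$ and $F_4$ vanish to order $\ge 2$ at $P_1$: in each of their three product terms both factors vanish there.

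Finally I invoke quasi-smoothness: the Jacobian of $(F_1,\dots,F_5)$ at $P_1$ must have rank equal to the codimension $3$. Since $F_1,F_2$ supply only the two independent directions $t_0,u$ and $F_3,F_4$ supply none, the third direction is forced to come from $F_5$, which happens precisely when its linear part $\propto\ell_3(P_1)\,z$ is nonzero; thus $\ell_3(P_1)\ne 0$. (Eliminating $t_0,u,z$ leaves the tangent coordinates $x,y,v$ of residues $1,3,2\bmod 5$, confirming $P_1$ is the $\frac{1}{5}(1,2,3)$ point; the analogous computation at $P_2$, where $F_2,F_5$ drop out and $F_3,F_4$ survive, leaves $x,z,u$ of residues $1,4,1$, the $\frac{1}{5}(1,1,4)$ point.) Because $\ell_1$ has its unique root at $P_1$, the condition $\ell_3(P_1)\ne 0$ says exactly that $\ell_3\ne 0$ and that $\ell_3$ is not a scalar multiple of $\ell_1$, i.e.\ $\ell_1\not\sim\ell_3$. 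The only real obstacle is the careful determination of these linear parts in the orbifold chart, and in particular the verification that $F_3$ and $F_4$ contribute nothing, since it is precisely the resulting indispensability of $F_5$ that pins the coefficient of $z\,t_1$ in $d_9$, hence $\ell_3$ at $P_1$, away from zero.
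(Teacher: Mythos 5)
Your proof is correct and is in essence the paper's own argument: both come down to the observation that, at the $\frac{1}{5}(1,2,3)$ point, the only Pfaffian that can carry a linear term in $z$ is $F_5$, via $b_5 d_9 \supset z\,\ell_2\ell_3$, so quasi-smoothness (Jacobian rank $3$) at that point forces $\ell_3$ to be nonzero there, which is exactly $\ell_3 \ne 0$ and $\ell_3 \not\sim \ell_1$. Two harmless remarks: the linear part of $F_2$ at $P_1$ need not be proportional to $u$ (a monomial $t_1 x$ in $a_6$ can contribute an $x$-component), but this does not affect your rank count since that linear part still lies in $\langle dx, du\rangle$; and your writeup, which correctly places the contradiction at the $\frac{1}{5}(1,2,3)$ point, in effect repairs the typos in the paper's proof, which writes $F_3$ and $\msq_2$ where $F_5$ and $\msq_1$ are meant.
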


\begin{proof}
We have 
\[
F_1 = \ell_1 \ell_2 + \cdots, \ 
F_3 = - \ell_1 v + \cdots, \ 
F_4 = z^2 \ell_3 - \delta z^2 \ell_1 \cdots, \ 
F_5 = z \ell_2 \ell_3 + \cdots.
\]
Let $\msq_1$ and $\msq_2$ be the singular points corresponding to the solutions $\ell_1 = 0$ and $\ell_2 = 0$ respectively.
We see that $\msq_2$ is of type $\frac{1}{5} (1,1,4)$ since $F_3 = \ell_1 v + \cdots$, hence $\msq_1$ is of type $\frac{1}{5} (1,2,3)$.
Assume that $\ell_3 = \theta \ell_1$ for some $\theta \in \mbC$.
Then $F_3 = \theta z \ell_1 \ell_2 + \cdots$ and this implies that $(\prt F/\prt z) (\msq_2) = 0$.
This is a contradiction since $\msq_2$ is of type $\frac{1}{5} (1,1,4)$ and the proof is completed.
\end{proof}

We exclude the point $\msp$ assuming the following:

\begin{Cond} \label{cd:deg12-4}
We have $z \in a_4$ and, under the above choice of coordinates, $\ell_3 - \delta \ell_1 \not\sim \ell_2$.
\end{Cond}

We have $u^2 \in F_3$ and $t^2 \in F_5$ since $\msp_u, \msp_v \notin X$, which implies $u \in a_6, b_6$ and $v \in b_7$ and $c_7$.
We have $F_4 = z^2 (\ell_3 - \varepsilon \ell_1) + \cdots$ and $\ell_3 - \varepsilon \ell_1 \ne 0$ by Lemma \ref{lem:deg12-4ell}. 
Replacing $t_0$ and $t_1$, we may assume $\ell_3 - \delta \ell_1 = t_0$.
By Lemma \ref{lem:elim}, after further replacing $t_0$, we can assume that $z^2 t_0$ is the unique monomial in $F_4$ which is divisible by $z^2$.
Set $\Pi = (x = y = t_0 = 0)$.
Then the restriction of $M$ and defining polynomials on $\Pi$ can be written as follows:
\[
M|_{\Pi} =
\begin{pmatrix}
0 & 0 & z & \alpha t_1 & u \\
& 0 & \beta t_1 & \gamma u & v \\
& & 0 & \delta v & \varepsilon z^2 \\
& & & 0 & \zeta z t_1 \\
& & & & 0
\end{pmatrix}
\hspace{1.5cm}
\begin{aligned}
F_1|_{\Pi} &= - \gamma z u + \alpha \beta t_1^2 \\ 
F_2|_{\Pi} &= - z v + \beta u t_1 \\ 
F_3|_{\Pi} &= - \alpha t_1 v + \gamma u^2 \\
F_4|_{\Pi} &= (\zeta - \alpha \varepsilon) z^2 t_1 + \delta u v \\ 
F_5|_{\Pi} &= \beta \zeta z t_1^2 - \gamma \varepsilon u z^2 + \delta v^2.
\end{aligned}
\]
for some $\alpha,\beta,\dots,\zeta \in \mbC$ with $\gamma, \delta \ne 0$.
By our choice of coordinates, we have $z^2 t_1 \notin F_4$, that is, $\zeta - \alpha \varepsilon = 0$.

\begin{Lem}
The point $\msp$ of type $\frac{1}{4} (1,1,3)$ is not a maximal centre.
\end{Lem}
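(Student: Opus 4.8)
The plan is to exclude $\msp = \msp_z$ by the isolating–divisor method: I will show that $\{x,y,t_0\}$ isolates $\msp$ and then combine Lemma \ref{lem:isolnef} with Lemma \ref{lem:excltc}. The whole point of the coordinate choices made above (arranging that $z^2t_0$ is the \emph{unique} monomial of $F_4$ divisible by $z^2$) is that this forces $t_0$ to vanish along $E$ to the high order $5/4$, which is exactly what makes the final intersection number nonpositive; a generic degree-$5$ section would only give $\ord_E = 1/4$ and the computation would fail.

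First I would verify the isolation, i.e.\ that $\Pi\cap X=(x=y=t_0=0)\cap X=\{\msp\}$ set-theoretically. Working in $\Pi\cong\mbP(4_z,5_{t_1},6_u,7_v)$ with the restricted equations displayed above, the relation $F_4|_{\Pi}=\delta uv$ with $\delta\neq 0$ splits the analysis into the two branches $v=0$ and $u=0$. In the branch $v=0$, the equation $F_3|_{\Pi}=\gamma u^2$ with $\gamma\neq 0$ forces $u=0$, and then $F_1|_{\Pi}=\alpha\beta\,t_1^2$ forces $t_1=0$, leaving only $\msp_z$. In the branch $u=0$, the equation $F_3|_{\Pi}=-\alpha t_1 v$ forces $t_1=0$ or $v=0$; if $v=0$ we reduce to the previous case, while if $t_1=0$ then $F_5|_{\Pi}=\delta v^2$ forces $v=0$. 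In every case $\Pi\cap X=\{\msp\}$, so $\{x,y,t_0\}$ isolates $\msp$.

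The delicate part of this case analysis is that it requires $\alpha\neq 0$ and $\beta\neq 0$ in addition to the obvious $\gamma,\delta\neq 0$ (coming from $u\in b_6$, $v\in c_7$); this is where the hypotheses genuinely enter, and I expect it to be the main obstacle. The non-vanishing $\alpha\neq 0$, where $\alpha$ is the coefficient of $t_1$ in $\ell_1$, follows from Lemma \ref{lem:deg12-4ell}: if $\alpha=0$ then $\ell_1\sim t_0=\ell_3-\varepsilon\ell_1$ would give $\ell_1\sim\ell_3$, contradicting that $\ell_1,\ell_3$ are not proportional. The non-vanishing $\beta\neq 0$, where $\beta$ is the coefficient of $t_1$ in $\ell_2$, is precisely Condition \ref{cd:deg12-4}, since $\beta=0$ would mean $\ell_2\sim t_0$. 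Without these two inputs the branch $u=v=0$ could carry a whole line $\mbP(4_z,5_{t_1})$ through $\msp$, and the isolation would break down.

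It then remains to record the vanishing orders and compute. Since $x,y$ survive as coordinates on $E\cong\mbP(1_x,1_{t_1},3_y)$ one has $\ord_E(x)=1/4$, $\ord_E(y)=3/4$, while the uniqueness of $z^2t_0$ in $F_4$ means $\varphi$ is realized as the embedded weighted blowup with weight $\wt(x,y,t_0,t_1,u,v)=\tfrac14(1,3,5,1,2,3)$, so that $\ord_E(t_0)=5/4$ by Lemma \ref{lem:ordE}. Consequently the proper transforms of $(x=0)\cap X$, $(y=0)\cap X$, $(t_0=0)\cap X$ are $\mbQ$-linearly equivalent to $B$, $3B$, $5B$ respectively, so Lemma \ref{lem:isolnef} shows that $L=B$ is nef. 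Finally
\[
(L\cdot B^2)=(B^3)=(A^3)-\frac{1}{4^3}(E^3)=\frac{1}{12}-\frac{1}{64}\cdot\frac{16}{3}=0,
\]
and Lemma \ref{lem:excltc} shows that $\msp$ is not a maximal centre.
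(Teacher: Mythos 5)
Your proposal is correct and follows essentially the same route as the paper: the same isolating set $\{x,y,t_0\}$, the same use of Condition \ref{cd:deg12-4} and Lemma \ref{lem:deg12-4ell} to get $\alpha,\beta \ne 0$, the same realization of $\varphi$ with weight $\frac{1}{4}(1,3,5,1,2,3)$ giving $\ord_E(t_0)=5/4$, and the same conclusion via Lemmas \ref{lem:isolnef} and \ref{lem:excltc} with $L=B$ and $(B^3)=0$. The only difference is that you spell out the case analysis showing $X \cap \Pi = \{\msp\}$, which the paper leaves as "straightforward to check".
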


\begin{proof}
We see $z u \in F_1$, $z v \in F_2$ and $z^2 t_0 \in F_4$.
We see that the $z^3 x$ and $z^2 t_0$ are the only monomials of degree $13$ having initial weight $\frac{1}{4}$.
By our choice of coordinates, we have $z^3 x \notin F_4$.
This implies that the weight $\wt (x,y,t_0,t_1,u,v) = \frac{1}{4} (1,3,5,1,2,3)$ satisfies the KBL condition.

We claim that none of $\alpha$ and $\beta$ is zero.
If $\alpha = 0$, then $\ell_1 \sim t_0$.
Since $\ell_3 - \varepsilon \ell_1 = z_0$, this implies $\ell_3 \sim \ell_1$.
This is impossible.
If $\beta = 0$, then $\ell_2 \sim t_0$ and this is impossible by Condition \ref{cd:deg12-4}.

It is now straightforward to check $X \cap \Pi = \{\msp\}$ since $\alpha,\beta,\gamma,\delta \ne 0$.
In particular, $\{x,y,t_0\}$ isolates $\msp$.
We have $\ord_E (x,y,t_0) \ge \frac{1}{4} (1,3,5)$ so that $L = B$ is nef by Lemma \ref{lem:isolnef} and we compute
\[
(L \cdot B^2) = (A^3) - \frac{1}{4^3} (E^3) = \frac{1}{12} - \frac{1}{12} = 0.
\]
Therefore, $\msp$ is not a maximal centre by Lemma \ref{lem:excltc}.
\end{proof}

\subsection{The point of type $\frac{1}{3} (1,1,2)$}

Let $\msp$ be a point of type $\frac{1}{3} (1,1,2)$.
After replacing coordinates, we assume $\msp = \msp_y$.
We assume $y \in a_3$.
Then, re-scaling $y$, we can assume that the coefficient of $y$ in $a_3$ is $1$.
We see $y v \in F_1$ and, replacing $v$, we assume that $y v$ is the unique monomial in $F_1$ divisible by $y$.
We can write the entries of the syzygy matrix as $a_5 = \ell_1 + (\text{other term})$, $b_5 = \ell_2 + (\text{other terms})$, $c_8 = y \ell_3 + \eta z^2 + (\text{other terms})$ and $d_9 = z \ell_4 + (\text{other terms})$ for some linear forms $\ell_1,\dots,\ell_4$ in $t_0,t_1$ and $\eta \in \mbC$.
Let $\alpha, \beta$ and $\delta$ be the coefficients of $z$, $y^2$ and $z y$ in $a_4$, $a_6$ and $b_7$ respectively.

\begin{Lem}
We have $\ell_1, \ell_2 \ne 0$.
Moreover, $\ell_1 \not\sim \ell_2$, $\ell_1 \not\sim \ell_4$ and $\ell_2 \not\sim \ell_3$.
\end{Lem}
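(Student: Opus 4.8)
\emph{The five assertions fall into two groups. The non-vanishing $\ell_1,\ell_2\ne 0$ and the first non-proportionality $\ell_1\not\sim\ell_2$ record how $X$ meets the fixed locus of the order-$5$ torus subgroup, while $\ell_1\not\sim\ell_4$ and $\ell_2\not\sim\ell_3$ are forced by quasi-smoothness at the two $\frac{1}{5}$ points, exactly as in Lemma \ref{lem:deg12-4ell}.}

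First I would restrict everything to the plane $\Pi_t=(x=y=z=u=v=0)\cong\mbP^1_{t_0,t_1}$, which is precisely the locus fixed by $\mu_5\subset\mbC^*$ (only $t_0,t_1$ have weight divisible by $5$). Among the entries of $M$ only $a_5,b_5$ survive on $\Pi_t$, where they become $\ell_1,\ell_2$, so $F_1|_{\Pi_t}=\ell_1\ell_2$ and $F_2|_{\Pi_t}=\cdots=F_5|_{\Pi_t}=0$; hence $X\cap\Pi_t=(\ell_1\ell_2=0)$. If $\ell_1=0$ or $\ell_2=0$ then all of $\mbP^1$ lies in $X$, giving a one-dimensional family of $\frac{1}{5}$ quotient singularities, impossible for a terminal (hence isolated-singularity) $3$-fold; thus $\ell_1,\ell_2\ne 0$. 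If $\ell_1\sim\ell_2$ then $(\ell_1\ell_2=0)$ is a single fat point, so $X$ would carry only one $\frac{1}{5}$ point, contradicting the basket, which lists two distinct ones. This yields $\ell_1\not\sim\ell_2$ and names the $\frac{1}{5}$ points $\msq_1=(\ell_1=0)$ and $\msq_2=(\ell_2=0)$.

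Next I would fix the types. The linear part of $F_3=a_3d_9-a_5b_7+a_6b_6$ is $F_3\equiv -\ell_1 v$ (the other two products are quadratic in the transverse coordinates), so $v$ (of weight $2$ modulo $5$) is a normal direction exactly where $\ell_1\ne 0$; hence $\msq_2$ is of type $\frac{1}{5}(1,1,4)$ and $\msq_1$ of type $\frac{1}{5}(1,2,3)$. Working in the orbifold chart at $\msq_2$, the only Pfaffians with a non-zero linear part are $F_1\equiv(\text{const})\,t_0$, $F_3\equiv -\ell_1 v$ and $F_4\equiv -\ell_1\ell_3\,y$ (the latter from $-a_5c_8$, using $c_8=y\ell_3+\eta z^2+\cdots$); these three must cut out the three normal directions $t_0,v,y$, so $\ell_1\ell_3$ cannot vanish at $\msq_2$. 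Since $\ell_1(\msq_2)\ne 0$ this gives $\ell_3(\msq_2)\ne 0$, i.e.\ $\ell_2\not\sim\ell_3$. Symmetrically, at $\msq_1$ the active linear parts are $F_1\equiv(\text{const})\,t_1$, $F_2\equiv\ell_2\,u$ and $F_5\equiv \ell_2\ell_4\,z$ (the last from $b_5d_9$, using $d_9=z\ell_4+\cdots$), which must eliminate $t_1,u,z$; hence $\ell_4(\msq_1)\ne 0$, i.e.\ $\ell_1\not\sim\ell_4$. This final step is exactly the mechanism of Lemma \ref{lem:deg12-4ell}.

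The main obstacle is the bookkeeping of these linearisations: one must verify that at each $\frac{1}{5}$ point the \emph{only} equations contributing a linear term are the three listed, and that these involve three independent transverse coordinates. Only then does the vanishing of $\ell_1\ell_3$ (resp.\ $\ell_2\ell_4$) genuinely drop the Jacobian rank below $3$ and so break quasi-smoothness; were some other Pfaffian to eliminate $y$ (resp.\ $z$) the argument would fail. Concretely, I would check that $F_2,F_5$ are purely quadratic at $\msq_2$ and $F_3,F_4$ at $\msq_1$, and that the stray $x$-terms arising from $a_6,b_6\ni u$ only shift the eliminated combination by a multiple of the surviving coordinate $x$. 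This is the one genuinely computational point; everything else is formal.
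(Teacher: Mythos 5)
Your proposal is correct and follows essentially the same route as the paper's proof: the statements $\ell_1,\ell_2\ne 0$ and $\ell_1\not\sim\ell_2$ come from identifying $X\cap(x=y=z=u=v=0)=(\ell_1\ell_2=0)$ with the two $\frac{1}{5}$ points of the basket, and the non-proportionalities $\ell_2\not\sim\ell_3$, $\ell_1\not\sim\ell_4$ come from quasi-smoothness at those two points, which is exactly how the paper forces the monomials $t_0^2y\in F_4$ and $t_1^2z\in F_5$ (your Jacobian-rank phrasing and the paper's ``the type forces the monomial'' are the same mechanism, and your deferred degree bookkeeping does check out). The only cosmetic difference is that your implicit normalization swaps $t_0\leftrightarrow t_1$ relative to the paper's choice $\ell_1=t_0$, $\ell_2=t_1$.
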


\begin{proof}
The set 
\[
(x = y = z = u = v = 0) \cap X = (x = y = z = u = v = \ell_1 \ell_2 = 0)
\]
consists of two singular points of type $\frac{1}{5} (1,2,3)$ and $\frac{1}{5} (1,1,4)$, which implies $\ell_1 \ell_2 \ne 0$ and $\ell_1 \not\sim \ell_2$.
In this proof, we assume $\ell_1 = t_0$ and $\ell_2 = t_1$ after replacing $t_0$ and $t_1$.
Since $F_3 = a_3 d_9 - a_5 b_7 + a_6 b_6$, $v \in b_7$ and $v \notin a_3, d_9,a_5,a_6,b_6$, we see that $v t_0 \in F_1$ and $v t_1 \notin F_1$.
This shows that $\msp_{t_0}$ and $\msp_{t_1}$ are of type $\frac{1}{5} (1,1,4)$ and $\frac{1}{5} (1,2,3)$, respectively. 

Assume $\ell_3 \sim \ell_2$, that is, $\ell_3 = \nu t_1$ for some $\nu \in \mbC$.
Since $\msp_{t_0}$ is of type $\frac{1}{5} (1,1,4)$, we have $t_0^2 y \in F_4$.
But since $F_4|_{\Pi} = - \ell_1 \ell_3 + \dots$, $\ell_1 = t_0$ and $\ell_3 = \nu t_1$, we see $t_0^2 y \notin F_4$.
This is a contradiction.

Assume $\ell_4 \sim \ell_1$, that is, $\ell_4 = \nu t_0$ for some $\nu \in \mbC$. 
Since $\msp_{t_1}$ is of type $\frac{1}{5} (1,2,3)$, we have $t_1^2 z \in F_5$.
But since $F_5|_{\Pi} = z \ell_2 \ell_4 + \cdots$, $\ell_2 = t_1$ and $\ell_4 = \nu t_0$, we see $t_1^2 \notin F_5$.
This is a contradiction and the proof is completed.
\end{proof}

We exclude the point $\msp$ assuming the following generality condition.

\begin{Cond} \label{cd:deg12-3}
$y \in a_3$, $\eta - \alpha \delta \ne 0$, $\ell_3 + \beta \ell_2 \not\sim \ell_1$ and $\ell_4 - \delta \ell_1 \not\sim \ell_2$. 
\end{Cond}

Note that $u \in a_6, b_6$ and $v \in b_7,c_7$ since $\msp_u, \msp_v \notin X$.
We set $\Pi = (x = u = v = 0)$.
Then we can write
\[
M|_{\Pi} =
\begin{pmatrix}
0 & y & \alpha z & \ell_1 & \beta y^2 \\
& 0 & \ell_2 & \gamma y^2 & \delta z y \\
& & 0 & \varepsilon z y & y \ell_3 + \eta z^2 \\
& & & 0 & z \ell_4 + \zeta y^3 \\
& & & & 0
\end{pmatrix}.
\]
We see that the coefficients of $z y^2$ and $y^4$ in $F_1|_{\Pi}$ and $F_3|_{\Pi}$ are $\varepsilon - \alpha \gamma$ and $\zeta + \beta \gamma$ respectively and both of them are zero by our choice of coordinates.
By eliminating $\varepsilon = \alpha \gamma$ and $\zeta = - \beta \gamma$, we have
\[
\begin{split}
F_1|_{\Pi} &= \ell_1 \ell_2, \\
F_2|_{\Pi} &= y^2 (\ell_3 + \beta \ell_2) + (\eta - \alpha \delta) z^2 y, \\
F_3|_{\Pi} &= (\ell_4 - \delta \ell_1) z y, \\
F_4|_{\Pi} &= - y \ell_1 \ell_3 + z^2 (\alpha \ell_4 - \eta \ell_1), \\
F_5|_{\Pi} &= -\gamma (\ell_3 + \beta \ell_2) y^3 - \gamma (\eta - \alpha \delta) z^2 y^2 + z \ell_2 \ell_4.
\end{split} 
\]

%

\begin{Lem}
No singular point of type $\frac{1}{3} (1,1,2)$ is a maximal centre.
\end{Lem}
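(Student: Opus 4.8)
The plan is to apply Lemma~\ref{lem:excltc}: I will produce a nef divisor $L$ on $Y$ with $(L \cdot B^2) \le 0$, obtained from an isolating set for $\msp = \msp_y$ via Lemma~\ref{lem:isolnef}. The natural candidate is $\{x, u, v\}$, matching the plane $\Pi = (x = u = v = 0)$ whose restriction was computed above. So first I would pin down $\ord_E(x), \ord_E(u), \ord_E(v)$, then check that $\{x,u,v\}$ isolates $\msp$, and finally evaluate the intersection number.

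For the vanishing orders, reduction modulo $r = 3$ gives $\ord_E(x) \ge \frac{1}{3}$ and $\ord_E(u) \ge 1$ at once. The decisive input is the sharp bound $\ord_E(v) \ge \frac{4}{3}$. Since $F_1 = yv + (\text{other terms})$ and $yv$ is, after the normalisation fixed above, the only monomial of $F_1$ divisible by $y$, the only monomial of $F_1|_{y=1}$ of initial weight $\frac{1}{3}$ is $v$ itself; hence $F_1^{\iniw} = v$, and Lemma~\ref{lem:ordE}(2) promotes the order by $1$ to give $\ord_E(v) \ge \frac{4}{3}$. Concretely, $\varphi$ is the embedded weighted blowup for a weight in which $v$ has been raised from $1$ to $4$.

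Next I would show $X \cap \Pi$ contains no curve through $\msp_y$, so that $\{x,u,v\}$ isolates $\msp$. From $F_1|_{\Pi} = \ell_1 \ell_2$ with $\ell_1 \not\sim \ell_2$, the set splits along $\ell_1 = 0$ and $\ell_2 = 0$. On $\ell_2 = 0$, working in the chart $y = 1$, the equation $F_2|_{\Pi} = 0$ becomes $(\ell_3 + \beta \ell_2) + (\eta - \alpha\delta) z^2 = 0$; because $\ell_2 \not\sim \ell_3$ the linear form is nonzero in the surviving $t$-variable and $\eta - \alpha\delta \ne 0$, so the $t$-coordinate is forced to be proportional to $z^2$. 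Substituting into $F_3|_{\Pi} = (\ell_4 - \delta \ell_1) z y = 0$ and using $\ell_4 - \delta \ell_1 \not\sim \ell_2$ then forces $z = 0$, hence the $t$-coordinate $= 0$, leaving only $\msp_y$. The locus $\ell_1 = 0$ is symmetric, now using $\ell_3 + \beta \ell_2 \not\sim \ell_1$ and $\ell_1 \not\sim \ell_4$. Thus every non-proportionality recorded before Condition~\ref{cd:deg12-3} and in Condition~\ref{cd:deg12-3} is used, and no component of $X \cap \Pi$ meets $\msp$.

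Finally, with $(b_i) = (1,6,7)$ and $\ord_E = (\frac{1}{3}, 1, \frac{4}{3})$, Lemma~\ref{lem:isolnef} gives that $L = B + cE$ is nef with $c = \max\{0, \frac{1}{6}, \frac{1}{7}\} = \frac{1}{6} \le \frac{1}{3}$. Since $(E^3) = \frac{9}{2}$, I compute $(B^3) = (A^3) - \frac{1}{27}(E^3) = -\frac{1}{12}$ and $(E \cdot B^2) = \frac{1}{9}(E^3) = \frac{1}{2}$, whence $(L \cdot B^2) = -\frac{1}{12} + \frac{1}{6} \cdot \frac{1}{2} = 0 \le 0$, and Lemma~\ref{lem:excltc} excludes $\msp$. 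I expect the main obstacle to be the sharpness of this bound: the value $(L \cdot B^2) = 0$ is attained exactly, so the promotion $\ord_E(v) \ge \frac{4}{3}$ is indispensable (the crude bound $\ord_E(v) \ge \frac{1}{3}$ gives $c = \frac{2}{7}$ and $(L \cdot B^2) > 0$), and the isolation step is delicate in that it consumes all of the genericity hypotheses.
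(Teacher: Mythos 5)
Your proof is correct and takes essentially the same route as the paper: the same isolating set $\{x,u,v\}$, the same decisive bound $\ord_E(v) \ge \frac{4}{3}$ obtained from the normalisation making $yv$ the unique monomial of $F_1$ divisible by $y$ (Lemma \ref{lem:ordE}), the same case split along $\ell_1 = 0$ and $\ell_2 = 0$ consuming Condition \ref{cd:deg12-3} and the recorded non-proportionalities, and the same nef divisor up to scaling (the paper's $L = 6\varphi^*A - E$ equals $6\bigl(B + \frac{1}{6}E\bigr)$), yielding $(L \cdot B^2) = 0$ and exclusion by Lemma \ref{lem:excltc}. The only cosmetic difference is bookkeeping in the isolation step, where the paper uses the combination $F_5|_{\Pi} + \gamma y F_2|_{\Pi} = z \ell_2 \ell_4$ while you solve $F_2|_{\Pi}$ for the $t$-coordinate and substitute into $F_3|_{\Pi}$.
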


\begin{proof}
We will show that $\{x,u,v\}$ isolates $\msp$.
It is enough to show that $X \cap \Pi^{\circ}$ is a finite set of points, where $\Pi^{\circ} = \Pi \cap (y \ne 0)$.
We have $F_5|_{\Pi} + \gamma y F_2|_{\Pi} = z \ell_2 \ell_4$.
Since $F_1|_{\Pi} = \ell_1 \ell_2$.
\[
X \cap \Pi^{\circ} = (\ell_1 \ell_2 = F_2|_{\Pi} = F_3|_{\Pi} = F_4|_{\Pi} = z \ell_2 \ell_4 = 0) \cap \Pi^{\circ} 
= \Sigma_1 \cup \Sigma_2,
\]
where
\[
\begin{split}
\Sigma_1 &= (\ell_1 = F_2|_{\Pi} = F_3|_{\Pi} = F_4|_{\Pi} = z \ell_2 \ell_4 = 0) \cap \Pi^{\circ}, \\
\Sigma_2 &= (\ell_2 = F_2|_{\Pi} = F_3|_{\Pi} = F_4|_{\Pi} = 0) \cap \Pi^{\circ}.
\end{split}
\]

Since $\ell_1 \not\sim \ell_2$ and $\ell_1 \not\sim \ell_4$, $\ell_1 = \ell_2 = 0$ and $\ell_1 = \ell_4 = 0$ both imply $t_0 = t_1 = 0$.
Hence we have $(\ell_1 = z \ell_2 \ell_4 = 0) = (t_0 = t_1 = 0) \cup (\ell_1 = z = 0)$ and 
\[
\Sigma_1 = \left((t_0 = t_1 = (\eta - \alpha \delta) z^2 y = 0) \cap \Pi^{\circ} \right) \cup \left((\ell_1 = z = \ell_3 + \beta \ell_2 = 0) \cap \Pi^{\circ}\right)
= \{\msp\}
\]
since $\eta - \alpha \delta \ne 0$ and $\ell_3 + \beta \ell_2 \not\sim \ell_1$ by Condition \ref{cd:deg12-3}.

Since $F_3|_{\Pi} = (\ell_4 - \delta \ell_1) z y$ and $\ell_4 - \delta \ell_1 \not\sim \ell_2$ by Condition \ref{cd:deg12-3}, we have $(\ell_2 = F_3|_{\Pi} = 0) \cap \Pi^{\circ} = (t_0 = t_1 = 0) \cup (\ell_2 = z = 0)$.
Hence
\[
\Sigma_2 = \left((t_0 = t_1 = (\eta - \alpha \delta) z^2 y = 0) \cap \Pi^{\circ} \right) \cup \left((\ell_2 = z = y^2 \ell_3 = - y \ell_1 \ell_3 = 0) \cap \Pi^{\circ} \right) 
= \{\msp\}
\]
since $\ell_3 \not\sim \ell_2$.
Thus, $\{x,u,v\}$ isolates $\msp$.

We see that $y^3 x, y^2 z, y v$ are the monomials of degree $10$ having initial weight $\frac{1}{4}$ and we have $y^3 x, y^2 z \notin F_1$ by our choice of coordinates.
Hence we have $\ord_E (x,u,v) \ge \frac{1}{3} (1,3,4)$ and $L = 6 \varphi^*A -\frac{3}{3} E$ is nef by Lemma \ref{lem:isolnef}.
We compute
\[
(L \cdot B^2) = 6 (A^3) - \frac{3}{3^3} (E^3) = \frac{1}{2} - \frac{1}{2} = 0.
\]
Therefore $\msp$ is not a maximal centre by Lemma \ref{lem:excltc}.
\end{proof}

\subsection{The $\frac{1}{5} (1,1,4)$ point and birational involution}

Let $\msp \in X$ be the point of type $\frac{1}{5} (1,1,4)$.
We assume $\msp = \msp_{t_1}$ after replacing $t_0$ and $t_1$.
Then we have $t_1 t_0 \in F_1$, $t_1 v \in F_3$ and $t_1^2 y \in F_4$ since $\msp$ is of type $\frac{1}{5} (1,1,4)$.
We have $u \in a_6, b_6$ and $v \in b_7, c_7$ since $\msp_u, \msp_v \notin X$.
We see that $\varphi$ is the weighted blowup of $X$ at $\msp$ with weight $\wt (x,z,u) = \frac{1}{5} (1,4,1)$ and it is realized as the embedded weighted blowup with the initial weight $\wt (x,y,z,t_0,u,v) = \iniw = \frac{1}{5} (1,3,4,5,1,2)$.

Let $\pi \colon X \ratmap \mbP := \mbP (1,3,4,5)$ be the projection to the coordinates $x,u,z,t_0$ and let $\pi_Y \colon Y \ratmap \mbP$ the induced rational map.
We take $H \in |\mcO_{\mbP} (1)|$.

\begin{Lem}
The map $\pi_Y$ is a surjective generically finite morphism of degree $2$ such that $B = \pi_Y^*H$.
\end{Lem}

\begin{proof}
We will show that $\pi_Y$ is everywhere defined.
We have an isomorphism
\[
E \cong (F^{\iniw}_1 = F^{\iniw}_3 = F^{\iniw}_4 = 0) \subset \mbP (1_x,3_y,4_z,5_{t_0},1_u,2_v)
\]
and it is enough to show $(x = y = z = t_0 = 0) \cap E = \emptyset$.
We can write $F^{\iniw}_1 = t_0 + g_1$, $F^{\iniw}_3 = v + \alpha u^2 + g_3$ and $F^{\iniw}_4 = y + \beta v u + g_4$, where $g_i \in (x,y,z,t_0)$ and $\alpha, \beta \in \mbC \setminus \{0\}$.
It is now clear that $(x = y = z = t_0 = 0) \cap E = \emptyset$.
This shows that $\pi_Y$ is a morphism.
We have $B = \pi_Y^*H$ since the section $x$ lifts to an anticanonical section on $Y$.
We have $(H^3) = 1/60$ and
\[
(B^3) = (A^3) - \frac{1}{5^3} (E^3) = \frac{1}{12} - \frac{1}{20} = \frac{1}{30},
\]
which shows that $\pi_Y$ is surjective and is generically finite of degree $2$.
\end{proof}

By the same argument as in the proof of Proposition \ref{prop:deg20-birinv}, this lemma implies the following.

\begin{Prop} \label{prop:deg12-birinv}
One of the following holds.
\begin{enumerate}
\item $\msp$ is not a maximal centre.
\item There is a birational involution $\sigma \colon X \ratmap X$ which is a Sarkisov link centred at $\msp$.
\end{enumerate}
\end{Prop}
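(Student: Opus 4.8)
The plan is to follow verbatim the strategy used for Proposition~\ref{prop:deg20-birinv}, exploiting the lemma just proved, which realizes $\pi_Y \colon Y \to \mbP = \mbP(1,3,4,5)$ as a surjective generically finite morphism of degree $2$ satisfying $B = \pi_Y^* H$. First I would form the Stein factorization $\pi_Y = \pi_Z \circ \psi$, where $\psi \colon Y \to Z$ is a birational morphism with connected fibres and $\pi_Z \colon Z \to \mbP$ is a finite morphism; since $\deg \pi_Y = 2$ and $\psi$ is birational, $\pi_Z$ is a double cover. The identity $B = \pi_Y^* H$ shows that $B$ is semiample and that $\psi$ is exactly the contraction of the $B$-trivial curves on $Y$, which is the nontrivial step of the Sarkisov link we wish to produce.

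Next I would check that $\psi$ cannot be an isomorphism. Were it one, $Y \cong Z$ would carry the structure of a double cover of the weighted projective $3$-space $\mbP$; but Lemma~\ref{lem:2cover} forbids this for any Kawamata blowup of a quasi-smooth $\mbQ$-Fano at a terminal quotient singular point. Hence $\psi$ is a nontrivial birational morphism, and since $Y$ is $\mbQ$-factorial, terminal and of Picard number $2$, its exceptional locus is contracted either as a divisor (the divisorial case) or in codimension at least two (the small case).

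Finally I would invoke \cite[Lemma 3.2]{OkadaII} to translate this dichotomy into the two alternatives. In the divisorial case the extraction $\varphi$ at $\msp$ fails to be maximal, giving~(1); in the small case $\psi$ is a flopping contraction and the link continues past the flop to a Mori fibre space, which \cite[Lemma 3.2]{OkadaII} identifies as a birational self-involution $\sigma \colon X \ratmap X$ centred at $\msp$, giving~(2).

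I do not anticipate a real obstacle, since every ingredient is already in place: the degree-two and pullback statements come from the preceding lemma, while Lemma~\ref{lem:2cover} and \cite[Lemma 3.2]{OkadaII} are quoted off the shelf. The only point needing care is the bookkeeping of the Stein factorization, namely verifying that $\pi_Z$ is genuinely a double cover and that $\psi$ contracts precisely the $B$-trivial curves, so that the hypotheses of \cite[Lemma 3.2]{OkadaII} are met literally; this is immediate from $B = \pi_Y^* H$ and $\deg \pi_Y = 2$.
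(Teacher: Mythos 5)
Your proposal is correct and coincides with the paper's own argument: the paper proves Proposition~\ref{prop:deg12-birinv} by citing verbatim the proof of Proposition~\ref{prop:deg20-birinv}, which is precisely the route you take --- Stein factorization of $\pi_Y$ into a birational morphism $\psi$ followed by a double cover, non-isomorphy of $\psi$ via Lemma~\ref{lem:2cover}, and the divisorial/small dichotomy resolved by \cite[Lemma 3.2]{OkadaII}.
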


\subsection{The $\frac{1}{5} (1,2,3)$ point and birational non-rigidity}
\label{sec:deg12nonbr}

Let $\msp$ be the point of type $\frac{1}{5} (1,2,3)$.
We will show that there is a Sarkisov link to a Mori fiber space which is not isomorphic to $X$ starting with the Kawamata blowup $\varphi$.
We denote by $\msq \in X$ the unique singular point of type $\frac{1}{5} (1,1,4)$.

\begin{Lem} \label{lem:deg12defeqlink}
By choice of coordinates, we can assume $\msp = \msp_{t_1}$, $\msq = \msp_{t_0}$ and defining polynomials of $X$ are of the forms:
\[
\begin{split}
F_1 &= t_1 t_0 + v a_3 + u a_4 + f_{10}, \\
F_2 &= t_1 u + v b_4 + u b_5 + g_{11}, \\
F_3 &= t_0 v + v c_5 + \alpha u^2 + u h_6 + h_{12}, \\
F_4 &= t_0^2 y + t_0 (v d_1 + u d_2 + h_8) - \beta u v + v h'_6 + u h_7 + h_{12}, \\
F_5 &= t_1^2 z + t_1 (v e_2 + u e_3 + g_9) + \beta v^2 + v u e_1 + v g_7 + u^2 e_2 + u g_8 + g_{14},
\end{split}
\] 
for some $\alpha, \beta \in \mbC \setminus \{0\}$, $a_i,b_i,\dots, f_i \in \mbC [x,y,z]$, $g_i \in \mbC [x,y,z,t_0]$ and  $h_i, h'_6 \in \mbC [x,y,z,t_1]$ with $t_1^2 y \notin h_{12}$, $t_0^2 z \notin g_{14}$.
Moreover, if $X$ is general, then \emph{Condition \ref{cd:deg12-5link}} below is satisfied.
\end{Lem}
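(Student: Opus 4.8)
The statement is a normal-form result, and the plan is to reach the displayed shape by a sequence of weighted coordinate changes that fix the two distinguished points, reading the leading monomials off from quasi-smoothness and constraining the surviving coefficients by weight.

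First I would pin down the two points. Exactly as observed in the analysis of the $\frac{1}{3}(1,1,2)$ point, the locus $(x = y = z = u = v = 0) \cap X$ is cut out on the line $\mbP(5_{t_0},5_{t_1})$ by a single equation $\ell_1 \ell_2 = 0$ with $\ell_1,\ell_2$ linear forms in $t_0,t_1$ that are not proportional; it therefore consists of precisely the two points of types $\frac{1}{5}(1,2,3)$ and $\frac{1}{5}(1,1,4)$. A linear change in the two weight-$5$ coordinates then lets me assume $\msp = \msp_{t_1}$ and $\msq = \msp_{t_0}$.

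Next I would read off the leading terms from quasi-smoothness. At $\msp_{t_1}$, of type $\frac{1}{5}(1,2,3)$, the three local equations must contain monomials $t_1^{l}x_j$ in which the eliminated coordinates are $t_0,u,z$ (of residues $0,1,4 \pmod 5$) and the tangent coordinates are $x,y,v$ (of residues $1,3,2$); this forces $t_1 t_0 \in F_1$, $t_1 u \in F_2$ and $t_1^2 z \in F_5$. Symmetrically, quasi-smoothness at $\msp_{t_0}$, of type $\frac{1}{5}(1,1,4)$, forces $t_0 t_1 \in F_1$, $t_0 v \in F_3$ and $t_0^2 y \in F_4$. Rescaling $t_0,t_1,z,y$ normalises these leading coefficients to $1$, and the two second-order coefficients that remain are the constants $\alpha,\beta$, which are nonzero precisely because the two points carry the stated quotient singularities. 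Now I would expand each $F_i$ as a polynomial in the four heavy variables $t_0,t_1,u,v$ with coefficients in $\mbC[x,y,z]$: the weights $5,5,6,7$ are large enough that, in each $F_i$, no heavy monomial beyond those displayed can occur in the relevant degree, which is exactly what yields the coefficient rings for $a_i,b_i,\dots,f_i$. The finer memberships $g_i \in \mbC[x,y,z,t_0]$ and $h_i,h'_6 \in \mbC[x,y,z,t_1]$ reflect the rule that, in the equation which is local at a given point, the coordinate of that point appears only in the displayed leading terms (so $t_1$ is absent from the coefficients of $F_5$, and $t_0$ from those of $F_3,F_4$), while the coordinate of the other point is allowed to survive; this is arranged by the substitutions $t_0 \mapsto t_0 + (\cdots)$, $t_1 \mapsto t_1 + (\cdots)$ with increments in $\mbC[x,y,z]$, together with shifts of $z$ and $y$. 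The two remaining clean-ups, $t_1^2 y \notin h_{12}$ and $t_0^2 z \notin g_{14}$, are then achieved by the elimination move of Lemma \ref{lem:elim} applied to $t_0^2 y \in F_4$ and $t_1^2 z \in F_5$ respectively, after which no further change disturbs the already-normalised leading monomials.

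For the last clause, Condition \ref{cd:deg12-5link} will amount to the nonvanishing of finitely many explicit polynomials in the coefficients produced above (nondegeneracy and non-proportionality of certain low-degree forms), so it cuts out a nonempty Zariski-open subset of the parameter space of the family; exhibiting a single quasi-smooth member on which these polynomials are nonzero shows that a general $X$ satisfies it. The main obstacle is the bookkeeping: I must order the coordinate changes — the linear normalisation of $t_0,t_1$, the rescalings, the $\mbC[x,y,z]$-translations of $t_0,t_1,y,z$, and the Lemma \ref{lem:elim} eliminations — so that each later move preserves the normalisations achieved earlier, and I must check that the weight count really does exclude every unwanted heavy monomial in all five Pfaffians simultaneously.
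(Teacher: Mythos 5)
The central gap is that your proof never uses the Pfaffian (syzygy matrix) structure, and the stated normal form is not a consequence of quasi-smoothness plus coordinate changes alone. The paper's proof works entirely on the matrix: it chooses $t_0,t_1$ (and then $u,v,y,z$) so that the weight-$5$ entries become $t_0+(\text{light})$, $t_1+(\text{light})$ and the other entries are normalised, after which the displayed shapes of $F_1,\dots,F_5$ fall out of the Pfaffian expansion. Your replacement step, ``quasi-smoothness at $\msp_{t_1}$ forces $t_1t_0\in F_1$, $t_1u\in F_2$, $t_1^2z\in F_5$,'' is a non sequitur: quasi-smoothness only says that the Jacobian of the affine cone at the point over $\msp$ has rank exactly $3$ and that the complementary residues present a $\frac{1}{5}(1,2,3)$ point. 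Since $x$ and $u$ both have residue $1$, and since $\frac{1}{5}(1,4,2)$ and $\frac{1}{5}(1,3,4)$ are isomorphic to $\frac{1}{5}(1,2,3)$ as quotient singularities, this permits degenerate configurations such as $t_1^2x\in F_2$ with $t_1u\notin F_2$ (local coordinates $u,y,v$), or $t_1^2y\in F_4$ with $t_1^2z\notin F_5$ (local coordinates $x,z,v$). In such a configuration no coordinate change preserving $\msp=\msp_{t_1}$, $\msq=\msp_{t_0}$ and the grading, and no replacement of $F_2$ by $F_2+\lambda xF_1$, can create the missing $t_1u$ term, because the $t_1$-part of the coefficient of $u$ in $F_2$ only gets rescaled by all these moves. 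What excludes the degenerate cases is precisely the matrix: the coefficient of $t_1u$ in $F_2=a_3c_8-a_4b_7+a_6b_5$ is the product of the coefficient of $u$ in $a_6$ and the coefficient of $t_1$ in $b_5$, and the first factor is nonzero because $u^2\in F_3=a_3d_9-a_5b_7+a_6b_6$ (i.e.\ $\msp_u\notin X$) forces $u\in a_6$. Likewise the single constant $\beta$ occurring both as $-\beta uv$ in $F_4$ and as $\beta v^2$ in $F_5$ is the coefficient of $v$ in the one entry $c_7$; no argument treating the five polynomials as unrelated can see such relations.

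There are also secondary defects. Your list of moves (linear change of $t_0,t_1$; rescalings; $\mbC[x,y,z]$-translations of $t_0,t_1,y,z$; Lemma \ref{lem:elim}) omits substitutions of the heavy variables $u,v$ and re-choices of the generators (modifications of the matrix as in Remark \ref{rem:modifmat}), both of which the paper uses and which are genuinely needed: for instance a cross term $t_0t_1x^2\in F_3$ can only be removed by a $v$-substitution plus a compensating $t_0$-translation, or by replacing $F_3$ with $F_3-\lambda x^2F_1$, and the term $u^2x$ this may create in $F_4$ needs $F_4\mapsto F_4-(\text{const})\,xF_3$. Moreover, Lemma \ref{lem:elim} does not yield $t_1^2y\notin h_{12}$ and $t_0^2z\notin g_{14}$ as you assert: applied to $t_0^2y\in F_4$ it removes the \emph{other} monomials divisible by $t_0^2$ and cannot touch $t_1^2y$; for a Pfaffian these two exclusions are consequences of the normalised matrix (equivalently, of quasi-smoothness once the correct configuration of active equations is known), not something one arranges afterwards. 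Finally, your treatment of the ``Moreover'' clause assumes the coefficients in the normal form vary freely over a parameter space, which only makes sense through the matrix parametrisation; the paper checks Condition \ref{cd:deg12-5link} by expressing $a_3$, $b_4$ and the quadric $z+ve_2+\beta v^2$ in terms of the free entries of $M$ and observing the condition holds for general entries.
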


\begin{proof}
The syzygy matrix can be written as
\[
M =
\begin{pmatrix}
a_3 & a_4  & A_5 & A_6 \\
0 & B_5 & \alpha u + t_0 b_1 + t_1 b'_1 + b_6 & B_7 \\
0 & 0 & -\beta v + u c_1 + t_0 c_2 + t_1 c'_2 + c_7 & v c'_1 + u c''_2 + t_0 c_3 + t_1 c'_3 + c_8 \\
0 & 0 & 0 & v d_2 + u d_3 + t_0 d_4 + t_1 d'_4 + d_9
\end{pmatrix},
\]
where $\alpha,\beta \in \mbC$, $a_i,b_i,b'_i,c_i,c'_i,c''_i,d_i,d'_i \in \mbC [x,y,z]$ and $A_i,B_i \in \mbC [x,y,z,t_0,t_1,u,v]$.
We will choose suitable coordinates so that the defining polynomials of $X$ are in the desired forms.
First, we choose $t_0$ and $t_1$ so that 
\[
A_5 = t_0 + a_4 b'_1 - a_3 c'_2, \ 
B_5 = t_1 + a_4 b_1 - a_3 c_2.
\]
Then $t_1 t_0$ is the unique monomial in $F_1$ that involves only on $t_0$ and $t_1$ so that $\msp_{t_0}$ and $\msp_{t_1}$ are the $\frac{1}{5} (1,1,4)$ and $\frac{1}{5} (1,2,3)$ points.
We are going to arrange the coordinates so that $\msp_{t_0}$ and $\msp_{t_1}$ are of type $\frac{1}{5} (1,1,4)$ and $\frac{1}{5} (1,2,3)$ respectively. 
Since $\msp_u, \msp_v \notin X$, we have $u \in A_6$, $v \in B_7$ and $\alpha, \beta \ne 0$.
It follows that we can choose $u$ and $v$ so that
\[
A_6 = u - a_3 c'_3, \ 
B_7 = - v + u b_1 + a_3 (d_4 - b_1 c'_3).
\]
By quasi-smoothness of $X$ at $\msp_{t_0}$ (resp.\ $\msp_{t_1}$), we have $t_0^2 y \in F_4$ (resp.\ $t_0^2 z \in F_5$), which implies $y \in c_3$ (resp.\ $z \in d'_4)$.
Hence we can choose $y$ and $z$ so that $c_3 = - y$ and $d'_4 = z + b'_1 c'_3$.
Under the above choice of coordinates, the polynomials $F_1,\dots,F_5$ are in the desired forms.

We have
\[
\begin{split}
F_1 &= t_1 t_0 + v ( -\beta a_3) + (\text{other terms}), \\
F_2 &= t_1 u + v (a_3 c'_1 + a_4) + (\text{other terms}), \\
F_5 &= t_1^2 z + t_1 v (d_2 - b'_1 c'_1 - c'_2) + \beta v^2 + (\text{other terms}).
\end{split}
\]
Clearly $y \in -\beta a_3$ and $z \in a_3 c'_1 + a_4$ for a general $X$ since $\beta \ne 0$.
We see that the set
\[
(-\beta a_3 = a_3 c'_1 + a_4 = z + v (d_2 - b'_1 c'_1 - c'_2) + \beta v^2 = 0)
\]
consists of $2$ distinct points for a general $X$, and the proof is completed.
\end{proof}

\begin{Rem}
Under the above choice of coordinates, $\msp_{t_1}$ is of type $\frac{1}{5} (1_x,2_v,3_y)$ and $\msp_{t_0}$ is of type $\frac{1}{5} (1_x,1_u,4_z)$.
\end{Rem}

We assume the following condition which is satisfied for a general $X$ by the above lemma.

\begin{Cond} \label{cd:deg12-5link}
We have $y \in a_3$ and, under the above choice of coordinates, the set 
\[
(a_3 = b_4 = z + v e_2 + \beta v^2 = 0) \subset \mbP (1_x,3_y,4_z,2_v)
\]
consists of distinct $2$ points.
\end{Cond}

We see that each monomial in $F_2 = t_1 u + v b_4 + u b_5 + g_{11}$ has initial weight at least $6/5$ except for $t_1 u$, so that the weight $\wt (x,y,z,t_0,u,v) = \frac{1}{5} (1,3,4,5,6,2) =: \mbfw$ satisfies the KBL condition.
It follows that $\varphi$ is realized as the embedded weighted blowup with weight $\mbfw$ and we have an isomorphism 
\[
E \cong (t_0 + v a_3 = u + v b_4 = z + v e_2 + \beta v^2 = 0) \subset \mbP,
\]
where $\mbP = \mbP (1_x,3_y,4_z,5_{t_0},6_u,2_v)$.
Let $X \ratmap \mbP (1,3,4,5,6)$ be the projection to $x,y,z,t_0,u$ which is defined outside $\msp$, and denote by $Z$ its image.
Let $\rho \colon Y \ratmap Z$ be the induced birational map.

\begin{Lem} \label{lem:deg12birrho}
$\rho$ is a birational morphism and it is the anticanonical model of $Y$.
\end{Lem}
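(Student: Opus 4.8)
The plan is to realize $\rho$ as the composite $\pi \circ \varphi$ and to verify, in turn, that it is a morphism, that it is birational onto $Z$, and that $\mcO_Z(1)$ pulls back to $-K_Y$; the identification with the anticanonical model is then formal. \emph{First, that $\rho$ is a morphism.} Away from $E$ the map coincides with $\pi \circ \varphi$, which is a morphism because $\pi$ is defined on $X \setminus \{\msp\}$ and $\varphi$ is an isomorphism there, so it remains to check that $\rho$ is defined along $E$, i.e. that its base locus $(x = y = z = t_0 = u = 0) \cap E$ is empty. Here I would use the explicit model
\[
E \cong (t_0 + v a_3 = u + v b_4 = z + v e_2 + \beta v^2 = 0) \subset \mbP (1_x,3_y,4_z,5_{t_0},6_u,2_v),
\]
together with the fact that $a_3, b_4, e_2 \in \mbC[x,y,z]$ have no constant term. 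Imposing $x = y = z = t_0 = u = 0$ makes $a_3, b_4, e_2$ vanish, so the first two equations are automatic and the third collapses to $\beta v^2 = 0$; since $\beta \neq 0$ this forces $v = 0$, leaving no point of the weighted projective space. Hence the base locus is empty and $\rho$ is everywhere defined.

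\emph{Next, that $\rho$ is birational.} Since $\pi$ merely drops the coordinates $t_1, v$, the fibre of $\pi$ over a point of $Z$ consists of the pairs $(t_1 : v)$ completing it to a point of $X$. By Lemma \ref{lem:deg12defeqlink} both $F_1 = t_1 t_0 + v a_3 + \cdots$ and $F_2 = t_1 u + v b_4 + \cdots$ are linear in $(t_1, v)$ with coefficients in $\mbC[x,y,z,t_0,u]$, so $F_1 = F_2 = 0$ is a linear system in $(t_1, v)$ with determinant $\Delta := t_0 b_4 - a_3 u$. As $y \in a_3$, the product $a_3 u$ contains the monomial $u y$, which cannot cancel against $t_0 b_4$ (the latter being free of $u$), so $\Delta$ is a nonzero polynomial of degree $9$; since the ideal of $X$ is generated in degrees $\ge 10$, $\Delta$ does not vanish identically on $X$. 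As $\Delta$ depends only on $x,y,z,t_0,u$ it is constant along the fibres of $\pi$, so over a general point of $Z$ the whole fibre lies in $\{\Delta \neq 0\}$ and Cramer's rule recovers $(t_1, v)$ uniquely. Hence the generic fibre of $\pi$, and so of $\rho$, is a single point, and $\rho$ is birational onto $Z$.

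\emph{Finally, the anticanonical-model statement.} The weight-one coordinate $x$ is a general anticanonical section with $\ord_E(x) = 1/5$, so its proper transform is $\varphi^* A - \tfrac15 E = B = -K_Y$, giving $\rho^*\mcO_Z(1) = B$. Since $\mcO_Z(1)$ is ample on $Z$ and
\[
(B^3) = (A^3) - \frac{1}{5^3}(E^3) = \frac{1}{12} - \frac{1}{30} = \frac{1}{20} > 0,
\]
the divisor $-K_Y = B$ is nef and big, pulled back from the ample class $\mcO_Z(1)$ by the birational morphism $\rho$. Taking $Z$ normal (for general $X$ it is a quasi-smooth hypersurface $Z_{18} \subset \mbP(1,3,4,5,6)$, so that $\rho_*\mcO_Y = \mcO_Z$, or equivalently replacing $Z$ by the target of the Stein factorization of $\rho$), the projection formula gives $H^0(Y, -mK_Y) = H^0(Z, \mcO_Z(m))$ for all $m \ge 0$, whence $\Proj R(Y, -K_Y) = Z$ and $\rho$ is precisely the anticanonical morphism.

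The main obstacle I anticipate is the base-point-freeness along $E$: it is this computation, resting on $\beta \neq 0$ in the leading $v^2$-term of $F_5$, that upgrades $\rho$ from a mere rational contraction to a genuine morphism and so makes the anticanonical-model conclusion available. The birationality and the identification $\rho^*\mcO_Z(1) = B$ are then short linear-algebra and divisor computations, and the passage to $\Proj R(Y, -K_Y)$ is formal once normality of $Z$ (equivalently, connectedness of the fibres of $\rho$) is in hand.
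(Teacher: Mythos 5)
Your proof is correct and takes essentially the same route as the paper's: definedness along $E$ from the explicit description of $E$ (the key point being $\beta \ne 0$ in the term $\beta v^2$), birationality by solving $F_1 = F_2 = 0$ as a linear system in $(t_1,v)$ with matrix $\left(\begin{smallmatrix} t_0 & a_3 \\ u & b_4 \end{smallmatrix}\right)$, and the formal identification with the anticanonical model. The only difference is that you make explicit two points the paper leaves implicit, namely why the determinant $t_0 b_4 - a_3 u$ does not vanish identically on $X$ (your degree argument, using $y \in a_3$ from Condition \ref{cd:deg12-5link}) and the normality/Stein-factorization caveat in passing from $Z$ to $\Proj R(Y,-K_Y)$.
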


\begin{proof}
We see that the sections $x,y,z,t_0,u$ lift to plurianticanonical sections on $Y$ and they restrict to $E$ the coordinates $x,y,z,t,u$ of $\mbP$.
It is straightforward to see
\[
(x = y = z = t = u = 0) \cap E = \emptyset
\]
and this implies that $\rho$ is everywhere defined.
For a general point of $Z$, its inverse image via $\rho$ is a single point since we can solve $t_1$ and $v$ in terms of $F_1 = F_2 = 0$ which can be expressed as
\[
\begin{pmatrix}
t_0 & a_3 \\
u & b_4
\end{pmatrix}
\begin{pmatrix}
t_1 \\
v
\end{pmatrix}
= 
- \begin{pmatrix}
u a_4 + f_{10} \\
u b_5 + g_{11}
\end{pmatrix}.
\]
This shows that $\rho$ is birational and thus it is the anticanonical model of $Y$.
\end{proof}

The following lemma will be used in order to show that $\rho$ is a small contraction.

\begin{Lem} \label{lem:divcontmob}
Let $V$ be a $\mbQ$-Fano variety of Picard number one and let $\varphi \colon W \to V$ a $K_W$-negative extremal divisorial contraction with exceptional divisor $E$.
Suppose that $W$ admits a $K_W$-trivial divisorial contraction $\psi \colon W \to U$ which contracts a divisor $G$.
If a prime divisor $D$ on $W$ is $\mbQ$-linearly equivalent to $- \lambda K_W - \mu E$ for some $\lambda,\mu$ with $\mu > 0$, then $D = G$. 
\end{Lem}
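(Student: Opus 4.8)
The plan is to work numerically on $W$, whose Picard number is $2$ since $\varphi$ is a divisorial contraction and $\rho(V)=1$. Thus $\bNE(W)$ is a two-dimensional cone with exactly two extremal rays: the ray $R_1=\mbR_{\ge 0}\ell_1$ contracted by $\varphi$ and the ray $R_2=\mbR_{\ge 0}\ell_2$ contracted by $\psi$, where $\ell_1,\ell_2$ are generating curve classes, with $\ell_1$ lying in $E$ and $\ell_2$ lying in $G$. The defining properties of the two contractions give the sign data $K_W\cdot\ell_1<0$, $E\cdot\ell_1<0$ (the exceptional divisor of a divisorial extraction is negative on its contracted curves), $K_W\cdot\ell_2=0$ (as $\psi$ is $K_W$-trivial) and $G\cdot\ell_2<0$. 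The strategy is to test the class of $D$ against $\ell_2$: from $D\sim_{\mbQ}-\lambda K_W-\mu E$ we get
\[
(D\cdot\ell_2)=-\lambda(K_W\cdot\ell_2)-\mu(E\cdot\ell_2)=-\mu(E\cdot\ell_2),
\]
so everything reduces to controlling the sign of $E\cdot\ell_2$.

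The crux, and the step I expect to be the main obstacle, is to prove $E\cdot\ell_2>0$; this is where the hypothesis that $V$ is a $\mbQ$-Fano of Picard number one really enters. I would write $K_W=\varphi^*K_V+aE$ for the discrepancy $a=a_E(K_V)$ and first observe $a>0$: indeed $K_W\cdot\ell_1=a(E\cdot\ell_1)$ because $\varphi^*K_V\cdot\ell_1=K_V\cdot\varphi_*\ell_1=0$, and since both $K_W\cdot\ell_1<0$ and $E\cdot\ell_1<0$ we must have $a>0$. Next, because $R_2\ne R_1$ and $R_1=\ker(\varphi_*\colon N_1(W)\to N_1(V))$, the class $\varphi_*\ell_2$ is a non-zero effective curve class on $V$; as $-K_V$ is ample this gives $\varphi^*K_V\cdot\ell_2=K_V\cdot\varphi_*\ell_2<0$. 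Combining with $K_W\cdot\ell_2=0$ yields
\[
a(E\cdot\ell_2)=K_W\cdot\ell_2-\varphi^*K_V\cdot\ell_2=-\varphi^*K_V\cdot\ell_2>0,
\]
and hence $E\cdot\ell_2>0$ since $a>0$. In particular $E\ne G$, as $G\cdot\ell_2<0$.

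To finish, I would invoke the standard fact that $\psi$, being a divisorial contraction with exceptional divisor $G$ whose positive-dimensional fibres are covered by curves of class $R_2$, forces $(D'\cdot\ell_2)\ge 0$ for every prime divisor $D'\ne G$: a general contracted curve in the ray $R_2$ is not contained in $D'$, so meets it non-negatively. Now apply the displayed computation: with $\mu>0$ and $E\cdot\ell_2>0$ we obtain $(D\cdot\ell_2)=-\mu(E\cdot\ell_2)<0$. Since $D$ is prime, the only way a strict inequality $(D\cdot\ell_2)<0$ can occur is $D=G$, which is exactly the desired conclusion. The remaining care is merely bookkeeping of the two geometric inputs used above — that the exceptional divisor of an extremal divisorial contraction is negative on its contracted curves, and that a prime divisor other than the contracted one meets the contracted ray non-negatively — both immediate from the fibre structure of $\varphi$ and $\psi$.
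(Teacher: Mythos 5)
Your proof is correct, and it takes a genuinely different route from the paper's. You argue dually, on the cone of curves: since $\rho(W)=2$, the Mori cone $\bNE(W)$ has exactly the two rays $R_1$, $R_2$ contracted by $\varphi$ and $\psi$; you extract the key sign $E\cdot\ell_2>0$ from $K_W=\varphi^*K_V+a E$, the ampleness of $-K_V$ (applied to the nonzero effective class $\varphi_*\ell_2$) and $a>0$, and then conclude $(D\cdot\ell_2)=-\mu(E\cdot\ell_2)<0$, which forces $D=G$ because the $\psi$-contracted curves cover $G$ and therefore meet any prime divisor other than $G$ non-negatively. The paper instead works on the cone of effective divisors: it asserts that this cone is generated by $E$ and $G$, invokes \cite[Lemma 2.20]{OkadaII} (the infinitely many $K_W$-trivial, $E$-positive curves contracted by $\psi$ show that $\varphi$ is not a maximal extraction) to conclude that no mobile linear system on $W$ can have class $-\lambda' K_W-\mu' E$ with $\mu'>0$, and then, assuming $D\ne G$, writes $D\sim_{\mbQ} kG+lE$ with $k,l>0$, so that $|mD|$ is mobile, a contradiction. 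Your approach buys self-containedness and elementarity: it needs neither the maximal-extraction machinery nor the (unproved in the paper) generation statement for the effective cone, only the standard facts that an exceptional divisor of a divisorial contraction is negative on its contracted ray and that a covering family of curves meets any other prime divisor non-negatively. The paper's route, on the other hand, stays inside the mobility/maximal-centre framework that runs through the whole article and records along the way the intermediate fact that $\varphi$ is not a maximal extraction, which is the form in which the lemma is actually exploited (via Lemma \ref{lem:deg12flop} and its analogue in Section \ref{sec:deg4}).
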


\begin{proof}
Note that $\Pic (V) \otimes \mbQ$ is generated by $-K_W$ and $E$, and the cone of effective divisors on $W$ is generated by $E$ and $G$.

Since $\psi \colon W \to U$ is divisorial and $-K_W$-trivial, there are infinitely many curves on $W$ contracted by $\psi$ and they intersect $-K_W$ trivially and $E$ positively.
By \cite[Lemma 2.20]{OkadaII} (see also \cite[]{cheltsov-park}), $\varphi \colon W \to V$ is not a maximal extraction.
This implies that a divisor which is $\mbQ$-linearly equivalent to $- \lambda' K_W - \mu' E$ is not mobile if $\mu' > 0$ (because otherwise $\varphi$ is a maximal extraction).

Let $D \sim_{\mbQ} - \lambda K_W - \mu E$, $\mu > 0$, be a prime divisor.
We assume that $D \ne G$.
Since the cone of effective divisor of $W$ is generated by $E$ and $G$, we can write $D \sim_{\mbQ} k G + l E$ for some rational numbers $k, l > 0$.
Take a positive integer $m$ such that $m D \sim m k G + m l E$ and $m k, m l \in \mbZ$.
This linear equivalence implies that the linear system $|m D|$ is mobile since $D \ne G, E$.
This is a contradiction and the assertion is proved.
\end{proof}

\begin{Lem} \label{lem:deg12flop}
$\rho$ is a flopping contraction.
\end{Lem}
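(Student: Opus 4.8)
The plan is to leverage Lemma~\ref{lem:deg12birrho}, which already tells us that $\rho$ is a birational morphism realising the anticanonical model of $Y$. Consequently $B=-K_Y=\rho^{*}H$ for an ample $\mbQ$-Cartier divisor $H$ on $Z$, so $B$ is nef and every curve contracted by $\rho$ is $K_Y$-trivial. Since a flopping contraction is by definition a small $K_Y$-trivial contraction and $Y$ carries only terminal quotient singularities, the whole statement reduces to showing that $\rho$ does \emph{not} contract a divisor, i.e.\ that $\rho$ is small. As $Y$ has Picard number $2$ and $\rho$ is the crepant extremal contraction of the $K_Y$-trivial ray, it is either small (flopping) or divisorial, so it suffices to exclude the divisorial case.

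Suppose then, for contradiction, that $\rho$ contracts a prime divisor $G$. Then $\rho\colon Y\to Z$ is a $K_Y$-trivial divisorial contraction, and Lemma~\ref{lem:divcontmob}, applied with $V=X$, $W=Y$, the Kawamata blowup $\varphi$, and $\psi=\rho$, asserts that every prime divisor on $Y$ which is $\mbQ$-linearly equivalent to $-\lambda K_Y-\mu E$ with $\mu>0$ must coincide with $G$. I would derive a contradiction by exhibiting two \emph{distinct} prime divisors of this shape; the mechanism producing this pair is the two distinct points supplied by Condition~\ref{cd:deg12-5link}.

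Concretely, using the isomorphism $E\cong(t_0+v a_3=u+v b_4=z+v e_2+\beta v^{2}=0)\subset\mbP(1_x,3_y,4_z,5_{t_0},6_u,2_v)$, on $E$ one has $t_0=-v a_3$ and $u=-v b_4$, so the projection $\rho|_{E}$ to the coordinates $(x,y,z,t_0,u)$ is generically injective and can collapse points only along $(a_3=b_4=0)$. By Condition~\ref{cd:deg12-5link} this degeneracy locus consists of two distinct points $\msq_1,\msq_2\in E$, and these are precisely the two points lying on the curve $C:=\rho^{-1}(P_0)$ that $\rho$ contracts to a single point $P_0\in Z$. The plan is then to attach to $\msq_1$ and $\msq_2$ two surfaces $D_1,D_2\subset Y$, each a proper transform of a hypersurface section of $X$ chosen so that the $\ord_E$ of its defining polynomial exceeds $\tfrac{1}{5}\deg$ (forcing $\mu>0$), with $D_1\neq D_2$ and both irreducible. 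Lemma~\ref{lem:divcontmob} then forces $D_1=G=D_2$, a contradiction; hence no divisor is contracted, $\rho$ is small, and therefore a flopping contraction.

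The hard part is the last step: manufacturing the two distinct prime divisors with $\mu>0$, or equivalently showing that the second edge of the effective cone of $Y$ is not spanned by a single prime divisor. The geometric reason this should hold is exactly that the contracted fibre $C$ meets $E$ in the \emph{two separate} points $\msq_1,\msq_2$ of Condition~\ref{cd:deg12-5link}, rather than in a single one. I expect the cleanest route to be to build $D_1,D_2$ from these two points by the same coordinate-elimination and $\ord_E$-bookkeeping (via Lemma~\ref{lem:ordE} and Lemma~\ref{lem:elim}) used throughout this section, checking irreducibility on the chart $t_1\neq0$ as in the earlier curve-irreducibility lemmas. Alternatively, one may bypass Lemma~\ref{lem:divcontmob} altogether and argue directly from the matrix equation in Lemma~\ref{lem:deg12birrho}, solving for $(t_1,v)$ away from $(t_0 b_4-a_3 u=0)$ to show that $\Exc(\rho)$ is purely one-dimensional, equal to $C$; this likewise yields smallness and hence that $\rho$ is a flopping contraction.
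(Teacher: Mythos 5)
Your skeleton matches the paper's: reduce to showing $\rho$ is small (Lemma \ref{lem:deg12birrho} gives $B$-triviality and birationality), use the two points of Condition \ref{cd:deg12-5link} collapsing to one point to see $\rho$ is not an isomorphism, and invoke Lemma \ref{lem:divcontmob} to rule out the divisorial case. But the decisive step is missing, and the mechanism you propose for it is not the one that works. The paper first pins down the class of the hypothetical contracted divisor $G$: writing $G \sim_{\mbQ} kB - lE$, the relation $0 = (B^2 \cdot G) = \tfrac{1}{20}k - \tfrac{1}{6}l$ and integrality force $G \sim_{\mbQ} m(10B - 3E)$ with $m \ge 1$. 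It then constructs a \emph{single} effective class with $\mu > 0$ from the syzygy relation $b_4 F_1 - a_3 F_2$, which gives $t_1 (t_0 b_4 - u a_3) = -b_4(u a_4 + f_{10}) + a_3 (u b_5 + g_{11})$ on $X$ with right-hand side vanishing along $E$ to order at least $14/5$; hence the proper transform of $H = (t_0 b_4 - u a_3 = 0) \cap X$ satisfies $\tilde{H} \sim_{\mbQ} 9B - E$, and some component $D$ of $\tilde{H}$ has $D \sim_{\mbQ} \lambda B - \mu E$ with $\mu > 0$ and $0 < \lambda \le 9 < 10$. Comparing with $[G]$ gives $D \ne G$, contradicting Lemma \ref{lem:divcontmob}. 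Your plan omits the computation of $[G]$ entirely and instead asks for \emph{two distinct} prime divisors with $\mu > 0$. That would indeed suffice logically, but observe that under the contradiction hypothesis only one such prime divisor can exist (namely $G$ itself, by Lemma \ref{lem:divcontmob}), so your target is exactly as hard as the contradiction you seek, and no construction is given: ``attaching'' a divisor to each of the two points $\msq_1,\msq_2$ is unsubstantiated, and in the paper those points serve only to show $\rho$ is not an isomorphism — they have no evident connection to effective classes with $\mu > 0$. Producing even one divisor with $\mu>0$ requires the syzygy trick above, and nothing in your sketch produces a second, distinct one.

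Your fallback route also has a genuine gap. Solving the matrix equation of Lemma \ref{lem:deg12birrho} for $(t_1,v)$ where $t_0 b_4 - u a_3 \ne 0$ shows only that every positive-dimensional fiber of $\rho$ lies inside the determinantal divisor $\tilde{H}$; it does not show $\Exc(\rho)$ is one-dimensional. To conclude smallness this way you would still need to prove that no component of $\tilde{H}$ is contracted by $\rho$ — i.e.\ to analyze $\rho$ precisely on the locus your computation excludes — which is no easier than the original problem. So neither of your two routes, as sketched, closes the proof.
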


\begin{proof}
We see that the set 
\[
(a_4 = b_4 = 0) \cap E = (a_3 = b_4 = t_0 = u = z + v e_2 + v^2 = 0) \subset \mbP
\]
consists of two points $\{\msq_1, \msq_2\}$ and both of them are mapped to the same point $\msq \in \rho (E)$ via $\rho$, where 
\[
\{\msq\} = (a_3 = b_4 = t_0 = u = 0) \subset \mbP (1,3,4,5,6).
\]
Note that this in particular implies that $\rho$ is not an isomorphism.

It remains to show that $\rho$ is not divisorial.
Assume that $\rho$ is divisorial and let $G$ be the prime divisor on $Y$ contracted by $\rho$.
Since $G$ is contracted by the $B$-trivial contraction $\rho$, we have $(B^2 \cdot G) = 0$.
Since $(B^3) = 1/20$, we compute
\[
0 = (B^2 \cdot G) = k (B^3) - l (B^2 \cdot E) = \frac{1}{20} k - \frac{1}{5^2} l (E^3) = \frac{1}{20} k - \frac{1}{6} l.
\]
Since $k$ and $l$ are integers, we have $G \sim_{\mbQ} m (10 B - 3 E)$ for some positive integer $m$.
We will construct a prime divisor on $Y$ which is $\mbQ$-linearly equivalent to $\lambda B - \mu E$ for some $\lambda,\mu$ with $0 < \lambda < 10$ and $\mu > 0$.
We have
\[
b_4 F_1 - a_3 F_2 = t_1 (t_0 b_4 - u a_3) + b_4 (u a_4 + f_{10}) - a_3 (u b_5 + g_{11}).
\]
Thus, on $X$, we have
\[
 t_1 (t_0 b_4 - u a_3) = - b_4 (u a_4 + f_{10}) + a_3 (u b_5 + g_{11}).
\]
Each monomial in the right-hand side of the above equation vanishes along $E$ to order at least $14/5$.
Let $H \sim_{\mbQ} 9 A$ be the divisor on $X$ defined by $t_0 b_4 - u a_3 = 0$.
We have $\tilde{H} \sim_{\mbQ} 9 \varphi^* A - \frac{14}{5} E = 9 B - E$.  
Note that $\tilde{H}$ is not necessarily irreducible or reduced.
However there is a prime divisor $D$ (which is a component of $\tilde{H}$) such that $D \sim_{\mbQ} \lambda B - \mu E$ such that $\mu > 0$.
The integer $\lambda$ necessarily satisfies $0 < \lambda \le 9$.
This implies that $D \ne G$.
By Lemma \ref{lem:divcontmob}, this is a contradiction and $\rho$ is small.
\end{proof}

Let $\varphi' \colon Y' \to X$ be the Kawamata blowup of $X$ at the $\frac{1}{5} (1,1,4)$ point $\msq = \msp_{t_0}$ with exceptional divisor $E'$.
We see that $\varphi'$ can be realized as the embedded weighted blowup with the initial weight $\wt (x,y,z,t_1,u,v) = \frac{1}{5} (1,3,4,5,1,2)$ so that we have an isomorphism
\[
E' \cong (t_1 + v a_3 + u a_4 = v + \alpha u^2 = y + v d_1 + u d_2 -\beta u v = 0) \subset \mbP'.
\]
where $\mbP' = \mbP (1_x,3_y,4_z,5_{t_1},1_u,2_v)$.

Let $\psi \colon \hat{Y} \to Y$ be the Kawamata blowup of $Y$ at the $\frac{1}{5} (1,1,4)$ point $\varphi^{-1} (\msq)$.
We denote by $\pi \colon X \ratmap \mbP (1,3,4)$ the projection to $x,y,z$ and by $\eta \colon \hat{Y} \ratmap \mbP (1,3,4)$ the induced rational map.
We have the following diagram
\[
\xymatrix{
& \ar[ld]_{\psi'} \hat{Y} \ar[rd]^{\psi} \ar@{-->}@/^2.3pc/[rrrddd]^{\eta} & & & \\
Y' \ar[rd]_{\varphi'} & & \ar[ld]^{\varphi} Y \ar[rd]_{\rho} & & \\
& X \ar@{-->}[rrrd]_{\pi \hspace{0.6cm}} \ar@{-->}[rr]& & Z \ar@{-->}[rd] & \\
& & & & \mbP (1,3,4)}
\]
where $\psi' \colon \hat{Y} \to Y'$ is the Kawamata blowup of $Y'$ at the $\frac{1}{5} (1,2,3)$ point ${\varphi'}^{-1} (\msp)$ and $\eta$ is the rational map induced by $\pi$.
Note that the exceptional divisors of $\psi$ and $\psi'$ are $\hat{E}'$ and $\hat{E}$ which are the proper transforms of $E'$ and $E$ respectively, where we recall that $E'$ is the exceptional divisor of the Kawamata blowup $\varphi' \colon Y' \to X$ at the $\frac{1}{5} (1,1,4)$ point $\msq = \msp_{t_0}$.
We set $B = -K_Y$ and $\hat{B} = -K_{\hat{Y}}$.
It is straightforward to compute that $(B^3) = 1/20$, $(\hat{B}^3) = 0$.

\begin{Lem}
$\eta$ is a morphism which is an elliptic fibration.
Moreover, $\hat{E}$ and $\hat{E}'$ are respectively $2$-section and $3$-section of $\eta$.
\end{Lem}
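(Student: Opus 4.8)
The plan is to derive the entire statement from the single identity $\hat{B}=\eta^{*}\mcO_{\mbP(1,3,4)}(1)$: I would first check that $\eta$ is a morphism, then read off the fibration structure and the two section numbers from intersection theory on $\hat{Y}$.

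\emph{Step 1: $\eta$ is a morphism.} The indeterminacy locus of $\pi$ is $(x=y=z=0)\cap X$. Substituting $x=y=z=0$ into the equations of Lemma~\ref{lem:deg12defeqlink} and using $\alpha,\beta\neq0$ gives $F_1\mapsto t_1t_0$, $F_2\mapsto t_1u$, $F_3\mapsto t_0v+\alpha u^2$, $F_4\mapsto-\beta uv$, $F_5\mapsto\beta v^2$, whose common zero locus in $\mbP(5,5,6,7)$ is exactly $\{\msp_{t_0},\msp_{t_1}\}=\{\msq,\msp\}$. Because the centre $\varphi^{-1}(\msq)$ of $\psi$ misses $E$ and the centre $(\varphi')^{-1}(\msp)$ of $\psi'$ misses $E'$, we have $\hat{E}\cong E$ and $\hat{E}'\cong E'$; on the explicit models $E\subset\mbP(1,3,4,5,6,2)$ and $E'\subset\mbP(1,3,4,5,1,2)$ displayed above, the equation $z+ve_2+\beta v^2$ (resp.\ $v+\alpha u^2$ together with $y+vd_1+ud_2-\beta uv$) forces all coordinates to vanish once $x=y=z=0$, so $(x=y=z=0)$ is empty on each. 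Away from $\hat{E}\cup\hat{E}'$ the map $\eta$ is $\pi$ itself, which is defined there; hence $\eta$ is a morphism.

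\emph{Step 2: the key identity.} Since $x$ has weight one it is an anticanonical section, and $\ord_E(x)=\ord_{\hat{E}'}(x)=1/5$ coincide with the discrepancies of the two Kawamata blowups. Thus the proper transform of $(x=0)$ is $\psi^{*}\varphi^{*}A-\tfrac15\hat{E}-\tfrac15\hat{E}'=-K_{\hat{Y}}=\hat{B}$, and as $x$ vanishes identically on neither $\hat{E}$ nor $\hat{E}'$ this proper transform equals $\eta^{*}\mcO_{\mbP(1,3,4)}(1)$. Therefore $\hat{B}=\eta^{*}\mcO(1)$ is the pullback of an ample class: it is nef, $(\hat{B}^3)=0$, and $\hat{B}\cdot F=0$ for every fibre $F$; moreover $\eta$ is surjective onto $\mbP(1,3,4)$ because $\pi$ is dominant.

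\emph{Step 3: sections, connectedness and genus.} From $\hat{B}=\psi^{*}B-\tfrac15\hat{E}'$ and the blowup relations $((\psi^{*}B)^2\cdot\hat{E}')=(\psi^{*}B\cdot\hat{E}'^{2})=0$, $(\hat{E}'^{3})=25/(1\cdot4)$, I obtain $\hat{E}'\cdot\hat{B}^2=\tfrac1{25}(\hat{E}'^{3})=\tfrac14$; symmetrically, writing $\hat{B}=\psi'^{*}B'-\tfrac15\hat{E}$ with $B'=-K_{Y'}$ and using $(\hat{E}^{3})=25/(2\cdot3)$ gives $\hat{E}\cdot\hat{B}^2=\tfrac16$. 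Since $\hat{B}^2=\eta^{*}(\mcO(1)^2)=\tfrac1{12}[F]$ (because $(\mcO_{\mbP(1,3,4)}(1)^2)=\tfrac1{12}$), every divisor $D$ satisfies $D\cdot F=12\,(D\cdot\hat{B}^2)$, whence $\hat{E}\cdot F=2$ and $\hat{E}'\cdot F=3$. To see the fibres are connected, take the Stein factorisation $\eta=s\circ\eta_0$ with $s$ finite of degree $k$; then $2=k(\hat{E}\cdot F_0)$ and $3=k(\hat{E}'\cdot F_0)$ for a connected fibre $F_0$, so $k\mid2$ and $k\mid3$, forcing $k=1$. Hence $\eta$ has connected fibres; by generic smoothness a general fibre $F$ is a smooth connected curve, and adjunction with $\hat{B}|_F\equiv0$ yields $K_F=K_{\hat{Y}}|_F=-\hat{B}|_F=0$, so $F$ is elliptic and $\eta$ is an elliptic fibration with $\hat{E}$ a $2$-section and $\hat{E}'$ a $3$-section.

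\emph{Expected obstacle.} The only genuinely delicate point is connectedness of the general fibre; I expect $\gcd(2,3)=1$ of the two section numbers to be precisely what pins the Stein degree to $1$, so that no explicit analysis of the fibre equation is needed.
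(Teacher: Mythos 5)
Your proposal is correct, and it splits naturally into a part that mirrors the paper and a part that genuinely differs. Step 1 is essentially the paper's own argument: the same restriction of the polynomials of Lemma~\ref{lem:deg12defeqlink} to $(x=y=z=0)$, identifying the indeterminacy locus of $\pi$ with $\{\msp_{t_0},\msp_{t_1}\}$, followed by the same emptiness checks $(x=y=z=0)\cap E=(x=y=z=0)\cap E'=\emptyset$ on the explicit models of $E$ and $E'$. Where you diverge is in the multisection numbers and the elliptic-fibration property. The paper gets $2$ and $3$ geometrically: it intersects the proper transforms of the fibre-cutting surfaces $S_\lambda=(y-\lambda x^3=0)\cap X$ and $T_\mu=(z-\mu x^4=0)\cap X$ with $E$ (resp.\ $E'$), using that $E$ and $E'$ are complete intersections of degrees $5,6,4$ in $\mbP(1,3,4,5,6,2)$ and $5,2,3$ in $\mbP(1,3,4,5,1,2)$, so that $(\tilde S_\lambda\cdot\tilde T_\mu\cdot E)=\frac{3\cdot 4\cdot 5\cdot 6\cdot 4}{1\cdot 3\cdot 4\cdot 5\cdot 6\cdot 2}=2$ and similarly $=3$ for $E'$. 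You instead compute $\hat E\cdot\hat B^2=\frac{1}{25}(\hat E^3)=\frac16$ and $\hat E'\cdot\hat B^2=\frac{1}{25}(\hat E'^3)=\frac14$ from the two expressions $\hat B=\psi'^*B'-\frac15\hat E=\psi^*B-\frac15\hat E'$, and convert these into fibre degrees via $\hat B^2\equiv\frac1{12}[F]$; this is shorter and purely numerical, but it makes everything hinge on the identity $\hat B=\eta^*\mcO_{\mbP(1,3,4)}(1)$, which you do justify correctly (via $\ord_E(x)=\ord_{E'}(x)=1/5$ and the fact that neither exceptional divisor maps into $(x=0)$). Your Step 3 is in fact more careful than the paper's: the paper simply declares that $\eta$ is the anticanonical morphism, hence an elliptic fibration, whereas connectedness of fibres (equivalently $\eta_*\mcO_{\hat Y}=\mcO_{\mbP(1,3,4)}$, which is exactly what makes $\mbP(1,3,4)$ the anticanonical model of $\hat Y$ --- a fact invoked later in the proof of Theorem~\ref{thm:deg12nonrig}) is never argued there; your Stein-factorization observation that the degree $k$ of the finite part divides both $\hat E\cdot F=2$ and $\hat E'\cdot F=3$ settles it, and adjunction then gives $K_F\cong\mcO_F$. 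Two small points to make airtight: in the gcd step, the identity $2=k(\hat E\cdot F_0)$ uses that the $k$ connected components of a general $\eta$-fibre are general fibres of $\eta_0$ and hence numerically equivalent $1$-cycles (generic flatness), so each meets $\hat E$ in the same number; and generic smoothness should be applied on the smooth locus of $\hat Y$, which is legitimate because a general fibre avoids the finitely many quotient singularities.
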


\begin{proof}
The indeterminacy locus of the projection $\pi \colon X \ratmap \mbP (1,3,4)$ is the set $\Xi := (x = y = z = 0) \cap X$.
We have 
\[
F_1 (0,0,0,t_0,t_1,u,v) = t_1 t_0, \ 
F_2 (0,0,0,t_0,t_1,u,v) = t_1 u.
\]
so that $\Xi = \Xi_1 \cup \Xi_2$, where 
\[
\Xi_1 = (x = y = z = t_1 = 0) \cap X, \ 
\Xi_2 = (x = y = z = t_0 = u = 0) \cap X.
\]
By looking at the other polynomials $F_3, F_4, F_5$, it is easy to check that $\Xi_1 = \{\msp_{t_0}\}$ and $\Xi_2 = \{\msp_{t_1}\}$.
This shows that $\pi$ is defined outside $\{\msp_{t_0}, \msp_{t_1}\}$.
The proper transforms of the sections $x,y,z$ on $Y$ restricts to the coordinates $x,y,z$ on $E \subset \mbP$ and we have $(x = y = z = 0) \cap E = \emptyset$.
This shows that $\eta$ is defined at every point of $\hat{E}$.
For $\lambda,\mu \in \mbC$, we set $S_{\lambda} = (y - \lambda x^3 = 0) \cap X$ and $T_{\mu} = (z - \mu x^4 = 0) \cap X$.
We see that $\tilde{S}_{\lambda} \cap \tilde{T}_{\mu}$ is the fiber of $\pi \circ \varphi \colon Y \ratmap \mbP (1,3,4)$ over the point $(1\!:\!\lambda\!:\!\mu)$, $\tilde{S}_{\lambda}|_E$ and $\tilde{T}_{\mu}|_E$ are hyperplane sections of degree $3$ and $4$ on $E \subset \mbP$, so that we have
\[
(\tilde{S}_{\lambda} \cdot \tilde{T}_{\mu} \cdot E) = (\tilde{S}_{\lambda}|_E \cdot \tilde{T}_{\mu} |_E)_E = \frac{3 \cdot 4 \cdot 5 \cdot 6 \cdot 4}{1 \cdot 3 \cdot 4 \cdot 5 \cdot 6 \cdot 2} = 2.
\]
This shows that $\hat{E}$ is a $2$-section of $\eta$.
Here we explain the above computation in more detail.
Since $E$ is a complete intersection in $\mbP$ defined by equations of degree $5,6,4$ and $\tilde{S}_{\lambda}|_E$, $\tilde{T}_{\mu}|_E$ correspond to hypersurfaces in $\mbP$ of degree $3$, $4$ respectively, we have $(\tilde{S}_{\lambda}|_E \cdot \tilde{T}_{\mu}|_E)_E = 3 \cdot 4 \cdot 5 \cdot 6 \cdot 4 \cdot (\mcO_{\mbP} (1))^5$.

The proper transforms of the sections $x,y,z$ on $Y$ restricts to the coordinates $x,y,z$ on $E' \subset \mbP'$ and we have $(x = y = z = 0) \cap E' = \emptyset$.
This shows that $\eta$ is defined at every point of $\hat{E}'$.
We see that $S'_{\lambda} \cap T'_{\lambda}$, where $S'_{\lambda} = {\psi'}_*^{-1} S_{\lambda}$ and $T'_{\mu} = {\psi'}_*^{-1} T_{\mu}$, is the fiber of $\varphi' \circ \pi$ over the point $(1\!:\!\lambda\!:\!\mu)$, $S'_{\lambda}|_{E'}$ and $T'_{\mu}|_{E'}$ are hyperplane sections of degree $3$ and $4$ on $F \subset \mbP'$, so that we have
\[
(S'_{\lambda} \cdot T'_{\mu} \cdot E') = (S'_{\lambda}|_{E'} \cdot T'_{\mu} |_{E'})_{E'} = \frac{3 \cdot 4 \cdot 5 \cdot 2 \cdot 3}{1 \cdot 3 \cdot 4 \cdot 5 \cdot 1 \cdot 2} = 3.
\]
This shows that $\hat{E}'$ is a $3$-section of $\eta$.
We note that the intersections $\tilde{S}_{\lambda} \cap \tilde{T}_{\mu} \cap E$ and $S'_{\lambda} \cap T'_{\mu} \cap E$ can also be computed explicitly using local coordinates.)

Thus $\hat{\pi}$ is everywhere defined.
It is clear that the sections $x,y,z$ lift to sections of $\hat{B}, 3 \hat{B}, 4 \hat{B}$ respectively, so that $\eta$ is the anticanonical morphism and it is an elliptic fibration.
\end{proof}

By Lemma \ref{lem:deg12flop}, $\rho$ is a flopping contraction.
Let $\tau = \tau_0 \colon Y \ratmap Y_1$ be the flop of $\rho$.
Then $Y_1$ admits a $K_{Y_1}$-negative extremal ray because otherwise $K_{Y_1}$ is nef and big which is impossible.
There are three options: $Y_1$ is a Mori fiber space, $Y_1$ admits a $K_{Y_1}$-negative divisorial contractions to a $\mbQ$-Fano 3-fold or $Y_1$ admits a flip $Y_1 \ratmap Y_2$.
In the last case, $Y_2$ also have the same options since $K_{Y_2}$ is not nef and big.
Thus the flop $Y \ratmap Y_1$ followed by a sequence of flips gives a $2$-ray game which ends with a Mori fiber space, that is, we have a Sarkisov link $\sigma \colon X \ratmap \bar{X}/\bar{S}$ to a Mori fiber space.
We will show that $\bar{X}$ is not isomorphic to $X$, which requires all the results of this section.

\begin{Thm} \label{thm:deg12nonrig}
The Sarkisov link $\sigma$ starting with the Kawamata blowup of $X$ at the $\frac{1}{5} (1,2,3)$ point $\msp$ is a link to a Mori fiber space which is not isomorphic to $X$.
In particular, $X$ is not birationally rigid.
\end{Thm}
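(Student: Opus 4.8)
The plan is to argue by contradiction. We have already produced, by playing the $2$-ray game that begins with the Kawamata blowup $\varphi \colon Y \to X$ at $\msp$, performs the flop of the flopping contraction $\rho$ of Lemma \ref{lem:deg12flop}, and then runs a sequence of flips, a Sarkisov link $\sigma \colon X \ratmap \bar{X}/\bar{S}$ to a Mori fibre space. If $\dim \bar{S} > 0$, then $\bar{X}/\bar{S}$ is a strict Mori fibre space and is not isomorphic to $X$, and we are done. Hence I would henceforth assume that $\bar{S}$ is a point, so that $\bar{X}$ is a $\mbQ$-factorial terminal Fano $3$-fold of Picard number one, and suppose for contradiction that $\bar{X} \cong X$.

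Under this assumption $\sigma$ is a Sarkisov self-link of type II: there is a diagram $X \leftarrow Y \ratmap \bar{Y} \to \bar{X} \cong X$ in which $Y \ratmap \bar{Y}$ is an isomorphism in codimension one and $\bar{\varphi} \colon \bar{Y} \to \bar{X}$ is a divisorial contraction. Since $\bar{\varphi}$ contracts its exceptional divisor to a terminal quotient singular point $\bar{\msp} \in \bar{X}$, the uniqueness statement of \cite{Kawamata} shows that $\bar{\varphi}$ is the Kawamata blowup at $\bar{\msp}$, and $\bar{\msp}$ is necessarily a maximal centre. Combining the exclusions of Section \ref{sec:curvenspt} (no curve and no nonsingular point is a maximal centre) with the results of the present section (neither the $\frac{1}{4}(1,1,3)$ point nor a $\frac{1}{3}(1,1,2)$ point is a maximal centre), the only possibilities are that $\bar{\msp}$ is the $\frac{1}{5}(1,2,3)$ point or the $\frac{1}{5}(1,1,4)$ point of $\bar{X} \cong X$. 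I would treat these two cases separately.

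For the numerical book-keeping I would use anticanonical volumes together with the fact that the flop of $\rho$ is crepant and hence preserves $(-K)^3$. A Kawamata blowup of $X$ at the $\frac{1}{5}(1,2,3)$ (resp. $\frac{1}{5}(1,1,4)$) point yields a model with $(-K)^3 = 1/20$ (resp. $1/30$), so the relation $(-K_{\bar{X}})^3 = (-K_{\bar{Y}})^3 + 1/(r\,a(r-a))$ for $\bar{\msp}$ of type $\frac{1}{r}(1,a,r-a)$ forces $(-K_{\bar{Y}})^3$ to equal $1/20$ in the first case and $1/30$ in the second. Since $(-K_Y)^3 = (B^3) = 1/20$, the $\frac{1}{5}(1,1,4)$ case can only occur if the intervening flips change the volume from $1/20$ to $1/30$; here the birational involution $\iota$ of Proposition \ref{prop:deg12-birinv}, centred at the $\frac{1}{5}(1,1,4)$ point, is exactly the tool to untwist this maximal centre and reduce everything to the symmetric situation in which $\bar{\msp}$ is again a $\frac{1}{5}(1,2,3)$ point. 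The decisive invariant in the symmetric case is the elliptic fibration $\eta \colon \hat{Y} \to \mbP(1,3,4)$: both $X$ and $\bar{X} \cong X$ resolve, upon blowing up their two $\frac{1}{5}$ points, to a threefold carrying this fibration, on which $\hat{E}$ is a $2$-section and $\hat{E}'$ is a $3$-section. Comparing these two resolutions over the common base $\mbP(1,3,4)$ and tracking the multisection attached to the extracted divisor $E$ is expected to force the $2$-section and the $3$-section to have equal degree over the base, namely $2 = 3$, which is the desired contradiction.

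The main obstacle is the control of the intermediate flips. Because the $2$-ray game is only guaranteed to terminate in a Mori fibre space, with no a priori bound on the number of flips, a volume computation on $\bar{Y}$ alone is inconclusive: the flips need not preserve $(-K)^3$. The point of introducing the elliptic fibration $\eta$ is to supply an invariant insensitive to the flips, namely the degree over $\mbP(1,3,4)$ of the multisection coming from the extracted divisor, and I expect that it is the combination of this multisection-degree invariant with the untwisting by $\iota$ that ultimately forces $\bar{X} \not\cong X$, whence $X$ is not birationally rigid.
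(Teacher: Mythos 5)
Your skeleton matches the paper's in outline: assume $\bar{X} \cong X$, use the exclusion results to force the final extraction $\bar{\varphi}$ to be centred at one of the two $\frac{1}{5}$ points, use Proposition \ref{prop:deg12-birinv} to rule out the $\frac{1}{5}(1,1,4)$ point, and then derive a contradiction from the elliptic fibration $\eta \colon \hat{Y} \to \mbP(1,3,4)$ and the multisection degrees of $\hat{E}$ and $\hat{E}'$. But there are two genuine gaps. First, you never dispose of the intermediate flips; you only hope that the fibration invariant is ``insensitive'' to them. The paper needs no such invariance statement because it proves there are no flips: once $\bar{\varphi} = \varphi$, the end variety $\bar{Y}$ is isomorphic to $Y$ (uniqueness of the Kawamata blowup), and the only extremal contractions of $Y$ are the divisorial contraction $\varphi$ and the $K$-trivial small contraction $\rho$ of Lemma \ref{lem:deg12flop}; hence $Y$ admits no inverse flip, so the last step $\tau_m$ of the link cannot be a flip, forcing $m=0$, i.e.\ the link consists of the single flop $\tau$ of $\rho$. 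Every subsequent construction --- the automorphism $\theta$ of $Z$ fixing the unique $\frac{1}{5}(1,1,4)$ point $\bar{\msq}$, the lift $\hat{\tau}$ being the flop of $\hat{\rho}$ over $\mbP(1,3,4)$, and the identities $\hat{\tau}_*\hat{B} = \hat{B}$, $\hat{\tau}_*\hat{E}' = \hat{E}'$ --- rests on this. Without $m = 0$, you would have to prove that an arbitrary chain of flips still induces an automorphism of $Z$ fixing $\bar{\msq}$ together with a small lift to $\hat{Y}$ preserving $\hat{B}$ and $\hat{E}'$, none of which your proposal addresses.

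Second, the final contradiction is left as an expectation, and the one you predict ($2=3$) is not the one that occurs. The actual mechanism is class-group bookkeeping: write $\hat{\tau}_*\hat{E} = \alpha\hat{B} - \beta\hat{E} + \gamma\hat{E}'$ in the group generated by $\hat{B},\hat{E},\hat{E}'$; effectivity of $\hat{\tau}_*\hat{E}$ and of $(\hat{\tau}^2)_*\hat{E}$, together with $\beta>0$ (because $\tau$ is a flop), forces $\alpha > 0$ and $\beta = 1$; then intersecting with a general fiber $C'$ of $\eta'$, using $(\hat{B}\cdot C') = 0$ and the facts that $\hat{\tau}_*\hat{E}$, $\hat{E}$, $\hat{E}'$ are $2$-, $2$- and $3$-sections respectively, gives $2 = -2 + 3\gamma$, i.e.\ $\gamma = 4/3 \notin \mbZ$. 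So the obstruction is an integrality one, not an equality of multisection degrees, and reaching it requires exactly the pushforward computation (in particular the step $\beta=1$, which uses iterating $\hat{\tau}$) that your ``tracking'' leaves unspecified. A smaller point: your volume bookkeeping in the $\frac{1}{5}(1,1,4)$ case is, as you concede, inconclusive; the paper's route is cleaner and needs no volumes --- if $\bar{\varphi} = \varphi'$, then by uniqueness of the $2$-ray game the link starting with $\varphi'$ is the involution of Proposition \ref{prop:deg12-birinv}, which ends with $\varphi'$ rather than with $\varphi$, a contradiction.
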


\begin{proof}
We assume $\bar{X} \cong X$.
Then the link $\sigma$ sits in the diagram:
\[
\xymatrix{
Y \ar[d]_{\varphi} \ar@{-->}[r]^{\tau = \tau_0} & Y_1 \ar@{-->}[r]^{\tau_1} & \cdots \ar@{-->}^{\tau_{m-1}}[r] & Y_m \ar@{-->}[r]^{\tau_m} & \bar{Y} \ar[d]_{\bar{\varphi}}\\
X & & & & X}
\] 
where $\tau_i$ is a flip for $i \ge 1$ and $\bar{\varphi}$ is an extremal divisorial contraction. 
We see that $\bar{\varphi}$ coincides with either $\varphi$ or $\varphi'$ because a centre other than $\msp$ and $\msp_{t_0}$ is not a maximal centre.
By Proposition \ref{prop:deg12-birinv} (see also \cite[Lemma 3.2]{OkadaII}) , the Sarkisov link starting with $\varphi'$ ends with $\varphi'$.
By the uniqueness of $2$-ray game starting with a given divisorial extraction, $\bar{\varphi}$ cannot be $\varphi'$ and hence $\bar{\varphi} = \varphi$.
Now $\bar{Y} \cong Y$ so that it does not admit an inverse flip, which implies that $\tau_m$ cannot be a flip.
Thus $m = 0$, that is, the link involves only the flop $\tau$. 

We have the following diagram
\[
\xymatrix{
Y \ar@{-->}[rr]^{\tau} \ar[d]_{\varphi} \ar[rd]^{\rho} & & \ar[ld]_{\rho'} Y \ar[d]^{\varphi} \\
X & Z & X}
\]
where $\rho'$ is a flopping contraction.
Note that $\rho'$ can be decomposed as $\rho' = \theta \circ \rho$, where $\theta \colon Z \to Z$ is an automorphism, since $\tau$ induces an isomorphism between the anticanonical model $Z$ of $Y$.
Let $\hat{\tau} \colon \hat{Y} \ratmap \hat{Y}$ be the birational automorphism induced by $\tau$.
We set $N = \hat{B} + \varepsilon \hat{E}'$ for $0 < \varepsilon < \frac{1}{5}$, which is nef and big since $\psi^*B = \hat{B} + \frac{1}{5} \hat{E}'$ is nef and big, and $\hat{B}$ is nef.
We choose $0 < \varepsilon \ll 1/5$ so that $N$ is $\psi$-ample.
Let $\hat{\rho} \colon \hat{Y} \to \hat{Z}$ be the contraction associated with $N$.

We will show that the curves contracted by $\hat{\rho}$ are precisely the proper transforms of the flopping curves on $Y$.
Let $\Gamma \subset Y$ be a flopping curve.
Then
\[
0 \le (\hat{B} \cdot \hat{\Gamma}) = (B \cdot \Gamma) - \frac{1}{5} (\hat{E}' \cdot \hat{\Gamma}) = - \frac{1}{5} (\hat{E}' \cdot \hat{\Gamma}) \le 0.
\]
This shows $\hat{\Gamma} \cap \hat{E}' = \emptyset$ and $(\hat{B} \cdot \hat{\Gamma}) = 0$.
In particular, $\hat{\Gamma}$ is contracted by $\hat{\rho}$.
Let $\Delta \subset \hat{Y}$ be an irreducible curve on $\hat{Y}$ which is contracted by $\hat{\rho}$.
Note that $\Delta \not\subset \hat{E}'$ since $N$ is $\psi$-ample.
Then
\[
0 = (N \cdot \Delta) = (\hat{B} \cdot \Delta) + \varepsilon (\hat{E}' \cdot \Delta) \ge \varepsilon (\hat{E}' \cdot \Delta) \ge 0,
\]
which implies $\Delta \cap \hat{E}' = \emptyset$ and $0 = (\hat{B} \cdot \Delta) = (B \cdot \psi_*\Delta)$.
Thus $\Delta$ is the proper transform of a flopping curve on $Y$.

By the above argument, the curves contracted by $\hat{\rho}$ form a $K_{\hat{Y}}$-trivial extremal ray and $\hat{\rho}$ is a flopping contraction over $\mbP (1,3,4)$.
Moreover $\hat{Z}$ is obtained as the Kawamata blowup of $Z$ at the $\frac{1}{5} (1,1,4)$ point $\bar{\msq} := \rho (\varphi^{-1} (\msq))$.
Since the point $\varphi^{-1} (\msq) \in Y$ is the unique singular point of $\frac{1}{5} (1,1,4)$, the point $\bar{\msq} \in Z$ is the unique point of type $\frac{1}{5} (1,1,4)$.
Hence $\theta$ fixes $\bar{\msq}$.
It follows that the birational map $\hat{\tau} \colon \hat{Y} \ratmap \hat{Y}$ is the flop of $\hat{\rho}$ and we have the following commutative diagram
\[
\xymatrix{
\hat{Y} \ar@{-->}[rr]^{\hat{\tau}} \ar[rd]_{\eta} & & \ar[ld]^{\eta'} \hat{Y} \\
& \mbP (1,3,4)}
\]
where $\eta' = \chi \circ \eta$ for some automorphism $\chi$ of $\mbP (1,3,4)$ since the flop $\hat{\tau}$ induces an isomorphism of the anticanonical model $\mbP (1,3,4)$ of $\hat{Y}$.
Thus $\hat{\tau}$ is an isomorphism in codimension $1$ and it induces an isomorphism between the generic fibers of $\eta$ and $\eta'$.

We have $\hat{\tau}_* \hat{B} = \hat{B}$ since $\hat{\tau}$ is small.
By construction, we have $\hat{\tau}_*\hat{E}' = \hat{E}'$ (because $\theta (\bar{\msq}) = \bar{\msq}$) .
Since the Weil divisor class group of $\hat{Y}$ is generated by $\hat{B}$, $\hat{E}$ and $\hat{E}'$, we can write $\hat{\tau}_* \hat{E} = \alpha \hat{B} - \beta \hat{E} + \gamma \hat{E}'$ for some integers $\alpha, \beta, \gamma$.
Clearly $\alpha \ge 0$ since $\hat{\tau}_* \hat{E}$ is effective and non-zero.
Note that $\tau_* E = \alpha B - \beta E$ and since $\tau$ is a flop we have $\beta > 0$.
If $\alpha = 0$, then $\tau_*E = - \beta E$ and this is a contradiction since $\tau_*E$ is effective.
Hence $\alpha > 0$. 
We have
\[
(\hat{\tau}^2)_* \hat{E} = \alpha (1-\beta) \hat{B} + \beta^2 \hat{E} + \gamma (1-\beta) \hat{F}.
\]
Since $(\hat{\tau}^2)_* \hat{E}$ is effective, we have $\alpha (1-\beta) \ge 0$, which implies $\beta \le 1$.
Thus we have $\beta = 1$.
Since $\hat{\tau}$ induces an isomorphism between generic fibers of the elliptic fibrations $\eta$ and $\eta'$, $\hat{\tau}_*\hat{E}$ is a $2$-section of $\eta'$.
Clearly $\hat{E}$ and $\hat{E}'$ are $2$-section and $3$-section respectively.
Then, for a general $\eta'$-fiber $C'$, we have
\[
2 = (\hat{\tau}_* \hat{E} \cdot C') = \alpha (\hat{B} \cdot C') - (\hat{E} \cdot C') + \gamma (\hat{E}' \cdot C') = -2 + 3 \gamma.
\]
This is a contradiction since $\gamma \in \mbZ$.
Therefore, $\sigma$ cannot be a birational automorphism of $X$.
\end{proof}

\begin{Rem}
We are unable to give an explicit construction of the link $\sigma$ and we do not even understand whether the target Mori fiber space $\bar{X}/\bar{S}$ is a strict Mori fiber space or not.
\end{Rem} 

\section{Pfaffian Fano $3$-fold of degree $1/4$} \label{sec:deg4}

Let $X = X_{7,8,8,9,10} \subset \mbP (1_x,2_y,3_{z_0},3_{z_1},4_{t_0},4_{t_1},5_u)$ be a Pfaffian Fano $3$-fold of degree $1/4$.
The main aim of this section is to prove that there is a Sarkisov link centred at the $\frac{1}{4} (1,1,3)$ point to a Mori fiber space other than $X$.
This implies that $X$ is not birationally rigid.
For the rigorous proof, we need to exclude or untwist the other centres, so we will exclude points of type $\frac{1}{2} (1,1,1)$ and construct a Sarkisov link centred at each $\frac{1}{3} (1,1,2)$ point which is a birational involution. 
The syzygy matrix of $X$ and the defining polynomials are given as follows:
\[
M =
\begin{pmatrix}
0 & a_2 & a_3 & a'_3 & a_4 \\
& 0 & b_4 & b'_4 & b_5 \\
& & 0 & c_5 & c_6 \\
& & & 0 & d_6 \\
& & & & 0
\end{pmatrix}
\hspace{1.5cm}
\begin{aligned}
F_1 &= a_2 c_5 - a_3 b'_4 + a'_3 b_4 \\  
F_2 & = a_2 c_6 - a_3 b_5 + a_4 b_4 \\ 
F_3 &= a_2 d_6 - a'_3 b_5 + a_4 b'_4 \\ 
F_4 &= a_3 d_6 - a'_3 c_6 + a_4 c_5 \\
F_5 &= b_4 d_6 - b'_4 c_6 + b_5 c_5
\end{aligned}
\]
The basket of singularities of $X$ is as follows
\[
\left\{ 3 \times \frac{1}{2} (1,1,1), 3 \times \frac{1}{3} (1,1,2), \frac{1}{4} (1,1,3) \right\}.
\]

\subsection{Exclusion of the $\frac{1}{2} (1,1,1)$ points}

Let $\msp$ be a $\frac{1}{2} (1,1,1)$ point.
Throughout the present subsection, we assume $y \in a_2$ and then, re-scaling $y$, we assume that the coefficient of $y$ in $a_2$ is $1$.
Replacing $y,t_0,t_1$, we assume $\msp = \msp_y$.
We have $u^2 \in F_5$ since $\msp_u \notin X$, which implies $u \in b_5, c_5$.
It follows that $y u \in F_1$.
After replacing $u$, we assume that $y u$ is the unique monomial in $F_1$ which is divisible by $y$.

For the entries of the syzygy matrix $M$, we can write $a_3 = \ell_1 + (\text{other terms})$, $a'_3 = \ell_2 + (\text{other terms})$, $b_5 = y \ell_3 + (\text{other terms})$, $c_5 = y \ell_4 + (\text{other terms})$, $c_6 = \delta y^3 + q_1 + (\text{other terms})$ and $d_6 = \varepsilon y^3 + q_2 + (\text{other terms})$, where $\delta, \varepsilon \in \mbC$, $\ell_1,\dots,\ell_4$ and $q_1, q_2$ are respectively linear and quadratic forms in $t_0,t_1$.
Let $\beta, \gamma \in \mbC$ be the coefficients of $y^2$ in $b_4$ and $b'_4$ respectively.
We exclude the point $\msp$ assuming the following generality condition:

\begin{Cond} \label{cd:deg4-2}
We have $y \in a_2$ and the system of equations 
\[
q_1 - \ell_1 \ell_3 = q_2 - \ell_2 \ell_3 = \beta q_2 - \gamma q_1 + \ell_3 \ell_4 = 0
\]
does not have a non-trivial solution.
\end{Cond}

\begin{Lem}
If $X$ satisfies \emph{Condition \ref{cd:deg4-2}}, then no singular point of type $\frac{1}{2} (1,1,1)$ is a maximal centre.
\end{Lem}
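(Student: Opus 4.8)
The plan is to exclude $\msp = \msp_y$ by showing that $\{x,t_0,t_1\}$ isolates it and then invoking Lemma~\ref{lem:excltc}. Write $r=2$ for the index, so that $a_E(K_X)=\tfrac12$ and $(E^3)=\tfrac{r^2}{a(r-a)}=4$. The three tangent directions at $\msp$ are $x,z_0,z_1$, whence $\ord_E(x)=\tfrac12$, while $t_0,t_1$ are eliminated coordinates of even weight, so $\ord_E(t_0),\ord_E(t_1)\ge 1$; thus $\ord_E(x,t_0,t_1)\ge\tfrac12(1,2,2)$. The remaining coordinates $z_0,z_1,u$ vanish along $E$ only to order $\tfrac12$ and would contribute $c>\tfrac14$ in Lemma~\ref{lem:isolnef}, so they cannot be used; this forces $\{x,t_0,t_1\}$ as the only viable isolating set. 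Granting that it isolates $\msp$, Lemma~\ref{lem:isolnef} gives that $L=4\varphi^*A-E=4(B+\tfrac14E)$ is nef (here $c=\tfrac14\le\tfrac12$), and
\[
(L\cdot B^2)=4(A^3)-\frac{1}{2^2}(E^3)=1-1=0,
\]
so Lemma~\ref{lem:excltc} completes the exclusion.

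The substance of the proof is therefore the isolation claim. I would set $\Pi=(x=t_0=t_1=0)$ and analyse $X\cap\Pi$ near $\msp$. Restricting the first Pfaffian, the normalisation that $yu$ is the unique monomial of $F_1$ divisible by $y$ forces the coefficients of $y^2z_0$ and $y^2z_1$ to vanish (equivalently $\ell_4=\gamma\ell_1-\beta\ell_2$), so that $F_1|_{\Pi}=\lambda yu$ for some $\lambda\ne0$. Since $y\ne0$ in a neighbourhood of $\msp$, this equation forces $u=0$ there, and hence near $\msp$ we have $X\cap\Pi=X\cap\Pi'$, where $\Pi'=(x=t_0=t_1=u=0)=\mbP(2,3,3)$ with coordinates $y,z_0,z_1$.

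On $\Pi'$ I would restrict the remaining Pfaffians and pass to the chart $y=1$. The hypothesis $\msp\in X$ kills the constant terms (giving $\delta+\alpha\beta=\varepsilon+\alpha\gamma=\beta\varepsilon-\gamma\delta=0$), and then the lowest-order parts of $F_2,F_3,F_5$ are exactly the three binary quadratic forms
\[
q_1-\ell_1\ell_3,\qquad q_2-\ell_2\ell_3,\qquad \beta q_2-\gamma q_1+\ell_3\ell_4
\]
in $z_0,z_1$ recorded in Condition~\ref{cd:deg4-2}. Because three forms in two variables with no common zero have only the trivial common solution, Condition~\ref{cd:deg4-2} guarantees that these cut out only the origin, i.e.\ $X\cap\Pi=\{\msp\}$ in a neighbourhood of $\msp$. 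Thus no curve of $X\cap\Pi$ passes through $\msp$, which is the required isolation.

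The main obstacle I anticipate is bookkeeping rather than conceptual. One must carry out the restriction of all five Pfaffians to $\Pi$ with care, and in particular verify that the $F_1$-normalisation really forces $u\equiv0$ on $X\cap\Pi$ near $\msp$: this is what ensures that the $u$-dependent terms of $F_2,F_3,F_5$ drop out, so that one recovers genuinely the three forms of Condition~\ref{cd:deg4-2} rather than modified quadratics involving the eliminated variable $u$. Checking that $\msp\in X$ removes the constant terms, and that the branch $y=0$ of $X\cap\Pi$ (being closed and avoiding $\msp$) is irrelevant, are the remaining routine points; once they are settled the intersection computation above is immediate.
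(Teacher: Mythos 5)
Your proof is correct and is essentially the paper's argument: the same coordinate normalisation (making $yu$ the unique monomial of $F_1$ divisible by $y$, which kills the $y^2z_0$, $y^2z_1$ terms), the same use of $\msp\in X$ to force $\delta+\alpha\beta=\varepsilon+\alpha\gamma=\beta\varepsilon-\gamma\delta=0$, the same reduction of the isolation claim to the three binary forms of Condition \ref{cd:deg4-2} having only the trivial common zero, the same nef divisor $L=4\varphi^*A-E$ from Lemma \ref{lem:isolnef}, and the same computation $(L\cdot B^2)=0$ feeding into Lemma \ref{lem:excltc}.

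The one divergence is bookkeeping: the paper isolates $\msp$ with the four sections $\{x,t_0,t_1,u\}$, while you use $\{x,t_0,t_1\}$ and dispose of $u$ inside the isolation argument, via $F_1|_{\Pi}=\lambda yu$ forcing $u=0$ near $\msp$ so that the $u$-dependent terms of $F_2,F_3,F_5$ drop out; both routes are valid and yield the same $L$. However, your side remark that $u$ ``cannot be used'' because $\ord_E(u)=\frac{1}{2}$ is false: under the very normalisation you invoke, $yu$ is the unique monomial of $F_1$ of initial weight $\frac{1}{2}$, so Lemma \ref{lem:ordE} gives $\ord_E(u)\ge\frac{3}{2}$, and this is exactly how the paper keeps $u$ in the isolating set while retaining $c=\max\{0,\frac{1}{4},\frac{1}{4},\frac{1}{5}\}=\frac{1}{4}$. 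This error does not affect your proof, since you never use $\ord_E(u)$, but the claim that $\{x,t_0,t_1\}$ is ``the only viable isolating set'' should be deleted.
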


\begin{proof}
We will prove that the set $\{x,t_0,t_1,u\}$ isolates $\msp$.
We set $\Pi = (x = t_0 = t_1 = u = 0)$.
Then we can write
\[
M|_{\Pi} =
\begin{pmatrix}
0 & y & \ell_1 & \ell_2 & \alpha y^2 \\
& 0 & \beta y^2 & \gamma y^2 & y \ell_3 \\
& & 0 & y \ell_4 & \delta y^3 + q_1 \\
& & & 0 & \varepsilon y^3 + q_2 \\
& & & & 0
\end{pmatrix},
\]
where $\alpha,\beta,\dots,\varepsilon \in \mbC$ and $\ell_i, q_i$ are polynomials in $z_0,z_1$ which are linear and quadratic respectively.
Hence we have
\[
\begin{split}
F_1|_{\Pi} &= y^2 (\ell_4 - \gamma \ell_1 + \beta \ell_2), \\
F_2|_{\Pi} &= (\delta + \alpha \beta) y^4 + y (q_1 - \ell_1 \ell_3), \\
F_3|_{\Pi} &= (\varepsilon + \alpha \gamma) y^4 + y (q_2 - \ell_2 \ell_3), \\
F_4|_{\Pi} &= y^3 (\varepsilon \ell_1 - \delta \ell_2 + \alpha \ell_4) + \ell_1 q_2 - \ell_2 q_1, \\
F_5|_{\Pi} &= (\beta \varepsilon - \gamma \delta) y^5 + y^2 (\beta q_2 - \gamma q_1 + \ell_3 \ell_4)
\end{split}
\]
By our choice of coordinates, there is no monomial in $F_1$ divisible by $y$ other than $y u$, so that $\ell_4 - \gamma \ell_1 + \beta \ell_2 = 0$.
Since $\msp = \msp_y \in X$, we see that the coefficients of $y^4$, $y^4$ and $y^5$ in $F_2$, $F_3$ and $F_5$ are zero, which implies 
\[
\delta + \alpha \beta = \varepsilon + \alpha \gamma = \beta \varepsilon - \gamma \delta = 0.
\] 
Combining the above observations, we have
\[
\varepsilon \ell_1 - \delta \ell_2 + \alpha \ell_4 
= (\varepsilon + \alpha \gamma) \ell_1 - (\delta + \alpha \beta) \ell_2 = 0,
\]
hence $F_1|_{\Pi} = 0$ and
\[
F_2|_{\Pi} = y (q_1 - \ell_1 \ell_3),
F_3|_{\Pi} = y (q_2 - \ell_2 \ell_3),
F_4|_{\Pi} = \ell_1 q_2 - \ell_2 q_1,
F_5|_{\Pi} = y^2 (\beta q_2 - \gamma q_1 + \ell_3 \ell_4).
\]
By Condition \ref{cd:deg4-2}, $X \cap \Pi$ consists of $\msp$ and the $3$ points of type $\frac{1}{3} (1,1,2)$.
Thus $\{x,t_0,t_1,u\}$ isolates $\msp$.

We see that $y^3 x, y^2 z_0,y^2 z_1$ and $y u$ are the monomials of degree $7$ having initial weight $\frac{1}{2}$.
By our choice of coordinates, $y u$ is the unique monomial with initial weight $\frac{1}{2}$.
It follows that $\ord_E (x,t_0,t_1,u) \ge \frac{1}{2} (1,2,2,3)$.
Hence $L = 4 \varphi^*A - \frac{2}{2} E$ is nef by Lemma \ref{lem:isolnef} and we compute
\[
(L \cdot B^2) = 4 (A^3) - \frac{2}{2^3} (E^3) = \frac{4}{4} - \frac{2}{2} = 0.
\]
Therefore, $\msp$ is not a maximal centre by Lemma \ref{lem:excltc}.
\end{proof}

\subsection{The $\frac{1}{3} (1,1,2)$ points and birational involutions}

Let $\msp \in X$ be a point of type $\frac{1}{3} (1,1,2)$.
For a polynomial $f = f (x,y,z_0,z_1,t_0,t_1,u)$, we denote $\bar{f} = f (0,0,z_0,z_1,t_0,t_1,0)$.
Note that, for the entries $a_3, a'_3$ and $a_4,b_4,b'_4$ of the syzygy matrix of $X$, $\bar{a}_3, \bar{a}'_3$ and $\bar{a}_4, \bar{b}_4, \bar{b}'_4$ are linear forms in $z_0,z_1$ and $t_0,t_1$, respectively.
Note also that $\bar{c}_6$ and $\bar{d}_6$ are quadratic forms in $z_0,z_1$.

\begin{Cond} \label{cd:deg4-3}
The set
\[
(-\bar{a}_3 \bar{b}'_4 + \bar{a}'_3 \bar{b}_4 = \bar{a}_3 \bar{d}_6 - \bar{a}'_3 \bar{c}_6 = \bar{a}_4 = 0) \subset \mbP (3_{z_0}, 3_{z_1}) \times \mbP (4_{t_0}, 4_{t_1})
\]
is empty.
\end{Cond}

It is clear that Condition \ref{cd:deg4-3} is satisfied for a general $X$ and we assume that $X$ satisfies it.

\begin{Rem} \label{rem:modifmat}
Let $X$ be a Paffian Fano 3-fold defined by the syzygy matrix 
\[
M =
\begin{pmatrix}
a_2 & a_3 & a'_3 & a_4 \\
0 & b_4 & b'_4 & b_5 \\
& 0 & c_5 & c_6 \\
& & 0 & d_6 \\
\end{pmatrix}
\]
and let $F_1,\dots,F_5$ be defining polynomials.
For $\alpha \in \mbC$, the matrices
\[
M_{\alpha} =
\begin{pmatrix}
a_2 & a_3 - \alpha a'_3 & a'_3 & a_4 \\
0 & b_4 - \alpha b'_4 & b'_4 & b_5 \\
& 0 & c_5 & c_6 - \alpha d_6 \\
& & 0 & d_6 \\
\end{pmatrix}, \quad
M'_{\alpha} =
\begin{pmatrix}
a_2 & a_3 & a'_3 - \alpha a_3 & a_4 \\
0 & b_4 & b'_4 - \alpha b_4 & b_5 \\
& 0 & c_5 & c_6 \\
& & 0 & d_6 - \alpha c_6 \\
\end{pmatrix}
\]
both define the same Pfaffian 3-fold $X$ with defining polynomials $F_1,F_2-\alpha F_3,F_3,F_4,F_5$ and $F_1,F_2,F_3- \alpha F_2, F_4,F_5$, respectively.
\end{Rem}

The following choice of coordinates will also be used in the next subsection.

\begin{Lem} \label{lem:coord1/4}
Let $\msp \in X$ be a point of type $\frac{1}{3} (1,1,2)$ and $\msq \in X$ the point of type $\frac{1}{4} (1,1,3)$. 
By a choice of coordinates, we can assume that $\msp = \msp_{z_1}$, $\msq = \msp_{t_1}$ and the polynomials $F_1, \dots,F_5$ are written as follows:
\[
\begin{split}
F_1 &= t_1 z_1 + u a_2 + t_0 a_3 + a_7, \\
F_2 &= t_1 t_0 + u b_3 + t_0 b_4 + z_1 b_5 + b_8, \\
F_3 &= z_1 u + u c_3 + t_0^2 + t_0 c_4 + t_1 g_4 + z_1 c_5 + c_8, \\
F_4 &= z_1^2 z_0 + u t_0 + u g'_4 + t_0 g_5 + t_1 g'_5 + z_1 d_6 +  d_9, \\
F_5 &= t_1^2 y + t_1 (u e_1 + h_6) + u^2 + u h_5 + h_{10},
\end{split}
\]
where $a_i,b_i,c_i,d_i,e_i \in \mbC [x,y,z_0]$, $g_i \in \mbC [x,y,z_0,z_1]$ and $h_i \in \mbC [x,y,z_0,z_1,t_0]$ are all contained in the ideal $(x,y,z_0)$ and satisfy $z_0 \in b_3$ and $z_1^3 x \notin h_{10}$.
\end{Lem}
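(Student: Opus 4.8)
The plan is to follow the template of the proof of Lemma \ref{lem:deg12defeqlink}: first move the two relevant singular points to coordinate vertices, then extract the leading monomials of $F_1,\dots,F_5$ from quasi-smoothness together with the vertex data recorded in the table of Section \ref{sec:table}, and finally clear the remaining terms with the admissible matrix operations of Remark \ref{rem:modifmat} and the substitutions of Lemma \ref{lem:elim}. To position the points, note that a terminal quotient singularity of index $3$ (resp.\ $4$) can occur only where the nonvanishing coordinates have weight divisible by $3$ (resp.\ $4$), since otherwise the isotropy group is trivial; hence $\msp$ lies on $\mbP(3_{z_0},3_{z_1}) = (x=y=t_0=t_1=u=0)$ and $\msq$ lies on $\mbP(4_{t_0},4_{t_1}) = (x=y=z_0=z_1=u=0)$. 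The $\mathrm{GL}_2$-action on $(z_0,z_1)$ and the $\mathrm{GL}_2$-action on $(t_0,t_1)$ involve disjoint coordinates, so I may apply them independently to arrange $\msp = \msp_{z_1}$ and $\msq = \msp_{t_1}$.

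Next I would read off the six leading monomials. Quasi-smoothness at the $\frac{1}{3}(1,1,2)$ point $\msp_{z_1}$ requires that three of the Pfaffians eliminate the local coordinates whose weights are $\equiv 1,2,0 \pmod 3$; after relabelling these are $F_1,F_3,F_4$, which must then contain $z_1 t_1$, $z_1 u$ and $z_1^2 z_0$ respectively, so that the surviving coordinates $x,y,t_0$ have weights $1,2,1$, i.e.\ $(1,1,2)$ up to order. Quasi-smoothness at the $\frac{1}{4}(1,1,3)$ point $\msp_{t_1}$ requires $t_1 z_1 \in F_1$, $t_1 t_0 \in F_2$ and $t_1^2 y \in F_5$, leaving the survivors $x,z_0,u$ of weights $1,3,1$; the single monomial $z_1 t_1 \in F_1$ serves both purposes, eliminating $t_1$ at $\msp_{z_1}$ and $z_1$ at $\msp_{t_1}$, and this compatibility is what makes the normal form possible. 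The remaining leading monomials $t_0^2 \in F_3$, $u t_0 \in F_4$ and $u^2 \in F_5$ come from the conditions $\msp_{t_0},\msp_u \notin X$ of the table together with an inspection of the Pfaffian products $b_5 c_5$ and $a_4 c_5$ (which force $u \in b_5,c_5$ and $t_0 \in a_4$), exactly as in the degree $1/12$ computation; after rescaling I take all six coefficients equal to $1$.

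It then remains to normalize the lower-order terms. After the leading variables $z_1,t_0,t_1,u$ are stripped off, each coefficient polynomial is of positive degree in $x,y,z_0$, so membership in the subrings $\mbC[x,y,z_0]$, $\mbC[x,y,z_0,z_1]$, $\mbC[x,y,z_0,z_1,t_0]$ and in the ideal $(x,y,z_0)$ is either automatic for degree reasons or can be forced by the operations of Remark \ref{rem:modifmat}, which replace $F_2$ or $F_3$ by $F_2 - \alpha F_3$ or $F_3 - \alpha F_2$ without altering $X$, followed by ``complete the square/cube'' substitutions in $z_1,t_0,y,u$ in the spirit of Lemma \ref{lem:elim}. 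The two genuine conditions are the removal of the monomials not divisible by $x,y,z_0$ (for instance $z_1^2 t_0$ from $h_{10}$) and of $z_1^3 x$ from $h_{10}$; these are cleared by the same admissible change of presentation and of coordinates used to arrange the analogous normalizations $t_1^2 y \notin h_{12}$ and $t_0^2 z \notin g_{14}$ in Lemma \ref{lem:deg12defeqlink}. The condition $z_0 \in b_3$ is read off from quasi-smoothness at $\msq$, being precisely the statement that the weight-$3$ tangent direction $z_0$ is realised, so that $\msq$ is genuinely of type $\frac{1}{4}(1_x,1_u,3_{z_0})$.

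The main obstacle is organisational rather than conceptual: the two quasi-smoothness normalizations are carried out at the two different vertices $\msp_{z_1}$ and $\msp_{t_1}$, and one must order the substitutions so that each step modifies only monomials of weight strictly larger (for the grading relevant to the vertex being treated) than those already fixed, so that clearing a term near $\msp_{t_1}$ does not reintroduce a term previously cleared near $\msp_{z_1}$, and conversely. As in Lemma \ref{lem:deg12defeqlink}, this is guaranteed because every offending monomial has strictly higher initial weight than the corresponding leading monomial, so the required substitutions form a triangular system that can be solved one coordinate at a time.
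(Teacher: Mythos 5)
Your outline follows the paper's template (position the two points at vertices, extract leading monomials, normalize the tails), but the pivotal claim of your second paragraph is a genuine gap: quasi-smoothness does \emph{not} force $t_1 z_1 \in F_1$. At $\msq = \msp_{t_1}$, the type $\frac{1}{4}(1,1,3)$ only requires that $F_1$ contain $t_1 z_0$ \emph{or} $t_1 z_1$, since both $z_0$ and $z_1$ have weight $3$ and either one may be the surviving tangent direction; likewise at $\msp = \msp_{z_1}$ the weight-$1$ tangent directions can be any $2$-plane in $\langle x,t_0,t_1\rangle$, so $F_1$ may contain $z_1 t_0$ instead of $z_1 t_1$. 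Both ambiguities occur simultaneously: taking syzygy entries with $\bar{a}_3 = z_0$, $\bar{a}'_3 = z_1$, $\bar{b}_4 = t_0$, $\bar{b}'_4 = t_1$, $\bar{a}_4 = t_0$ (and the other entries generic) gives $F_1 = -z_0 t_1 + z_1 t_0 + \cdots$, which is quasi-smooth at both points with the correct types $\frac{1}{3}(1_x,2_y,1_{t_1})$ and $\frac{1}{4}(1_x,1_u,3_{z_1})$, yet has vanishing $t_1 z_1$-coefficient in $F_1$. This cannot be repaired by any "relabelling" or substitution: $F_1$ is the unique defining polynomial of degree $7$, and any coordinate change keeping $\msp$ at the $z_1$-vertex and $\msq$ at the $t_1$-vertex replaces $z_1$ by $\gamma z_1 + c z_0 + \cdots$ and $t_1$ by $\delta t_1 + d t_0 + \cdots$ while $z_0,t_0,u,x,y$ never acquire $z_1$- or $t_1$-terms, so the coefficient of $t_1 z_1$ in $F_1$ is only rescaled by $\gamma\delta \ne 0$. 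For such an $X$ the conclusion of Lemma \ref{lem:coord1/4} is simply false.

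What excludes this configuration is Condition \ref{cd:deg4-3}, the standing generality hypothesis of this section, which your proof never invokes. Indeed, once $\msp = \msp_{z_1}$ and $\msq = \msp_{t_1}$, one has $(\bar{a}_3\bar{d}_6 - \bar{a}'_3\bar{c}_6)((0\!:\!1)) = 0$ (because $\msp \in X$) and $\bar{a}_4((0\!:\!1)) = 0$ (because the zero of $\bar{a}_4$ is the $\frac{1}{4}(1,1,3)$ point), so the point $((0\!:\!1),(0\!:\!1))$ lies in the set of Condition \ref{cd:deg4-3} if and only if $(-\bar{a}_3\bar{b}'_4 + \bar{a}'_3\bar{b}_4)((0\!:\!1),(0\!:\!1))$, i.e.\ the coefficient of $t_1 z_1$ in $F_1$, vanishes; the assumed emptiness of that set is exactly what yields $z_1 \in a'_3$ and $t_1 \in b_4$, hence $t_1 z_1 \in F_1$ (this is the paper's step "we have $t_1 \in b_4$ because otherwise the set in Condition \ref{cd:deg4-3} contains $((0\!:\!1),(0\!:\!1))$"). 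A proof that uses only quasi-smoothness, $\msp_{t_0},\msp_u \notin X$, Remark \ref{rem:modifmat} and Lemma \ref{lem:elim} cannot close this gap, because the statement it targets is false without the generality condition. Your remaining inaccuracies are comparatively minor and repairable along the paper's lines (e.g.\ $z_0 \in b_3$ comes from $u \in b_5$, forced by $\msp_u \notin X$, combined with the normalization $a_3 = z_0$ in the syzygy matrix, not from "realizing the weight-$3$ tangent direction"; and the triangularity of the tail normalization should be run through the explicit matrix entries as in the paper), but the missing use of Condition \ref{cd:deg4-3} is fatal as written.
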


\begin{proof}
We have $u \in b_5, c_5$ since $u^2 \in F_5$ by quasi-smoothness of $X$.
The equations $\bar{a}_4 \bar{b}_4 = \bar{a}_4 \bar{b}'_4 = 0$ has a unique non-trivial solution and it corresponds to the $\frac{1}{4} (1,1,3)$ point of $X$.
It follows that $\bar{b}_4 = \bar{b}'_4 = 0$ has no non-trivial solution and the solution $\bar{a}_4 = 0$ corresponds to the $\frac{1}{4} (1,1,3)$ point.
We choose coordinates so that $\msp = \msp_{z_1}$ and $\msq = \msp_{t_1}$, which are equivalent to $z_0 \mid (\bar{a}_3 \bar{d}_6 - \bar{a}'_3 \bar{c}_6)$ and $\bar{a}_4 = t_0$.
By suitable modifications of the matrix $M$ in Remark \ref{rem:modifmat}, we may assume $\bar{a}_3 = z_0$.
We have $t_1 \in b_4$ because otherwise the set in Condition \ref{cd:deg4-3} contains the point $((0\!:\!1), (0\!:\!1))$ which is impossible.
Again by a suitable modification of $M$, we may assume $\bar{b}'_4 = t_0$.
Then, since neither $\bar{b}_4 = \bar{b}'_4 = 0$ nor $\bar{a}_3 = \bar{a}'_3 = 0$ has nontrivial solution, we have $t_1 \in \bar{b}_4$ and $z_1 \in a'_3$. 
Replacing $t_1 \mapsto t_1 - \varepsilon_1 t_0$ and $z_1 \mapsto z_1 - \varepsilon_2 z_0$ for some $\varepsilon_1, \varepsilon_2 \in \mbC$, we may assume $\bar{b}_4 = t_1, \bar{a}_4 = t_0$ and $\bar{a}'_3 = z_1$.
So far we choose coordinates so that $\msp = \msp_{z_1}, \msq = \msp_{t_1}$, $\bar{a}_3 = z_0, \bar{a}'_3 = z_1, \bar{b}_4 = t_1, \bar{b}'_4 = t_0$ and $z_0 \mid \bar{c}_6$, where the last assertion follows from $z_0 \mid (\bar{a}_3 \bar{d}_6 - \bar{a}'_3 \bar{c}_6)$ and $\bar{a}_3 = z_0$.

We further replace coordinates which preserve the above properties.
We replace $u$ so that $c_5 = u$.
We replace $z_0 \mapsto h_3 (x,y)$ and $z_1 \mapsto z_1 - h'_3 (x,y)$ for suitable $h_3, h'_3 \in \mbC [x,y]$ so that $a_3 = z_0$ and $a'_3 = z_1$.
Now we can write the syzygy matrix $M$ as follows
\[
M =
\begin{pmatrix}
a_2 & z_0 & z_1 & t_0 + A_4 \\
0 & t_1 + B_4 & t_0 + z_1 b'_1 + b'_4 & \alpha u + t_0 e_1 + t_1 e'_1 + B_5 \\
& 0 & u & u c_1 + t_0 c_2 + t_1 c_2' + C_6 \\
& & 0 & u d_1 + t_0 d_2 + t_1 d'_2 + D_6 \\
\end{pmatrix},
\]
where $\alpha \in \mbC \setminus \{0\}$, $a_2,a_3,a'_3,\dots,d'_2,e_1,e'_1 \in \mbC [x,y,z_0]$ and $A_4, B_4, B_5, C_6, D_6 \in \mbC [x,y,z_0,z_1]$.
We replace $t_0 \mapsto t_0 - A_4 - a_2 c'_2 + e'_1 z_0$ so that, after the replacement, we have $A_4 = - a_2 c'_2 + e'_1 z_0$.
We then replace $t_1 \mapsto t_1 + b'_1 z_0 - B_4$ so that, after the replacement, we have $B_4 = b'_1 z_0$.

We claim that $y \in d'_2$.
Indeed, since $\msq = \msp_{t_1}$ is of type $\frac{1}{4} (1,1,3)$, we have $t_1^2 y \in F_5$.
The terms in $F_5$ divisible by $t_1^2$ are computed as $t_1^2 d'_2$.
Hence $y \in d'_2$ and the claim is proved.
We replace $y$ so that $d'_2 = y$.
We finish the choice of coordinates and in the following  we observe that this is the desired choice of coordinates.

We compute $F_1, \dots,F_5$.
In the following descriptions, omitted terms $\cdots$ consist of monomials in variables $x,y,z_0$.
We have 
\[
\begin{split}
F_1 &= t_1 z_1 + u a_2 - t_0 z_0 + \cdots, \\
F_2 &= t_1 t_0 + u (a_2 c_1 - \alpha z_0) + t_0 (a_2 c_2 + b'_1 z_0) + a_2 C_6 - z_0 B_5 + \cdots, \\
F_3 &= - \alpha z_1 u + u a_2 d_1 + t_0^2 + t_0 (- z_1 e_1 + \cdots) + t_1 (a_2 d'_2 - z_1 e'_1) + a_2 D_6 - z_1 B_5 + \cdots, \\
F_4 &= u (t_0 - z_1 c_1 + \cdots) + t_1 (z_0 y - z_1 c'_2) + t_0 (z_0 d_2 - z_1 c_2) + z_0 D_6 - z_1 C_6.
\end{split}
\]
Recall that $z_0 \bar{D}_6 - z_1 \bar{C}_6 = 0$ has three distinct solutions (corresponding to three points of type $\frac{1}{3} (1,1,2)$) and, by our choice of coordinates, $z_0 \mid z_0 \bar{D}_6 - z_1 \bar{C}_6$.
It follows that $z_1^2 \notin C_6$ and $z_1^2 \in D_6$.
Thus it is easy to see that $F_1, F_2, F_3$ are in the form described in the statement after rescaling $u$.
We have $z_1^2 z_0 \in F_4 = z_0 D_6 - z_1 C_6$, which shows that $F_4$ is also in the desired form.
Although we do not write down $F_5$ explicitly here, it is easy to verify that 
\[
F_5 = t_1^2 y + t (u e_1 + h_6) + \beta u^2 + u h_5 + h_{10}
\] 
for some $\beta \in \mbC \setminus \{0\}$, $e_i \in \mbC [x,y,z_0]$ and $h_i \in \mbC [x,y,z_0,z_1,t_0]$.
It is easy to observe that $h_5, h_{10} \in (x,y,z_0)$ because there degree is not divisible by $3$ and it cannot contain a power of $z_1$.
This also explains that $g_i, g'_i \in (x,y,z_0)$.
Note that $h_6 = D_6 - b'_1 c'_2 z_1 + \cdots$ and it contains $z_1^2$.
By replacing $F_5$ by $F_5 - \gamma z_1 F_1$, we can eliminate the term $z_1^2$ in $h_6$.
Finally, replacing $y \mapsto \beta y$ and then replacing $F_5$ by $\frac{1}{\beta} F_5$, we may assume that $\beta = 1$.
This completes the proof.
\end{proof}  

We choose and fix coordinates as above.
It is easy to see that $z_1 t_1$ is the unique monomial in $F_1 = z_1 t_1 + u a_2 + t_0 a_3 + a_7$ having initial weight $\frac{1}{3}$ since $a_i = a_i (x,y,z_0)$ has initial weight $\frac{i}{3}$.
The Kawamata blowup $\varphi \colon Y \to X$ at $\msp$ is realized as the embedded weighted blowup at $\msq$ with weight $\wt (x,y,z_0,t_0,t_1,u) = \frac{1}{3} (1,2,3,1,4,2) =: \mbfw$.

Let $\pi \colon X \ratmap \mbP := \mbP (1,2,3,4)$ be the projection to the coordinates $x,y,z_0,t_1$ and let $\pi_Y \colon Y \ratmap \mbP (1,2,3,4)$ the induced rational map.
We take $H \in |\mcO_{\mbP} (1)|$.

\begin{Lem}
The map $\pi_Y$ is a surjective generically finite morphism of degree $2$ such that $B = \pi_Y^*H$.
\end{Lem}

\begin{proof}
By Lemma \ref{lem:coord1/4}, it s easy to observe that the indeterminacy locus of $\pi$, which is the set $(x = y = z_0 = t_1 = 0) \cap X$, consists of the single point $\msp$ since $a_i,\dots,e_i,g_i,g'_i,h_i$ all vanish along $(x = y = z_0 = 0)$.
We have an isomorphism
\[
E \cong (t_1 + u a_2 + t_0 a_3 = u + \alpha t_0^2 + \gamma t_0 x = z_0 + u t_0 + \delta u x = 0) \subset \mbP (1_x,2_y,3_{z_0},1_{t_0},4_{t_1},2_u),
\]
where $\gamma, \delta \in \mbC$ are the coefficients of $t_0 z_1 x, z_1 x$ in $h_8, g_4$, respectively.
The sections $x,y,z_0,t_1$ lift to plurianticanonical sections on $Y$ and restricts to the coordinates $x,y,z_0,t_1$ of the ambient weighted projective space of $E$.
It is clear that
\[
(x = y = z_0 = t_1 = 0) \cap E = \emptyset
\]
since $\alpha \ne 0$.
This shows that $\pi_Y$ is everywhere defined.
We see $\pi_Y^* H = B$ and we compute $(H^3) = 1/24$ and 
\[
(B^3) = (A^3) - \frac{1}{3^3} (E^3) = \frac{1}{4} - \frac{1}{6} = \frac{1}{12}.
\]
From this we see that $\pi_Y$ is surjective and has degree $2$.
\end{proof}

By the same argument as in the proof of Proposition \ref{prop:deg20-birinv}, the above lemma implies the following.

\begin{Prop} \label{prop:deg4-birinv}
One of the following holds.
\begin{enumerate}
\item $\msp$ is not a maximal centre.
\item There is a birational involution $\sigma \colon X \ratmap X$ which is a Sarkisov link centred at $\msp$.
\end{enumerate}
\end{Prop}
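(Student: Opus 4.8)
The plan is to follow the proof of Proposition~\ref{prop:deg20-birinv} line by line, since the preceding lemma supplies precisely the same input in the degree $1/4$ case: the induced map $\pi_Y \colon Y \ratmap \mbP(1,2,3,4)$ is a surjective generically finite morphism of degree $2$ satisfying $B = \pi_Y^* H$. First I would form the Stein factorization $\pi_Y = \pi_Z \circ \psi$, where $\psi \colon Y \to Z$ is a birational morphism with connected fibres onto a normal projective variety $Z$ and $\pi_Z \colon Z \to \mbP(1,2,3,4)$ is a finite morphism of degree $2$, i.e.\ a double cover. Writing $H_Z = \pi_Z^* H$ we have $B = \psi^* H_Z$, so every curve contracted by $\psi$ meets $B$ trivially; since $B = -K_Y$, the contraction $\psi$ is $K_Y$-trivial.

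The crucial point is to rule out that $\psi$ is an isomorphism. If it were, then $Y$ would itself be a double cover of the weighted projective $3$-space $\mbP(1,2,3,4)$, which is forbidden by Lemma~\ref{lem:2cover}: such a $Y$ would have Picard number one, whereas $Y$ is the Kawamata blowup of the Picard-number-one $\mbQ$-Fano $X$ and hence has Picard number $2$. Therefore $\psi$ genuinely contracts a positive-dimensional locus.

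Finally I would appeal to \cite[Lemma~3.2]{OkadaII} to obtain the dichotomy according to the type of the $K_Y$-trivial contraction $\psi$. If $\psi$ is divisorial, then the presence of this $K_Y$-trivial divisorial contraction obstructs the maximality of $\varphi$, so $\msp$ is not a maximal centre, which is alternative~(1). If $\psi$ is small, it is a flopping contraction, and the Galois involution of the double cover $\pi_Z$ lifts, through the flop of $\psi$, to a birational self-map of $Y$ that descends to a Sarkisov self-link $\sigma \colon X \ratmap X$ centred at $\msp$ --- the desired birational involution of alternative~(2). Since every case-specific ingredient (the degree-two projection and the class identity $B = \pi_Y^* H$) is already furnished by the preceding lemma, I do not expect any obstacle beyond what was resolved in the degree $1/20$ case; the only thing to verify is that $\pi_Z$ is genuinely ramified, so that the induced link is a nontrivial involution rather than an isomorphism, and this is again guaranteed by Lemma~\ref{lem:2cover} via the non-triviality of $\psi$.
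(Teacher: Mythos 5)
Your proposal is correct and is essentially identical to the paper's argument: the paper proves Proposition~\ref{prop:deg4-birinv} by invoking "the same argument as in the proof of Proposition~\ref{prop:deg20-birinv}," namely the Stein factorization $\pi_Y = \pi_Z \circ \psi$, the use of Lemma~\ref{lem:2cover} to rule out $\psi$ being an isomorphism, and the dichotomy of \cite[Lemma 3.2]{OkadaII} according to whether $\psi$ is divisorial or small. Your additional remarks (the $K_Y$-triviality of $\psi$ via $B = \psi^* H_Z$ and the lifting of the Galois involution through the flop) are consistent elaborations of what that cited lemma encapsulates.
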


\subsection{The $\frac{1}{4} (1,1,3)$ point and birational non-rigidity}

Let $\msp$ be the point of type $\frac{1}{4} (1,1,3)$.
We will show that the Kawamata blowup $\varphi \colon Y \to X$ leads to a Sarkisov link to a Mori fiber space which is not isomorphic to $X$.
The arguments are similar to those in Section \ref{sec:deg12nonbr} but more complicated.
Note that the $X$ has three points of type $\frac{1}{3} (1,1,2)$, denoted $\msq_1, \msq_2, \msq_3$.
We choose coordinates as in Lemma \ref{lem:coord1/4} for the $\frac{1}{3} (1,1,2)$ point $\msq_1$ and the $\frac{1}{4} (1,1,3)$ point $\msp$, so that $\msq_1 = \msp_{z_1}$ and $\msp = \msp_{t_1}$.

Recall that Lemma \ref{lem:coord1/4} is based on Condition \ref{cd:deg4-3} which we assume in this subsection.
In addition we assume the following condition which is satisfied for a general $X$.

\begin{Cond} \label{cd:deg4-4}
Under the choice of coordinates as in Lemma \ref{lem:coord1/4}, $y \in a_2$ and the set 
\[
(a_2 = b_3  = y + u e_1 + u^2 = 0) \subset \mbP (1_x,2_y,3_{z_0},1_u)
\]
consists of distinct $2$ points.
\end{Cond}

The Kawamata blowup $\varphi \colon Y \to X$ at $\msp$ is realized as the embedded weighted blowup with the initial weight $\wt (x,y,z_0,z_1,t_0,u) = \iniw = \frac{1}{4} (1,2,3,3,4,1)$ and we have an isomorphism
\[
E \cong (z_1 + u a_2 = t_0 + u b_3 = y + u e_1 + u^2 = 0) \subset \mbP,
\]
where $\mbP = \mbP (1_x,2_y,3_{z_0},3_{z_1},4_{t_0},1_u)$.
Let $X \ratmap \mbP (1,2,3,3,4)$ be the projection to $x,y,z_0,z_1,t_0$ and denote by $Z$ its image.
Let $\rho \colon Y \ratmap Z$ be the induced map.

\begin{Lem}
$\rho$ is a flopping contraction.
\end{Lem}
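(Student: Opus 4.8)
The plan is to follow the template of Lemma~\ref{lem:deg12flop} verbatim, reducing the statement to two assertions: that $\rho$ is a $B$-trivial birational morphism (the anticanonical model of $Y$), and that it is small but not an isomorphism. For the first part I would note that off $E$ the map $\rho$ is the projection of $X$ from $\msp$, whose fibre over a point of $Z$ is recovered by solving $F_1=F_2=0$ for $(t_1,u)$ through the linear system with coefficient matrix $\bigl(\begin{smallmatrix} z_1 & a_2 \\ t_0 & b_3\end{smallmatrix}\bigr)$; since $\det=z_1b_3-a_2t_0\not\equiv 0$, the generic fibre is a single point, so $\rho$ is birational and equals the anticanonical model. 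That $\rho$ is everywhere defined on $E$ follows from the isomorphism $E\cong (z_1+ua_2=t_0+ub_3=y+ue_1+u^2=0)\subset\mbP(1_x,2_y,3_{z_0},3_{z_1},4_{t_0},1_u)$: the indeterminacy locus $(x=y=z_0=z_1=t_0=0)\cap E$ is empty because, once $x=y=z_0=0$, the third equation reduces to $u^2=0$, forcing $u=0$.

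Next I would show $\rho$ is not an isomorphism. The rank of the matrix above drops precisely along $\det=0$, and it drops to zero along the locus $a_2=b_3=z_1=t_0=0$; by Condition~\ref{cd:deg4-4} the corresponding locus on $E$, namely $(a_2=b_3=y+ue_1+u^2=0)\subset\mbP(1_x,2_y,3_{z_0},1_u)$, is non-empty (two distinct points). Exactly as in Lemma~\ref{lem:deg12flop}, this produces a non-trivial contracted locus: the fibre of $\rho$ over the image point where the matrix vanishes is positive-dimensional, so $\rho$ is not an isomorphism, and the genericity built into the Condition pins the exceptional set down to a finite union of curves.

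It remains to prove $\rho$ is small. Suppose for contradiction it is divisorial with exceptional prime divisor $G$. Since $G$ is contracted by the $B$-trivial morphism $\rho$ we have $(B^2\cdot G)=0$; using $(B^3)=1/6$ and $(B^2\cdot E)=\tfrac1{16}(E^3)=1/3$ (from $(E^3)=16/3$) and writing $G\sim_\mbQ kB-lE$, the relation $\tfrac{k}{6}-\tfrac{l}{3}=0$ forces $k=2l$, so $G\sim_\mbQ m(2B-E)$ for some positive integer $m$. To contradict Lemma~\ref{lem:divcontmob} I would exhibit a prime divisor $D\neq G$ with positive $E$-coefficient. The natural candidate comes from the syzygy $b_3F_1-a_2F_2=t_1(z_1b_3-a_2t_0)+R$, where the $u$-terms cancel and every monomial of $R$ lies in the ideal generated by high-order terms; a direct check using the forms in Lemma~\ref{lem:coord1/4} gives $\ord_E R\ge 10/4$, hence $\ord_E(z_1b_3-a_2t_0)\ge 10/4$ on $X$. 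The divisor $H=(z_1b_3-a_2t_0=0)\cap X$ has degree $6$, so $\tilde H\sim_\mbQ 6\varphi^*A-\tfrac{10}{4}E=6B-E$, with $E$-coefficient $\ge 1>0$.

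The main obstacle is the final comparison, which is genuinely harder than in the degree $1/12$ case. There the exceptional class $G\sim m(10B-3E)$ had $B$-coefficient at least $10$, strictly exceeding the total $B$-coefficient $9$ of the competing divisor, so \emph{any} component with positive $E$-coefficient was automatically distinct from $G$. Here $G\sim m(2B-E)$ has $B$-coefficient only $2m$, which can be as small as $2$, so the crude degree bound no longer separates a component of $\tilde H$ from $G$. To close the argument I would instead prove that $\tilde H$ is \emph{irreducible and reduced} and that $\ord_E(z_1b_3-a_2t_0)=10/4$ \emph{exactly} (by checking that one term of $R$, e.g. $a_2z_1b_5$, survives to order exactly $10/4$ on $X$). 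Then $\tilde H$ is a prime divisor with $\tilde H\sim_\mbQ 6B-E$, a class not proportional to $m(2B-E)$; by Lemma~\ref{lem:divcontmob} applied to $D=\tilde H$ one would be forced to have $\tilde H=G$, i.e. $6=2m$ and $1=m$ simultaneously, which is absurd. The irreducibility of $\tilde H$ is the crux: I expect to establish it by the same chart-elimination technique used for the curves $\Gamma$ in the earlier sections, working in the affine chart $t_1\ne0$, eliminating $(t_1,u)$ via $F_1=F_2=0$, and using Condition~\ref{cd:deg4-4} to rule out extra components, after which $\rho$ is forced to be small and hence a flopping contraction.
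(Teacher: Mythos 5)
Your first two steps (that $\rho$ is a morphism, birational onto the anticanonical model of $Y$, and that it is not an isomorphism because the two points of $(a_2=b_3=0)\cap E$ supplied by Condition~\ref{cd:deg4-4} share the same image) agree with the paper, and your numerics $G\sim_{\mbQ}m(2B-E)$, $\tilde{H}\sim_{\mbQ}6B-E$ are exactly the paper's. You have also correctly diagnosed the genuine difficulty: unlike the degree $1/12$ case, where $G\sim_{\mbQ} m(10B-3E)$ has $B$-coefficient at least $10>9$, so that any component of $\tilde{H}$ with positive $E$-coefficient is automatically distinct from $G$, here the class $m(2B-E)$ cannot be separated from the components of $\tilde{H}$ by a crude degree bound.

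However, your repair has a real gap: the whole argument is made to rest on $\tilde{H}$ being irreducible and reduced, and that is precisely what you do not prove (``I expect to establish it by the same chart-elimination technique used for the curves $\Gamma$''). That technique proves irreducibility of one-dimensional intersections by eliminating variables down to a plane curve; here $\tilde{H}$ is a surface, and eliminating $(t_1,u)$ via $F_1=F_2=0$ on the chart $t_1\ne 0$ describes $X$ itself, not $H$, so no such reduction is available. The paper itself signals that primality of $\tilde{H}$ should not be expected --- in the proof of Lemma~\ref{lem:deg12flop} it explicitly notes that $\tilde{H}$ ``is not necessarily irreducible or reduced'' --- and its argument here is built to avoid the issue entirely: by Lemma~\ref{lem:divcontmob} some component of $\tilde{H}$ with positive $E$-coefficient must equal $G$, hence $G$ is a component of $\tilde{H}$; pushing forward and counting degrees forces $2m<6$, i.e.\ $m\le 2$; then $\varphi_*G\sim_{\mbQ}2mA$ is cut out by a polynomial of degree $2$ or $4$, and since every monomial of degree $2$ or $4$ in $\mbP(1,2,3,3,4,4,5)$ lies in the ideal $(x,y,t_0,t_1,u)$, the divisor $\varphi_*G$ --- and therefore $H$ --- must contain all three $\frac{1}{3}(1,1,2)$ points; but $H=(z_1b_3-t_0a_2=0)\cap X$ meets the line $(x=y=t_0=t_1=u=0)$ only where $z_0z_1=0$ (because $z_0\in b_3$), so it contains at most two of them, a contradiction. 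This route needs neither irreducibility of $\tilde{H}$ nor the exact value of $\ord_E(z_1b_3-t_0a_2)$, both of which your version requires and would have to establish (likely under extra generality assumptions beyond Condition~\ref{cd:deg4-4}). As it stands, your proof of smallness is incomplete at its self-identified crux.
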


\begin{proof}
By Lemma \ref{lem:coord1/4}, it is easy to observe that the projection $X \ratmap \mbP (1,2,3,3,4)$ is defined outside $\msp$.
The sections $x,y,z_0,z_1,t_0$ lift to plurianticanonical sections on $Y$ and they restrict to $E$ the coordinates $x,y,z_0,z_1,t_0$ of $\mbP$.
We see
\[
(x = y = z_0 = z_1 = t_0 = 0) \cap E = \emptyset
\]
and this shows that $\rho$ is a morphism.
By the same argument as in the proof Lemma \ref{lem:deg12birrho}, we see that $\rho$ is birational and is the anticanonical model of $Y$.
The set $(a_2 = b_3 = 0) \cap E$ consits of two points by Condition \ref{cd:deg4-4} and it is mapped to the same point via $\rho$, which shows that $\rho$ is not an isomorphism.

It remains to show that $\rho$ is small.
Assume that $\rho$ is divisorial and let $G$ be the prime divisor on $Y$ contracted by $\rho$.
Since $(B^2 \cdot G) = 0$, we have $G \sim_{\mbQ} m (2 B - E)$ for some positive integer $m$.
By the same argument as in the proof of Lemma \ref{lem:deg12flop}, the proper transform $\tilde{H}$ of the divisor $H$ on $X$ defined by $z_1 b_3 - t_0 a_2 = 0$ satisfies $\tilde{H} \sim_{\mbQ} 6 B - E$.
By Lemma \ref{lem:divcontmob}, a component of $\tilde{H}$ which is $\mbQ$-linearly equivalent to $\lambda B - \mu E$ for some $\lambda,\mu$ with $\mu > 0$ is $G$.
It follows that $\tilde{H}$ contains $G$ as a component.
This in particular implies $m \le 2$.
We see that $\varphi_* G \sim_{\mbQ} 2 m A$ is cut out on $X$ by a polynomial of degree $2 m$ with $2 m = 2, 4$.
Hence $\varphi_*G$ contains the three singular points of type $\frac{1}{3} (1,1,2)$, and we conclude that $H$ contains the three singular points of type $\frac{1}{3} (1,1,2)$. 
But this is impossible since $H \sim_{\mbQ} 6 A$, which is defined by $z_1 b_3 - t_0 a_2 = 0$, contains at most $2$ singular points of type $\frac{1}{3} (1,1,2)$.
This is a contradiction and $\rho$ is a flipping contraction.
\end{proof}

Let $\varphi'_1 \colon Y'_1 \to X$ be the Kawamata blowup at the $\frac{1}{3} (1,1,2)$ point $\msq_1$ with exceptional divisor $E'_1$.
As is argued in the previous subsection, $\varphi'_1$ is realized as the embedded weighted blowup at $\msq_1 = \msp_{z_1}$ with weight $\wt (x,y,z_0,t_0,t_1,u) = \frac{1}{3} (1,2,3,1,4,2)$ and we have an isomorphism
\[
E'_1 \cong (t_1 + u a_2 + t_0 a_3 = u + t_0^2 + \gamma t_0 x = z_0 + u t_0 + \delta u x = 0) \subset \mbP',
\]
for some $\gamma,\delta \in \mbC$, where $\mbP' = \mbP (1_x,2_y,3_{z_0},1_{t_0},4_{t_1},2_u)$.

Let $\psi_1 \colon \hat{Y}_1 \to Y$ be the Kawamata blowup of $Y$ at the $\frac{1}{3} (1,1,2)$ point $\varphi^{-1} (\msq_1)$.
We have a natural birational morphism $\psi'_1 \colon \hat{Y}_1 \to Y'_1$ which is the Kawamata blowup of the $\frac{1}{4} (1,1,3)$ point ${\varphi'}_1^{-1} (\msp)$.
We see that the proper transforms $\hat{E}_1$ and $\hat{E}'_1$ of $E$ and $E'_1$ are the exceptional divisors of $\psi'_1$ and $\psi_1$, respectively. 
We denote by $\pi_1 \colon X \ratmap \mbP (1,2,3)$ the projection to $x,y,z_0$ and by $\eta_1 \colon \hat{Y} \ratmap \mbP (1,2,3)$ the induced rational map.
We set $B = -K_Y$ and $\hat{B} = - K_{\hat{Y}_1}$.

\begin{Lem} \label{lem:dge1/4ellipfib}
$\eta_1$ is a morphism which is an elliptic fibration.
Moreover, $\hat{E}_1$ and $\hat{E}'_1$ are respectively $2$-section and $3$-section of $\eta_1$.
\end{Lem}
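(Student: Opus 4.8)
The plan is to follow verbatim the strategy used for the analogous statement in the degree $1/12$ case (the elliptic-fibration lemma preceding Theorem \ref{thm:deg12nonrig}), now feeding in the explicit defining equations and exceptional divisors produced by Lemma \ref{lem:coord1/4}. Concretely, I will (i) locate the indeterminacy locus of $\pi_1$, (ii) show $\eta_1$ extends across the two exceptional divisors so that it is a morphism, (iii) compute the two multisection degrees by an intersection computation on weighted complete intersections, and (iv) read off that $\eta_1$ is the anticanonical morphism, hence an elliptic fibration.

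First I would compute the base locus of $\pi_1$, that is $(x=y=z_0=0)\cap X$. Since every coefficient polynomial $a_i,b_i,\dots,h_i$ in Lemma \ref{lem:coord1/4} lies in $(x,y,z_0)$, restricting $F_1,\dots,F_5$ to $x=y=z_0=0$ leaves only the leading monomials, giving the system
\[
t_1 z_1 = t_1 t_0 = z_1 u + t_0^2 = u t_0 = u^2 = 0.
\]
From $u^2=0$ one gets $u=0$, then $t_0=0$, and finally $t_1 z_1 = 0$, so the base locus is exactly $\{\msp_{z_1},\msp_{t_1}\}=\{\msq_1,\msp\}$; thus $\pi_1$ is a morphism away from $\msq_1$ and $\msp$, the two centres blown up in $\hat{Y}_1$. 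Next I would check that $\eta_1$ is defined along the two exceptional divisors. Using the embedded descriptions $E\subset\mbP(1_x,2_y,3_{z_0},3_{z_1},4_{t_0},1_u)$ and $E'_1\subset\mbP(1_x,2_y,3_{z_0},1_{t_0},4_{t_1},2_u)$ recorded just before the lemma, the lifts of $x,y,z_0$ restrict to the like-named coordinates, so it suffices to verify $(x=y=z_0=0)\cap E=\emptyset$ and $(x=y=z_0=0)\cap E'_1=\emptyset$. Setting $x=y=z_0=0$ kills every coefficient polynomial and reduces the three defining equations of $E$ to $z_1=t_0=u^2=0$, and those of $E'_1$ to $t_1=u+t_0^2=ut_0=0$; in each case the only solution forces all homogeneous coordinates to vanish, which is impossible. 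Hence $\eta_1$ is everywhere defined.

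For the multisection degrees I would intersect the fibre classes with the exceptional divisors. Writing $S_\lambda=(y-\lambda x^2=0)\cap X$ and $T_\mu=(z_0-\mu x^3=0)\cap X$, the surface $\tilde{S}_\lambda\cap\tilde{T}_\mu$ is the fibre of $\pi_1\circ\varphi$ over $(1\!:\!\lambda\!:\!\mu)$, and the restrictions $\tilde{S}_\lambda|_E,\tilde{T}_\mu|_E$ are degree $2$ and degree $3$ sections of the complete intersection $E$, cut out in $\mbP(1,2,3,3,4,1)$ by equations of degrees $3,4,2$. This gives
\[
(\tilde{S}_\lambda\cdot\tilde{T}_\mu\cdot E)=(\tilde{S}_\lambda|_E\cdot\tilde{T}_\mu|_E)_E=\frac{3\cdot4\cdot2\cdot2\cdot3}{1\cdot2\cdot3\cdot3\cdot4\cdot1}=2,
\]
so $\hat{E}_1$ is a $2$-section. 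Running the same computation with the proper transforms on $Y'_1$ against $E'_1\subset\mbP(1,2,3,1,4,2)$, cut out by equations of degrees $4,2,3$, yields $\tfrac{4\cdot2\cdot3\cdot2\cdot3}{1\cdot2\cdot3\cdot1\cdot4\cdot2}=3$, so $\hat{E}'_1$ is a $3$-section. Finally, since $x,y,z_0$ lift to sections of $\hat{B},2\hat{B},3\hat{B}$ respectively, $\eta_1$ is the morphism defined by $|\hat{B}|$ (after replacing $\hat{B}$ by a suitable multiple), i.e.\ the anticanonical morphism; as $(\hat{B}^3)=0$ while $\hat{B}$ is nef and the target $\mbP(1,2,3)$ is a surface, the general fibre is an irreducible $\hat{B}$-trivial curve, which by adjunction has arithmetic genus one, so $\eta_1$ is an elliptic fibration.

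The main obstacle I anticipate is not a single computation but the bookkeeping needed to justify the claims feeding the degree count: one must confirm that the stated weighted complete intersection models of $E$ and $E'_1$ are correct (so that each multisection number is literally the intersection of two hypersurface sections on a weighted complete intersection), and that $x,y,z_0$ really attain the minimal vanishing orders along $\hat{E}_1$ and $\hat{E}'_1$ required for them to be sections of $\hat{B},2\hat{B},3\hat{B}$ rather than higher multiples; both follow from the KBL-condition analysis of Section \ref{sec:methods} but require care. The only genuinely geometric point is the connectedness of the general fibre, needed to legitimately call $\eta_1$ \emph{elliptic}, and this follows from Stein factorisation once $\eta_1$ has been identified with the anticanonical contraction.
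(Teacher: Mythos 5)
Your proposal is correct and follows essentially the same route as the paper's proof: the same computation of the indeterminacy locus $(x=y=z_0=0)\cap X=\{\msp_{z_1},\msp_{t_1}\}$ from the equations of Lemma \ref{lem:coord1/4}, the same verification that $(x=y=z_0=0)\cap E=(x=y=z_0=0)\cap E'_1=\emptyset$ via the embedded descriptions of the exceptional divisors, the identical weighted-complete-intersection degree counts yielding $2$ and $3$, and the identification of $\eta_1$ with the anticanonical morphism. The minor additions (adjunction/Stein factorisation remarks) only make explicit what the paper leaves implicit.
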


\begin{proof}
We first show that $\pi_1 \colon X \ratmap \mbP (1,2,3)$ is defined outside the set $\{\msq_1, \msp\} = \{\msp_{z_1}, \msp_{t_1}\}$.
The indeterminacy locus of $\pi_1$ is the set $\Xi := (x = y = z_0 = 0) \cap X$.
We have 
\[
F_1 (0,0,0,z_1,t_0,t_1,u) = t_1 z_1, \ 
F_2 (0,0,0,z_1,t_0,t_1,u) = t_1 t_0,
\]
so that $\Xi = (x = y = z_0 = t_1 = 0) \cup (x = y = z_0 = z_1 = t_1 = 0)$.
By looking at the other polynomials $F_3,F_4,F_5$, it is easy to check that the former and the latter sets are $\{\msp_{z_1}\}$ and $\{\msp_{t_1}\}$, respectively, so that $\Xi = \{\msp_{z_1},\msp_{t_1}\}$.
It is straightforward to see $(x = y = z_0 = 0) \cap E = (x = y = z_0 = 0) \cap E' = \emptyset$, which shows that $\eta_1$ is a morphism.
Since $x,y,z_0$ lift to sections of $\hat{B}, 2 \hat{B}, 3 \hat{B}$, respectively, $\eta_1$ is the anticanonical morphism of $\hat{Y}_1$, that is, it is an elliptic fibration.

For $\lambda,\mu \in \mbC$, we set $S_{\lambda} = (y - \lambda x^2 = 0) \cap X$ and $T_{\mu} = (z_0 - \mu x^3 = 0) \cap X$.
We see that $\tilde{S}_{\lambda} \cap \tilde{T}_{\mu}$, where $\tilde{S}_{\lambda}, \tilde{T}_{\lambda}$ are the proper transforms of $S_{\lambda}, T_{\mu}$ via $\varphi$, is the fiber of $\pi_1 \circ \varphi \colon Y \ratmap \mbP (1,2,3)$ over the point $(1\!:\!\lambda\!:\!\mu)$ and we compute 
\[
(\tilde{S}_{\lambda} \cdot \tilde{T}_{\lambda} \cdot E) = (\tilde{S}_{\lambda}|_E \cdot \tilde{T}_{\mu}|_E)_E = \frac{2 \cdot 3 \cdot 3 \cdot 4 \cdot 2}{1 \cdot 2 \cdot 3 \cdot 3 \cdot 4 \cdot 1} = 2.
\]
Thus $\hat{E}_1$ is $2$-section of $\eta_1$.
Similarly, $S'_{\lambda} \cap T'_{\mu}$, where $S'_{\lambda}, T'_{\mu}$ are the proper transforms of $S_{\lambda}, T_{\mu}$ via $\varphi'_1$, is a fiber of $\pi_1 \circ \varphi'_1 \colon Y'_1 \ratmap \mbP (1,2,3)$ over the point $(1\!:\!\lambda\!:\!\mu)$ and we compute
\[
(S'_{\lambda} \cdot T'_{\lambda} \cdot E'_1) = (S'_{\lambda}|_{E'_1} \cdot T'_{\mu}|_{E'_1})_{E'_1} = \frac{2 \cdot 3 \cdot 4 \cdot 2 \cdot 3}{1 \cdot 2 \cdot 3 \cdot 1 \cdot 4 \cdot 2} = 3.
\]
This shows that $\hat{E}'_1$ is a $3$-section of $\eta_1$. 
\end{proof}

The above arguments hold true for $\msq_i$, $i = 2,3$, instead of $\msq_1$ (by re-choosing coordinates as in Lemma \ref{lem:coord1/4} for $\msq_i$ and $\msp$) and we obtain the following diagram for $i = 1,2,3$. 
\[
\xymatrix{
& \ar[ld]_{\psi'_i} \hat{Y}_i \ar[rd]^{\psi_i} \ar@/^2.3pc/[rrrddd]^{\eta_i} & & & \\
Y'_i \ar[rd]_{\varphi'_i} & & \ar[ld]^{\varphi} Y \ar[rd]_{\rho} & & \\
& X \ar@{-->}[rrrd]_{\pi_i \hspace{0.6cm}} \ar@{-->}[rr]& & Z \ar@{-->}[rd] & \\
& & & & \mbP (1,2,3)}
\]
where $\varphi'_i \colon Y'_i \to X$, $\psi_i \colon \hat{Y}_i \to Y$, $\psi'_i \colon \hat{Y}_i \to Y'_i$ are the Kawamata blowups at $\msq_i \in X$, $\varphi^{-1} (\msq_i) \in Y$, ${\varphi'}_i^{-1} (\msp) \in Y'_i$, respectively, $\eta_i \colon \hat{Y}_i \to \mbP (1,2,3)$ is the elliptic fibration induced by the natural projection $\pi_i \colon X \ratmap \mbP (1,2,3)$.
Let $E'_i$ be the $\varphi'_i$-exceptional divisor and $\hat{E}_i, \hat{E}'_i$ be the proper transform of $E$ and $E'_i$ via $\psi_i, \psi'_i$, respectively.
By Lemma \ref{lem:dge1/4ellipfib}, $\hat{E}_i$ and $\hat{E}'_i$ are $2$-section and $3$-section of $\eta_i$, respectively.

\begin{Thm} \label{thm:deg4nonrig}
The Sarkisov link $\sigma$ starting with the Kawamata blowup of $X$ at the $\frac{1}{4} (1,1,3)$ point is a link to a Mori fiber space which is not isomorphic to $X$.
In particular, $X$ is not birationally rigid.
\end{Thm}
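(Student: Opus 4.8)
The plan is to transcribe the proof of Theorem~\ref{thm:deg12nonrig} almost line for line, the single genuinely new feature being the presence of \emph{three} points of type $\frac{1}{3}(1,1,2)$, namely $\msq_1,\msq_2,\msq_3$, in place of one auxiliary quotient point. As in the degree $1/12$ case, the flopping contraction $\rho\colon Y \ratmap Z$ admits a flop $\tau\colon Y \ratmap Y$, and the $2$-ray game it initiates terminates in a Mori fibre space, producing the Sarkisov link $\sigma\colon X \ratmap \bar{X}/\bar{S}$. I argue by contradiction, assuming $\bar{X} \cong X$; then $\sigma$ closes up into a commutative diagram consisting of the flop $\tau = \tau_0$, flips $\tau_1,\dots,\tau_m$, and a terminal Kawamata blowup $\bar\varphi\colon \bar{Y}\to X$ at a maximal centre, with $\bar{Y}\cong Y$.

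First I would pin down $\bar\varphi$. The points of type $\frac{1}{2}(1,1,1)$ have already been excluded as maximal centres, so $\bar\varphi$ is either $\varphi$ (the blowup at $\msp$) or one of the $\varphi'_i$ (the blowup at $\msq_i$). By Proposition~\ref{prop:deg4-birinv} the link beginning with any $\varphi'_i$ is a birational involution that ends again with $\varphi'_i$; the uniqueness of the $2$-ray game starting from a fixed divisorial extraction then rules out $\bar\varphi = \varphi'_i$, so $\bar\varphi = \varphi$. Since $\bar{Y} \cong Y$ admits no inverse flip, the last arrow cannot be a flip, whence $m = 0$ and the whole link is the single flop $\tau$. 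Exactly as before, $\tau$ corresponds to an automorphism $\theta\colon Z \to Z$ with $\rho' = \theta \circ \rho$, and $\theta$ permutes the three points $\bar{\msq}_i := \rho(\varphi^{-1}(\msq_i))$ of type $\frac{1}{3}(1,1,2)$ on $Z$; write $\theta(\bar{\msq}_i) = \bar{\msq}_{i'}$.

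The core is the numerical computation on the elliptic fibration. Fix any $i$ and blow up $\msq_i$ to obtain $\hat{Y}_i$ with the anticanonical elliptic fibration $\eta_i\colon \hat{Y}_i \to \mbP(1,2,3)$, on which $\hat{E}_i$ is a $2$-section and $\hat{E}'_i$ a $3$-section by Lemma~\ref{lem:dge1/4ellipfib}. The flop lifts to a small birational map $\hat\tau\colon \hat{Y}_i \ratmap \hat{Y}_{i'}$ which, being small, induces an isomorphism of the common anticanonical model $\mbP(1,2,3)$ and hence carries fibres of $\eta_i$ to fibres of $\eta_{i'}$; in particular it sends a $2$-section to a $2$-section. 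Pushing the induced relation down via $\psi_{i'}$ gives $\hat\tau_*\hat{B} = \hat{B}$, $\hat\tau_*\hat{E}'_i = \hat{E}'_{i'}$, and
\[
\hat\tau_*\hat{E}_i = \alpha \hat{B} - \beta \hat{E}_{i'} + \gamma \hat{E}'_{i'},
\]
where $(\alpha,\beta)$ records the self-map $\tau_* E = \alpha B - \beta E$ on $Y$. Effectivity of $\tau_* E$ forces $\alpha > 0$, and effectivity of $(\tau^2)_* E$ forces $\beta = 1$, by the same effective-cone argument as in Theorem~\ref{thm:deg12nonrig}. Intersecting with a general fibre $C'$ of $\eta_{i'}$ and using $(\hat{B}\cdot C') = 0$, $(\hat{E}_{i'}\cdot C') = 2$, $(\hat{E}'_{i'}\cdot C') = 3$, together with the fact that $\hat\tau_*\hat{E}_i$ is a $2$-section, gives
\[
2 = (\hat\tau_*\hat{E}_i \cdot C') = -2 + 3\gamma,
\]
so $\gamma = 4/3$, contradicting $\gamma \in \mbZ$.

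The hard part is the bookkeeping forced by the three $\frac{1}{3}(1,1,2)$ points: unlike the degree $1/12$ case, where the unique auxiliary point is fixed by $\theta$ and $\hat\tau$ is an honest self-map, here $\theta$ may permute the $\bar{\msq}_i$ nontrivially and $\hat\tau$ is only a map between two \emph{different} blowups $\hat{Y}_i$ and $\hat{Y}_{i'}$. The point that saves the argument is that the numerical data feeding the final contradiction---the section degrees $2$ and $3$ and the relation $(\hat{B}\cdot C') = 0$---are identical on every $\hat{Y}_j$, so the value $\gamma = 4/3$ is independent of the permutation; what must be checked with care is that $\hat\tau$ remains small and fibration-preserving for every choice of $i$, so that $\hat\tau_*\hat{E}_i$ is genuinely a $2$-section of $\eta_{i'}$. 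The remaining ingredients (that $\tau$ induces the automorphism $\theta$ and that $\beta = 1$) are routine transcriptions of the degree $1/12$ argument.
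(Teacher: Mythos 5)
Your overall strategy is the paper's, and most of the transcription is sound, but there is one genuine gap: you have miscounted the $\frac{1}{3}(1,1,2)$ points in play. The Kawamata blowup $\varphi\colon Y\to X$ at the $\frac{1}{4}(1,1,3)$ point is the $\frac{1}{4}(1,1,3)$-weighted blowup, and by Remark \ref{rem:embKblup} its exceptional divisor $E$ carries a singular point of type $\frac{1}{3}(1,1,2)$. So $Y$ has \emph{four} points of type $\frac{1}{3}(1,1,2)$: the three $\varphi^{-1}(\msq_i)$ and a fourth point on $E$, whose image $\bar{\msq}\in Z$ is again a $\frac{1}{3}(1,1,2)$ point. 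Consequently the automorphism $\theta$ of $Z$ permutes a set of (up to) four points, not three, and your assertion that ``$\theta$ permutes the three points $\bar{\msq}_i$'' is unjustified: a priori $\theta$ could send some $\bar{\msq}_i$ to $\bar{\msq}$. For such a bad index $i$ the lift $\hat{\tau}\colon \hat{Y}_i\ratmap \hat{Y}_{i'}$ that you need does not exist as a small, fibration-compatible map, because the construction of $\hat{\tau}$ rests on $\theta$ carrying the blown-up point $\bar{\msq}_i$ to the blown-up point $\bar{\msq}_{i'}$ (this is what makes $\theta$ lift to an isomorphism $\hat{Z}_i\to\hat{Z}_{i'}$ of the Kawamata blowups of $Z$, between which both $\hat{\rho}$'s are flopping contractions). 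So the claim you flag as ``must be checked for every choice of $i$'' is in fact \emph{false} for a bad $i$; what is true, and what the paper proves, is that a good $i$ exists: since $\theta$ is injective, at most one of $\bar{\msq}_1,\bar{\msq}_2,\bar{\msq}_3$ can map to $\bar{\msq}$, so after renumbering one may assume $\theta(\bar{\msq}_1)=\bar{\msq}_j$ with $j\in\{1,2,3\}$, and the whole argument is run only for $i=1$. This pigeonhole step is exactly the new difficulty of the degree $1/4$ case beyond ``three points instead of one,'' and it is missing from your proposal.

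Two smaller remarks. First, your derivation of $\beta=1$ differs from the paper's: because $\hat{\tau}$ is not a self-map when $j\ne 1$, the paper cannot iterate it and instead observes that the matrix of $\hat{\tau}_*$ on the divisor class groups in the bases $(\hat{B},\hat{E},\hat{E}')$ lies in $\operatorname{GL}_3(\mbZ)$, whence $\beta=1$; your alternative of iterating the honest self-map $\tau$ on $Y$ (where $\tau_*E=\alpha B-\beta E$, so effectivity of $(\tau^2)_*E$ gives $\alpha(1-\beta)\ge 0$, hence $\beta\le 1$, hence $\beta=1$) is a valid workaround for the same obstruction. Second, the final intersection computation $2=-2+3\gamma$ and the identification of $\bar{\varphi}$ with $\varphi$ via Proposition \ref{prop:deg4-birinv} and uniqueness of the $2$-ray game are correct and agree with the paper. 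With the four-point bookkeeping and renumbering inserted, your proof would be complete.
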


\begin{proof}
Assume that the link $\sigma$ is an birational automorphism.
Then, by the same argument as in the proof of Theorem \ref{thm:deg12nonrig}, we obtain the flop $\tau$ of $\rho \colon X \to Z$, which is a birational automorphism sitting in a diagram
\[
\xymatrix{
Y \ar[d]_{\varphi} \ar[rd]^{\rho} \ar@{-->}[rrr]^{\tau}& & & \ar[ld]_{\rho} Y \ar[d]^{\varphi} \\
X & Z \ar[r]^{\theta} & Z & X} 
\]
where $\theta$ is an automorphism.
Note that $Y$ has four points of type $\frac{1}{3} (1,1,2)$, that is, $\varphi^{-1} (\msq_i)$ for $i = 1,2,3$ and the point $\bar{\msq}$ on the exceptional divisor $E$.
By the same argument as in the proof of Theorem \ref{thm:deg12nonrig}, the curves contracted by $\rho$ does not pass through $\varphi^{-1} (\msq_i)$ for $i = 1,2,3$, hence $\rho$ is an isomorphism around $\varphi^{-1} (\msq_i)$.
We set $\bar{\msq}_i = \rho (\varphi^{-1} (\msq_i)) \in Z$ which is of type $\frac{1}{3} (1,1,2)$, and $\bar{\msq} = \rho (\msq)$.
Since $\theta$ is an automorphism, it maps $\frac{1}{3} (1,1,2)$ point to a $\frac{1}{3} (1,1,2)$, and the set of $\frac{1}{3} (1,1,2)$ points on $Z$ is contained in $\{\bar{\msq}_1,\dots,\bar{\msq}_3,\bar{\msq}\}$.
By renumbering, we may assume that $\theta (\bar{\msq}_1) \ne \bar{\msq}$.
Set $\theta (\bar{\msq}_1) = \bar{\msq}_j$, $j \in \{1,2,3\}$.

For $i = 1,j$, let $\hat{\rho}_i \colon \hat{Y}_i \to \hat{Z}_i$ be the morphism induced by $N_i = -K_{\hat{Y}_i} + \varepsilon \hat{E}'_i$ for a sufficiently small $\varepsilon > 0$.
By the same argument as in the proof of Theorem \ref{thm:deg12nonrig}, $\hat{\rho}_i$ is a flopping contraction, and $\hat{Z}_i$ is obtained as the Kawamata blowup of $Z$ at $\bar{\msq}_i$.
Now, since $\theta (\bar{\msq}_1) = \bar{\msq}_j$, the automorphism $\theta \colon Z \to Z$ induces an isomorphism $\hat{\theta} \colon \hat{Z}_1 \to \hat{Z}_j$, and we have the following diagram
\[
\xymatrix{
\hat{Y}_1 \ar[d]_{\psi_1} \ar[rd]^{\hat{\rho}_1} \ar@{-->}[rrrr]^{\hat{\tau}} & & & & \ar[ld]_{\hat{\rho}'_j} \hat{Y}_j \ar[d]^{\psi'_j} \\
Y \ar[d]_{\varphi} \ar[rd]^{\rho} & \hat{Z}_1 \ar[d] \ar[rr]^{\hat{\theta}} & & \hat{Z}_j \ar[d] & \ar[ld]_{\rho} Y \ar[d]^{\varphi} \\
X & Z \ar[rr]^{\theta} & & Z & X}
\]
where $\hat{\tau} \colon \hat{Y}_1 \ratmap \hat{Y}_j$ is the map induced by $\tau \colon Y \ratmap Y$.
By construction, $\hat{\tau}_* \hat{E}'_1 = \hat{E}'_j$.
Hence $\hat{\tau}$ is an isomorphism in codimension one, that is, it is a flop.
By considering the anticanonical models of $\hat{Y}_1$ and $\hat{Y}_j$, we obtain an automorphism of $\mbP (1,2,3)$ sitting in the commutative diagram
\[
\xymatrix{
\hat{Y}_1 \ar[d]_{\eta_1} \ar@{-->}[r]^{\hat{\tau}} & \hat{Y}_j \ar[d]^{\eta_j} \\
\mbP (1,2,3) \ar[r]^{\cong} & \mbP (1,2,3)}
\]
and $\hat{\tau}$ induces an isomorphism between generic fibers of the elliptic fibrations $\eta_1, \eta_j$.

We set $\hat{B}_1 = - K_{\hat{Y}_1}$ and $\hat{B}_j = - K_{\hat{Y}_j}$.
Then $\hat{\tau}_* \hat{B}_1 = \hat{B}_j$ and $\hat{\tau}_* \hat{E}'_1 = \hat{E}'_j$.
We can write $\hat{\tau}_* \hat{E}_1 = \alpha \hat{B}_j - \beta \hat{E}_j + \gamma \hat{E}'_j$ for some integers $\alpha,\beta,\gamma$.
Since $\hat{\tau}_*$ induces an isomorphism between the divisor class groups, we have
\[
\begin{pmatrix}
1 & 0 & 0 \\
\alpha & - \beta & \gamma \\
0 & 0 & 1
\end{pmatrix} \in \operatorname{GL}_3 (\mbZ),
\]
which implies $\beta = 1$.
Since $\hat{\tau}_*\hat{E}_1$, $\hat{E}_j$ and $\hat{E}'_j$ are $2$-, $2$- and $3$-sections of $\eta_j$, respectively, the computation of intersection numbers of $\hat{\tau}_* \hat{E}_1 = \alpha \hat{B}_j - \hat{E}_j + \gamma \hat{E}'_j$ and a general fiber $C$ of $\eta_j$ gives $\gamma = 4/3$.
This is a contradiction since $\gamma$ is an integer and the proof is completed.
\end{proof}

\section{The table} \label{sec:table}

\newlength{\myheight}
\setlength{\myheight}{0.55cm}

We summarize the result of this paper in the following table.
The first column indicates the number and the type of singular points of $X$.
The second column indicates the existence of Sarkisov link centred at the corresponding point: If the second column is blank, then the corresponding point is not a maximal centre, and the mark ``Q.I." and ``$\exists$ Link" indicate that there is a Sarkisov link centred at the point which is a quadratic involution and a link to a Mori fiber space not isomorphic to $X$, respectively.
The third column indicates the generality condition required to prove the result indicated in the second column. \\

\begin{center}
\begin{flushleft}
$X_{16,17,18,19,20} \subset \mbP (1_x,5_y,6_z,7_t,8_u,9_v,10_w)$; $(A^3)
 = 1/42$.
\end{flushleft} \nopagebreak
\begin{tabular}{|p{60pt}|p{50pt}|>{\centering\arraybackslash}p{40pt}||p{60pt}|p{60pt}|>{\centering\arraybackslash}p{40pt}|}
\hline
\parbox[c][\myheight][c]{0cm}{} $\frac{1}{2} (1,1,1)$ & & no & $\frac{1}{3} (1,1,2)$ & & no \\
\hline
\parbox[c][\myheight][c]{0cm}{} $\frac{1}{5} (1,1,4)$ & & \ref{cd:deg42-5} & $\frac{1}{5} (1,2,3)$ & & no \\
\hline
\parbox[c][\myheight][c]{0cm}{} $\frac{1}{7} (1,1,6)$ & & no & & & \\
\hline
\end{tabular}
\end{center}

\begin{center}
\begin{flushleft}
$X_{14,15,16,17,18} \subset \mbP (1_x,5_y^2,6_z,7_t,8_u,9_v)$; 
$(A^3) = 1/30$.
\end{flushleft} \nopagebreak
\begin{tabular}{|p{60pt}|p{50pt}|>{\centering\arraybackslash}p{40pt}||p{60pt}|p{60pt}|>{\centering\arraybackslash}p{40pt}|}
\hline
\parbox[c][\myheight][c]{0cm}{} $\frac{1}{5} (1,1,4)$ & & \ref{cd:deg30-5} & $2 \! \times \! \frac{1}{5} (1,2,3)$ & & no \\
\hline
\parbox[c][\myheight][c]{0cm}{} $\frac{1}{6} (1,1,5)$ & & no & & &\\
\hline
\end{tabular}
\end{center}

\begin{center}
\begin{flushleft}
$X_{12,13,14,15,16} \subset \mbP (1_x,4_y,5_z^2,6_t,7_u,8_v)$; 
$(A^3) = 1/20$.
\end{flushleft} \nopagebreak
\begin{tabular}{|p{60pt}|p{50pt}|>{\centering\arraybackslash}p{40pt}||p{60pt}|p{60pt}|>{\centering\arraybackslash}p{40pt}|}
\hline
\parbox[c][\myheight][c]{0cm}{} $\frac{1}{2} (1,1,1)$ &  &  no & $\frac{1}{4} (1,1,3)$ & & \ref{cd:deg20-4} \\
\hline
\parbox[c][\myheight][c]{0cm}{} $2 \! \times \! \frac{1}{5} (1,1,4)$ & & no & $\frac{1}{5} (1,2,3)$ & Q.I. & \\
\hline
\end{tabular}
\end{center}

\begin{center}
\begin{flushleft}
$X_{10,11,12,13,14} \subset \mbP (1_x,3_y,4_z,5_t^2,6_u,7_v)$; 
$(A^3) = 1/12$.
\end{flushleft} \nopagebreak
\begin{tabular}{|p{60pt}|p{50pt}|>{\centering\arraybackslash}p{40pt}||p{60pt}|p{60pt}|>{\centering\arraybackslash}p{40pt}|}
\hline
\parbox[c][\myheight][c]{0cm}{} $2 \!\times \! \frac{1}{3} (1,1,2)$ & & \ref{cd:deg12-3} & $\frac{1}{4} (1,1,3)$ & & \ref{cd:deg12-4} \\
\hline
\parbox[c][\myheight][c]{0cm}{} $\frac{1}{5} (1,1,4)$ & Q.I. & no & $\frac{1}{5} (1,2,3)$ & $\exists$ Link & \ref{cd:deg12-5link} \\
\hline
\end{tabular}
\end{center}

\begin{center}
\begin{flushleft}
$X_{7,8,8,9,10} \subset \mbP (1_x,2_y,3_z^2,4_t^2,5_u)$; 
$(A^3) = 1/4$.
\end{flushleft} \nopagebreak
\begin{tabular}{|p{60pt}|p{50pt}|>{\centering\arraybackslash}p{40pt}||p{60pt}|p{60pt}|>{\centering\arraybackslash}p{40pt}|}
\hline
\parbox[c][\myheight][c]{0cm}{} $3 \!\times \! \frac{1}{2} (1,1,1)$ & & \ref{cd:deg4-2} & $3 \! \times \! \frac{1}{3} (1,1,2)$ & Q.I. & \ref{cd:deg4-3} \\
\hline
\parbox[c][\myheight][c]{0cm}{} $\frac{1}{4} (1,1,3)$ & $\exists$ Link & \ref{cd:deg4-3}, \ref{cd:deg4-4} & & & \\
\hline
\end{tabular}
\end{center}


\end{document}